\newtheorem{theorem}{Theorem}
\newtheorem{proposition}{Proposition}
\newtheorem{lemma}{Lemma}
\newtheorem{corollary}{Corollary}
\theoremstyle{definition}
\newtheorem{definition}{Definition}
\theoremstyle{remark}
\newtheorem{remark}{Remark}
 \newcommand{\R}{\mathbb{R}}
 \newcommand{\C}{\mathbb{C}}
 \newcommand{\N}{\mathbb{N}}
 \newcommand{\Z}{\mathbb{Z}}
\newcommand{\ospq}{\mathfrak{osp}_q(1\vert 2)}
\newcommand{\akk}{a^{(i,k)}}
\newcommand{\bkk}{b^{(i,k)}}
\newcommand{\ckk}{c^{(i,k)}}
\newcommand{\dkk}{d^{(i,k)}}
\newcommand{\vjk}[1]{\vert\mathbf{j}_{#1}^{(i,k)}\vert}
\newcommand{\vj}[2]{\vert\mathbf{j}_{#2}^{(#1)}\vert}
\newcommand{\vjj}[1]{\vert\mathbf{j}_{#1}\vert}
\renewcommand{\vss}[1]{\vert\mathbf{s}_{#1}\vert}
\newcommand{\vst}[1]{\vert\tilde{\mathbf{s}}_{#1}\vert}
\newcommand{\vp}[1]{\vert\mathbf{p}_{#1}\vert}
\newcommand{\bj}{\mathbf{j}}
\newcommand{\bp}{\mathbf{p}}
\newcommand{\bk}{\mathbf{k}}
\newcommand{\bh}{\mathbf{h}}
\newcommand{\vh}[1]{\vert\mathbf{h}_{#1}\vert}
\newcommand{\vk}[1]{\vert\mathbf{k}_{#1}\vert}
\newcommand{\alphakk}{\alpha^{(i,k)}}
\newcommand{\betakk}{\beta^{(i,k)}}
\newcommand{\gammakk}{\gamma^{(i,k)}}
\newcommand{\deltakk}{\delta^{(i,k)}}
\newcommand{\betak}{\widetilde{\beta}_k}
\newcommand{\gammak}{\widetilde{\gamma}_k}
\newcommand{\deltak}{\widetilde{\delta}_k}
\newcommand{\ak}{\widetilde{a}^{(i,r)}_k}
\newcommand{\bkm}{\widetilde{b}^{(i,r)}_k}
\newcommand{\ck}{\widetilde{c}^{(i,r)}_k}
\newcommand{\dk}{\widetilde{d}^{(i,r)}_k}
\newcommand{\nuu}{\boldsymbol{\nu}}
\newcommand{\bb}{\mathfrak{b}}
\begin{document}
\title[The $q$-Bannai--Ito algebra and multivariate $(-q)$-Racah polynomials]{The $q$-Bannai--Ito algebra and multivariate $(-q)$-Racah and Bannai-Ito polynomials}
\author{Hendrik De Bie}
\email{Hendrik.DeBie@UGent.be}
\address{Department of Electronics and Information Systems\\Faculty of Engineering and Architecture\\Ghent University\\Building S8, Krijgslaan 281, 9000 Gent\\ Belgium.}

\author{Hadewijch De Clercq}
\email{Hadewijch.DeClercq@UGent.be}
\address{Department of Electronics and Information Systems\\Faculty of Engineering and Architecture\\Ghent University\\Building S8, Krijgslaan 281, 9000 Gent\\ Belgium.}

\date{\today}
\keywords{\(q\)-Racah polynomials, Askey--Wilson polynomials, Bannai--Ito polynomials, multivariate polynomials, bispectrality, Bannai--Ito algebra, Askey--Wilson algebra, Askey scheme}
\subjclass[2010]{33C50, 33D45, 33D50, 33D80, 39A13, 81R50} 
	
	\begin{abstract}
		The Gasper and Rahman multivariate \((-q)\)-Racah polynomials appear as connection coefficients between bases diagonalizing different abelian subalgebras of the recently defined higher rank \(q\)-Bannai--Ito algebra \(\mathcal{A}_n^q\). Lifting the action of the algebra to the connection coefficients, we find a realization of \(\mathcal{A}_n^q\) by means of difference operators. This provides an algebraic interpretation for the bispectrality of the multivariate \((-q)\)-Racah polynomials, as was established in Iliev, \emph{Trans. Amer. Math. Soc.}, {\bf 363}(3) (2011), 1577--1598.	
		
		Furthermore, we extend the Bannai--Ito orthogonal polynomials to multiple variables and use these to express the connection coefficients for the \(q = 1\) higher rank Bannai--Ito algebra \(\mathcal{A}_n\), thereby proving a conjecture from De Bie et al., \emph{Adv. Math.} {\bf 303} (2016), 390--414. We derive the orthogonality relation of these multivariate Bannai--Ito polynomials and provide a discrete realization for \(\mathcal{A}_n\).
	\end{abstract}
	
	\maketitle
	\tableofcontents
	
	\section{Introduction}
	
	\subsection{The Bannai--Ito algebra and its generalizations}
	The Bannai--Ito algebra \(\mathcal{A}_3\) is the associative algebra over \(\C\) with three generators \(\Gamma_{12},\Gamma_{23},\Gamma_{13}\) and relations
	\begin{align}
	\label{BI alg rank 1}
	\{\Gamma_{12}, \Gamma_{23}\} = \Gamma_{13} + \alpha_{13}, \quad  \{\Gamma_{12}, \Gamma_{13}\} = \Gamma_{23} + \alpha_{23}, \quad  \{\Gamma_{13}, \Gamma_{23}\} = \Gamma_{12} + \alpha_{12},
	\end{align}
	where \(\alpha_{ij}\) are structure constants. It was introduced in \cite{Tsujimoto&Vinet&Zhedanov-2012} as the algebraic structure underlying the Bannai--Ito orthogonal polynomials, which sit atop the so-called \((-1)\)-Askey scheme. It appeared also in \cite{DeBie&Genest&Vinet-2016-2} as the symmetry algebra of the three-dimensional Dirac--Dunkl operator with \(\mathbb{Z}_2^3\) reflection group. A central extension of this algebra can be embedded in the threefold tensor product of the Lie superalgebra \(\mathfrak{osp}(1\vert 2)\) \cite{Genest&Vinet&Zhedanov-2012}. This observation allowed to generalize the Bannai--Ito algebra in several ways.
	
	One possibility is to \(q\)-deform the underlying Lie superalgebra to the quantum superalgebra \(\mathfrak{osp}_q(1\vert 2)\). This led to the construction of a \(q\)-deformed Bannai--Ito algebra \(\mathcal{A}_3^q\) in \cite{Genest&Vinet&Zhedanov-2016}, where its connection with the univariate \((-q)\)-Racah polynomials was uncovered. The algebra \(\mathcal{A}_3^q\) turned out to be isomorphic to the universal Askey--Wilson algebra \cite{Terwilliger-2011}, which is a central extension of the Askey--Wilson or Zhedanov algebra \cite{Zhedanov-1991}, under a transformation \(q\to -q^2\). 
	
	A second generalization was proposed in \cite{DeBie&Genest&Vinet-2016}, where the connection with Dirac--Dunkl operators was used to extend the Bannai--Ito algebra to higher rank. The resulting algebra \(\mathcal{A}_n\) was observed to be the symmetry algebra of a superintegrable system with reflections on the \(n\)-sphere \cite{DeBie&Genest&Lemay&Vinet-2017} and of the \(\mathbb{Z}_2^n\) Dirac--Dunkl equation. Generalizations to other reflection groups were obtained in \cite{Genest&Lapointe&Vinet-2018} and \cite{DeBie&Oste&VanderJeugt-2018}. Other types of higher rank extensions, notably in connection with more general Lie algebras and quantum groups, are still actively investigated. Recently a classical Askey--Wilson algebra associated to the Lie algebra \(\mathfrak{sl}_n\) was constructed in \cite{Baseilhac&Crampe&Pimenta-2018}, see also \cite{Stokman-2018}.
	
	The two mentioned techniques were combined in our recent work \cite{DeBie&DeClercq&vandeVijver-2018}. We introduced the higher rank \(q\)-Bannai--Ito algebra \(\mathcal{A}_n^q\), which is both a higher rank extension of \(\mathcal{A}_3^q\) and a \(q\)-deformation of \(\mathcal{A}_n\), and which will be at the center of our attention throughout this paper. To each subset \(A\subseteq \{1,\dots,n\}\) we associated an algebra generator \(\Gamma_A^q\), yielding the algebraic relations
	\begin{equation}
	\label{q-BI rel.}
	\{\Gamma_A^q,\Gamma_B^q\}_q = \Gamma_{(A\cup B)\setminus(A\cap B)}^q + (q^{1/2}+q^{-1/2})\left(\Gamma_{A\cap B}^q\Gamma_{A\cup B}^q + \Gamma_{A\setminus(A\cap B)}^q\Gamma_{B\setminus(A\cap B)}^q\right),
	\end{equation}
	where \(\{A,B\}_q = q^{1/2}AB+q^{-1/2}BA\) is the \(q\)-anticommutator and \(A,B\) are sets of integers between 1 and \(n\) subject to certain conditions.
	As in the rank one case, this algebra can equally be considered a higher rank universal Askey--Wilson algebra. A different extension method was studied in \cite{Post&Walter-2017} for the rank two case, corresponding to \(n =4\).
	The algebra \(\mathcal{A}_n^q\) is also closely related to superintegrable quantum systems. In \cite{DeBie&DeClercq&vandeVijver-2018} we established a model of \(q\)-difference operators and reflections which has the higher rank \(q\)-Bannai--Ito algebra as its symmetry algebra. We will repeat the definition of this so-called \(\mathbb{Z}_2^n\) \(q\)-Dirac--Dunkl model, as well as our extension algorithm for multifold tensor products in Sections \ref{Paragraph - ospq and the extension process} and \ref{Paragraph q-Dirac-Dunkl model}. 
	
	We will consider a class of abelian subalgebras for \(\mathcal{A}_n^q\) and to each such subalgebra we will attach a set of joint eigenvectors in an \(n\)-fold tensor product of positive discrete series representations of \(\mathfrak{osp}_q(1\vert 2)\). We give an explicit construction for these vectors in Definition \ref{definition bases final} and motivate this definition throughout Sections \ref{Paragraph - Unitary irreducible modules} and \ref{Paragraph Decompositions}. They are to be compared with the canonical bases for finite-dimensional \(U_q(\mathfrak{sl}_2)\)-modules obtained in \cite{Frenkel&Khovanov-1997} and \cite{Khovanov-1997}. The overlap coefficients between two such vectors, each corresponding to a different abelian subalgebra, will be expressed in Theorem \ref{th - Overlap q-case} in terms of a special class of \(q\)-orthogonal polynomials, namely the Gasper and Rahman multivariate \((-q)\)-Racah polynomials \cite{Gasper&Rahman-2007}. As such, these polynomials arise as \(3nj\)-symbols for \(\ospq\), in the sense of \cite{Kirillov&Reshetikhin-1989} and \cite{Rosengren-2007}. 
	
	\subsection{Orthogonal polynomials in multiple variables}
	The \(q\)-Racah polynomials are amongst the most general of all \(q\)-hypergeometric orthogonal polynomials. They can be obtained upon reparametrization and truncation from the Askey--Wilson polynomials, which are at the top of the \(q\)-Askey scheme and hence have all other \(q\)-orthogonal polynomials in the scheme as limiting cases. Two different methods have been proposed to extend the Askey--Wilson and \(q\)-Racah polynomials to multiple variables. 
	
	One possibility stems from the theory of symmetric functions and was exploited by Koornwinder \cite{Koornwinder-1992} based on previous work by Macdonald \cite{Macdonald-1988}. Restricting the orthogonality measure to a discrete support, one obtains a class of multivariable \(q\)-Racah polynomials \cite{Stokman&vanDiejen-1998}. These Macdonald-Koornwinder polynomials and their connection to affine Hecke algebras have been intensively studied, see for example \cite{Stokman-2000}.
	
	A second type of extension, by means of coupled products of univariate Askey--Wilson polynomials, was introduced by Gasper and Rahman \cite{Gasper&Rahman-2005}, based on a construction by Tratnik \cite{Tratnik-1991} for multivariable \((q = 1)\) Racah polynomials. Suitable truncation led to a class of multivariate \(q\)-Racah polynomials \cite{Gasper&Rahman-2007}, which will be of interest in this paper. 
	
	The univariate Askey--Wilson and \(q\)-Racah polynomials are well-known to be bispectral in the sense of Duistermaat and Gr\"{u}nbaum \cite{Duistermaat&Grunbaum-1986}. Writing them as \(p_n(x)\), these polynomials can be defined both through a recursion relation in \(x\) and one in \(n\). 
	Iliev extended these bispectrality properties to the multivariate case in \cite{Iliev-2011}. He constructed two commutative algebras of difference and \(q\)-difference operators diagonalized by these polynomials and thereby discovered a duality relation between the variables \(x_i\) and the degrees \(n_i\).
	
	So far, this bispectrality has received little algebraic foundation. In this paper we will connect the multivariate \((-q)\)-Racah polynomials to the higher rank \(q\)-Bannai--Ito algebra \(\mathcal{A}_n^q\) and thereby cast Iliev's difference operators in a larger algebraic framework. More precisely, we will show in Theorem \ref{th discrete realization} how these operators give rise to a realization of \(\mathcal{A}_n^q\), which is hence generated by Iliev's aforementioned commutative algebras. 
	
	\subsection{Algebraic underpinning for the Askey--Wilson polynomials}
	In previous literature, profound connections were established between quantum groups and algebras on the one hand and Askey--Wilson polynomials on the other. We refer to \cite{Noumi&Mimachi-1990}, \cite{Koelink-1996} and \cite{Rosengren-2000} for an overview in the univariate case. More recently, similar relations were unveiled for their multivariate counterparts.
	
	Genest, Iliev and Vinet studied in \cite{Genest&Iliev&Vinet-2018} the coupling coefficients for multifold tensor products of \(\mathfrak{su}_q(1,1)\). These were identified as multivariate \(q\)-Racah polynomials through their appearance as connection coefficients between bases of multivariate \(q\)-Hahn and \(q\)-Jacobi polynomials. 
	
	A related problem was studied for the \(q\)-Onsager algebra \cite{Baseilhac-2006}, which is known to have the Askey--Wilson algebra as a homomorphic image \cite{Terwilliger-2011}. In \cite{Baseilhac&Vinet&Zhedanov-2017}, an iteration of coproducts of the quantum affine algebra \(U_q(\widehat{\mathfrak{sl}_2})\) allowed to identify two sets of \(N\) mutually commuting \(q\)-difference operators, which together generate several copies of the \(q\)-Onsager algebra. The polynomial eigenbases of the two sets of \(q\)-difference operators were considered and their overlap coefficients were expressed as entangled products of univariate \(q\)-Racah polynomials. However, these did not coincide with the Gasper and Rahman multivariable \(q\)-Racah polynomials. Altering the difference operators nevertheless allowed to obtain the Gasper and Rahman polynomials as generators for a family of infinite-dimensional modules of the rank one \(q\)-Onsager algebra in \cite{Baseilhac&Martin-2015}.
	
	In \cite{Groenevelt-2018} an iteration of coproducts of twisted primitive elements was used to relate the quantum group \(U_q(\mathfrak{su}(1,1))\) to the multivariate Askey--Wilson and Al-Salam-Chihara polynomials. The obtained difference equations coincide with the relations in \cite{Iliev-2011}.
	
	A similar recoupling for the higher rank Racah algebra was considered in \cite{DeBie&Genest&vandeVijver&Vinet-2016}. Its overlap coefficients were obtained in \cite{DeBie&vandeVijver-2018} in terms of Tratniks multivariate \(q = 1\) Racah polynomials and the higher rank Racah algebra was realized by means of the difference operators for Racah polynomials defined by Geronimo and Iliev \cite{Geronimo&Iliev-2010}.
	
	Also in the classical setting, bispectral orthogonal polynomials have been given profound algebraic underpinnings.
	In \cite{Iliev-2012} Iliev studies Krawtchouk polynomials in \(n\) variables and relates their spectral properties to representations of the Lie algebra \(\mathfrak{sl}_{n+1}(\C)\). Iliev and Xu established the bispectrality of a class of multivariate Hahn polynomials in \cite{Iliev&Xu-2017}. The corresponding difference operators arise as symmetries of the discrete generic quantum superintegrable system and were shown to generate a realization of the Kohno-Drinfeld Lie algebra.
	
	All of these references, except for the latter two, focus solely on (\(q\)-)shifts in the variables of the polynomials, thus neglecting the duality with the polynomial degrees. However, we will allow for discrete shifts in the degrees as well, thereby exploiting the full bispectrality. Another novelty in our approach is that we obtain explicit algebraic relations connecting both types of difference operators, thereby recovering the \(\mathcal{A}_n^q\) identities (\ref{q-BI rel.}). Moreover, none of the mentioned works goes beyond the iteration of coproducts as a technique for extension to higher rank. We will instead use the novel extension algorithm that we introduced in \cite{DeBie&DeClercq&vandeVijver-2018}, to provide a complete description of the considered algebras. Referring to the notation in (\ref{q-BI rel.}), we will index our algebra generators by subsets of \(\{1,\dots,n\}\). In previous work only sets of consecutive integers could be considered, whereas our method makes it possible to obtain explicit expressions for any possible set.
	
	We emphasize that all results obtained in this paper are easily transferred to the higher rank Askey--Wilson algebra under the transformation \(q\to -q^2\), by the isomorphism with \(\mathcal{A}_n^q\) established in \cite{DeBie&DeClercq&vandeVijver-2018}. The considered overlap coefficients then become multivariate \((q^2)\)-Racah polynomials and generate infinite-dimensional modules for the higher rank Askey--Wilson algebra.
	
	\subsection{Specialization for $q\to 1$}
	In the limit \(q\to 1\), the univariate \((-q)\)-Racah polynomials reduce to the Bannai--Ito polynomials, a class of orthogonal polynomials first considered in \cite{Bannai&Ito-1984}. They satisfy both a three-term recurrence and three-term difference relation, thus giving rise to an associated Leonard pair \cite{Terwilliger-2001}. They appeared in \cite{Genest&Vinet&Zhedanov-2012} as connection coefficients for the original \(q = 1\) Bannai--Ito algebra (\ref{BI alg rank 1}). In \cite{DeBie&Genest&Vinet-2016} a construction by means of Dunkl operators was proposed for the higher rank Bannai--Ito algebra, of which \(\mathcal{A}_n^q\) is a \(q\)-deformation. Its connection coefficients were conjecturally stated to be multivariate extensions of Bannai--Ito polynomials, which had however not been defined at the time. In Section \ref{Section - limit q to 1} we will affirm and prove this statement: we propose a definition for multivariate Bannai--Ito polynomials following the methods of Tratnik and Gasper and Rahman. We will obtain explicit expressions for the overlap coefficients in terms of these polynomials in Theorem \ref{th - Overlap q = 1}. Subsequently we derive their orthogonality and bispectrality properties. We also show that our definitions coincide with the Bannai--Ito polynomials recently introduced in \cite{Lemay&Vinet-2018} for the bivariate case, up to a modification of two parameters. 
	
	\subsection{Outline of the contents}
	The paper is organized as follows. In Section \ref{Section - Higher rank q-BI}, we recall the necessary prerequisites on the higher rank \(q\)-Bannai--Ito algebra and the discrete series representation of \(\mathfrak{osp}_q(1\vert 2)\). We repeat the definition of the \(\mathbb{Z}_2^n\) \(q\)-Dirac--Dunkl model in Section \ref{Paragraph q-Dirac-Dunkl model}. To each maximal abelian subalgebra of \(\mathcal{A}_n^q\) we associate a class of joint eigenvectors in a coupled \(\ospq\)-module in Section \ref{Paragraph - Unitary irreducible modules}. Consequently, we obtain a decomposition of this module in Section \ref{Paragraph Decompositions} as a motivation for our solution to the spectral problem in Definition \ref{definition bases final}. The connection coefficients for \(\mathcal{A}_n^q\) will be derived in Section \ref{Paragraph - Connection coefficients}. We first consider the univariate case before going to multiple variables. In Section \ref{Section - Discrete realization} we translate some of the results from \cite{Iliev-2011} to \((-q)\)-Racah polynomials and use these to construct a discrete realization for \(\mathcal{A}_n^q\). Finally, we introduce the multivariate Bannai--Ito polynomials in Section \ref{Section - limit q to 1}. We motivate their definition and obtain their orthogonality and bispectrality properties. The results from previous sections are subjected to a limit \(q \to 1\), leading to the connection coefficients and their interpretation as generators for infinite-dimensional modules of the higher rank \(q = 1\) Bannai--Ito algebra. We end with some conclusions and an outlook.
	
\section{The higher rank $q$-Bannai--Ito algebra}
\label{Section - Higher rank q-BI}

We will first recall some prerequisites about the \(q\)-Bannai--Ito algebra and \(\mathfrak{osp}_q(1\vert 2)\). 

\subsection{The quantum algebra $\mathfrak{osp}_q(1\vert 2)$ and the extension process}
\label{Paragraph - ospq and the extension process}
Let \(q\) be a complex number which is not a root of unity. For any complex number or any operator \(A\) we will denote by \([A]_q\) the \(q\)-number
\[
[A]_q = \frac{q^A-q^{-A}}{q-q^{-1}}.
\]
We will write \([i;j]\) for the set of natural numbers \(\{i,i+1,\dots,j\}\).
The \(q\)-anticommutator of two operators \(A\) and \(B\) is defined as
\[
\{A,B\}_q = q^{1/2}AB+q^{-1/2}BA.
\]

The quantum superalgebra \(\ospq\) is the \(\mathbb{Z}_2\)-graded unital associative algebra with generators \(A_+,A_-, K, K^{-1}\) and the grade involution \(P\), satisfying the commutation relations
\begin{align}
\label{def - ospq with K}
\begin{split}
KA_{+}K^{-1} = q^{1/2} A_{+}, \quad KA_{-}K^{-1} = q^{-1/2} A_{-}, \quad \{A_+,A_-\} = \frac{K^2-K^{-2}}{q^{1/2}-q^{-1/2}}, \\ \quad \{P,A_{\pm}\} = 0, \quad 
\ [P,K] = 0, \quad [P,K^{-1}] = 0, \quad KK^{-1} = K^{-1}K = 1, \quad P^2 = 1.
\end{split}
\end{align}
The Casimir operator has the expression
\begin{equation}
\label{Gamma^q}
\Gamma^q = \left(-A_+A_-+\frac{q^{-1/2}K^2-q^{1/2}K^{-2}}{q-q^{-1}} \right)P.
\end{equation}
It is easily checked that \(\Gamma^q\) commutes with all algebra elements.

One can endow \(\ospq\) with a coproduct \(\Delta: \ospq \rightarrow \ospq^{\otimes 2} \) acting on the generators as \cite{Genest&Vinet&Zhedanov-2016}
\begin{equation}
\label{Coproduct}
\Delta(A_{\pm}) = A_{\pm}\otimes KP + K^{-1}\otimes A_{\pm}, \quad \Delta(K) = K\otimes K, \quad \Delta(P) = P\otimes P.
\end{equation}
We will consider the multifold tensor product algebra \(\ospq^{\otimes n}\) with its standard product law as in \cite{Genest&Vinet&Zhedanov-2016}:
\begin{equation}
\label{Standard product law}
(a_1\otimes a_2\otimes\dots\otimes a_n)(b_1\otimes b_2\otimes\dots\otimes b_n) = a_1b_1\otimes a_2b_2\otimes\dots\otimes a_nb_n.
\end{equation}

The rank \(n-2\) \(q\)-Bannai--Ito algebra was constructed inside \(\ospq^{\otimes n}\) in \cite{DeBie&DeClercq&vandeVijver-2018} by means of an extension process, allowing to lift the Casimir operator \(\Gamma^q\) to \(n\)-fold tensor products. Let us denote by \(\mathcal{I}\) the subalgebra of \(\ospq\) generated by the elements \(A_-K,A_+K,K^2P\) and \(\Gamma^q\), and let \(\tau:\mathcal{I}\to \ospq\otimes\mathcal{I}\) be the algebra morphism defined by 
\begin{align}
\begin{split}
\label{def:tau isomorphism}
\tau(A_-K) = & \ K^2P\otimes A_-K, \\
\tau(A_+K) = & \ (K^{-2}P\otimes A_+K)+q^{-1/2}(q-q^{-1})(A_+^2P\otimes A_-K) \\&+ q^{-1/2}(q^{1/2}-q^{-1/2})(A_+K^{-1}P\otimes K^2P)\\& +q^{-1/2}(q-q^{-1})(A_+K^{-1}P\otimes \Gamma^q), \\
\tau(K^2P) = & \ 1\otimes K^2P - (q-q^{-1})(A_+K\otimes A_-K),\\
\tau(\Gamma^q) = & \ 1\otimes \Gamma^q.
\end{split}
\end{align}
One can easily check that \(\mathcal{I}\) is a left coideal subalgebra of \(\ospq\), as well as a left \(\ospq\)-comodule with coaction \(\tau\).

To each set \(A\subseteq[1;n]\) one can now associate an element \(\Gamma_A^q\in\ospq^{\otimes n}\), constructed as
\begin{equation}
\label{Gamma_A^q}
\Gamma_A^q = 1^{\otimes(\min(A)-1)}\otimes \left(\tau_{\max(A)-1,\max(A)}^{A}\dots\tau_{\min(A),\min(A)+1}^{A}\right)
(\Gamma^q)\otimes 1^{\otimes (n-\max(A))},
\end{equation}
where
\begin{equation}
\label{extension process}
\tau_{k-1,k}^{A}=\left\{
\arraycolsep=1.4pt\def\arraystretch{1.5}\begin{array}{lll}
1^{\otimes (k-\min(A)-1)}\otimes\Delta & \textrm{if}\ k-1\in A \ \textrm{and} \ k\in A, \\
(1^{\otimes (k-\min(A))}\otimes\tau)(1^{\otimes (k-\min(A)-1)}\otimes\Delta)\quad & \textrm{if}\ k-1\in A \ \mathrm{and} \ k\notin A,\\
1^{\otimes(k-\min(A)-1)}\otimes\Delta\otimes 1
&\textrm{if}\ k-1\notin A \ \mathrm{and}\ k\notin A,\\
1^{\otimes (k-\min(A)+1)} & \textrm{if}\ k-1\notin A \ \textrm{and} \ k\in A.
\end{array}
\right.
\end{equation}
For sets \(A = [i;j]\) of consecutive numbers, this process reduces to the well-known iteration of coproducts
\[
\Gamma_{[i;j]}^q = 1^{\otimes(i-1)}\otimes \Delta^{(j-i+1)}(\Gamma^q)\otimes 1^{\otimes(n-j)},
\]
with
\begin{equation}
\label{iterating Delta}
\Delta^{(d)} = \left(1^{\otimes(d-2)}\otimes\Delta\right)\Delta^{(d-1)}, \quad \Delta^{(1)} = \mathrm{id}.
\end{equation}
\begin{definition}
	The subalgebra of \(\ospq^{\otimes n}\) spanned by the elements \(\Gamma_A^q\), \(A\subseteq[1;n]\), is said to be the \(q\)-Bannai--Ito algebra of rank \(n-2\). We will denote it by \(\mathcal{A}_n^q\).
\end{definition}
This terminology goes back to \cite{Genest&Vinet&Zhedanov-2016}, where the rank 1 \(q\)-Bannai--Ito algebra was introduced, and to \cite{DeBie&Genest&Vinet-2016}, which established a construction for the \(q = 1\) rank \(n-2\) Bannai--Ito algebra using Dunkl operators.
The algebra relations satisfied by the elements \(\Gamma_A^q\) can be summarized as follows.
\begin{proposition}
	\label{prop q-anticommutation Gamma}
	Let \(A_1\), \(A_2\), \(A_3\), \(A_4\) be subsets of \([1;n]\) such that for any \(i\in\{1,2,3\}\) one has
	\[
	\max(A_i) < \min(A_{i+1}) \ \mathrm{or}\ A_i = \emptyset\ \mathrm{or}\ A_{i+1}=\emptyset. 
	\]
	Let \(A, B \subseteq[1;n]\) be defined through one of the following relations:
	\begin{alignat}{3}\label{most general sets 1}
	& A= A_1 \cup A_2 \cup A_4, \qquad && B = A_2 \cup A_3,& \\
	\label{most general sets 2}
	& A= A_2 \cup A_3, \qquad && B = A_1 \cup A_3 \cup A_4, &\\
	\label{most general sets 3}
	& A= A_1 \cup A_3 \cup A_4, \qquad && B = A_1 \cup A_2 \cup A_4.&
	\end{alignat}
	Let \(C = (A\cup B) \setminus(A\cap B)\).
	Then the elements \(\Gamma_A^q\), \(\Gamma_B^q\) and \(\Gamma_C^q\) satisfy the relations
	\begin{align}
	\label{q-anticommutation Gamma}
	\begin{split}
	\{\Gamma_A^q,\Gamma_B^q\}_q  & = \Gamma_{C}^q + (q^{1/2}+q^{-1/2})\left(\Gamma_{A\cap B}^q\Gamma_{A\cup B}^q + \Gamma_{A\setminus( A\cap B)}^q\Gamma_{B\setminus (A\cap B)}^q\right), \\
	\{\Gamma_B^q,\Gamma_C^q\}_q  & = \Gamma_{A}^q + (q^{1/2}+q^{-1/2})\left(\Gamma_{B\cap C}^q\Gamma_{B\cup C}^q + \Gamma_{B\setminus (B\cap C)}^q\Gamma_{C\setminus (B\cap C)}^q\right), \\
	\{\Gamma_C^q,\Gamma_A^q\}_q  & = \Gamma_{B}^q + (q^{1/2}+q^{-1/2})\left(\Gamma_{C\cap A}^q\Gamma_{C\cup A}^q + \Gamma_{C\setminus (C\cap A)}^q\Gamma_{A\setminus (C\cap A)}^q\right).
	\end{split}
	\end{align}
\end{proposition}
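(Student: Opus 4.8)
The plan is to reduce the general relations (\ref{q-anticommutation Gamma}) to a single finite computation and to propagate them through the extension maps (\ref{extension process}). I would begin with a bookkeeping reduction. Writing \(X = A_1\cup A_2\cup A_4\), \(Y = A_2\cup A_3\) and \(Z = A_1\cup A_3\cup A_4\), one checks directly from the disjointness of the blocks \(A_1,A_2,A_3,A_4\) that the three configurations (\ref{most general sets 1}), (\ref{most general sets 2}) and (\ref{most general sets 3}) realise the \emph{same} unordered triple \(\{X,Y,Z\}\), cyclically relabelled as \((A,B,C)=(X,Y,Z)\), \((Y,Z,X)\) and \((Z,X,Y)\) respectively. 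Tracking the auxiliary sets \(A\cap B\), \(A\cup B\), \(A\setminus(A\cap B)\) and \(B\setminus(A\cap B)\) through this relabelling, the three relations in (\ref{q-anticommutation Gamma}) are seen to permute cyclically as well. Hence it suffices to prove the three relations in the single configuration (\ref{most general sets 1}); the cases (\ref{most general sets 2}) and (\ref{most general sets 3}) then follow at once.

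Next I would argue that the identities depend only on the \emph{block pattern} of the four sets, not on the internal structure of each block \(A_\ell\). Because \(\mathcal{I}\) is a left coideal of \(\ospq\) and \(\tau\) a coaction, the extension process (\ref{Gamma_A^q})--(\ref{extension process}) builds every generator \(\Gamma_S^q\) occurring in (\ref{q-anticommutation Gamma}) by applying, block by block, either an iterated coproduct \(\Delta^{(d)}\) inside a block or the coaction \(\tau\) across a gap. The comodule coherence of \(\tau\), equivalently the coassociativity underlying (\ref{iterating Delta}), should let me collapse each nonempty block to a single tensor slot carrying the corresponding lifted Casimir and each gap to the insertion governed by \(\tau\). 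After this collapse the seven generators of (\ref{q-anticommutation Gamma}) live in \(\ospq^{\otimes 4}\), indexed by \(X,Y,Z\subseteq\{1,2,3,4\}\) together with their intersections and unions.

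The base case is then a finite computation in \(\ospq^{\otimes 4}\). Substituting the coproduct (\ref{Coproduct}), the Casimir (\ref{Gamma^q}) and the morphism (\ref{def:tau isomorphism}) into the collapsed generators, the claimed identity reduces to a relation provable from the defining relations (\ref{def - ospq with K}). When \(A_4=\emptyset\) the three remaining blocks give \(X=\{1,2\}\), \(Y=\{2,3\}\), \(Z=\{1,3\}\), and the statement specialises to the rank-one \(q\)-Bannai--Ito relation of \cite{Genest&Vinet&Zhedanov-2016}, the \(q\)-deformation of (\ref{BI alg rank 1}); the general four-block case is handled by the same direct substitution, now also invoking \(\tau\) to bridge the gaps left by the blocks absent from one of the sets.

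I expect the main obstacle to be the collapse step: justifying rigorously that each of the seven generators depends only on the block pattern and that a coherent family of extension maps simultaneously carries the rank-one generators to their counterparts for arbitrary blocks. The difficulty is that a position lying in a gap for one of the sets \(A,B,C\) may be an interior point of another, so the sequences of \(\tau\)'s and \(\Delta\)'s in (\ref{extension process}) differ from generator to generator and cannot be applied uniformly. The resolution I would pursue is an induction on the total number of indices, enlarging one block or filling one gap at a time while invoking the algebra-morphism property of \(\Delta\) and \(\tau\) together with the coideal identities for \(\mathcal{I}\); the delicate point is to check that at each step the images of all seven generators are again the generators attached to the enlarged sets, so that the relation is transported intact.
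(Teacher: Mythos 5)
The paper does not prove this proposition itself; the text immediately following the statement defers entirely to \cite{DeClercq-2019}, so there is no in-paper argument to measure yours against. Your opening reduction is correct and worth keeping: with the blocks $A_1,\dots,A_4$ pairwise disjoint and ordered, the three configurations (\ref{most general sets 1})--(\ref{most general sets 3}) produce the same unordered triple $\{X,Y,Z\}$ up to cyclic relabelling, and the nine displayed identities collapse to the three identities of configuration (\ref{most general sets 1}).

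Beyond that, however, the proposal has a genuine gap, and it is exactly the one you flag yourself: the ``collapse''/transport step is asserted rather than proved, and it is the entire substance of the theorem. The maps $\tau^{A}_{k-1,k}$ in (\ref{extension process}) depend on $A$ through the membership pattern of $k-1$ and $k$, so the seven generators appearing in (\ref{q-anticommutation Gamma}) are built by \emph{different} composites of $\Delta$'s and $\tau$'s; the definitions hand you no single morphism $\ospq^{\otimes n}\to\ospq^{\otimes(n+1)}$ that simultaneously sends each $\Gamma_S^q$ to the generator of the enlarged set. Producing such a coherent family --- both when a block is enlarged (where the relevant position falls under the $\Delta$-case for some of the seven sets and under the $\tau$- or gap-cases for others) and when a gap is widened (a gap of length $g$ costs $g$ applications of maps in (\ref{extension process}), so even ``a gap collapses to one $\tau$'' needs an argument) --- is the hard content, and invoking the algebra-morphism property of $\Delta$ and $\tau$ together with the coideal identities does not yet deliver it. The base case is also heavier than you suggest: with all four blocks nonempty singletons, the sets $\{1,2,4\}$, $\{2,3\}$, $\{1,3,4\}$ and $\{1,4\}$ are non-consecutive, so the verification in $\ospq^{\otimes 4}$ already requires the explicit formulas (\ref{def:tau isomorphism}) and is not subsumed by the rank-one relation of \cite{Genest&Vinet&Zhedanov-2016}, which is only the sub-case $A_4=\emptyset$ with singleton blocks. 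You would also need to fix a convention for $\Gamma_\emptyset^q$ so that the identities remain meaningful when some $A_i=\emptyset$. In short, the skeleton is plausible and consistent with the strategy of \cite{DeClercq-2019}, but the two load-bearing steps --- the transport lemma and the four-block base computation --- are missing.
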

For the proof of these relations we refer to \cite{DeClercq-2019}.

The iteration of coproducts (\ref{iterating Delta}) can equally be applied to the \(\ospq\)-generators, one thus obtains from (\ref{Coproduct})
\begin{align}
\label{iteration A_pm, K, P}
\begin{split}
\left(A_{\pm}\right)_{[1;n]} & = \Delta^{(n)}\left(A_{\pm}\right) = \sum_{\ell = 1}^{n} \left(K^{-1}\right)^{\otimes (\ell-1)}\otimes A_\pm\otimes (KP)^{\otimes (n-\ell)}, \\  \left(K^{\pm 1}\right)_{[1;n]} & = \Delta^{(n)}\left(K^{\pm 1}\right) = \left(K^{\pm 1}\right)^{\otimes n}, \qquad P_{[1;n]} = \Delta^{(n)}(P) = P^{\otimes n}
\end{split}
\end{align}
and hence by (\ref{Gamma^q}) we have
\begin{equation}
\label{Gamma_[1;n]^q}
\Gamma_{[1;n]}^q = \left(-\left(A_+\right)_{[1;n]}\left(A_-\right)_{[1;n]}+\frac{q^{-1/2}}{q-q^{-1}}K_{[1;n]}^2 - \frac{q^{1/2}}{q-q^{-1}}\left(K^{-1}\right)_{[1;n]}^2\right)P_{[1;n]}.
\end{equation}

Some other useful properties of the algebra generators, proven in \cite{DeBie&DeClercq&vandeVijver-2018}, are the following.

\begin{corollary}
	\label{cor Generating set}
	The set of operators \(\Gamma_{[i;j]}^q\), with \(1\leq i\leq j\leq n\), is a generating set for \(\mathcal{A}_n^q\).
\end{corollary}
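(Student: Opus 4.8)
The plan is to show, by induction on the number of connected components of the index set, that every generator \(\Gamma_A^q\) with \(A\subseteq[1;n]\) already lies in the subalgebra \(\mathcal{B}\) of \(\mathcal{A}_n^q\) generated by the intervals \(\Gamma_{[i;j]}^q\); since the reverse inclusion \(\mathcal{B}\subseteq\mathcal{A}_n^q\) is trivial, this yields the corollary. For a nonempty \(A\) write \(c(A)\) for the number of its maximal intervals (connected components), so \(c(A)=1\) exactly when \(A\) is an interval. As \(\mathcal{B}\) is a subalgebra it is automatically closed under scalar multiples, products and the \(q\)-anticommutator \(\{\cdot,\cdot\}_q\); hence it suffices to express each \(\Gamma_A^q\) through generators indexed by sets of strictly smaller component number, which then belong to \(\mathcal{B}\) by the inductive hypothesis.

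The base case \(c(A)=1\) is immediate, since then \(A\) is an interval and \(\Gamma_A^q\) is a generator by definition. For the inductive step let \(A\) have components \(I_1,\dots,I_k\) with \(k\geq 2\), listed from left to right, and let \(G\) be the nonempty interval of integers strictly between \(\max(I_1)\) and \(\min(I_2)\). I would apply the first identity of Proposition \ref{prop q-anticommutation Gamma} to the ordered blocks
\[
A_1=I_1,\qquad A_2=G,\qquad A_3=I_2,\qquad A_4=I_3\cup\dots\cup I_k,
\]
with the convention \(A_4=\emptyset\) when \(k=2\). These satisfy the ordering hypothesis \(\max(A_i)<\min(A_{i+1})\) of the proposition, and under the configuration (\ref{most general sets 1}) the symmetric-difference set equals our target, \((A_1\cup A_2\cup A_4)\cup(A_2\cup A_3)\) minus \(A_2\) being \(A_1\cup A_3\cup A_4=A\). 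The first relation of (\ref{q-anticommutation Gamma}) then gives
\[
\Gamma_A^q=\{\Gamma_{A_1\cup A_2\cup A_4}^q,\Gamma_{A_2\cup A_3}^q\}_q-(q^{1/2}+q^{-1/2})\left(\Gamma_{A_2}^q\Gamma_{A_1\cup A_2\cup A_3\cup A_4}^q+\Gamma_{A_1\cup A_4}^q\Gamma_{A_3}^q\right).
\]

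The heart of the argument is to verify that every index set on the right has at most \(k-1\) components. Filling the first gap merges \(I_1\) and \(I_2\): indeed \(I_1\cup G=[\min(I_1);\min(I_2)-1]\) and \(I_1\cup G\cup I_2=[\min(I_1);\max(I_2)]\) are single intervals, so both \(A_1\cup A_2\cup A_4\) and \(A_1\cup A_2\cup A_3\cup A_4\) have \(1+(k-2)=k-1\) components, while \(A_2\cup A_3=G\cup I_2\), \(A_2=G\) and \(A_3=I_2\) are intervals. The one remaining term is \(A_1\cup A_4=I_1\cup I_3\cup\dots\cup I_k\), which omits \(I_2\) and hence also has \(k-1\) components. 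Thus \(\Gamma_A^q\) is realized as a polynomial in generators indexed by sets of component number at most \(k-1\), each of which lies in \(\mathcal{B}\) by induction, whence \(\Gamma_A^q\in\mathcal{B}\).

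I expect the only real obstacle to be the choice of decomposition in the inductive step. A naive splitting, for instance \(A_1=I_1,\,A_2=G,\,A_3=\emptyset,\,A_4=I_2\cup\dots\cup I_k\), would make the difference set \(A_1\cup A_4=I_1\cup I_2\cup\dots\cup I_k\) equal to \(A\) itself, rendering the reduction circular. The decomposition above is engineered precisely so that the two components \(I_1,I_2\) straddling the first gap are separated between the blocks \(A_1\) and \(A_3\); this guarantees that all six auxiliary sets genuinely lose a component. Once this bookkeeping is in place the corollary follows, and one checks in passing that no generator \(\Gamma_\emptyset^q\) is ever needed, since the sets \(A_2\), \(A_3\) and \(A_1\cup A_4\) appearing in the products all remain nonempty.
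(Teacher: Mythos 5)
Your argument is correct. The paper itself gives no proof of this corollary (it is quoted from \cite{DeBie&DeClercq&vandeVijver-2018}), but your induction on the number of connected components, taking $A_2$ to be the gap between the first two components so that the symmetric difference $C=A_1\cup A_3\cup A_4$ recovers the target set while every other index set occurring in the relation loses a component, is exactly the intended mechanism, and your checks (the ordering hypothesis of Proposition \ref{prop q-anticommutation Gamma}, the component counts, and the avoidance of $\Gamma_{\emptyset}^q$) all go through.
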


\begin{proposition}
	\label{prop A contained in B}
	For \(A,B\subseteq [1;n]\) sets of consecutive integers such that \(A\subseteq B\) or \(A\cap B = \emptyset\), one has
	\[
	[\Gamma_A^q,\Gamma_B^q] = 0.
	\]
\end{proposition}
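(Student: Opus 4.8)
The plan is to treat the two hypotheses separately, as they reflect different configurations of the tensor-factor supports of \(\Gamma_A^q\) and \(\Gamma_B^q\). Write \(A = [i;j]\) and \(B = [k;l]\) and recall that for consecutive sets \(\Gamma_{[i;j]}^q = 1^{\otimes(i-1)} \otimes \Delta^{(j-i+1)}(\Gamma^q) \otimes 1^{\otimes(n-j)}\), so that each generator acts as the identity on all slots outside its defining interval.

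In the disjoint case \(A \cap B = \emptyset\) I may assume \(j < k\) after possibly interchanging \(A\) and \(B\). Expanding each factor \(\Delta^{(\cdot)}(\Gamma^q)\) as a sum of simple tensors and invoking the standard product law (\ref{Standard product law}), one checks that in both \(\Gamma_A^q \Gamma_B^q\) and \(\Gamma_B^q \Gamma_A^q\) every slot receives a nontrivial entry from at most one of the two operators, the other contributing a factor \(1\). The two orders therefore agree slot by slot and the commutator vanishes. I expect this case to be routine.

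The substantive case is \(A \subseteq B\), say \(k \le i \le j \le l\). Both operators are then supported on the common block of slots \([k;l]\), so I would discard the shared leading factor \(1^{\otimes(k-1)}\) and trailing factor \(1^{\otimes(n-l)}\) and argue inside \(\ospq^{\otimes d}\) with \(d = l-k+1\). There \(\Gamma_B^q\) reads \(\Delta^{(d)}(\Gamma^q)\) while \(\Gamma_A^q\) reads \(1^{\otimes a} \otimes \Delta^{(e)}(\Gamma^q) \otimes 1^{\otimes c}\), with \(a = i-k\), \(e = j-i+1\), \(c = l-j\) and \(a+e+c = d\). The key step is a factorization coming from coassociativity of \(\Delta\): putting \(m = a+1+c\) and \(\Phi = \mathrm{id}^{\otimes a} \otimes \Delta^{(e)} \otimes \mathrm{id}^{\otimes c} \colon \ospq^{\otimes m} \to \ospq^{\otimes d}\), one has \(\Delta^{(d)} = \Phi \circ \Delta^{(m)}\). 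Since \(\Delta^{(e)}\) is an algebra morphism, so is \(\Phi\), and one obtains \(\Gamma_B^q = \Phi(\Delta^{(m)}(\Gamma^q))\) together with \(\Gamma_A^q = \Phi(1^{\otimes a} \otimes \Gamma^q \otimes 1^{\otimes c})\).

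The punchline is that \(1^{\otimes a} \otimes \Gamma^q \otimes 1^{\otimes c}\) is \emph{central} in \(\ospq^{\otimes m}\): because \(\Gamma^q\) commutes with every element of \(\ospq\) and the standard product law (\ref{Standard product law}) carries no Koszul signs (equivalently, \(\Gamma^q\) is even), placing \(\Gamma^q\) in a single slot and the identity elsewhere produces an element commuting with every simple tensor, hence with all of \(\ospq^{\otimes m}\). Applying the morphism \(\Phi\) then gives \([\Gamma_A^q, \Gamma_B^q] = \Phi([1^{\otimes a} \otimes \Gamma^q \otimes 1^{\otimes c}, \Delta^{(m)}(\Gamma^q)]) = \Phi(0) = 0\). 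The one point demanding care is the coassociativity identity \(\Delta^{(d)} = \Phi \circ \Delta^{(m)}\) with the correct slot alignment: the recursion (\ref{iterating Delta}) builds \(\Delta^{(d)}\) by always expanding the last factor, whereas here I must expand an interior block, so I would verify this decomposition by induction. Once it is established, centrality of the single-slot Casimir makes the conclusion immediate.
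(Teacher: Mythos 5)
Your proof is correct; note that the paper itself defers the proof of this proposition to \cite{DeBie&DeClercq&vandeVijver-2018}, and your argument is essentially the standard one used there. The disjoint case follows from the ungraded product law (\ref{Standard product law}), and the nested case from coassociativity of the coproduct (\ref{Coproduct}) --- which holds, so the identity \(\Delta^{(d)}=\Phi\circ\Delta^{(m)}\) you flag is a routine induction --- combined with the centrality of the single-slot Casimir \(1^{\otimes a}\otimes\Gamma^q\otimes 1^{\otimes c}\).
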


This allows us to state the following definition.
\begin{definition}
	\label{definition Y_i,r}
	For \(i\in[1;n-1], r\in[1;n-i]\), we denote by \(\mathcal{Y}_{i,r}\) the subalgebra of \(\mathcal{A}_n^q\) with generators 
	\begin{equation}
	\label{subalgebra Y_i,r}
	\mathcal{Y}_{i,r} = \langle \Gamma_{[1;2]}^q,\dots,\Gamma_{[1;i]}^q,\Gamma_{[i+1;i+2]}^q,\dots,\Gamma_{[i+1;i+r]}^q,\Gamma_{[1;i+r]}^q,\dots,\Gamma_{[1;n]}^q\rangle.
	\end{equation}
	It is abelian by Proposition \ref{prop A contained in B}.
\end{definition}
\begin{remark}
	\label{Remark Y_i,r}
	In case \(i = 1\), the first sequence \(\Gamma_{[1;2]}^q,\dots,\Gamma_{[1;i]}^q\) of generators is of course empty. Hence our notation \(\mathcal{Y}_{1,r}\) will refer to the algebra
	\[
	\mathcal{Y}_{1,r} = \langle \Gamma_{[2;3]}^q,\dots,\Gamma_{[2;r+1]}^q,\Gamma_{[1;r+1]}^q,\dots,\Gamma_{[1;n]}^q\rangle.
	\]
	For \(r = 1\), the middle sequence \(\Gamma_{[i+1;i+2]}^q,\dots,\Gamma_{[i+1;i+r]}^q\) will be empty, so by \(\mathcal{Y}_{i,1}\) we mean
	\[
	\mathcal{Y}_{i,1} = \langle \Gamma_{[1;2]}^q,\dots,\Gamma_{[1;i]}^q,\Gamma_{[1;i+1]}^q,\dots,\Gamma_{[1;n]}^q\rangle = \langle \Gamma_{[1;k]}^q: k\in[2;n]\rangle,
	\]
	which yields the same algebra for every \(i\), i.e.
	\[
	\mathcal{Y}_{i,1} = \mathcal{Y}_{1,1}, \qquad \forall i \in [2;n-1].
	\]
\end{remark}
\begin{remark}
	It is clear that no generators can be added to the algebras \(\mathcal{Y}_{i,r}\) without losing the property of commutativity. The number of non-central generators of these maximal abelian subalgebras, \(n-2\) in this case, stands as the rank of the algebra \(\mathcal{A}_n^q\).
\end{remark}

The generators of these subalgebras mutually commute and they will also turn out to be simultaneously diagonalizable in Section \ref{Paragraph - Unitary irreducible modules}. Our main purpose in Section \ref{Paragraph - Connection coefficients} will be the computation of the overlap coefficients between the eigenbases of two such subalgebras \(\mathcal{Y}_{i,r}\).

\subsection{The $\mathbb{Z}_2^n$ $q$-Dirac--Dunkl model}
\label{Paragraph q-Dirac-Dunkl model}

An explicit realization of the higher rank \(q\)-Bannai--Ito algebra was proposed in \cite{DeBie&DeClercq&vandeVijver-2018}. We recall here its basic features as a motivation for the forthcoming Definition \ref{definition bases final}. Let \(x_1,\dots,x_n\) be arbitrary real variables and \(\gamma_1,\dots,\gamma_n>\frac12\) be real parameters. Let \(r_i\) be the reflection with respect to the hyperplane \(x_i = 0\), i.e. \(r_if(x_1,\dots,x_n) = f(x_1,\dots,-x_i,\dots,x_n)\), and let \(T_{q,i}\) be the \(q\)-shift operator \(T_{q,i}f(x_1,\dots,x_n) = f(x_1,\dots,qx_i,\dots,x_n)\). Introduce the \(\mathbb{Z}_2^n\) \(q\)-Dunkl operator as
\[
D_i^q = \frac{q^{\gamma_i-\frac12}}{q-q^{-1}}\left(\frac{T_{q,i}-r_i}{x_i} \right)-\frac{q^{-(\gamma_i-\frac12)}}{q-q^{-1}}\left(\frac{T_{q,i}^{-1}-r_i}{x_i}\right).
\]
A realization of \(\ospq\), acting on functions of the variable \(x_i\), can be obtained by taking \cite{Genest&Vinet&Zhedanov-2016}
\begin{equation}
\label{ospq-correspondence}
A_+ \rightarrow x_i, \quad A_- \rightarrow D_i^q, \quad K \rightarrow q^{\gamma_i/2}T_{q,i}^{1/2}, \quad P\rightarrow r_i.
\end{equation}
Explicit expressions were obtained in \cite{DeBie&DeClercq&vandeVijver-2018} for the operators \(\Gamma_{[i+1;i+j]}^q\) inside this realization. The model was completed by the definition of the operators
\[
D_{[n]}^q = \sum_{i=1}^n D_i^qR_{[n],i}^q, \qquad X_{[n]}^q = \sum_{i=1}^n x_iR_{[n],i}^q, \qquad T_{q,[n]} = \prod_{i = 1}^nT_{q,i},
\]
where
\[
R_{[n],i}^q = \left(\prod_{j = 1}^{i-1} q^{-\frac{\gamma_j}{2}}\left(T_{q,j}\right)^{-1/2}\right)\left(\prod_{j = i+1}^{n}q^{\frac{\gamma_j}{2}}\left(T_{q,j}\right)^{1/2}r_j\right),
\]
and the Cauchy--Kowalewska extension
\[
\mathbf{CK}_{x_j}^{\gamma_j} = \sum_{m = 0}^{\infty} \frac{(-1)^{\frac{m(m+1)}{2}}q^{\frac{m}{2}(\gamma_{[1;j]}-1)}}{\sigma_{m}^{(\gamma_j)}\sigma_{m-1}^{(\gamma_j)}\dots\sigma_{1}^{(\gamma_j)}}x_j^{m}\left(D_{[j-1]}^q\right)^{m}\left(T_{q,[j-1]}\right)^{\frac{m}{2}},
\]
where by \(\sigma_m^{(\gamma)}\) we denote the number
\begin{equation}
\label{def sigma_m^gamma}
\sigma_m^{(\gamma)} = \left[m+\gamma-\frac12\right]_q-(-1)^m\left[\gamma-\frac12\right]_q
\end{equation}
and where we write \(\gamma_A = \sum_{i\in A}\gamma_i \) for any \(A\subseteq [1;n]\).
A construction was given for the polynomial null-solutions of \(D_{[n]}^q\) that diagonalize the subalgebra \(\mathcal{Y}_{i,1}\). For any vector \(\mathbf{j}\in \N^{n-1}\) we will denote by \(\vert\mathbf{j}_m\vert\) the sum \(j_1+\dots+j_{m}\) and by \(\vert\bj\vert\) the full sum \(j_1+\dots+j_{n-1}\). Then the functions
\begin{align}
	\label{psi_j def}
	\begin{split}
	& \psi_{\mathbf{j}}(x) = \psi_{(j_1,\dots,j_{n-1})}(x_1,\dots,x_n) \\ =& \  \mathbf{CK}_{x_{n}}^{\gamma_{n}}\left[\left(X_{[n-1]}^q\right)^{j_{n-1}}\mathbf{CK}_{x_{n-1}}^{\gamma_{n-1}}\left[\left(X_{[n-2]}^q\right)^{j_{n-2}}\dots \left[\left(X_{[2]}^q\right)^{j_{2}}\mathbf{CK}_{x_2}^{\gamma_2}(x_1^{j_1})\right]\dots\right]\right]
	\end{split}
\end{align}
are homogeneous polynomials in \(x_1,\dots,x_n\) of degree \(\vjj{n-1}\) and were shown to satisfy the eigenvalue equations
\[
\Gamma_{[1;\ell]}^q\psi_{\mathbf{j}}(x) = (-1)^{\vert \mathbf{j}_{\ell-1}\vert} \left[\vert \mathbf{j}_{\ell-1}\vert + \gamma_{[1;\ell]} - \frac12 \right]_q\psi_{\mathbf{j}}(x),
\]
for all \(\ell\in\{2,\dots,n\}\) and
\[
D_{[n]}^q\psi_{\mathbf{j}}(x) = 0,
\]
and moreover turned out to be orthogonal with respect to the \(q\)-Dunkl-Fischer inner product
\begin{equation}
	\label{q-Dunkl-Fischer inner product}
	\langle \psi(x), \varphi(x)\rangle = \big(\overline{\psi(D_1^q,\dots,D_n^q)}\varphi(x_1,\dots,x_n)\big)\big\vert_{\mathbf{x} = 0}.
\end{equation}
Referring to the notation of the forthcoming Definition \ref{definition bases final}, they will coincide with the abstract vectors \(\vert j^{(1,1)}_{n-1},\dots,j^{(1,1)}_1;0\rangle \), up to normalization. Similarly, the basis vectors \(\vert j^{(1,n-1)}_{n-1},\dots,j^{(1,n-1)}_1; 0\rangle\) diagonalizing the subalgebra
\[
\mathcal{Y}_{1,n-1} = \langle \Gamma_{[2;3]}^q,\Gamma_{[2;4]}^q,\dots,\Gamma_{[2;n]}^q,\Gamma_{[1;n]}^q\rangle
\]
are realized inside the \(q\)-Dirac--Dunkl model by the functions
\begin{align*}
	\varphi_{\mathbf{j}}(x_1,\dots,x_n) = \pi\left(\psi_{\mathbf{j}}(x_1,\dots,x_n)\right),
\end{align*}
where \(\pi\) is the permutation \(1\mapsto2, 2\mapsto 3,\dots, n-1\mapsto n, n\mapsto 1\), applied simultaneously to the index \(i\) of the variables \(x_i\), the \(q\)-Dunkl operators \(D_i^q\), the reflections \(r_i\), the \(q\)-shift operators \(T_{q,i}\) and the parameters \(\gamma_i\). The overlap coefficients
\[
\langle \varphi_{\mathbf{j}}(x), \psi_{\mathbf{s}}(x)\rangle
\]
with respect to the inner product (\ref{q-Dunkl-Fischer inner product}) will hence follow as special cases from the results in Section \ref{Paragraph - Connection coefficients}.

\subsection{Unitary irreducible modules and the spectral problem}
\label{Paragraph - Unitary irreducible modules}
Let us recall some notation from \cite{Genest&Vinet&Zhedanov-2016}. Let \(\gamma>\frac{1}{2}\) be a real number and \(m\in\N\). Denote by \(\vert m\rangle_{\gamma}\) the orthonormal vectors satisfying
\[
\,_{\gamma}\langle m\vert m'\rangle_{\gamma} = \delta_{m,m'},
\]
endowed with the following \(\ospq\)-action
\begin{equation}
\label{ospq-action}
\begin{alignedat}{3}
A_+\vert m\rangle_{\gamma}& = \sqrt{\sigma_{m+1}^{(\gamma)}}\vert m+1\rangle_{\gamma}, \quad & A_-\vert m\rangle_{\gamma} &= \sqrt{\sigma_m^{(\gamma)}}\vert m-1\rangle_{\gamma}, \\
K\vert m\rangle_{\gamma}& = q^{\frac12(m+\gamma)}\vert m\rangle_{\gamma},\quad & P\vert m\rangle_{\gamma}&  = (-1)^m\vert m\rangle_{\gamma},
\end{alignedat}
\end{equation} 
with \(\sigma_m^{(\gamma)}\) as in (\ref{def sigma_m^gamma}).
As follows from (\ref{Gamma^q}), the Casimir element \(\Gamma^q\) acts on these vectors as a multiple of the identity:
\[
\Gamma^q\vert m\rangle_{\gamma} = \left[\gamma-\frac12\right]_q\vert m\rangle_{\gamma}.
\]
Let \(W^{(\gamma)}\) be the infinite-dimensional vector space spanned by the vectors \(\vert m\rangle_{\gamma}\), \(m\in \N\). It follows immediately from (\ref{ospq-action}) that
\begin{equation}
\label{hermitian conjugate generators}
A_\pm^{\dagger} = A_\mp, \quad K^{\dagger}= K, \quad P^{\dagger} = P,
\end{equation}
hence also
\begin{equation}
\label{hermitian conjugate Gamma}
{\Gamma^q}^{\dagger} = \Gamma^q.
\end{equation}
Then by the preceding observations \(W^{(\gamma)}\) is a unitary \(\ospq\)-module, and moreover it is irreducible if one requires \(\vert q\vert \neq 1\), since then one has \(\sigma_m^{(\gamma)}\neq 0\) for all \(m\in\N\setminus\{0\}\). 

\begin{remark}
Note that the explicit \(\ospq\)-realization (\ref{ospq-correspondence}) agrees with (\ref{ospq-action}) under the identification
\[
\vert m\rangle_{\gamma} \to \frac{x^m}{\sqrt{\sigma_1^{(\gamma)}\sigma_2^{(\gamma)}\dots\sigma_m^{(\gamma)}}}.
\]
\end{remark}

Now fix natural numbers \(i, r, n\in \N\setminus\{0\}\) such that \(n \geq i+r\). Like before we will let \(\gamma_1, \dots, \gamma_n>\frac12\) be a set of real numbers. In what follows, we will consider the action of \(\mathcal{A}_n^q\) on the coupled module \(W^{(\gamma_1)}\otimes\dots\otimes W^{(\gamma_n)}\). Note that for any vector \(\vert\mathbf{v}\rangle \) in \(W^{(\gamma_1)}\otimes\dots\otimes W^{(\gamma_n)}\) one has
\begin{equation}
\label{Remark singletons}
\Gamma_{\{i\}}^q\vert\mathbf{v}\rangle = \left[\gamma_i-\frac12\right]_q\vert\mathbf{v}\rangle.
\end{equation}

It will also turn out useful to make the following observation.

\begin{lemma}
	\label{lemma self-adjoint}
	Each of the operators \(\Gamma_{[k;\ell]}^q\), \(1\leq k\leq \ell\leq n\), as well as the operator \(K_{[1;n]}\), are self-adjoint.
\end{lemma}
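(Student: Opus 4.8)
The plan is to reduce the self-adjointness of the operators $\Gamma_{[k;\ell]}^q$ and $K_{[1;n]}$ to the Hermitian conjugation rules (\ref{hermitian conjugate generators}) for the single-site generators of $\ospq$, which follow immediately from the explicit module action (\ref{ospq-action}) on each $W^{(\gamma)}$. The key structural fact is that the inner product on the tensor product $W^{(\gamma_1)}\otimes\dots\otimes W^{(\gamma_n)}$ is the natural product inner product, so that Hermitian conjugation acts on an operator $a_1\otimes\dots\otimes a_n$ simply by conjugating each tensor factor: $(a_1\otimes\dots\otimes a_n)^\dagger = a_1^\dagger\otimes\dots\otimes a_n^\dagger$, and it reverses order in products.

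First I would dispose of $K_{[1;n]}$. By (\ref{iteration A_pm, K, P}) one has $K_{[1;n]} = K^{\otimes n}$, and since $K^\dagger = K$ by (\ref{hermitian conjugate generators}), the product rule for $\dagger$ gives $K_{[1;n]}^\dagger = (K^\dagger)^{\otimes n} = K^{\otimes n} = K_{[1;n]}$ at once. For the operators $\Gamma_{[k;\ell]}^q$, the approach is to recall from (\ref{Gamma_[1;n]^q}) (applied to the range $[k;\ell]$ rather than $[1;n]$) that $\Gamma_{[k;\ell]}^q$ is built from the iterated coproducts $(A_\pm)_{[k;\ell]}$, $(K^{\pm1})_{[k;\ell]}$ and $P_{[k;\ell]}$ via the Casimir expression (\ref{Gamma^q}). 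I would therefore establish the auxiliary conjugation relations
\[
\big((A_\pm)_{[k;\ell]}\big)^\dagger = (A_\mp)_{[k;\ell]}, \qquad \big((K^{\pm1})_{[k;\ell]}\big)^\dagger = (K^{\pm1})_{[k;\ell]}, \qquad \big(P_{[k;\ell]}\big)^\dagger = P_{[k;\ell]}.
\]
The latter two are immediate from the tensor-product structure of (\ref{iteration A_pm, K, P}) together with $K^\dagger = K$ and $P^\dagger = P$. For $(A_\pm)_{[k;\ell]}$ one uses the explicit sum from (\ref{iteration A_pm, K, P}): each summand is a tensor product of factors $K^{-1}$, a single $A_\pm$, and factors $KP$; applying $\dagger$ factorwise turns $A_\pm$ into $A_\mp$ and leaves $(K^{-1})^\dagger = K^{-1}$ and $(KP)^\dagger = P^\dagger K^\dagger = PK = KP$ (using $[P,K]=0$) unchanged, so the whole sum maps to $(A_\mp)_{[k;\ell]}$ termwise.

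With these in hand, the final step is to conjugate the Casimir expression for $\Gamma_{[k;\ell]}^q$. Writing it schematically as
\[
\Gamma_{[k;\ell]}^q = \left(-(A_+)_{[k;\ell]}(A_-)_{[k;\ell]} + \tfrac{q^{-1/2}}{q-q^{-1}}(K)_{[k;\ell]}^2 - \tfrac{q^{1/2}}{q-q^{-1}}(K^{-1})_{[k;\ell]}^2\right)P_{[k;\ell]},
\]
I would apply $\dagger$, using that it reverses products and that the scalar coefficients are real (as $q$ will be taken real in the relevant setting, cf.\ (\ref{hermitian conjugate Gamma})). The term $-(A_+)_{[k;\ell]}(A_-)_{[k;\ell]}$ conjugates to $-(A_-)_{[k;\ell]}^\dagger(A_+)_{[k;\ell]}^\dagger = -(A_+)_{[k;\ell]}(A_-)_{[k;\ell]}$, which is invariant; the $K^{\pm2}$ terms are manifestly self-adjoint; and the overall factor $P_{[k;\ell]}$, which commutes past the $K$-terms and whose interaction with $(A_+)_{[k;\ell]}(A_-)_{[k;\ell]}$ must be checked, gets reproduced since $\dagger$ applied to $M P$ with $M$ self-adjoint gives $P M$, which equals $M P$ whenever $M$ commutes with $P$. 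The main obstacle I anticipate is precisely this last point: one must verify that the differential part $(A_+)_{[k;\ell]}(A_-)_{[k;\ell]}$ commutes with $P_{[k;\ell]}$ so that the conjugate of the product $MP_{[k;\ell]}$ returns $\Gamma_{[k;\ell]}^q$ rather than a reordered expression. This commutativity follows because $(A_\pm)_{[k;\ell]}$ are odd elements whose product is even, and $P_{[k;\ell]}$ implements the grade involution, so an even element commutes with $P_{[k;\ell]}$; spelling out this parity argument at the level of the tensor factors is the one genuinely non-trivial verification, after which self-adjointness of $\Gamma_{[k;\ell]}^q$ follows.
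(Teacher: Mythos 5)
Your proposal is correct and follows essentially the same route as the paper: reduce everything to the single-site conjugation rules (\ref{hermitian conjugate generators}), show that the iterated coproducts $(A_\pm)_{[k;\ell]}$, $K_{[k;\ell]}^{\pm1}$, $P_{[k;\ell]}$ conjugate termwise to the expected operators, and then conjugate the Casimir expression. The only (cosmetic) difference is in the last step: where you verify directly that $(A_+)_{[k;\ell]}(A_-)_{[k;\ell]}$ commutes with $P_{[k;\ell]}$ so that the conjugated product reassembles, the paper instead uses that $\Delta^{(d)}$ is a $\dagger$-compatible algebra morphism together with the already-established identity ${\Gamma^q}^{\dagger}=\Gamma^q$ (\ref{hermitian conjugate Gamma}), which pushes that parity check down to the single-factor level.
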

\begin{proof}
	Using the definition of the coproduct (\ref{Coproduct}) and the hermitian conjugate (\ref{hermitian conjugate generators}), one can easily show by induction that
	\[
	\Delta^{(d)}(A_{\pm})^{\dagger} = \Delta^{(d)}(A_\pm^{\dagger}), \quad \Delta^{(d)}(K)^{\dagger} = \Delta^{(d)}(K^{\dagger}), \quad \Delta^{(d)}(P)^{\dagger} = \Delta^{(d)}(P^{\dagger}),
	\]
	for any \(d\in\N\), referring to the notation (\ref{iterating Delta}). By the multiplicativity of the coproduct and (\ref{Gamma^q}) one hence also finds
	\[
	\Delta^{(d)}(\Gamma^q)^{\dagger} = \Delta^{(d)}(\Gamma^q),
	\]
	which concludes the proof.
\end{proof}

Let us write \((A_{\pm})_{[1;i]}\), \(K_{[1;i]}\) and \(P_{[1;i]}\) for the operators obtained from (\ref{iteration A_pm, K, P}) by replacing \(n\) by \(i\). Note that these have a natural action on the coupled module \(W^{(\gamma_1)}\otimes\dots\otimes W^{(\gamma_i)}\). Similarly, write \((A_{\pm})_{[i+1;i+r]}\), \(K_{[i+1;i+r]}\) and \(P_{[i+1;i+r]}\) for the corresponding operators acting on \(W^{(\gamma_{i+1})}\otimes\dots\otimes W^{(\gamma_{i+r})}\), and \((A_{\pm})_{[1;i+r]}\), \(K_{[1;i+r]}\) and \(P_{[1;i+r]}\) for their counterparts acting on \(W^{(\gamma_{1})}\otimes\dots\otimes W^{(\gamma_{i+r})}\).

Our main purpose in this section will be to derive several explicit bases for the coupled module \(W^{(\gamma_{1})}\otimes\dots\otimes W^{(\gamma_{n})}\). To that end, we state the following definition.

\begin{definition}
	\label{definition psi bases}
	Let \(\mathbf{h}=(h_1,\dots,h_{i-1})\in\N^{i-1}\) and \(p\in \N\). The vectors \(\psi^{(i)}(\bh,p)\in W^{(\gamma_1)}\otimes \dots\otimes W^{(\gamma_i)}\) are recursively defined as 
	\[
	\psi^{(i)}(h_1,\dots,h_{i-1},p) = (A_+)_{[1;i]}^p\left(\sum_{k = 0}^{h_{i-1}}c_{k,\mathbf{h}}^{(i)}\psi^{(i-1)}(h_1,\dots,h_{i-2},k)\otimes\vert h_{i-1}-k\rangle_{\gamma_i}\right)
	\]
	with coefficients
	\[
	c_{k,\mathbf{h}}^{(i)} = (-1)^{k(h_{i-1}+\frac{k-1}{2})}q^{-\frac{k}{2}(\vh{i-1}+\gamma_{[1;i]}-1)}\prod_{\ell = 0}^{k-1}\frac{\sqrt{\sigma_{h_{i-1}-\ell}^{(\gamma_i)}}}{\sigma_{\ell+1}^{(\vh{i-2}+\gamma_{[1;i-1]})}}
	\]
	and initial conditions
	\begin{equation}
	\label{psi i initial conditions}
	\psi^{(1)}(p) = \left(\prod_{\ell = 1}^p\sqrt{\sigma_{\ell}^{(\gamma_1)}} \right)\vert m\rangle_{\gamma_1}.
	\end{equation}
\end{definition}

The vectors \(\psi^{(i)}(\bh,p)\) are easily shown to be eigenvectors for certain operators in \(\ospq^{\otimes i}\).

\begin{lemma}
	\label{lemma psi}
	The vectors \(\psi^{(i)}(\bh,p)\) satisfy the equations
	\begin{align}
	\label{eigenvalue Gamma psi}
	\Gamma_{[1;m]}^q\psi^{(i)}(h_1,\dots,h_{i-1},p) & = (-1)^{\vh{m-1}}\left[\vh{m-1}+\gamma_{[1;m]}-\frac12\right]_q \psi^{(i)}(h_1,\dots,h_{i-1},p),
	\end{align}
	for any \(m\in \{2,\dots,i\}\),
	\begin{align}
	\label{eigenvalue K psi}
	K_{[1;i]}\psi^{(i)}(h_1,\dots,h_{i-1},p) & = q^{\frac12(\vert\bh\vert+p+\gamma_{[1;i]})}\psi^{(i)}(h_1,\dots,h_{i-1},p), \\
	\label{eigenvalue P psi}
	P_{[1;i]}\psi^{(i)}(h_1,\dots,h_{i-1},p) & = (-1)^{\vert\bh\vert+p}\psi^{(i)}(h_1,\dots,h_{i-1},p),
	\end{align}
	and
	\begin{equation}
	\label{eigenvalue A_- psi}
	\left(A_-\right)_{[1;i]}\psi^{(i)}(h_1,\dots,h_{i-1},p) = \sigma_{p}^{(\vert\bh\vert+\gamma_{[1;i]})} \psi^{(i)}(h_1,\dots,h_{i-1},p-1),
	\end{equation}
	where we set \(\psi^{(i)}(h_1,\dots,h_{i-1},-1) = 0\).
\end{lemma}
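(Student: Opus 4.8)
The natural approach is induction on $i$, running all four identities simultaneously. The base case $i=1$ is a direct check: the $\Gamma_{[1;m]}^q$-statement (\ref{eigenvalue Gamma psi}) is vacuous, while (\ref{eigenvalue K psi})--(\ref{eigenvalue A_- psi}) follow from the single-mode action (\ref{ospq-action}) on $\psi^{(1)}(p)=\big(\prod_{\ell=1}^p\sqrt{\sigma_\ell^{(\gamma_1)}}\big)\vert p\rangle_{\gamma_1}$, the normalisation being chosen precisely so that $A_-\psi^{(1)}(p)=\sigma_p^{(\gamma_1)}\psi^{(1)}(p-1)$. For the inductive step I would first isolate the vector
\[
\Phi=\sum_{k=0}^{h_{i-1}}c_{k,\bh}^{(i)}\,\psi^{(i-1)}(h_1,\dots,h_{i-2},k)\otimes\vert h_{i-1}-k\rangle_{\gamma_i},
\]
so that $\psi^{(i)}(\bh,p)=(A_+)_{[1;i]}^p\Phi$, prove the four identities first for $\Phi$ (the case $p=0$), and then propagate them to arbitrary $p$.

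The propagation rests on the fact that $(A_\pm)_{[1;i]}$, $K_{[1;i]}$ and $P_{[1;i]}$ are the images of $A_\pm,K,P$ under the algebra morphism $\Delta^{(i)}$ of (\ref{iterating Delta}), so they again obey the relations (\ref{def - ospq with K}). The decompositions
\[
K_{[1;i]}=K_{[1;i-1]}\otimes K,\quad P_{[1;i]}=P_{[1;i-1]}\otimes P,\quad (A_\pm)_{[1;i]}=(A_\pm)_{[1;i-1]}\otimes KP+(K^{-1})_{[1;i-1]}\otimes A_\pm,
\]
which follow from the coproduct (\ref{Coproduct}), are the main tool. With these, the $K$- and $P$-eigenvalues on each summand of $\Phi$ combine, by the induction hypothesis and (\ref{ospq-action}), to $q^{\frac12(\vert\bh\vert+\gamma_{[1;i]})}$ and $(-1)^{\vert\bh\vert}$, independently of $k$; likewise, for $m\le i-1$ the operator $\Gamma_{[1;m]}^q$ acts inside the first $i-1$ factors with the $k$-independent eigenvalue supplied by the induction hypothesis. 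Thus (\ref{eigenvalue K psi}), (\ref{eigenvalue P psi}) and (\ref{eigenvalue Gamma psi}) for $m\le i-1$ hold on $\Phi$.

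The heart of the argument is the identity $(A_-)_{[1;i]}\Phi=0$. Expanding $(A_-)_{[1;i]}\Phi$ with the decomposition above, using the induction hypothesis (\ref{eigenvalue A_- psi}) for $(A_-)_{[1;i-1]}$ together with $A_-\vert m\rangle_{\gamma_i}=\sqrt{\sigma_m^{(\gamma_i)}}\vert m-1\rangle_{\gamma_i}$ and $KP\vert m\rangle_{\gamma_i}=(-1)^m q^{\frac12(m+\gamma_i)}\vert m\rangle_{\gamma_i}$, and reindexing so that both resulting sums are expressed over the vectors $\psi^{(i-1)}(h_1,\dots,h_{i-2},j)\otimes\vert h_{i-1}-j-1\rangle_{\gamma_i}$, one finds that the vanishing is equivalent to a two-term recurrence relating $c_{j+1,\bh}^{(i)}$ and $c_{j,\bh}^{(i)}$. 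Verifying that the closed form in Definition \ref{definition psi bases} satisfies this recurrence---the ratio $c_{j+1,\bh}^{(i)}/c_{j,\bh}^{(i)}$ must produce exactly the sign $(-1)^{h_{i-1}+j}$, the power $q^{-\frac12(\vert\bh\vert+\gamma_{[1;i]}-1)}$ and the prescribed $\sigma$-quotient---is the one genuinely computational point, and I expect the sign bookkeeping in the exponent of $(-1)$ to be the main obstacle. Once $(A_-)_{[1;i]}\Phi=0$ is established, the remaining case $m=i$ of (\ref{eigenvalue Gamma psi}) follows by inserting the expression (\ref{Gamma_[1;n]^q}) for $\Gamma_{[1;i]}^q$: the $(A_+)_{[1;i]}(A_-)_{[1;i]}$-term annihilates $\Phi$, and the surviving $K_{[1;i]}^2$- and $(K^{-1})_{[1;i]}^2$-terms, evaluated on the eigenvalues just found, collapse via (\ref{def sigma_m^gamma}) to $(-1)^{\vert\bh\vert}[\,\vert\bh\vert+\gamma_{[1;i]}-\tfrac12\,]_q$.

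Finally I would push all four identities from $\Phi$ to $\psi^{(i)}(\bh,p)=(A_+)_{[1;i]}^p\Phi$. The relations $K_{[1;i]}(A_+)_{[1;i]}=q^{1/2}(A_+)_{[1;i]}K_{[1;i]}$ and $P_{[1;i]}(A_+)_{[1;i]}=-(A_+)_{[1;i]}P_{[1;i]}$ produce the extra factors $q^{p/2}$ and $(-1)^p$ in (\ref{eigenvalue K psi}) and (\ref{eigenvalue P psi}); since $\Gamma_{[1;m]}^q$ commutes with $(A_+)_{[1;i]}$ for every $m\le i$---for $m=i$ by centrality of the Casimir, and for $m<i$ by splitting $(A_+)_{[1;i]}=(A_+)_{[1;m]}\otimes(KP)^{\otimes(i-m)}+(K^{-1})^{\otimes m}\otimes(A_+)_{[m+1;i]}$ and using that $\Delta^{(m)}(\Gamma^q)$ is central on the first $m$ factors---the eigenvalue in (\ref{eigenvalue Gamma psi}) is unchanged. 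For (\ref{eigenvalue A_- psi}) I would run the abstract ladder computation in $\ospq$: from $(A_-)_{[1;i]}\Phi=0$ and $\{A_+,A_-\}=(K^2-K^{-2})/(q^{1/2}-q^{-1/2})$ one obtains by induction on $p$ that $(A_-)_{[1;i]}(A_+)_{[1;i]}^p\Phi=\sigma_p^{(\nu)}(A_+)_{[1;i]}^{p-1}\Phi$ with $\nu=\vert\bh\vert+\gamma_{[1;i]}$, the needed identity $\sigma_p^{(\nu)}+\sigma_{p-1}^{(\nu)}=(q^{\,p-1+\nu}-q^{-(p-1+\nu)})/(q^{1/2}-q^{-1/2})$ following at once from (\ref{def sigma_m^gamma}).
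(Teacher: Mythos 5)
Your proposal is correct and follows essentially the same route as the paper's proof: induction on $i$, the coproduct decompositions of $K_{[1;i]}$, $P_{[1;i]}$ and $(A_\pm)_{[1;i]}$, the reduction of $(A_-)_{[1;i]}\psi^{(i)}(\mathbf{h},0)=0$ to a two-term recurrence for the $c_{k,\mathbf{h}}^{(i)}$ (your stated ratio $(-1)^{h_{i-1}+j}q^{-\frac12(\vert\mathbf{h}\vert+\gamma_{[1;i]}-1)}$ times the $\sigma$-quotient is exactly what the closed form gives and what the vanishing requires), and the Casimir expression (\ref{Gamma_[1;n]^q}) for the $m=i$ case. The only differences are organizational — you establish everything at $p=0$ and propagate via the ladder relations and centrality of $\Delta^{(i)}(\Gamma^q)$, while the paper uses the closed-form identities (\ref{relations with powers}) directly at general $p$ — and these do not change the substance of the argument.
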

\begin{proof}
	We will prove the claims by induction on \(i\), where the case \(i = 1\) follows immediately from (\ref{ospq-action}) and (\ref{psi i initial conditions}). 
	
	It is immediate from (\ref{def - ospq with K}) that
	\begin{align}
	\label{relations with powers}
	\begin{split}
	A_-A_+^{k} & = A_+^{k-1}\left(\frac{q^{-1/2}(q^{k}-(-1)^{k})}{q-q^{-1}}K^2 - \frac{q^{1/2}(q^{-k}-(-1)^{k})}{q-q^{-1}}K^{-2}\right) + (-1)^{k} A_+^{k} A_-, \\
	KA_+^{k} & = q^{\frac{k}{2}} A_+^{k}K, \\
	PA_+^{k} & = (-1)^{k} A_+^{k}P,
	\end{split}
	\end{align}
	and of course the same identities hold after applying \(\Delta^{(i)}\) to both sides. The induction hypothesis hence leads straightforwardly to (\ref{eigenvalue K psi}) and (\ref{eigenvalue P psi}). Moreover, \(\Gamma_{[1;m]}^q\) commutes with \((A_+)_{[1;i]}\) for any \(m\in \{2,\dots,i-1\}\), as one sees by applying \(\Delta^{(m)}\otimes 1^{\otimes (i-m)}\) to the identity
	\[
	\left[\Gamma^q\otimes 1^{\otimes (i-m)},\left(A_+\right)_{[1;i-m+1]}\right] = 0.
	\]
	Hence the relations (\ref{eigenvalue Gamma psi}) for \(m<i\) follow immediately from the induction hypothesis. 
	
	Furthermore, (\ref{relations with powers}) asserts that
	\begin{align*}
	(A_-)_{[1;i]}\psi^{(i)}(\bh,p) =\ & (A_-)_{[1;i]}(A_+)_{[1;i]}^p\psi^{(i)}(\bh,0)\\ =\  & \sigma_p^{(\vert\bh\vert + \gamma_{[1;i]})}\psi^{(i)}(\bh,p-1) + (-1)^p(A_+)_{[1;i]}^{p}(A_-)_{[1;i]}\psi^{(i)}(\bh,0),
	\end{align*}
	where we have used (\ref{eigenvalue K psi}). It follows from (\ref{iteration A_pm, K, P}) that
	\[
	(A_-)_{[1;i]} = (A_-)_{[1;i-1]}\otimes KP + K_{[1;i-1]}^{-1}\otimes A_-,
	\]
	and so
	\begin{align*}
	(A_-)_{[1;i]}\psi^{(i)}(\bh,0)=\ & \sum_{k = 0}^{h_{i-1}}c_{k,\bh}^{(i)} (A_-)_{[1;i-1]}\psi^{(i-1)}(\bh',k)\otimes (KP)\vert h_{i-1}-k\rangle_{\gamma_i} \\ & + \sum_{k = 0}^{h_{i-1}}c_{k,\bh}^{(i)} K_{[1;i-1]}^{-1}\psi^{(i-1)}(\bh',k)\otimes (A_-)\vert h_{i-1}-k\rangle_{\gamma_i}.
	\end{align*}
	where \(\bh' = (h_1,\dots,h_{i-2})\). By (\ref{ospq-action}) and the induction hypothesis, this yields
	\[
	\sum_{k = 0}^{h_{i-1}-1}\chi_{k,\bh}^{(i)}\psi^{(i-1)}(\bh',k)\otimes\vert h_{i-1}-k-1\rangle_{\gamma_i},
	\]
	where
	\begin{align*}
	\chi_{k,\bh}^{(i)} =\ & (-1)^{h_{i-1}-k-1}q^{\frac12(h_{i-1}-k-1+\gamma_i)}\sigma_{k+1}^{(\vert\bh_{i-2}\vert+\gamma_{[1;i-1]})}c_{k+1,\bh}^{(i)}\\& + q^{-\frac12(\vert\bh_{i-2}\vert+k+\gamma_{[1;i-1]})}\sqrt{\sigma_{h_{i-1}-k}^{(\gamma_i)}}c_{k,\bh}^{(i)},
	\end{align*}
	which vanishes by definition of \(c_{k,\bh}^{(i)}\). This proves (\ref{eigenvalue A_- psi}).
	
	Finally, one easily verifies the equation (\ref{eigenvalue Gamma psi}) for \(m = i\) by (\ref{Gamma_[1;n]^q}) with \(i\) instead of \(n\), and the equations (\ref{eigenvalue K psi}), (\ref{eigenvalue P psi}) and (\ref{eigenvalue A_- psi}).
\end{proof}

Upon replacing \(i\) by \(r\), \((\gamma_1,\dots,\gamma_i)\) by \((\gamma_{i+1},\dots,\gamma_{i+r})\) and \((A_-)_{[1;i]}\) by \((A_-)_{[i+1;i+r]}\) in Definition \ref{definition psi bases}, one obtains the following analogous construction.

\begin{definition}
	\label{definition phi bases}
	Let \(\bk=(k_1,\dots,k_{r-1})\in \N^{r-1}\) and \(p'\in\N\). The vectors \(\varphi^{(r)}(\bk,p')\in W^{(\gamma_{i+1})}\otimes\dots\otimes W^{(\gamma_{i+r})}\) are recursively defined as
	\[
	\varphi^{(r)}(k_1,\dots,k_{r-1},p') = (A_+)_{[i+1;i+r]}^{p'}\left(\sum_{t = 0}^{k_{r-1}}d_{t,\mathbf{k}}^{(r)}\varphi^{(r-1)}(k_1,\dots,k_{r-2},t)\otimes\vert k_{r-1}-t\rangle_{\gamma_{i+r}}\right)
	\]
	with coefficients
	\[
	d_{t,\mathbf{k}}^{(r)} = (-1)^{t(k_{r-1}+\frac{t-1}{2})}q^{-\frac{t}{2}(\vert\mathbf{k}_{r-1}\vert+\gamma_{[i+1;i+r]}-1)}\prod_{\ell = 0}^{t-1}\frac{\sqrt{\sigma_{k_{r-1}-\ell}^{(\gamma_{i+r})}}}{\sigma_{\ell+1}^{(\vert\bk_{r-2}\vert+\gamma_{[i+1;i+r-1]})}}
	\]
	and initial conditions
	\[
	\varphi^{(1)}(p') = \left(\prod_{\ell = 1}^{p'}\sqrt{\sigma_{\ell}^{(\gamma_{i+1})}} \right)\vert m\rangle_{\gamma_{i+1}}.
	\]
\end{definition}

The vectors \(\varphi^{(r)}(\bk,p')\) satisfy suitable analogs of the equations (\ref{eigenvalue Gamma psi})--(\ref{eigenvalue A_- psi}). The proof goes along the same lines as Lemma \ref{lemma psi}.

\begin{lemma}
	\label{lemma phi}
	The vectors \(\varphi^{(r)}(\bk,p')\) satisfy the equations
	\begin{align}
	\label{eigenvalue Gamma phi}
	\begin{split}
	&\Gamma_{[i+1;i+m]}^q\varphi^{(r)}(k_1,\dots,k_{r-1},p') \\ =\ & (-1)^{\vk{m-1}}\left[\vk{m-1}+\gamma_{[i+1;i+m]}-\frac12\right]_q \varphi^{(r)}(k_1,\dots,k_{r-1},p'),
	\end{split}
	\end{align}
	for any \(m\in \{2,\dots,r\}\),
	\begin{align}
	\label{eigenvalue K phi}
	K_{[i+1;i+r]}\varphi^{(r)}(k_1,\dots,k_{r-1},p') & = q^{\frac12(\vert\bk\vert+p'+\gamma_{[i+1;i+r]})}\varphi^{(r)}(k_1,\dots,k_{r-1},p'), \\
	\label{eigenvalue P phi}
	P_{[i+1;i+r]}\varphi^{(r)}(k_1,\dots,k_{r-1},p') & = (-1)^{\vert\bk\vert+p'}\varphi^{(r)}(k_1,\dots,k_{r-1},p'),
	\end{align}
	and
	\begin{equation}
	\label{eigenvalue A_- phi}
	\left(A_-\right)_{[i+1;i+r]}\varphi^{(r)}(k_1,\dots,k_{r-1},p') = \sigma_{p'}^{(\vert\bk\vert + \gamma_{[i+1;i+r]})} \varphi^{(r)}(k_1,\dots,k_{r-1},p'-1),
	\end{equation}
	where we set \(\varphi^{(r)}(k_1,\dots,k_{r-1},-1)=0\).
\end{lemma}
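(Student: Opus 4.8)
The plan is to imitate the proof of Lemma \ref{lemma psi} almost verbatim, exploiting the fact that Definition \ref{definition phi bases} is obtained from Definition \ref{definition psi bases} by the substitution $i\mapsto r$, $(\gamma_1,\dots,\gamma_i)\mapsto(\gamma_{i+1},\dots,\gamma_{i+r})$ and a uniform shift of all tensor-factor labels from $[1;i]$ to $[i+1;i+r]$. Since the $\ospq$-action (\ref{ospq-action}), the iterated coproduct (\ref{iteration A_pm, K, P}) and the power relations (\ref{relations with powers}) only involve the relative positions of the tensor factors on which they act, every manipulation in Lemma \ref{lemma psi} transfers after this relabelling. I would therefore induct on $r$, with base case $r=1$ following immediately from (\ref{ospq-action}) and the initial condition in Definition \ref{definition phi bases}.

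For the inductive step I would first establish (\ref{eigenvalue K phi}) and (\ref{eigenvalue P phi}). Applying the iterated coproduct realizing the factors $[i+1;i+r]$ to the last two identities in (\ref{relations with powers}) lets me commute the prefactor $(A_+)_{[i+1;i+r]}^{p'}$ past $K_{[i+1;i+r]}$ and $P_{[i+1;i+r]}$, so these eigenvalue equations follow from the induction hypothesis together with the $K$- and $P$-weights of the summands $\varphi^{(r-1)}(\bk',t)\otimes\vert k_{r-1}-t\rangle_{\gamma_{i+r}}$, where $\bk'=(k_1,\dots,k_{r-2})$. For the equations (\ref{eigenvalue Gamma phi}) with $m<r$ I would use that $\Gamma_{[i+1;i+m]}^q$ commutes with $(A_+)_{[i+1;i+r]}$, obtained by applying the appropriate iterated coproduct to $[\Gamma^q\otimes 1^{\otimes(r-m)},(A_+)_{[i+1;i+r-m+1]}]=0$, so that the eigenvalue is already pinned down at level $r-1$.

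The one computation that genuinely has to be redone is the $(A_-)_{[i+1;i+r]}$-action (\ref{eigenvalue A_- phi}). Here I would use the first relation in (\ref{relations with powers}) to peel off $(A_+)_{[i+1;i+r]}^{p'}$, producing the advertised term $\sigma_{p'}^{(\vert\bk\vert+\gamma_{[i+1;i+r]})}\varphi^{(r)}(\bk,p'-1)$ plus a term proportional to $(A_-)_{[i+1;i+r]}\varphi^{(r)}(\bk,0)$. The coproduct splitting $(A_-)_{[i+1;i+r]}=(A_-)_{[i+1;i+r-1]}\otimes KP+K_{[i+1;i+r-1]}^{-1}\otimes A_-$ then collapses the latter into a single sum whose coefficient is the exact analog $\chi_{t,\bk}^{(r)}$ of the quantity $\chi_{k,\bh}^{(i)}$ appearing in Lemma \ref{lemma psi}. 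I expect this cancellation to be the only real obstacle: one must verify that $\chi_{t,\bk}^{(r)}=0$ by the definition of $d_{t,\bk}^{(r)}$, i.e.\ that the two-term recurrence forced on the coefficients by $\chi_{t,\bk}^{(r)}$ matches the explicit product formula for $d_{t,\bk}^{(r)}$, now with the shifted sums $\gamma_{[i+1;i+r]}$ in place of $\gamma_{[1;i]}$ and $\vert\bk\vert$ in place of $\vert\bh\vert$. Finally I would settle (\ref{eigenvalue Gamma phi}) for $m=r$ by substituting the freshly established relations (\ref{eigenvalue K phi})--(\ref{eigenvalue A_- phi}) into the expression (\ref{Gamma_[1;n]^q}) for the top Casimir, read off on the factors $[i+1;i+r]$, exactly as in the closing step of Lemma \ref{lemma psi}.
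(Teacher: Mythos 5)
Your proposal is correct and matches the paper exactly: the paper proves Lemma \ref{lemma phi} simply by noting that "the proof goes along the same lines as Lemma \ref{lemma psi}," i.e.\ induction on $r$ after the relabelling $i\mapsto r$, $(\gamma_1,\dots,\gamma_i)\mapsto(\gamma_{i+1},\dots,\gamma_{i+r})$, which is precisely your strategy. Your identification of the cancellation $\chi_{t,\bk}^{(r)}=0$ forced by the definition of $d_{t,\bk}^{(r)}$ as the only step requiring genuine verification is also the right reading of where the work lies.
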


The vectors \(\psi^{(i)}(\bh,p)\) and \(\varphi^{(r)}(\bk,p')\) can be coupled in the following fashion.

\begin{definition}
	Let \(\mathbf{h}=(h_1,\dots,h_{i-1})\in\N^{i-1}\), \(\bk=(k_1,\dots,k_{r-1})\in \N^{r-1}\) and \(s,t\in\N\). The vectors \(\Psi^{(i,r)}(\bh,\bk,s,t) = \Psi^{(i,r)}(h_1,\dots,h_{i-1},k_1,\dots,k_{r-1},s,t)\) are defined by
	\[
	\Psi^{(i,r)}(\bh,\bk,s,t) = (A_+)_{[1;i+r]}^t\sum_{p = 0}^s \epsilon_p \psi^{(i)}(h_1,\dots,h_{i-1},p)\otimes\varphi^{(r)}(k_1,\dots,k_{r-1},s-p),
	\]
	where
	\[
	\epsilon_p = (-1)^p\frac{\beta_{s-p+1}\beta_{s-p+2}\dots \beta_s}{\alpha_p\alpha_{p-1}\dots\alpha_1}
	\]
	with
	\begin{align*}
	\alpha_p =\ & (-1)^{\vert\bk\vert + s-p} q^{\frac12(\gamma_{[i+1;i+r]}+s-p+\vert\bk\vert)} \sigma_{p}^{(\vert\bh\vert + \gamma_{[1;i]})}, \\
	\beta_{p'}=\ & q^{-\frac12(\gamma_{[1;i]}+s-p'+\vert\bh\vert)} \sigma_{p'}^{(\vert\bk\vert + \gamma_{[i+1;i+r]})}.
	\end{align*}
\end{definition}

Relying on Lemmas \ref{lemma psi} and \ref{lemma phi}, one may identify some interesting equations satisfied by the vectors \(\Psi^{(i,r)}(\bh,\bk,s,t)\).

\begin{lemma}
	\label{lemma Psi}
	The vectors \(\Psi^{(i,r)}(\bh,\bk,s,t)\) satisfy the equations
	\begin{align}
	\label{eigenvalue Gamma Psi 1}
	\Gamma_{[1;m]}^q\Psi^{(i,r)}(\bh,\bk,s,t) & = (-1)^{\vh{m-1}}\left[\vh{m-1}+\gamma_{[1;m]}-\frac12\right]_q \Psi^{(i,r)}(\bh,\bk,s,t),
	\end{align}
	for any \(m\in \{2,\dots,i\}\),
	\begin{align}
	\label{eigenvalue Gamma Psi 2}
	\Gamma_{[i+1;i+m]}^q\Psi^{(i,r)}(\bh,\bk,s,t) & = (-1)^{\vk{m-1}}\left[\vk{m-1}+\gamma_{[i+1;i+m]}-\frac12\right]_q \Psi^{(i,r)}(\bh,\bk,s,t),
	\end{align}
	for any \(m\in \{2,\dots,r\}\), 
	\begin{align}
	\label{eigenvalue Gamma Psi 3}
	\Gamma_{[1;i+r]}^q\Psi^{(i,r)}(\bh,\bk,s,t) & = (-1)^{\vert\bh\vert+\vert\bk\vert+s}\left[\vert\bh\vert+\vert\bk\vert+s+\gamma_{[1;i+r]}-\frac12\right]_q \Psi^{(i,r)}(\bh,\bk,s,t),
	\end{align}
	and
	\begin{align}
	\label{eigenvalue K Psi}
	K_{[1;i+r]}\Psi^{(i,r)}(\bh,\bk,s,t) & = q^{\frac12(\vert\bh\vert+\vert\bk\vert+s+t+\gamma_{[1;i+r]})}\Psi^{(i,r)}(\bh,\bk,s,t), \\
	\label{eigenvalue P Psi}
	P_{[1;i+r]}\Psi^{(i,r)}(\bh,\bk,s,t) & = (-1)^{\vert\bh\vert+\vert\bk\vert+s+t}\Psi^{(i,r)}(\bh,\bk,s,t),
	\end{align}
	and moreover 
	\begin{equation}
	\label{eigenvalue A_- Psi}
	\left(A_-\right)_{[1;i+r]}\Psi^{(i,r)}(\bh,\bk,s,t) = \sigma_t^{(\vert\bh\vert + \vert\bk\vert + s + \gamma_{[1;i+r]})} \Psi^{(i,r)}(\bh,\bk,s,t-1),
	\end{equation}
	where we set \(\Psi^{(i,r)}(\bh,\bk,s,-1)=0\). 
\end{lemma}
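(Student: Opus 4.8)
The plan is to reduce all six identities to the single vector \(v_s := \Psi^{(i,r)}(\bh,\bk,s,0) = \sum_{p=0}^s \epsilon_p\,\psi^{(i)}(\bh,p)\otimes\varphi^{(r)}(\bk,s-p)\), since by construction \(\Psi^{(i,r)}(\bh,\bk,s,t) = (A_+)_{[1;i+r]}^t v_s\). I would show that \(v_s\) is a joint eigenvector of all the operators appearing on the left-hand sides and, crucially, that it is a lowest weight vector in the sense that \((A_-)_{[1;i+r]} v_s = 0\). Once this is settled, the general-\(t\) statements follow by commuting the prefactor \((A_+)_{[1;i+r]}^t\) to the left, using the three identities in (\ref{relations with powers}) after applying \(\Delta^{(i+r)}\) to them.

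The heart of the matter, and the main obstacle, is the annihilation \((A_-)_{[1;i+r]} v_s = 0\). Applying \(\Delta^{(i)}\otimes\Delta^{(r)}\) to the coproduct \(\Delta(A_-) = A_-\otimes KP + K^{-1}\otimes A_-\) and invoking coassociativity yields \((A_-)_{[1;i+r]} = (A_-)_{[1;i]}\otimes K_{[i+1;i+r]}P_{[i+1;i+r]} + (K^{-1})_{[1;i]}\otimes (A_-)_{[i+1;i+r]}\). Acting with this on \(\psi^{(i)}(\bh,p)\otimes\varphi^{(r)}(\bk,s-p)\) and inserting the eigenvalue and lowering relations (\ref{eigenvalue K psi}), (\ref{eigenvalue A_- psi}) of Lemma \ref{lemma psi} and (\ref{eigenvalue K phi}), (\ref{eigenvalue P phi}), (\ref{eigenvalue A_- phi}) of Lemma \ref{lemma phi}, the result collapses to \(\alpha_p\,\psi^{(i)}(\bh,p-1)\otimes\varphi^{(r)}(\bk,s-p) + \beta_{s-p}\,\psi^{(i)}(\bh,p)\otimes\varphi^{(r)}(\bk,s-p-1)\) — this is exactly the role for which \(\alpha_p\) and \(\beta_{p'}\) were defined. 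Summing against \(\epsilon_p\) and reindexing, the coefficient of \(\psi^{(i)}(\bh,p)\otimes\varphi^{(r)}(\bk,s-1-p)\) equals \(\epsilon_{p+1}\alpha_{p+1} + \epsilon_p\beta_{s-p}\); substituting the closed form of \(\epsilon_p\), the two terms differ only by the sign \((-1)^{p+1}\) versus \((-1)^p\) and hence cancel, giving \((A_-)_{[1;i+r]} v_s = 0\). The boundary terms at \(p=-1\) and \(p=s\) vanish by the conventions \(\psi^{(i)}(\bh,-1)=0\) and \(\varphi^{(r)}(\bk,-1)=0\).

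The weight-type equations are then immediate. Since \(K_{[1;i+r]} = K_{[1;i]}\otimes K_{[i+1;i+r]}\) and \(P_{[1;i+r]} = P_{[1;i]}\otimes P_{[i+1;i+r]}\) act diagonally on each summand with a \(p\)-independent eigenvalue, one gets \(K_{[1;i+r]} v_s = q^{\frac12(\vert\bh\vert+\vert\bk\vert+s+\gamma_{[1;i+r]})} v_s\) and \(P_{[1;i+r]} v_s = (-1)^{\vert\bh\vert+\vert\bk\vert+s} v_s\); commuting \((A_+)_{[1;i+r]}^t\) through by the last two relations of (\ref{relations with powers}) produces (\ref{eigenvalue K Psi}) and (\ref{eigenvalue P Psi}). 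For (\ref{eigenvalue A_- Psi}) I would use the first relation of (\ref{relations with powers}) under \(\Delta^{(i+r)}\): the term proportional to \((A_-)_{[1;i+r]} v_s\) drops out, and the surviving \(K_{[1;i+r]}^{\pm2}\)-contribution collapses, by the definition (\ref{def sigma_m^gamma}) of \(\sigma\), to the scalar \(\sigma_t^{(\vert\bh\vert+\vert\bk\vert+s+\gamma_{[1;i+r]})}\). Equation (\ref{eigenvalue Gamma Psi 3}) is obtained from the explicit Casimir formula (\ref{Gamma_[1;n]^q}) with \(n = i+r\): on \(v_s\) the term \(-(A_+)_{[1;i+r]}(A_-)_{[1;i+r]}\) is killed by lowest-weightness, while the \(K^{\pm2}\)-terms assemble into \((-1)^{\vert\bh\vert+\vert\bk\vert+s}[\vert\bh\vert+\vert\bk\vert+s+\gamma_{[1;i+r]}-\frac12]_q\); the prefactor then passes through because \(\Gamma_{[1;i+r]}^q\) is central in \(\Delta^{(i+r)}(\ospq)\).

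It remains to treat the intermediate Casimirs (\ref{eigenvalue Gamma Psi 1}) and (\ref{eigenvalue Gamma Psi 2}). Here I would first record that \(\Gamma_{[1;m]}^q\) for \(m\le i\) and \(\Gamma_{[i+1;i+m]}^q\) for \(m\le r\) both commute with \((A_+)_{[1;i+r]}\); this is proved exactly as in Lemma \ref{lemma psi}, by applying a suitable \(\Delta^{(\bullet)}\otimes 1^{\otimes\bullet}\) to the centrality relation \([\Gamma^q\otimes 1^{\otimes\bullet},\Delta^{(\bullet)}(A_+)]=0\), after splitting \((A_+)_{[1;i+r]}\) through its coproduct into pieces supported on \([1;i]\) and \([i+1;i+r]\). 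On each summand of \(v_s\) these operators act only on the \(\psi^{(i)}\)-leg, respectively the \(\varphi^{(r)}\)-leg, with the eigenvalues (\ref{eigenvalue Gamma psi}) and (\ref{eigenvalue Gamma phi}), which are independent of \(p\); hence \(v_s\) is an eigenvector and, the prefactor commuting through, so is \(\Psi^{(i,r)}(\bh,\bk,s,t)\). This completes the proof.
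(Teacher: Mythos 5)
Your proposal is correct and follows essentially the same route as the paper: the same coproduct splitting of \((A_-)_{[1;i+r]}\), the same cancellation \(\epsilon_{p+1}\alpha_{p+1}+\epsilon_p\beta_{s-p}=0\) establishing that \(\Psi^{(i,r)}(\bh,\bk,s,0)\) is annihilated by \((A_-)_{[1;i+r]}\), and the same use of (\ref{relations with powers}) and the Casimir formula (\ref{Gamma_[1;n]^q}) to handle general \(t\) and equation (\ref{eigenvalue Gamma Psi 3}). The only difference is that you spell out the commutation of the intermediate Casimirs with \((A_+)_{[1;i+r]}\), which the paper leaves implicit in its appeal to Lemmas \ref{lemma psi} and \ref{lemma phi}.
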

\begin{proof}
	The equations (\ref{eigenvalue Gamma Psi 1}) and (\ref{eigenvalue Gamma Psi 2}) follow immediately from Lemmas \ref{lemma psi} and \ref{lemma phi} respectively. For the equations (\ref{eigenvalue K Psi}) and (\ref{eigenvalue P Psi}), observe that
	\begin{align*}
	K_{[1;i+r]}(A_+)_{[1;i+r]}^t =\ & q^{\frac{t}{2}}(A_+)_{[1;i+r]}^tK_{[1;i+r]}, \\ P_{[1;i+r]}(A_+)_{[1;i+r]}^t =\ & (-1)^{t}(A_+)_{[1;i+r]}^tP_{[1;i+r]}
	\end{align*}
	and
	\[
	K_{[1;i+r]} = K_{[1;i]}\otimes K_{[i+1;i+r]}, \qquad P_{[1;i+r]} = P_{[1;i]}\otimes P_{[i+1;i+r]}.
	\]
	Hence (\ref{eigenvalue K Psi}) and (\ref{eigenvalue P Psi}) follow from (\ref{eigenvalue K psi}), (\ref{eigenvalue P psi}), (\ref{eigenvalue K phi}) and (\ref{eigenvalue P phi}). 
	
	It follows from (\ref{relations with powers}) and (\ref{eigenvalue K Psi}) that
	\begin{align*}
	&\left(A_-\right)_{[1;i+r]}\Psi^{(i,r)}(\bh,\bk,s,t) = \left(A_-\right)_{[1;i+r]}\left(A_+\right)_{[1;i+r]}^t\Psi^{(i,r)}(\bh,\bk,s,0) \\
	=\ & \sigma_t^{(\vert\bh\vert+\vert\bk\vert+s+\gamma_{[1;i+r]})} \Psi^{(i,r)}(\bh,\bk,s,t-1) + (-1)^t (A_+)_{[1;i+r]}^t(A_-)_{[1;i+r]}\Psi^{(i,r)}(\bh,\bk,s,0)
	\end{align*}
	where we have used (\ref{eigenvalue K Psi}) with \(t = 0\). Observe from (\ref{iteration A_pm, K, P}) that
	\[
	(A_-)_{[1;i+r]} = (A_-)_{[1;i]}\otimes K_{[i+1;i+r]}P_{[i+1;i+r]} + K^{-1}_{[1;i]}\otimes (A_-)_{[i+1;i+r]}
	\]
	and so it follows from Lemmas \ref{lemma psi} and \ref{lemma phi} that
	\begin{align*}
	&(A_-)_{[1;i+r]}\Psi^{(i,r)}(\bh,\bk,s,0) \\ 
	=\ & \sum_{p = 0}^{s-1}\left(\epsilon_{p+1}\alpha_{p+1}+\epsilon_p\beta_{s-p}\right)\psi^{(i)}(h_1,\dots,h_{i-1},p)\otimes\varphi^{(r)}(k_1,\dots,k_{r-1},s-p-1).
	\end{align*}
	One now immediately checks that \(\epsilon_{p+1}\alpha_{p+1}+\epsilon_p\beta_{s-p}=0\) for all \(p\) and hence the equation (\ref{eigenvalue A_- Psi}) is satisfied.
	
	The equation (\ref{eigenvalue Gamma Psi 3}) follows again from (\ref{Gamma_[1;n]^q}), (\ref{eigenvalue K Psi}), (\ref{eigenvalue P Psi}) and (\ref{eigenvalue A_- Psi}).
\end{proof}

The recursive extension process outlined in Definition \ref{definition psi bases} can now be used to expand the vectors \(\Psi^{(i,r)}\) even further.

\begin{definition}
	\label{definition bases Theta}
	Let \(\mathbf{j}\in\N^{n-1}\) and \(N\in\N\). The vectors \[\Theta^{(i,r)}(\bj,N) = \Theta^{(i,r,n-i-r+1)}(j_1,\dots,j_{n-1},N)\in W^{(\gamma_1)}\otimes\dots\otimes W^{(\gamma_n)}\] are recursively defined by
	\begin{align*}
	&\Theta^{(i,r,n-i-r+1)}(j_1,\dots,j_{n-1},N) \\=\ &  \left(A_+\right)_{[1;n]}^{N}\left(\sum_{k = 0}^{j_{n-1}} e_{k,\mathbf{j}}^{(n)}\Theta^{(i,r,n-i-r)}(j_1,\dots,j_{n-2},k)\otimes\vert j_{n-1}-k\rangle_{\gamma_n}\right),
	\end{align*}
	with coefficients
	\[
	e_{k,\mathbf{j}}^{(n)} = (-1)^{k(j_{n-1}+\frac{k-1}{2})}q^{-\frac{k}{2}(\vert\bj_{n-1}\vert+\gamma_{[1;n]}-1)}\prod_{\ell = 0}^{k-1}\frac{\sqrt{\sigma_{j_{n-1}-\ell}^{(\gamma_n)}}}{\sigma_{\ell+1}^{(\vert\bj_{n-2}\vert+\gamma_{[1;n-1]})}}
	\]
	and initial conditions
	\[
	\Theta^{(i,r,1)}(j_1,\dots,j_{i+r-1},p) = \Psi^{(i,r)}(j_1,\dots,j_{i+r-1},p).
	\]
\end{definition}

The vectors \(\Theta^{(i,r)}(\bj,N)\) can also be characterized through certain eigenvalue equations.

\begin{lemma}
	\label{lemma Theta}
	The vectors \(\Theta^{(i,r)}(j_1,\dots,j_{n-1},N)\) satisfy the equations
	\begin{equation}
	\label{eigenvalue equation Theta 1}
	\Gamma_{[1;m]}^q\Theta^{(i,r)}(j_1,\dots,j_{n-1},N) = (-1)^{\vert\mathbf{j}_{m-1}\vert}\left[\vert\mathbf{j}_{m-1}\vert+\gamma_{[1;m]}-\frac12\right]_q\Theta^{(i,r)}(j_1,\dots,j_{n-1},N)
	\end{equation}
	for all \(m\in \{2,\dots,i\}\cup\{i+r,\dots,n\}\),
	\begin{align}
	\label{eigenvalue equation Theta 2}
	\begin{split}
	& \Gamma_{[i+1;i+m]}^q\Theta^{(i,r)}(j_1,\dots,j_{n-1},N) \\ = &\  (-1)^{\vert\mathbf{j}_{i+m-2}\vert-\vert\mathbf{j}_{i-1}\vert}\left[\vert\mathbf{j}_{i+m-2}\vert-\vert\mathbf{j}_{i-1}\vert+\gamma_{[i+1;i+m]}-\frac12\right]_q\Theta^{(i,r)}(j_1,\dots,j_{n-1},N),
	\end{split}
	\end{align}
	for all \(m\in \{2,\dots,r\}\),
	\begin{align}
	\label{eigenvalue equation Theta K}
	K_{[1;n]}\Theta^{(i,r)}(j_1,\dots,j_{n-1},N) & = q^{\frac12(\vert\bj\vert+N+\gamma_{[1;n]})}\Theta^{(i,r)}(j_1,\dots,j_{n-1},N), \\
	\label{eigenvalue P Theta}
	P_{[1;n]}\Theta^{(i,r)}(j_1,\dots,j_{n-1},N) & = (-1)^{\vert\bj\vert+N}\Theta^{(i,r)}(j_1,\dots,j_{n-1},N),
	\end{align}
	and
	\begin{equation}
	\label{eigenvalue A_- Theta}
	\left(A_-\right)_{[1;n]}\Theta^{(i,r)}(j_1,\dots,j_{n-1},N) = \sigma_{N}^{(\vert\bj\vert + \gamma_{[1;n]})} \Theta^{(i,r)}(j_1,\dots,j_{n-1},N-1),
	\end{equation}
	where we set \(\Theta^{(i,r)}(j_1,\dots,j_{n-1},-1)=0\).
\end{lemma}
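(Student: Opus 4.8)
The plan is to establish all six families of equations simultaneously by induction on the third superscript $n-i-r+1$ of $\Theta^{(i,r,n-i-r+1)}$, with $i$ and $r$ held fixed. This superscript counts the number of extension steps performed on top of the base vector $\Psi^{(i,r)}$, and the recursion in Definition \ref{definition bases Theta} has exactly the same shape as the one defining the $\psi^{(i)}$ in Definition \ref{definition psi bases}, with $\Psi^{(i,r)}$ replacing the initial vector and the coefficients $e_{k,\bj}^{(n)}$ playing the role of $c_{k,\bh}^{(i)}$. Consequently the whole argument will run closely parallel to the proof of Lemma \ref{lemma psi}, the only genuinely new ingredient being the bookkeeping needed to preserve the ``middle'' equations (\ref{eigenvalue equation Theta 2}) through each step.

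For the base case $n=i+r$ (third superscript equal to $1$) we have $\Theta^{(i,r,1)}=\Psi^{(i,r)}$, so every equation follows from Lemma \ref{lemma Psi} once the arguments are matched. Writing $h_\ell=j_\ell$ for $\ell\in[1;i-1]$, $k_\ell=j_{i-1+\ell}$ for $\ell\in[1;r-1]$, $s=j_{i+r-1}$ and $t=N$, I would check the elementary identities $\vh{m-1}=\vjj{m-1}$, $\vk{m-1}=\vjj{i+m-2}-\vjj{i-1}$ and $\vert\bh\vert+\vert\bk\vert+s=\vjj{n-1}$, under which (\ref{eigenvalue Gamma Psi 1})--(\ref{eigenvalue A_- Psi}) become precisely (\ref{eigenvalue equation Theta 1})--(\ref{eigenvalue A_- Theta}).

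For the induction step I assume $n\geq i+r+1$ and that the six equations hold for the vector $\Theta^{(i,r,n-i-r)}(j_1,\dots,j_{n-2},k)\in W^{(\gamma_1)}\otimes\dots\otimes W^{(\gamma_{n-1})}$. Several claims are then immediate. Equations (\ref{eigenvalue equation Theta K}) and (\ref{eigenvalue P Theta}) follow from the relations (\ref{relations with powers}), which persist after applying $\Delta^{(n)}$ to both sides, together with the splittings $K_{[1;n]}=K_{[1;n-1]}\otimes K$ and $P_{[1;n]}=P_{[1;n-1]}\otimes P$, the induction hypothesis and (\ref{ospq-action}). For each generator $\Gamma_{[1;m]}^q$ occurring in (\ref{eigenvalue equation Theta 1}) with $m\leq n-1$, and each $\Gamma_{[i+1;i+m]}^q$ with $m\in\{2,\dots,r\}$ (note that $i+m\leq i+r\leq n-1$ here), the operator acts as the identity on the last tensor factor $W^{(\gamma_n)}$ and commutes with $(A_+)_{[1;n]}$; the latter is seen, as in Lemma \ref{lemma psi}, by combining coassociativity of $\Delta$ with the centrality of $\Gamma^q$ in $\ospq$. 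Hence (\ref{eigenvalue equation Theta 1}) for $m\in\{2,\dots,i\}\cup\{i+r,\dots,n-1\}$ and all of (\ref{eigenvalue equation Theta 2}) are inherited directly from the induction hypothesis.

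The one real computation is the lowering relation (\ref{eigenvalue A_- Theta}). Using (\ref{relations with powers}) and (\ref{eigenvalue equation Theta K}) I would reduce it, exactly as in Lemma \ref{lemma psi}, to the vanishing of $(A_-)_{[1;n]}$ on the $N=0$ vector; splitting $(A_-)_{[1;n]}=(A_-)_{[1;n-1]}\otimes KP+K_{[1;n-1]}^{-1}\otimes A_-$ and invoking the induction hypothesis for $(A_-)_{[1;n-1]}$ together with (\ref{ospq-action}) yields, for each $k$, a coefficient of the same shape as the quantity $\chi_{k,\bh}^{(i)}$ from Lemma \ref{lemma psi}, which vanishes precisely by the definition of $e_{k,\bj}^{(n)}$. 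Finally, the remaining new case $m=n$ of (\ref{eigenvalue equation Theta 1}), namely $\Gamma_{[1;n]}^q$, is obtained from the closed form (\ref{Gamma_[1;n]^q}) by substituting the eigenvalues from (\ref{eigenvalue equation Theta K}), (\ref{eigenvalue P Theta}) and (\ref{eigenvalue A_- Theta}) and observing that $(A_+)_{[1;n]}$ sends $\Theta^{(i,r)}(\bj,N-1)$ to $\Theta^{(i,r)}(\bj,N)$, so that $(A_+)_{[1;n]}(A_-)_{[1;n]}$ acts as the scalar $\sigma_N^{(\vert\bj\vert+\gamma_{[1;n]})}$; this is identical to the verification of (\ref{eigenvalue Gamma psi}) for $m=i$ in Lemma \ref{lemma psi}. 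I expect the only delicate points to be the index bookkeeping in the base case and confirming the commutation $[\Gamma_{[i+1;i+m]}^q,(A_+)_{[1;n]}]=0$ that carries the middle equations through each step; the cancellations themselves demand no idea beyond what Lemma \ref{lemma psi} already supplies.
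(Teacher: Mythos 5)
Your proposal is correct and follows essentially the same route as the paper: induction on $n$ with $i,r$ fixed, the base case $n=i+r$ reduced to Lemma \ref{lemma Psi}, inheritance of (\ref{eigenvalue equation Theta 1}) for $m\le n-1$, (\ref{eigenvalue equation Theta 2}), (\ref{eigenvalue equation Theta K}) and (\ref{eigenvalue P Theta}) through the tensor splittings, the telescoping cancellation forced by the definition of $e_{k,\mathbf{j}}^{(n)}$ for (\ref{eigenvalue A_- Theta}), and the $m=n$ case via (\ref{Gamma_[1;n]^q}). The only difference is that you spell out the commutation $[\Gamma_{[i+1;i+m]}^q,(A_+)_{[1;n]}]=0$ and the base-case index matching, which the paper leaves implicit; both are handled correctly.
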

\begin{proof}
	Assuming \(i\) and \(r\) are fixed, we in fact have to prove the claim for every \(n\in \N\) with \(n\geq i+r\). We will do this by induction on \(n\).
	
	For \(n = i+r\) we have
	\[
	\Theta^{(i,r)}(j_1,\dots,j_{n-1},N) = \Psi^{(i,r)}(j_1,\dots,j_{n-1},N)
	\]
	and so the claim follows immediately from Lemma \ref{lemma Psi}. 
	
	Suppose hence the statement is true for \(n-1\geq i+r\). Since
	\[
	K_{[1;n]} = K_{[1;n-1]}\otimes K, \qquad P_{[1;n]} = P_{[1;n-1]}\otimes P,
	\]
	it follows immediately from the induction hypothesis that the equations (\ref{eigenvalue equation Theta 2}), (\ref{eigenvalue equation Theta K}) and (\ref{eigenvalue P Theta}) are satisfied, as well as the equations (\ref{eigenvalue equation Theta 1}) for \(m\in \{2,\dots,i\}\cup\{i+r,\dots,n-1\}\). As before, by (\ref{relations with powers}) we have
	\begin{align*}
	&\left(A_-\right)_{[1;n]}\Theta^{(i,r)}(j_1,\dots,j_{n-1},N) \\=\ & \sigma_{N}^{(\vert\bj\vert+\gamma_{[1;n]})} \Theta^{(i,r)}(j_1,\dots,j_{n-1},N-1) + (-1)^{N} (A_+)_{[1;n]}^{N} (A_-)_{[1;n]}\Theta^{(i,r)}(j_1,\dots,j_{n-1},0).
	\end{align*}
	Now recall that
	\[
	(A_-)_{[1;n]} = (A_-)_{[1;n-1]}\otimes KP + K_{[1;n-1]}^{-1}\otimes A_-
	\]
	and so
	\begin{align*}
	(A_-)_{[1;n]}\Theta^{(i,r)}(j_1,\dots,j_{n-1},0) = & \sum_{k = 0}^{j_{n-1}}\chi_{\bj,k}^{(n)} \Theta^{(i,r)}(j_1,\dots,j_{n-2},k)\otimes \vert j_{n-1}-k-1\rangle_{\gamma_n}, 
	\end{align*}
	with
	\begin{align*}
	\chi_{\bj,k}^{(n)} =\ & (-1)^{j_{n-1}-k-1}q^{\frac12(j_{n-1}-k-1+\gamma_n)}\sigma_{k+1}^{(\vert\bj_{n-2}\vert+\gamma_{[1;n-1]})}e_{k+1,\bj}^{(n)}\\& + q^{-\frac12(\vert\bj_{n-2}\vert+k+\gamma_{[1;n-1]})}\sqrt{\sigma_{j_{n-1}-k}^{(\gamma_n)}}e_{k,\bj}^{(n)},
	\end{align*}
	which vanishes by definition of \(e_{k,\bj}^{(n)}\).
	Hence the equation (\ref{eigenvalue A_- Theta}) holds.
	
	Finally, the equation (\ref{eigenvalue equation Theta 1}) with \(m = n\) follows from (\ref{Gamma_[1;n]^q}), (\ref{eigenvalue equation Theta K}), (\ref{eigenvalue P Theta}) and (\ref{eigenvalue A_- Theta}).
\end{proof}

Finally, we will introduce a new notation for the normalized counterparts of the vectors \(\Theta^{(i,r)}\).

\begin{definition}
	\label{definition bases final}
	Let \(\mathbf{j}^{(i,r)}\in\N^{n-1}\) and \(N\in\N\). We denote by \[\vert\mathbf{j}^{(i,r)};N\rangle = \vert j^{(i,r)}_{n-1},j^{(i,r)}_{n-2},\dots,j^{(i,r)}_{1};N\rangle \in W^{(\gamma_1)}\otimes\dots\otimes W^{(\gamma_n)}\]
	the vectors defined by
	\[
	\vert j^{(i,r)}_{n-1},j^{(i,r)}_{n-2},\dots,j^{(i,r)}_{1};N\rangle = \eta_{\mathbf{j}^{(i,r)},N}\Theta^{(i,r,n-i-r+1)}(j_1^{(i,r)},\dots,j_{n-1}^{(i,r)},N),
	\]
	where \(\eta_{\mathbf{j}^{(i,r)},N}\) is a normalization constant, chosen such that
	\begin{equation}
	\label{normalization bases}
	\langle j^{(i,r)}_{n-1},\dots,j^{(i,r)}_1;N \vert {j}_{n-1}^{(i,r)},\dots,{j}_1^{(i,r)};N\rangle = 1.
	\end{equation}
\end{definition}

It follows immediately from Lemma \ref{lemma Theta} that these vectors satisfy the equations
\begin{equation}
\label{eigenvalue equation 1}
\Gamma_{[1;m]}^q\vert j^{(i,r)}_{n-1},\dots,j^{(i,r)}_{1};N\rangle = (-1)^{\vert\mathbf{j}^{(i,r)}_{m-1}\vert}\left[\vert\mathbf{j}^{(i,r)}_{m-1}\vert+\gamma_{[1;m]}-\frac12\right]_q\vert j^{(i,r)}_{n-1},\dots,j^{(i,r)}_{1};N\rangle
\end{equation}
for any \(m\in \{2,\dots,i\}\cup\{i+r,\dots,n\}\),
\begin{align}
\label{eigenvalue equation 2}
\begin{split}
& \Gamma_{[i+1;i+m]}^q\vert j^{(i,r)}_{n-1},\dots,j^{(i,r)}_{1};N\rangle \\ = &\  (-1)^{\vert\mathbf{j}^{(i,r)}_{i+m-2}\vert-\vert\mathbf{j}^{(i,r)}_{i-1}\vert}\left[\vert\mathbf{j}^{(i,r)}_{i+m-2}\vert-\vert\mathbf{j}^{(i,r)}_{i-1}\vert+\gamma_{[i+1;i+m]}-\frac12\right]_q\vert j^{(i,r)}_{n-1},\dots,j^{(i,r)}_{1};N\rangle,
\end{split}
\end{align}
for all \(m\in \{2,\dots,r\}\) and
\begin{equation}
\label{eigenvalue equation K}
K_{[1;n]}\vert j^{(i,r)}_{n-1},\dots,j^{(i,r)}_{1};N\rangle = q^{\frac12(\vj{i,r}{n-1}+N+\gamma_{[1;n]})}\vert j^{(i,r)}_{n-1},\dots,j^{(i,r)}_{1};N\rangle.
\end{equation}
Moreover, if \((\bj^{(i,r)},N)\neq ({\bj'}^{(i,r)},N')\in\N^n\), then \(\vert \bj^{(i,r)};N\rangle\) and \(\vert{\bj'}^{(i,r)};N'\rangle\) will correspond to different eigenvalues for at least one of the operators
\[
\Gamma_{[1;m]}^q \ \mathrm{with}\ m\in \{2,\dots,i\}\cup\{i+r,\dots,n\}, \quad \Gamma_{[i+1;i+m]}^q \ \mathrm{with}\ m\in \{2,\dots,r\},  \quad K_{[1;n]},
\]
and hence by Lemma \ref{lemma self-adjoint} they will be orthogonal, and by (\ref{normalization bases}) even orthonormal.

\begin{remark}
	\label{Remark equality i and 1}
	It follows from Definitions \ref{definition psi bases}, \ref{definition phi bases} and \ref{definition bases Theta} that for any \(i\in \{2,\dots,n-1\}\) one has
	\begin{equation}
	\label{(i,1) = (1,1)}
	\vert\mathbf{j}^{(i,1)};N\rangle =  \vert\mathbf{j}^{(1,1)};N\rangle = \psi^{(n)}(j_1^{(1,1)},\dots,j_{n-1}^{(1,1)};N),
	\end{equation}
	in agreement with Remark \ref{Remark Y_i,r}.
\end{remark}

\subsection{Decompositions and orthonormal bases for the coupled module}
\label{Paragraph Decompositions}

Let us introduce the notation
\begin{equation}
\label{U_N,i def}
U_{N,m} = \langle \vert n_1\rangle_{\gamma_1}\otimes \vert n_2\rangle_{\gamma_2}\otimes\dots\otimes\vert n_m\rangle_{\gamma_m}: n_1+n_2+\dots+n_m = N\rangle,
\end{equation}
i.e. \(U_{N,m}\) is the eigenspace of \(K^{\otimes m}\) with eigenvalue \(q^{\frac12(N+\gamma_{[1;m]})}\).

We will also need the following subspaces of \(W^{(\gamma_1)}\otimes\dots\otimes W^{(\gamma_n)}\).

\begin{definition}
	\label{definition V_p,N}
	Let \(\mathbf{p}= (p_1,\dots,p_{n-i-r})\in\N^{n-i-r}\) and \(N\in\N\).
	We denote by \(V_{N,\mathbf{p}}^{(i,r)} = V_{N,p_1,\dots,p_{n-i-r}}^{(i,r)}\) the joint eigenspace of the operators
	\[
	\Gamma_{[1;i+r+m]}^q \quad \mathrm{with\ eigenvalue} \quad (-1)^{\vert\mathbf{p}_{m+1}\vert}\left[\vert\mathbf{p}_{m+1}\vert+\gamma_{[1;i+r+m]}-\frac12\right]_q,
	\]
	with \(m\in \{0,\dots,n-i-r-1\}\) and
	\[
	K_{[1;n]} \quad \mathrm{with\ eigenvalue} \quad q^{\frac12(N+\vert\mathbf{p}_{n-i-r}\vert+\gamma_{[1;n]})}.
	\]
\end{definition}
Note that for \(n = i+r\) only the last eigenvalue equation remains, hence in that case \(V_{N,\mathbf{p}}^{(i,r)}\) reduces to \(U_{N,n}\). A basis for this space is obtained in the following lemma. 

\begin{lemma}
	\label{lemma U_N,2}
	A basis for the space \(U_{N,n}\) is given by
	\begin{equation}
	\label{U_N,n basis}
	\left\{ \Theta^{(i,r)}\left(j_1,\dots,j_{n-1},N-\vert\bj_{n-1}\vert\right): \bj\in\N^{n-1}\ \mathrm{with}\ \vert\bj_{n-1}\vert\leq N \right\}.
	\end{equation}
\end{lemma}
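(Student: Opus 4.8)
The plan is to present the family (\ref{U_N,n basis}) as an orthogonal set of the correct cardinality sitting inside $U_{N,n}$, and then invoke positive-definiteness of the inner product on the unitary module. I would check four things: membership in $U_{N,n}$, a cardinality count, pairwise orthogonality, and nonvanishing. For membership, note that by (\ref{iteration A_pm, K, P}) we have $K_{[1;n]} = K^{\otimes n}$, so by (\ref{U_N,i def}) the space $U_{N,n}$ is precisely the $q^{\frac12(N+\gamma_{[1;n]})}$-eigenspace of $K_{[1;n]}$. Applying (\ref{eigenvalue equation Theta K}) to $\Theta^{(i,r)}(\bj, N-\vjj{n-1})$ and using that $\vert\bj\vert = \vjj{n-1}$ for $\bj\in\N^{n-1}$, the $K_{[1;n]}$-eigenvalue is $q^{\frac12(\vjj{n-1}+(N-\vjj{n-1})+\gamma_{[1;n]})} = q^{\frac12(N+\gamma_{[1;n]})}$, so every vector in (\ref{U_N,n basis}) lies in $U_{N,n}$. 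For the count, the map $\bj\mapsto(j_1,\dots,j_{n-1},N-\vjj{n-1})$ is a bijection between the index set $\{\bj\in\N^{n-1}:\vjj{n-1}\leq N\}$ and the $n$-tuples in $\N^n$ summing to $N$; the latter index the standard basis of $U_{N,n}$, so (\ref{U_N,n basis}) has exactly $\dim U_{N,n}=\binom{N+n-1}{n-1}$ elements.

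For orthogonality, recall that all the operators $\Gamma_{[1;m]}^q$ and $\Gamma_{[i+1;i+m]}^q$ of Lemma \ref{lemma Theta} are self-adjoint by Lemma \ref{lemma self-adjoint}. As explained just before the lemma, the eigenvalues in (\ref{eigenvalue equation Theta 1}) and (\ref{eigenvalue equation Theta 2}) determine successively all partial sums $\vjj{1},\dots,\vjj{n-1}$ and hence the full multi-index $\bj$; this uses the injectivity of $k\mapsto(-1)^k[k+\gamma-\frac12]_q$ on $\N$, which holds because $|q|\neq 1$. Thus distinct admissible $\bj$ (with $N$ fixed) give distinct joint eigenvalues for this commuting family of self-adjoint operators, so the associated vectors in (\ref{U_N,n basis}) are pairwise orthogonal. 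Note this step is insensitive to whether a vector happens to vanish, so there is no circularity with the normalization.

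The crux is nonvanishing. Here I would strip off the leading creation operator. Since $\Theta^{(i,r)}(\bj,M) = (A_+)_{[1;n]}^{M}\Theta^{(i,r)}(\bj,0)$ by Definition \ref{definition bases Theta}, and since the computation in the proof of Lemma \ref{lemma self-adjoint} yields $(A_+)_{[1;n]}^{\dagger} = (A_-)_{[1;n]}$, the lowering relation (\ref{eigenvalue A_- Theta}) gives
\[
\langle \Theta^{(i,r)}(\bj,M),\Theta^{(i,r)}(\bj,M)\rangle = \sigma_M^{(\vert\bj\vert+\gamma_{[1;n]})}\,\langle \Theta^{(i,r)}(\bj,M-1),\Theta^{(i,r)}(\bj,M-1)\rangle.
\]
As $\sigma_m^{(\gamma)}\neq 0$ for $m\geq 1$ and the inner product is positive definite, this reduces nonvanishing of $\Theta^{(i,r)}(\bj,M)$ to that of $\Theta^{(i,r)}(\bj,0)$. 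The latter I would settle by induction on the number of tensor factors: in the defining sum of Definition \ref{definition bases Theta} the summands carry pairwise distinct final factors $\vert j_{n-1}-k\rangle_{\gamma_n}$, while the $k=0$ term has coefficient $e_{0,\bj}^{(n)}=1$ and vector $\Theta^{(i,r,n-i-r)}(j_1,\dots,j_{n-2},0)$, nonzero by the inductive hypothesis; the base case $\Psi^{(i,r)}$, together with the vectors of Definitions \ref{definition psi bases} and \ref{definition phi bases}, is handled identically and bottoms out at the manifestly nonzero vector (\ref{psi i initial conditions}).

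Combining the four steps, (\ref{U_N,n basis}) is a set of $\dim U_{N,n}$ nonzero, pairwise orthogonal vectors of $U_{N,n}$, hence an orthogonal basis. I expect the fourth step to be the only real obstacle: membership, counting and orthogonality are bookkeeping built directly on Lemma \ref{lemma Theta} and Lemma \ref{lemma self-adjoint}, whereas nonvanishing requires both the self-adjointness identity to remove the $(A_+)_{[1;n]}$-power and a separate induction unwinding the recursive construction.
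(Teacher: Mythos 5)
Your proof is correct and follows essentially the same route as the paper: membership via Lemma \ref{lemma Theta}, linear independence from the distinct joint eigenvalues for the generators of \(\mathcal{Y}_{i,r}\), and the dimension count \(\dim U_{N,n} = \binom{N+n-1}{N}\). The only substantive addition is that you explicitly verify the nonvanishing of the vectors \(\Theta^{(i,r)}(\bj,N-\vert\bj_{n-1}\vert)\) (by stripping the \((A_+)_{[1;n]}\)-powers via \((A_+)_{[1;n]}^{\dagger}=(A_-)_{[1;n]}\) and inducting on the recursive construction), a point the paper's one-line proof leaves implicit when it asserts linear independence.
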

\begin{proof}
	Each of the vectors \(\Theta^{(i,r)}\left(\bj,N-\vert\bj_{n-1}\vert\right)\), with \(\bj\in\N^{n-1}, \vert\bj_{n-1}\vert\leq N\), lies in \(U_{N,n}\) by Lemma \ref{lemma Theta}, and moreover they are linearly independent, since they correspond to different eigenvalues for the generators of \(\mathcal{Y}_{i,r}\). The claim now follows from the fact that \(\dim\left(U_{N,n}\right) = \binom{N+n-1}{N}\), which is precisely the cardinality of (\ref{U_N,n basis}). 
\end{proof}

This simple observation will lead us to a decomposition for the coupled \(\ospq\)-module. The proof will be omitted, since it essentially follows by induction from the results in \cite[Theorems 2.1 and 5.1]{Shibukawa-1992}.

\begin{proposition}
	The coupled module \(W^{(\gamma_1)}\otimes\dots\otimes W^{(\gamma_n)}\) can be decomposed in irreducible components as
	\begin{equation}
	W^{(\gamma_1)}\otimes\dots\otimes W^{(\gamma_n)} \cong \bigoplus_{N = 0}^{\infty} m_N W^{(\gamma_{[1;n]}+N)},
	\end{equation}
	where the multiplicity is given by
	\[
	m_N = \binom{N+n-2}{N}.
	\]
\end{proposition}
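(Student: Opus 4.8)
The plan is to avoid the induction through \cite{Shibukawa-1992} and instead read the decomposition straight off the eigenvector machinery, in particular Lemmas \ref{lemma Theta} and \ref{lemma U_N,2}. First I would fix any admissible pair \((i,r)\) and regroup the vectors \(\Theta^{(i,r)}(\bj,N)\) by the value of \(\bj\in\N^{n-1}\), setting
\[
V_{\bj} = \mathrm{span}_{\C}\left\{\Theta^{(i,r)}(\bj,N) : N\in\N\right\}.
\]
(A different choice of \((i,r)\) produces a different but equally valid decomposition.) The first thing to check is that each \(V_{\bj}\) is invariant under the diagonal \(\ospq\)-action, i.e. under the generators \((A_\pm)_{[1;n]}=\Delta^{(n)}(A_\pm)\), \(K_{[1;n]}^{\pm1}\) and \(P_{[1;n]}\) of (\ref{iteration A_pm, K, P}). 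This is immediate from Lemma \ref{lemma Theta}: \((A_+)_{[1;n]}\) raises \(N\) by one, \((A_-)_{[1;n]}\) lowers it by (\ref{eigenvalue A_- Theta}), and \(K_{[1;n]}^{\pm1},P_{[1;n]}\) act by scalars via (\ref{eigenvalue equation Theta K}) and (\ref{eigenvalue P Theta}); all of these preserve \(V_{\bj}\).

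Next I would pin down the isomorphism type. Writing \(\gamma'=\gamma_{[1;n]}+|\bj|\), the vector \(\Theta^{(i,r)}(\bj,0)\) is killed by \((A_-)_{[1;n]}\) and is a \(K_{[1;n]}\)-eigenvector of eigenvalue \(q^{\gamma'/2}\), hence a lowest-weight vector of weight \(\gamma'\), and the higher \(\Theta^{(i,r)}(\bj,N)=(A_+)_{[1;n]}^N\Theta^{(i,r)}(\bj,0)\) fill out its tower. Comparing the action in Lemma \ref{lemma Theta} with the standard action (\ref{ospq-action}) on \(W^{(\gamma')}\) shows that, after normalizing the \(\Theta^{(i,r)}(\bj,N)\), the operators \((A_\pm)_{[1;n]}\) and \(K_{[1;n]}\) reproduce exactly the structure constants of \(W^{(\gamma')}\). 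Since \(\gamma'>\tfrac12\) and \(|q|\neq1\) give \(\sigma_N^{(\gamma')}\neq0\) for all \(N\geq1\), the tower never truncates, so \(V_{\bj}\) is irreducible and, being an irreducible lowest-weight module of lowest weight \(\gamma'\), is isomorphic to \(W^{(\gamma_{[1;n]}+|\bj|)}\).

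It then remains to assemble the towers and count multiplicities. Intersecting with the \(K_{[1;n]}\)-eigenspace \(U_{L,n}\) of eigenvalue \(q^{\frac12(L+\gamma_{[1;n]})}\), each \(V_{\bj}\) with \(|\bj|\leq L\) contributes the single vector \(\Theta^{(i,r)}(\bj,L-|\bj|)\), and by Lemma \ref{lemma U_N,2} these form a basis of \(U_{L,n}\). Hence the \(\Theta^{(i,r)}(\bj,N)\) are a basis of the whole module and
\[
W^{(\gamma_1)}\otimes\dots\otimes W^{(\gamma_n)}=\bigoplus_{\bj\in\N^{n-1}}V_{\bj}.
\]
Grouping the summands by \(N=|\bj|\), the number of \(\bj\in\N^{n-1}\) with \(j_1+\dots+j_{n-1}=N\) equals the number of weak compositions of \(N\) into \(n-1\) parts, namely \(\binom{N+n-2}{n-2}=\binom{N+n-2}{N}\), which is exactly \(m_N\); as a check, \(\sum_{N=0}^{L}\binom{N+n-2}{N}=\binom{L+n-1}{L}=\dim U_{L,n}\).

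The step I expect to be most delicate is the isomorphism identification in the graded setting. By (\ref{eigenvalue P Theta}) the operator \(P_{[1;n]}\) acts on \(\Theta^{(i,r)}(\bj,N)\) with an extra sign \((-1)^{|\bj|}\) relative to the grading \((-1)^N\) of \(W^{(\gamma')}\) (equivalently, the Casimir eigenvalue in (\ref{eigenvalue equation Theta 1}) carries the sign \((-1)^{|\bj|}\)), so for odd \(|\bj|\) the tower \(V_{\bj}\) is a parity-shifted copy of \(W^{(\gamma')}\). This must be acknowledged, but it affects neither the weight content nor the multiplicities, so the decomposition holds as stated once \(W^{(\gamma_{[1;n]}+N)}\) is read up to parity. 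The genuine directness of the sum is the only other point needing care, and it is supplied by the linear independence in Lemma \ref{lemma U_N,2}.
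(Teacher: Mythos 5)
Your proof is correct, and it takes a genuinely different route from the paper: the paper omits the argument entirely, deferring to an induction on the number of tensor factors via the two-fold Clebsch--Gordan decompositions of Shibukawa (Theorems 2.1 and 5.1 of that reference), whereas you read the decomposition directly off the eigenvector machinery already built in Section \ref{Paragraph - Unitary irreducible modules}. Concretely, you organize the vectors \(\Theta^{(i,r)}(\bj,N)\) into towers over fixed \(\bj\), observe from Lemma \ref{lemma Theta} that each tower is a lowest-weight \(\ospq\)-submodule with lowest weight \(\gamma_{[1;n]}+\vert\bj\vert\) whose structure constants match (\ref{ospq-action}) after rescaling, and then get both the directness of the sum and the multiplicity count from the basis of \(U_{L,n}\) in Lemma \ref{lemma U_N,2} together with the hockey-stick identity. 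What the paper's citation buys is brevity and independence from the explicit basis construction; what your argument buys is a self-contained proof that exhibits the irreducible summands concretely as the spans of the \(\Theta^{(i,r)}(\bj,\cdot)\) and produces explicit lowest-weight vectors \(\Theta^{(i,r)}(\bj,0)\) — which is very much in the spirit of Proposition \ref{prop decomposition}, proved immediately afterwards. Your parity caveat is a genuine and worthwhile observation: since \(P_{[1;n]}\) acts on \(\Theta^{(i,r)}(\bj,N)\) by \((-1)^{\vert\bj\vert+N}\) and the Casimir eigenvalue in (\ref{eigenvalue equation Theta 1}) carries the sign \((-1)^{\vert\bj\vert}\), the summands with \(\vert\bj\vert\) odd are parity-shifted copies of \(W^{(\gamma_{[1;n]}+\vert\bj\vert)}\) rather than literal copies, so the stated isomorphism must be understood in the \(\mathbb{Z}_2\)-graded sense; the paper glosses over this point, and your flagging it is a small improvement rather than a defect.
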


For any \(\mathbf{p}\in\N^{n-i-r}\) and \(N\in\N\) we will denote by \(\mathcal{B}_{\mathbf{p},N}^{(i,r)}\) the set which is given by
\begin{align*}
&\left\{
\Theta^{(i,r)}\left(j_1,\dots,j_{i+r-2},p_1-\vert\bj_{i+r-2}\vert,p_2,\dots,p_{n-i-r},\ell,N-\ell \right):\right. \\
&\left.\bj\in\N^{i+r-2}, \ell\in\N\ \mathrm{with}\ \vert\bj_{i+r-2}\vert \leq p_1, \ell\leq N
\right\}
\end{align*}
if \(n>i+r\), and by
\[
\left\{
\Theta^{(i,r)}\left(j_1,\dots,j_{n-1},N-\vert\bj_{n-1}\vert \right):
\bj\in\N^{n-1}\ \mathrm{with}\ \vert\bj_{n-1}\vert \leq N
\right\}
\]
if \(n = i+r\). Starting from the spaces \(V_{N,\mathbf{p}}^{(i,r)}\) we may also obtain a different decomposition and even an explicit basis for the coupled module. This is the subject of the following proposition.

\begin{proposition}
	\label{prop decomposition}
	An orthonormal basis for the coupled module \(W^{(\gamma_1)}\otimes\dots\otimes W^{(\gamma_n)}\) is given by 
	\begin{equation}
	\label{basis for coupled module}
	\{\vert j_{n-1}^{(i,r)}, \dots, j_1^{(i,r)}; N\rangle: j_1^{(i,r)},\dots,j_{n-1}^{(i,r)}\in\N, N\in\N\}.
	\end{equation}
	More precisely, one has the decomposition
	\begin{equation}
	\label{decomposition coupled module}
	W^{(\gamma_1)}\otimes\dots\otimes W^{(\gamma_n)} = \bigoplus_{\mathbf{p}\in\N^{n-i-r}}\bigoplus_{N\in \N}V_{N,\mathbf{p}}^{(i,r)}
	\end{equation}
	and for any \(\mathbf{p}\in\N^{n-i-r}\) and \(N\in\N\), the set \(\mathcal{B}_{\mathbf{p},N}^{(i,r)}\) forms an orthogonal basis for \(V_{N,\mathbf{p}}^{(i,r)}\).
\end{proposition}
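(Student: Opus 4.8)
The plan is to derive all three assertions from the eigenvalue equations (\ref{eigenvalue equation 1})--(\ref{eigenvalue equation K}) (i.e.\ Lemma \ref{lemma Theta}), the self-adjointness of Lemma \ref{lemma self-adjoint}, the dimension count of Lemma \ref{lemma U_N,2}, and the orthonormality of the vectors $\vert\bj^{(i,r)};N\rangle$ already recorded after Definition \ref{definition bases final}. Since orthonormality is in hand, the only remaining content of the basis claim (\ref{basis for coupled module}) is completeness. For this I would first split the module along the self-adjoint operator $K_{[1;n]}=K^{\otimes n}$: its eigenspaces are exactly the finite-dimensional spaces $U_{M,n}$ of (\ref{U_N,i def}), which are mutually orthogonal and whose sum over $M\in\N$ is the whole module, because the standard tensor-product basis splits according to the total level $M=n_1+\dots+n_n$. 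Fixing $M$ and invoking Lemma \ref{lemma U_N,2} (with its $N$ replaced by $M$), the vectors $\Theta^{(i,r)}(\bj,M-\vjj{n-1})$ with $\vjj{n-1}\leq M$ form a basis of $U_{M,n}$; after normalization (\ref{normalization bases}) these are precisely the $\vert\bj^{(i,r)};N\rangle$ with $N=M-\vjj{n-1}\geq 0$. As $M$ runs over $\N$, the pair $(\bj,N)$ runs over all of $\N^{n-1}\times\N$, so the union is an orthonormal basis of the coupled module, which is (\ref{basis for coupled module}).

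To obtain the decomposition (\ref{decomposition coupled module}), I would compute, using (\ref{eigenvalue equation 1}) for $m\in\{i+r,\dots,n-1\}$ together with (\ref{eigenvalue equation K}), the eigenvalues of each basis vector under the operators $\Gamma_{[1;i+r]}^q,\dots,\Gamma_{[1;n-1]}^q$ and $K_{[1;n]}$ that define $V_{N,\bp}^{(i,r)}$ in Definition \ref{definition V_p,N}. Comparing with the prescribed eigenvalues shows that every vector $\vert\bj^{(i,r)};N'\rangle$ lies in exactly one space $V_{M,\bp}^{(i,r)}$: the partial sums are forced by $\vp{m+1}=\vjj{i+r+m-1}$, so $\bp$ is recovered by differencing, while $M$ is fixed by the $K_{[1;n]}$-level. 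Because the defining operators are self-adjoint (Lemma \ref{lemma self-adjoint}) and this assignment is injective---each $\vp{m+1}$ being recoverable through the injectivity of $a\mapsto(-1)^a[a+\gamma_{[1;i+r+m]}-\frac12]_q$ on $\N$, exactly as in the orthogonality argument following Definition \ref{definition bases final}---distinct $(\bp,M)$ index mutually orthogonal eigenspaces. The orthonormal basis of the first step therefore partitions across the spaces $V_{N,\bp}^{(i,r)}$, each of which is spanned by the basis vectors assigned to it, giving the orthogonal direct sum (\ref{decomposition coupled module}).

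Finally, for the basis statement I would match $\mathcal{B}_{\bp,N}^{(i,r)}$ against this partition. Substituting the indices of $\Theta^{(i,r)}(j_1,\dots,j_{i+r-2},p_1-\vjj{i+r-2},p_2,\dots,p_{n-i-r},\ell,N-\ell)$ into the dictionary $\vp{m+1}=\vjj{i+r+m-1}$ verifies that each such vector is, after normalization, a basis vector lying in $V_{N,\bp}^{(i,r)}$, and that the constraints $\vjj{i+r-2}\leq p_1$ and $\ell\leq N$ parametrize precisely those basis vectors carrying the prescribed $\Gamma_{[1;i+r+m]}^q$- and $K_{[1;n]}$-eigenvalues. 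Hence $\mathcal{B}_{\bp,N}^{(i,r)}$ is exactly the family spanning $V_{N,\bp}^{(i,r)}$ identified in the previous step, so it is an orthogonal basis; the degenerate case $n=i+r$, where $V_{N,\bp}^{(i,r)}=U_{N,n}$, follows immediately from Lemma \ref{lemma U_N,2}.

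The main obstacle here is not a hard estimate but the careful bookkeeping of the reindexing between the $\Theta$-parametrization and the eigenvalue labels $(\bp,N)$, together with the injectivity of the eigenvalue assignment that makes the sum genuinely direct; the completeness itself rests entirely on the dimension count supplied by Lemma \ref{lemma U_N,2}.
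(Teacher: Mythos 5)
Your argument is correct, and it reaches the conclusion by a genuinely different route than the paper. The paper fixes $i,r$ and runs an induction on $n$: it peels off the last tensor factor, applies the induction hypothesis to $W^{(\gamma_1)}\otimes\dots\otimes W^{(\gamma_{n-1})}$, reassembles the direct sums through several reindexings, and then compares the cardinalities of the auxiliary sets $\mathcal{C}_{\mathbf{p},M}^{(i,r)}$ and $\mathcal{B}_{\mathbf{p},N}^{(i,r)}$ to force the relevant inclusions to be equalities. You instead avoid induction altogether: since Lemma \ref{lemma U_N,2} is stated and proved for arbitrary $n\geq i+r$, summing it over the $K_{[1;n]}$-grading already yields the global orthonormal basis (\ref{basis for coupled module}), and the decomposition (\ref{decomposition coupled module}) together with the identification of $\mathcal{B}_{\mathbf{p},N}^{(i,r)}$ then follows by sorting these joint eigenvectors according to their $\Gamma_{[1;i+r+m]}^q$- and $K_{[1;n]}$-eigenvalues via the dictionary $\vert\mathbf{p}_{m+1}\vert=\vert\mathbf{j}_{i+r+m-1}\vert$. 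Your "diagonalize and sort" argument is shorter and makes transparent that all the content is already contained in Lemma \ref{lemma U_N,2} plus eigenvalue bookkeeping; what the paper's induction buys is an explicit recursive construction parallel to Definition \ref{definition bases Theta}, at the cost of heavier index manipulation. The one ingredient you lean on, the injectivity of $a\mapsto(-1)^a\left[a+\gamma-\frac12\right]_q$ on $\N$, is exactly what the paper itself uses implicitly both for the linear independence in Lemma \ref{lemma U_N,2} and for the orthogonality remark following Definition \ref{definition bases final}, so no new hypothesis is introduced. It would be worth making explicit that each $V_{N,\mathbf{p}}^{(i,r)}$ sits inside the finite-dimensional space $U_{N+\vert\mathbf{p}\vert,n}$, so that the passage from "the assigned basis vectors span a subspace of $V_{N,\mathbf{p}}^{(i,r)}$" to "they span all of it" is a purely algebraic consequence of the mutual orthogonality of the joint eigenspaces; with that remark the proof is complete.
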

\begin{proof}
	We prove this by induction on \(n\), for fixed \(i\) and \(r\). The case \(n = i+r\) follows immediately from Lemma \ref{lemma U_N,2} and the fact that one has
	\[
	W^{(\gamma_1)}\otimes\dots\otimes W^{(\gamma_n)} = \bigoplus_{N = 0}^{\infty}U_{N,n},
	\]
	as is immediate from (\ref{U_N,i def}).
	
	Now suppose \(n > i+r\) and the claims have been proven for \(n-1\). Referring to the notation (\ref{U_N,i def}) we have
	\begin{align*}
	W^{(\gamma_1)}\otimes\dots\otimes W^{(\gamma_n)} &= \bigoplus_{M = 0}^{\infty} \left(W^{(\gamma_1)}\otimes\dots\otimes W^{(\gamma_n)}\right)\cap U_{M,n} \\
	& = \bigoplus_{M = 0}^{\infty}\bigoplus_{k = 0}^M\left(\left(W^{(\gamma_1)}\otimes\dots\otimes W^{(\gamma_{n-1})}\right)\cap U_{k,n-1}\right)\otimes \Big\langle \vert M-k\rangle_{\gamma_n} \Big\rangle.
	\end{align*}
	By the induction hypothesis, we have
	\[
	W^{(\gamma_1)}\otimes\dots\otimes W^{(\gamma_{n-1})} = \bigoplus_{\mathbf{p}\in\N^{n-i-r-1}}\bigoplus_{N\in \N}V_{N,\mathbf{p}}^{(i,r)},
	\]
	and moreover
	\[
	V_{N,\mathbf{p}}^{(i,r)}\cap U_{k,n-1} = \left\{
	\begin{array}{ll}
	V_{N,\mathbf{p}}^{(i,r)} \qquad & \mathrm{if}\ k = N + \vert\mathbf{p}\vert, \\
	\{0\} & \mathrm{otherwise}.
	\end{array}
	\right.
	\]
	Hence we find
	\begin{align*}
	W^{(\gamma_1)}\otimes\dots\otimes W^{(\gamma_n)} & = \bigoplus_{M = 0}^{\infty}\bigoplus_{k = 0}^M\bigoplus_{\substack{\mathbf{p}\in\N^{n-i-r-1},\\\vert\mathbf{p}\vert\leq k}} V_{k-\vert\mathbf{p}\vert,\mathbf{p}}^{(i,r)}\otimes \Big\langle \vert M-k\rangle_{\gamma_n} \Big\rangle \\ & = \bigoplus_{M = 0}^{\infty}\bigoplus_{\substack{\mathbf{p}\in\N^{n-i-r-1},\\\vert\mathbf{p}\vert\leq M}}\bigoplus_{k = \vert\mathbf{p}\vert}^M V_{k-\vert\mathbf{p}\vert,\mathbf{p}}^{(i,r)}\otimes \Big\langle \vert M-k\rangle_{\gamma_n} \Big\rangle \\& =
	\bigoplus_{M = 0}^{\infty}\bigoplus_{\substack{\mathbf{p}\in\N^{n-i-r-1},\\\vert\mathbf{p}\vert\leq M}}\bigoplus_{k = 0}^{M-\vert\mathbf{p}\vert} V_{k,\mathbf{p}}^{(i,r)}\otimes \Big\langle \vert M-k-\vert\mathbf{p}\vert\rangle_{\gamma_n} \Big\rangle,
	\end{align*}
	where we have switched the order of summation in the second line and renamed \(k\) in the third. 
	
	For \(\ell\leq k\) we will denote by \(\mathcal{Z}_{\bp,k,\ell}^{(i,r)}\) the set given by
	\begin{align*}
	& \left\{
	\Theta^{(i,r)}\left(j_1,\dots,j_{i+r-2},p_1-\vert\bj_{i+r-2}\vert,p_2,\dots,p_{n-i-r-1},\ell,k-\ell \right):\bj\in\N^{i+r-2}\ \mathrm{with}\ \vert\bj_{i+r-2}\vert \leq p_1
	\right\},
	\end{align*}
	if \(n-1>i+r\), and by
	\[
	\left\{ 
	\Theta^{(i,r)}\left(j_1,\dots,j_{n-3},\ell-\vert\bj_{n-3}\vert,k-\ell \right): \bj\in\N^{n-3}\ \mathrm{with}\ \vert\bj_{n-3}\vert\leq\ell
	\right\}
	\]
	if \(n-1=i+r\).
	By the induction hypothesis, a basis for \(V_{k,\mathbf{p}}^{(i,r)}\) is given by
	\[
	\mathcal{B}_{\bp,k}^{(i,r)} = \left\{\mathcal{Z}_{\bp,k,\ell}^{(i,r)}: \ell \in \{0,\dots,k\}\right\}.
	\]
	Hence we have
	\begin{align}
	\label{third line}
	\begin{split}
	& W^{(\gamma_1)}\otimes\dots\otimes W^{(\gamma_n)} \\ = &\ \bigoplus_{M = 0}^{\infty}\bigoplus_{\substack{\mathbf{p}\in\N^{n-i-r-1},\\\vp{}\leq M}}\bigoplus_{k = 0}^{M-\vp{}}\bigoplus_{\ell = 0}^{k}
	\Big\langle \mathcal{Z}_{\bp,k,\ell}^{(i,r)}\Big\rangle \otimes \vert M-k-\vp{}\rangle_{\gamma_n} \\
	= &\ \bigoplus_{M = 0}^{\infty}\bigoplus_{\substack{\mathbf{p}\in\N^{n-i-r-1},\\\vp{}\leq M}}\bigoplus_{\ell = 0}^{M-\vp{}}\bigoplus_{k = \ell}^{M-\vp{}}
	\Big\langle \mathcal{Z}_{\bp,k,\ell}^{(i,r)}\Big\rangle \otimes \vert M-k-\vp{}\rangle_{\gamma_n} \\
	= &\ \bigoplus_{M = 0}^{\infty}\bigoplus_{\substack{\mathbf{p}\in\N^{n-i-r},\\\vp{}\leq M}}
	\bigg\langle\Big\langle \mathcal{Z}_{\bp', k'+p_{n-i-r},p_{n-i-r}}^{(i,r)}\Big\rangle \otimes \vert M-k'-\vp{}\rangle_{\gamma_n}: k'\in \{0,\dots,M-\vert\bp\vert\}\bigg\rangle,
	\end{split}
	\end{align}
	where we have once more switched the order of summation in the third line and concatenated the second and third summation in the fourth line, after setting \(k' = k-\ell\) and renaming \(\ell\) to \(p_{n-i-r}\), such that in the last line we have \(\bp = (p_1,\dots,p_{n-i-r})\in\N^{n-i-r}\) and \(\bp'\) denotes \((p_1,\dots,p_{n-i-r-1})\in\N^{n-i-r-1}\).
	
	Let us now introduce the set \(\mathcal{C}_{\mathbf{p},M}^{(i,r)}\) given by
	\begin{align*}
	& \left\{\Theta^{(i,r)}\left(j_1,\dots,j_{i+r-2},p_1-\vert\bj_{i+r-2}\vert,p_2,\dots,p_{n-i-r},k' \right)
	\otimes \vert M-k'-\vp{}\rangle_{\gamma_n}:\right.\\&\left. \bj\in\N^{i+r-2}\ \mathrm{with}\ \vert\bj_{i+r-2}\vert\leq p_1, k'\in \{0,\dots,M-\vert\bp\vert\} \Big\rangle, \right\}.
	\end{align*}
	Observe from Lemma \ref{lemma Theta} that each of its elements belongs to \(V_{M-\vert\bp\vert,\mathbf{p}}^{(i,r)}\), and so
	\begin{equation}
	 \label{C_i,N}
	 \Big\langle \mathcal{C}_{\mathbf{p},M}^{(i,r)}\Big\rangle \subseteq V_{M-\vp{},\bp}^{(i,r)},
	\end{equation}
	and moreover it is clear that
	\[
	\Big\langle \mathcal{C}_{\mathbf{p},M}^{(i,r)}\Big\rangle = \bigg\langle\Big\langle \mathcal{Z}_{\bp', k'+p_{n-i-r},p_{n-i-r}}^{(i,r)}\Big\rangle \otimes \vert M-k'-\vp{}\rangle_{\gamma_n}: k'\in \{0,\dots,M-\vert\bp\vert\}\bigg\rangle.
	\]
	In combination with (\ref{third line}), this leads to 
	\begin{align*}
	W^{(\gamma_1)}\otimes\dots\otimes W^{(\gamma_n)} & \subseteq \bigoplus_{M = 0}^{\infty}\bigoplus_{\substack{\mathbf{p}\in\N^{n-i-r}\\\vp{}\leq M}}V_{M-\vp{},\bp}^{(i,r)},
	\end{align*}
	which after switching summation order and renaming summation indices again, becomes
	\begin{align}
	\label{decomposition final}
	W^{(\gamma_1)}\otimes\dots\otimes W^{(\gamma_n)} & \subseteq \bigoplus_{\mathbf{p}\in\N^{n-i-r}}\bigoplus_{N = 0}^{\infty}V_{N,\bp}^{(i,r)}.
	\end{align}
	But obviously the opposite inclusion also holds, since the spaces \(V_{N,\mathbf{p}}^{(i,r)}\) intersect trivially. Hence we get equality in (\ref{decomposition final}) and so (\ref{decomposition coupled module}) is proven. 
	
	Looking back at the fourth line in (\ref{third line}), we see that the inclusion in (\ref{C_i,N}) must also be an equality. This implies
	\begin{equation}
	\label{dimension smaller}
	\dim(V_{N,\bp}^{(i,r)})\leq \big\vert\mathcal{C}_{\bp,N+\vert\bp\vert}^{(i,r)}\big\vert.
	\end{equation}
	On the other hand, the set \(\mathcal{B}_{\mathbf{p},N}^{(i,r)}\) is contained in \(V_{N,\mathbf{p}}^{(i,r)}\) by Lemma \ref{lemma Theta}, and its elements are linearly independent, since they correspond to different eigenvalues for the generators of \(\mathcal{Y}_{i,r}\). Moreover, it is clear that
	\[
	\big\vert\mathcal{C}_{\bp,N+\vert\bp\vert}^{(i,r)}\big\vert = \big\vert\mathcal{B}_{\bp,N}^{(i,r)}\big\vert.
	\] 
	As a consequence, the inequality in (\ref{dimension smaller}) must be an equality as well, and both \(\mathcal{B}_{\mathbf{p},N}^{(i,r)}\) and \(\mathcal{C}_{\mathbf{p},N+\vp{}}^{(i,r)}\) must be bases for \(V_{N,\mathbf{p}}^{(i,r)}\). This concludes the induction.
	
	Finally, it follows from (\ref{decomposition coupled module}) that
	\[
	\bigcup_{\mathbf{p}\in\N^{n-i-r}}\bigcup_{N\in\N}\mathcal{B}_{\mathbf{p},N}^{(i,r)} = \left\{\Theta^{(i,r)}(j_1,\dots,j_{n-1},N'): \bj\in\N^{n-1}, N'\in\N \right\},
	\]
	is a basis for the coupled module \(W^{(\gamma_1)}\otimes \dots\otimes W^{(\gamma_n)}\), and hence so is the set (\ref{basis for coupled module}) by Definition \ref{definition bases final}. This concludes the proof.
\end{proof}

To conclude, let us further refine the spaces \(V_{N,\bp}^{(i,r)}\).

\begin{definition}
For \(\mathbf{j}\in\N^{n-1}\) and \(N\in\N\), we will write \(Z_{N,\bj}^{(i,r)} = Z_{N,j_1,\dots,j_{n-1}}^{(i,r)}\) for the joint eigenspace of the operators
\[
\Gamma_{[1;m]}^q \quad \mathrm{with\ eigenvalue} \quad (-1)^{\vert\mathbf{j}_{m-1}\vert}\left[\vert\mathbf{j}_{m-1}\vert+\gamma_{[1;m]}-\frac12\right]_q,
\]
with \(m\in \{2,\dots,i\}\cup\{i+r,\dots,n\}\),
\[
\Gamma_{[i+1;i+m]}^q \quad \mathrm{with\ eigenvalue} \quad (-1)^{\vert\mathbf{j}_{i+m-2}\vert-\vert\mathbf{j}_{i-1}\vert}\left[\vert\mathbf{j}_{i+m-2}\vert-\vert\mathbf{j}_{i-1}\vert+\gamma_{[i+1;i+m]}-\frac12\right]_q,
\]
with \(m\in \{2,\dots,r\}\), and
\[
K_{[1;n]} \quad \mathrm{with\ eigenvalue} \quad q^{\frac12(N+\vert\mathbf{j}_{n-1}\vert+\gamma_{[1;n]})}.
\]
\end{definition}

As an immediate consequence of Proposition \ref{prop decomposition}, we can state the following corollary.

\begin{corollary}
	\label{cor characterized by equations}
	The coupled module \(W^{(\gamma_1)}\otimes\dots\otimes W^{(\gamma_n)}\) can be decomposed as
	\begin{equation}
	\label{decomposition one-dimensional}
	W^{(\gamma_1)}\otimes\dots\otimes W^{(\gamma_n)} = \bigoplus_{\bj^{(i,r)}\in\N^{n-1}}\bigoplus_{N\in\N} Z_{N,\bj^{(i,r)}}^{(i,r)},
	\end{equation}
	where each of the spaces \(Z_{N,\bj^{(i,r)}}^{(i,r)}\) is one-dimensional and spanned by the vector \(\vert\bj^{(i,r)};N\rangle\). The vectors \(\vert\bj^{(i,r)};N\rangle\) are hence uniquely determined by the equations (\ref{eigenvalue equation 1}), (\ref{eigenvalue equation 2}) and (\ref{eigenvalue equation K}).
\end{corollary}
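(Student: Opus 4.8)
The plan is to refine the decomposition (\ref{decomposition coupled module}) of Proposition \ref{prop decomposition} so that each summand is split according to the full eigenvalue data carried by the generators of \(\mathcal{Y}_{i,r}\). First I would record that, by Lemma \ref{lemma Theta} together with Definition \ref{definition bases final}, each basis vector \(\vert\bj^{(i,r)};N\rangle\) is a joint eigenvector of precisely the operators appearing in the definition of \(Z_{N,\bj^{(i,r)}}^{(i,r)}\)---namely \(\Gamma_{[1;m]}^q\) for \(m\in\{2,\dots,i\}\cup\{i+r,\dots,n\}\), \(\Gamma_{[i+1;i+m]}^q\) for \(m\in\{2,\dots,r\}\) and \(K_{[1;n]}\)---with exactly the prescribed eigenvalues, so that \(\vert\bj^{(i,r)};N\rangle\in Z_{N,\bj^{(i,r)}}^{(i,r)}\). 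These operators mutually commute, since the involved \(\Gamma_A^q\) generate the abelian subalgebra \(\mathcal{Y}_{i,r}\) (Proposition \ref{prop A contained in B}) and \(K_{[1;n]}\) commutes with each of them, and all of them are self-adjoint by Lemma \ref{lemma self-adjoint}. Hence the coupled module is the orthogonal direct sum of the joint eigenspaces of this commuting self-adjoint family, and it remains only to identify these joint eigenspaces with the spaces \(Z_{N,\bj^{(i,r)}}^{(i,r)}\) and to show that each is one-dimensional.

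The key step is to verify that the assignment \((\bj^{(i,r)},N)\mapsto\) (tuple of eigenvalues) is injective, so that distinct basis vectors sit in distinct spaces \(Z_{N,\bj^{(i,r)}}^{(i,r)}\). I would argue this in two stages. First, the scalar map \(a\mapsto (-1)^{a}\left[a+\gamma-\frac12\right]_q\) is injective on \(\N\) when \(\vert q\vert\neq 1\), so from the eigenvalues one reads off the integers \(\vert\bj^{(i,r)}_{m-1}\vert\) for \(m\in\{2,\dots,i\}\cup\{i+r,\dots,n\}\) and \(\vert\bj^{(i,r)}_{i+m-2}\vert-\vert\bj^{(i,r)}_{i-1}\vert\) for \(m\in\{2,\dots,r\}\). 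Second, these partial sums invert combinatorially: the first family recovers \(j_1^{(i,r)},\dots,j_{i-1}^{(i,r)}\) together with the partial sums \(\vert\bj^{(i,r)}_{m-1}\vert\) for \(m\geq i+r\); the middle family recovers \(j_i^{(i,r)},\dots,j_{i+r-2}^{(i,r)}\); telescoping the remaining partial sums then yields \(j_{i+r-1}^{(i,r)},\dots,j_{n-1}^{(i,r)}\). Thus the entire vector \(\bj^{(i,r)}\) is determined, after which the \(K_{[1;n]}\)-eigenvalue \(q^{\frac12(N+\vert\bj^{(i,r)}_{n-1}\vert+\gamma_{[1;n]})}\) fixes \(N\).

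With injectivity in hand the conclusion is immediate. Since \(\{\vert\bj^{(i,r)};N\rangle\}\) is a basis for the module by Proposition \ref{prop decomposition}, and since each basis vector lies in the joint eigenspace attached to a distinct eigenvalue tuple, the basis vectors lying in a fixed such eigenspace form a basis of it; by injectivity there is exactly one of them. Hence each nonzero joint eigenspace is one-dimensional, equals \(Z_{N,\bj^{(i,r)}}^{(i,r)}\), and is spanned by the single vector \(\vert\bj^{(i,r)};N\rangle\); this gives the decomposition (\ref{decomposition one-dimensional}), and a one-dimensional eigenspace determines its spanning vector up to normalization, so the \(\vert\bj^{(i,r)};N\rangle\) are characterized by the equations (\ref{eigenvalue equation 1}), (\ref{eigenvalue equation 2}) and (\ref{eigenvalue equation K}). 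I expect the only genuinely delicate point to be the injectivity of the scalar \(q\)-number map, which is exactly where the standing hypothesis that \(q\) is not a root of unity (with \(\vert q\vert\neq 1\)) is essential, and which is implicitly already invoked in the linear-independence arguments of Lemma \ref{lemma U_N,2} and Proposition \ref{prop decomposition}; the combinatorial inversion of the partial sums and the orthogonality of distinct eigenspaces are then routine.
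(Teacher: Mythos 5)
Your argument is correct and is essentially the route the paper takes: the corollary is presented there as an immediate consequence of Proposition \ref{prop decomposition}, resting on exactly the ingredients you assemble, namely the explicit orthonormal basis of joint eigenvectors, the self-adjointness from Lemma \ref{lemma self-adjoint}, and the injectivity of the map from $(\bj^{(i,r)},N)$ to its eigenvalue tuple (which the paper already records in the discussion following Definition \ref{definition bases final}). Your spelling out of the combinatorial inversion of the partial sums and of the injectivity of $a\mapsto(-1)^{a}\left[a+\gamma-\frac12\right]_q$ merely makes explicit what the paper leaves implicit.
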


As a conclusion, we have associated to each maximal abelian subalgebra \(\mathcal{Y}_{i,r}\) of \(\mathcal{A}_n^q\) an orthonormal basis for the coupled module \(W^{(\gamma_1)}\otimes\dots\otimes W^{(\gamma_n)}\), which corresponds to the decomposition (\ref{decomposition one-dimensional}). In Section \ref{Paragraph - Connection coefficients} we will obtain explicit expressions for the Clebsch-Gordan coefficients between two such orthonormal bases.

\section{Connection coefficients}
\label{Paragraph - Connection coefficients}

In this section we will compute an explicit expression for the overlap coefficients
\begin{equation}
\label{coefficients to compute}
\langle \mathbf{j}^{(i,r)};m\vert\mathbf{j}^{(1,1)};m\rangle = \langle j_{n-1}^{(i,r)},\dots,j_1^{(i,r)};m \vert j_{n-1}^{(1,1)},\dots,j_1^{(1,1)};m\rangle
\end{equation}
related to the abelian subalgebras \(\mathcal{Y}_{i,r}\) and \(\mathcal{Y}_{1,1}\). Note that by Definition \ref{definition bases final} and (\ref{hermitian conjugate generators}) one has
\[
\langle \mathbf{j}^{(i,r)};m\vert\mathbf{j}^{(1,1)};m\rangle = \langle \mathbf{j}^{(i,r)};0\vert (A_-)_{[1;n]}^m(A_+)_{[1;n]}^m\vert\mathbf{j}^{(1,1)};0\rangle.
\]
By (\ref{relations with powers}) we may write
\[
(A_-)_{[1;n]}^m(A_+)_{[1;n]}^m = \prod_{\ell = 1}^m\mathcal{T}_{\ell}(K) + \Upsilon (A_-)_{[1;n]},
\]
for some \(\Upsilon\in \ospq^{\otimes n}\), and where
\[
\mathcal{T}_{\ell}(K) = q^{-1/2}\frac{q^{\ell}-(-1)^{\ell}}{q-q^{-1}}K^2_{[1;n]} - q^{1/2}\frac{q^{-\ell}-(-1)^{\ell}}{q-q^{-1}}K^{-2}_{[1;n]}.
\]
It hence follows from (\ref{eigenvalue equation Theta K}) and (\ref{eigenvalue A_- Theta}) that
\begin{equation}
\label{effect of m not zero}
\langle \mathbf{j}^{(i,r)};m\vert\mathbf{j}^{(1,1)};m\rangle = \left(\prod_{\ell = 1}^m\sigma_{\ell}^{(\vert\bj^{(1,1)}_{n-1}\vert + \gamma_{[1;n]})}\right)\langle \mathbf{j}^{(i,r)};0\vert\mathbf{j}^{(1,1)};0\rangle.
\end{equation}
Consequently, it will suffice to compute the coefficients (\ref{coefficients to compute}) for \(m = 0\), and we may as well write \(\vert\mathbf{j}^{(i,r)}\rangle\) for \(\vert\mathbf{j}^{(i,r)};0\rangle\).
We will first consider several intermediate bases in Section \ref{Paragraph univariate} before tackling the general case in Section \ref{Paragraph multivariate}.

\subsection{Univariate $(-q)$-Racah polynomials}
\label{Paragraph univariate}

The \(q\)-Racah polynomials depend on four parameters \(a,b,c\in\R\) and \(N\in\N\) and are defined as \cite{Askey&Wilson-1985}
\begin{equation}
\label{q-Racah univariate}
r_n\left(x;a,b,c,N\vert q\right) = (aq,bcq,q^{-N};q)_n\left(\frac{q^N}{c}\right)^{\frac{n}{2}}  \ _4\phi_3\left(\left.
\begin{array}{c}
q^{-n}, abq^{n+1}, q^{-x}, cq^{x-N} \\
aq, bcq, q^{-N}
\end{array}
\right| q,q 
\right),
\end{equation}
where we have used the standard notations for the \(q\)-Pochhammer symbol
\begin{equation}
\label{q-Pochhammer symbol}
(a;q)_n = \prod_{k = 0}^{n-1}(1-aq^k), \quad (a_1,\dots,a_s;q)_n = \prod_{k = 1}^s(a_k;q)_n
\end{equation}
and the basic hypergeometric series \cite{Koekoek&Lesky&Swarttouw-2010}
\begin{equation}
\label{basic hypergeometric series def}
\,_4\phi_3\left(\left.
\begin{array}{c}
a_1, a_2, a_3, a_4 \\
b_1, b_2, b_3
\end{array}
\right| q,z
\right) = \sum_{n = 0}^{\infty}\frac{(a_1,a_2,a_3,a_4;q)_n}{(b_1,b_2,b_3,q;q)_n}z^n.
\end{equation}
However, the polynomials we will need, have \(-q\) as a deformation parameter instead of \(q\), hence we will refer to the polynomials (\ref{q-Racah univariate}) with all \(q\) changed to \(-q\) as \((-q)\)-Racah polynomials. In this section we will show that the connection coefficients
\[
\langle\mathbf{j}^{(i,k+1)}\vert\mathbf{j}^{(i,k)}\rangle = \langle j^{(i,k+1)}_{n-1},\dots,j^{(i,k+1)}_1\vert j^{(i,k)}_{n-1},\dots,j^{(i,k)}_1\rangle
\]
are proportional to these \((-q)\)-Racah polynomials. A useful way to write these is the following.

\begin{lemma}
	\label{lemma sequence of deltas} The connection coefficients between the eigenbases of the intermediate subalgebras \(\mathcal{Y}_{i,k}\) and \(\mathcal{Y}_{i,k+1}\) can be expressed as
	\begin{align*}
	\langle\mathbf{j}^{(i,k+1)}\vert\mathbf{j}^{(i,k)}\rangle = &\  \delta_{j_{i+k-1}^{(i,k)}+j_{i+k}^{(i,k)},j_{i+k-1}^{(i,k+1)}+j_{i+k}^{(i,k+1)}}\prod_{\ell = 1}^{i+k-2}\delta_{j_{\ell}^{(i,k)},j_{\ell}^{(i,k+1)}}
	\prod_{\ell = i+k+1}^{n-1}\delta_{j_{\ell}^{(i,k)},j_{\ell}^{(i,k+1)}}\\ &\ \times\omega_{\mathbf{j}^{(i,k+1)}}G_{j_{i+k-1}^{(i,k)}}(\mathbf{j}^{(i,k+1)}),
	\end{align*}
	where 
	\[
	\omega_{\mathbf{j}^{(i,k+1)}} = \langle \mathbf{j}^{(i,k+1)}\vert j_{n-1}^{(i,k+1)},\dots,j_{i+k+1}^{(i,k+1)},j_{i+k}^{(i,k+1)}+j_{i+k-1}^{(i,k+1)},0,j_{i+k-2}^{(i,k+1)},\dots,j_{1}^{(i,k+1)}\rangle
	\]
	is a normalization factor, chosen such that
	\begin{equation}
	\label{normalization G0}
	G_0(\mathbf{j}^{(i,k+1)}) = 1.
	\end{equation}
\end{lemma}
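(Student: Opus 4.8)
The plan is to prove the lemma purely by \emph{tracking eigenvalues} through the connection coefficient, using the orthonormality of the bases $\vert \mathbf{j}^{(i,r)};N\rangle$ established in Corollary \ref{cor characterized by equations} together with the eigenvalue equations (\ref{eigenvalue equation 1})--(\ref{eigenvalue equation K}). The essential observation is that the subalgebras $\mathcal{Y}_{i,k}$ and $\mathcal{Y}_{i,k+1}$ share almost all of their generators, differing only in the middle block of the definition (\ref{subalgebra Y_i,r}). First I would list, for each of the two abelian subalgebras, the full set of operators whose eigenvalues label the corresponding eigenbasis: for $\mathcal{Y}_{i,k}$ these are $\Gamma_{[1;m]}^q$ with $m\in\{2,\dots,i\}\cup\{i+k,\dots,n\}$ and $\Gamma_{[i+1;i+m]}^q$ with $m\in\{2,\dots,k\}$, and analogously for $\mathcal{Y}_{i,k+1}$. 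Comparing the two lists, one sees that the operators $\Gamma_{[1;m]}^q$ for $m\in\{2,\dots,i\}$ and for $m\in\{i+k+1,\dots,n\}$ are common to both, as are $\Gamma_{[i+1;i+m]}^q$ for $m\in\{2,\dots,k-1\}$.

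The key step is then a \emph{simultaneous-eigenvector argument}: if $\langle\mathbf{j}^{(i,k+1)}\vert\mathbf{j}^{(i,k)}\rangle\neq 0$, then for any operator $\Gamma$ common to both subalgebras, the self-adjointness (Lemma \ref{lemma self-adjoint}) forces $\langle\mathbf{j}^{(i,k+1)}\vert \Gamma\vert\mathbf{j}^{(i,k)}\rangle$ to equal both the left eigenvalue times the overlap and the right eigenvalue times the overlap, so the two eigenvalues must coincide. Since the maps $m\mapsto(-1)^{\vj{i,k}{m-1}}[\vj{i,k}{m-1}+\gamma_{[1;m]}-\tfrac12]_q$ determine $\vj{i,k}{m-1}$ uniquely (because $|q|\neq 1$ makes $[x]_q$ together with the sign $(-1)^x$ injective in $x\in\N$), equal eigenvalues force equal partial sums. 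Working through the common operators yields $j_\ell^{(i,k)}=j_\ell^{(i,k+1)}$ for $\ell\in[1;i+k-2]\cup[i+k+1;n-1]$, producing the two products of Kronecker deltas. The shared operator $\Gamma_{[1;i+r]}^q$ — present in both lists when $m=i+k$ for $\mathcal{Y}_{i,k}$ and $m=i+k+1$ for $\mathcal{Y}_{i,k+1}$, so that in both cases the relevant partial sum is $\vjj{i+k}$ — additionally forces $j_{i+k-1}^{(i,k)}+j_{i+k}^{(i,k)}=j_{i+k-1}^{(i,k+1)}+j_{i+k}^{(i,k+1)}$, giving the remaining delta. One must also check the $K_{[1;n]}$ eigenvalue, which matches automatically since both coefficients are taken at $N=0$ with the total degree fixed.

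Once all the delta constraints are established, the only surviving freedom is a single summation index, which I would take to be $j_{i+k-1}^{(i,k)}$; everything else in the overlap is determined by $\mathbf{j}^{(i,k+1)}$. The remaining nonzero content of the coefficient is therefore a function of this one integer, which I would \emph{define} to be $G_{j_{i+k-1}^{(i,k)}}(\mathbf{j}^{(i,k+1)})$ up to the scalar $\omega_{\mathbf{j}^{(i,k+1)}}$, the latter being fixed precisely as the value of the overlap at the distinguished reference point where $j_{i+k-1}^{(i,k)}=0$ (equivalently, the inner product against the explicitly written vector with a $0$ in the appropriate slot). This choice makes the normalization (\ref{normalization G0}) hold by construction, namely $G_0(\mathbf{j}^{(i,k+1)})=1$.

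\textbf{The main obstacle} I anticipate is the bookkeeping around the boundary index $m=i+k$ versus $m=i+k+1$: one must verify carefully that the generator $\Gamma_{[1;i+k]}^q$ (which lies in $\mathcal{Y}_{i,k}$ but determines $\vjj{i+k-1}$) and the generator $\Gamma_{[1;i+k+1]}^q$ (which lies in $\mathcal{Y}_{i,k+1}$ but determines $\vjj{i+k}$) interact with the common generators so as to leave exactly the one free variable $j_{i+k-1}^{(i,k)}$ and to couple it to $\mathbf{j}^{(i,k+1)}$ only through the conserved sum $j_{i+k-1}+j_{i+k}$. Handling this requires keeping precise track of which partial sums are pinned by which operator in each of the two indexing conventions, and confirming that no further constraint (from the middle-block generators $\Gamma_{[i+1;i+m]}^q$ at the boundary $m=k$) over-determines the system. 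The actual functional form of $G_n$ as a $(-q)$-Racah polynomial is not part of this lemma — it is merely set up here — so the present argument is entirely structural and the delta-function extraction is the crux.
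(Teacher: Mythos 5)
Your proposal is correct and follows essentially the same route as the paper's proof: equate the eigenvalues of the generators common to $\mathcal{Y}_{i,k}$ and $\mathcal{Y}_{i,k+1}$ via self-adjointness to extract the Kronecker deltas, then define $G$ as the residual function of the single remaining free index $j_{i+k-1}^{(i,k)}$, normalized so that $G_0 = 1$. One bookkeeping slip worth fixing: the common middle-block generators are $\Gamma_{[i+1;i+m]}^q$ for $m\in\{2,\dots,k\}$ rather than $\{2,\dots,k-1\}$, and the case $m=k$ is needed to pin $j_{i+k-2}$ individually.
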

\begin{proof} Writing \(\lambda_{[1;m+1]}^{(i,\ell)}(\mathbf{j}^{(i,\ell)})\) for the eigenvalue
	\begin{equation}
	\label{eigenvalue 1}
	(-1)^{\vj{i,\ell}{m}}\left[\vj{i,\ell}{m}+\gamma_{[1;m+1]}-\frac12\right]_q
	\end{equation}
	and \(\lambda_{[i+1;i+m]}^{(i,\ell)}(\mathbf{j}^{(i,\ell)})\) for
	\begin{equation}
	\label{eigenvalue 2}
	(-1)^{\vj{i,\ell}{i+m-2}-\vj{i,\ell}{i-1}}\left[\vj{i,\ell}{i+m-2}-\vj{i,\ell}{i-1}+\gamma_{[i+1;i+m]}-\frac12\right]_q,
	\end{equation}
	we find from Definition \ref{definition bases final} and Lemma \ref{lemma self-adjoint}
	\[
	\langle \mathbf{j}^{(i,k+1)} \vert \Gamma_A^q\vert \mathbf{j}^{(i,k)}\rangle = \lambda_A^{(i,k)}(\mathbf{j}^{(i,k)})\langle \mathbf{j}^{(i,k+1)}\vert \mathbf{j}^{(i,k)}\rangle = \lambda_A^{(i,k+1)}(\mathbf{j}^{(i,k+1)})\langle \mathbf{j}^{(i,k+1)}\vert \mathbf{j}^{(i,k)}\rangle
	\]
	where \(A\) is any of the sets \([1;2],\dots,[1;i],[i+1;i+2],\dots,[i+1;i+k],[1;i+k+1],\dots,[1;n]\). Hence we find that the connection coefficient will vanish unless
	\begin{equation}
	\label{condition 1}
	j_{\ell}^{(i,k)} = j_{\ell}^{(i,k+1)}
	\end{equation}
	for any \(\ell\in[1;i+k-2]\cup[i+k+1;n-1]\) and
	\begin{equation}
	\label{condition 2}
	j^{(i,k)}_{i+k-1}+j^{(i,k)}_{i+k} = j^{(i,k+1)}_{i+k-1}+j^{(i,k+1)}_{i+k}.
	\end{equation}
	This results in the formulation
	\[
	\langle\mathbf{j}^{(i,k+1)}\vert\mathbf{j}^{(i,k)}\rangle =  g(\mathbf{j}^{(i,k+1)},j^{(i,k)}_{i+k-1}) \ \delta_{\vj{i,k}{i+k},\vj{i,k+1}{i+k}}\prod_{\ell = 1}^{i+k-2}\delta_{j_{\ell}^{(i,k)},j_{\ell}^{(i,k+1)}}
	\prod_{\ell = i+k+1}^{n-1}\delta_{j_{\ell}^{(i,k)},j_{\ell}^{(i,k+1)}},
	\]
	with
	\[
	g(\mathbf{j}^{(i,k+1)},j^{(i,k)}_{i+k-1}) = \langle\mathbf{j}^{(i,k+1)}\vert j_{n-1}^{(i,k+1)},\dots,j_{i+k}^{(i,k+1)}+j_{i+k-1}^{(i,k+1)}-j^{(i,k)}_{i+k-1},j^{(i,k)}_{i+k-1},\dots,j_{1}^{(i,k+1)}\rangle.
	\]
	It is clear that \(\omega_{\mathbf{j}^{(i,k+1)}} = g(\mathbf{j}^{(i,k+1)},0)\). Defining now 
	\[
	G_{j_{i+k-1}^{(i,k)}}(\mathbf{j}^{(i,k+1)}) = \frac{g(\mathbf{j}^{(i,k+1)},j_{i+k-1}^{(i,k)})}{g(\mathbf{j}^{(i,k+1)},0)},
	\]
	the result follows.
\end{proof}

The only two operators that act differently on \(\vert\mathbf{j}^{(i,k)}\rangle\) and \(\vert\mathbf{j}^{(i,k+1)}\rangle\) are \(\Gamma_{[i+1;i+k+1]}^q\), which diagonalizes \(\vert\mathbf{j}^{(i,k+1)}\rangle\) but not \(\vert\mathbf{j}^{(i,k)}\rangle\), and \(\Gamma_{[1;i+k]}^q\), where we have the opposite. In the next proposition, we will show that \(\Gamma_{[i+1;i+k+1]}^q\) will act on \(\vert\mathbf{j}^{(i,k)}\rangle\) in a tridiagonal fashion. Let us first introduce some notation. Let
\begin{align}
\label{akk, bkk}
\begin{split}
\akk & = (-q)^{\vjk{i+k-2}-\vjk{i-1}}q^{\gamma_{[i+1;i+k+1]}-\frac12} \\
\bkk & = -(-q)^{-\vjk{i+k}+\vjk{i-1}}q^{-\gamma_{[i+1;i+k+1]}+\frac12} \\
\ckk & = -(-q)^{\vjk{i+k}+\vjk{i-1}}q^{2\gamma_{[1;i]}+\gamma_{[i+1;i+k+1]}-\frac12} \\
\dkk & = (-q)^{\vjk{i+k-2}-\vjk{i-1}}q^{\gamma_{[i+1;i+k]}-\gamma_{i+k+1}+\frac12},
\end{split}
\end{align}
and
\begin{align}
\label{Expressions A_sk, C_sk}
\begin{split}
A_{s}^{(i,k)} = &\, -\frac{(1+(-q)^s\akk\bkk)(1-(-q)^s\akk\ckk)(1-(-q)^s\akk\dkk)}{\akk(1-(-q)^{2s-1}\akk\bkk\ckk\dkk)(1-(-q)^{2s}\akk\bkk\ckk\dkk)}\\&\times(1-(-q)^{s-1}\akk\bkk\ckk\dkk), \\
C_s^{(i,k)} = &\, \frac{\akk(1-(-q)^s)(1-(-q)^{s-1}\bkk\ckk)(1-(-q)^{s-1}\bkk\dkk)}{(1-(-q)^{2s-2}\akk\bkk\ckk\dkk)(1-(-q)^{2s-1}\akk\bkk\ckk\dkk)}\\&\times(1+(-q)^{s-1}\ckk\dkk).
\end{split}
\end{align}
Define also
\begin{align}
\label{tridiagonal coefficients}
\begin{split}
V_{\mathbf{j}^{(i,k)}} & = \frac{\akk-\left(\akk\right)^{-1}-A_{j_{i+k-1}^{(i,k)}}^{(i,k)}-C_{j_{i+k-1}^{(i,k)}}^{(i,k)}}{q-q^{-1}}, \\
U_{\mathbf{j}^{(i,k)}} & = \frac{\sqrt{A_{j_{i+k-1}^{(i,k)}-1}^{(i,k)}C_{j_{i+k-1}^{(i,k)}}^{(i,k)}}}{q-q^{-1}}.
\end{split}
\end{align}
Then one can show the following proposition.
\begin{proposition}
	\label{prop tridiagonal action}
	The operator \(\Gamma_{[i+1;i+k+1]}^q\) acts tridiagonally on the basis functions \(\vert \mathbf{j}^{(i,k)}\rangle\):
	\begin{align*}
	\Gamma_{[i+1;i+k+1]}^q\vert \mathbf{j}^{(i,k)}\rangle = &\  V_{\mathbf{j}^{(i,k)}}\vert \mathbf{j}^{(i,k)}\rangle+ U_{\mathbf{j}^{(i,k)}} \vert \mathbf{j}^{(i,k)}- \mathbf{h}_{i+k-1}\rangle+ U_{\mathbf{j}^{(i,k)}+\mathbf{h}_{i+k-1}} \vert\mathbf{j}^{(i,k)} +\mathbf{h}_{i+k-1} \rangle,
	\end{align*}
	where \(\mathbf{h}_{i+k-1} = (\underbrace{0,\dots,0}_{i+k-2\ \mathrm{times}},1,-1,\underbrace{0,\dots,0}_{n-i-k-1\ \mathrm{times}}) \).
\end{proposition}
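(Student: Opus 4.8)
The plan is first to show that $\Gamma_{[i+1;i+k+1]}^q$ shifts only the single index $j_{i+k-1}^{(i,k)}$, and then to extract the rank-one $q$-Bannai--Ito structure that forces tridiagonality. By Proposition \ref{prop A contained in B}, the set $[i+1;i+k+1]$ is either contained in or disjoint from each of the sets $[1;m]$ with $m\leq i$ or $m\geq i+k+1$, each $[i+1;i+m]$ with $m\leq k$, and $[1;n]$; hence $\Gamma_{[i+1;i+k+1]}^q$ commutes with every generator of $\mathcal{Y}_{i,k}$ except $\Gamma_{[1;i+k]}^q$, and with $K_{[1;n]}$. Consequently it preserves all eigenvalues in \eqref{eigenvalue equation 1}, \eqref{eigenvalue equation 2} and \eqref{eigenvalue equation K} except the one attached to $\Gamma_{[1;i+k]}^q$. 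Tracking which partial sums these eigenvalues fix, one finds that $\Gamma_{[i+1;i+k+1]}^q$ leaves every $j_\ell^{(i,k)}$ with $\ell\neq i+k-1,i+k$ unchanged, preserves $j_{i+k-1}^{(i,k)}+j_{i+k}^{(i,k)}$ and the degree $N$, and therefore maps $\vert\mathbf{j}^{(i,k)}\rangle$ into the span of the vectors $\vert\mathbf{j}^{(i,k)}+\ell\,\mathbf{h}_{i+k-1}\rangle$, $\ell\in\mathbb{Z}$. This reduces the statement to a one-parameter problem in $s:=j_{i+k-1}^{(i,k)}$.

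Next I would apply Proposition \ref{prop q-anticommutation Gamma} with $A_1=[1;i]$, $A_2=[i+1;i+k]$, $A_3=\{i+k+1\}$ and $A_4=\emptyset$, so that $A=[1;i+k]$, $B=[i+1;i+k+1]$ and $C=[1;i]\cup\{i+k+1\}$. The relations \eqref{q-anticommutation Gamma} then couple $\Gamma_{[1;i+k]}^q$, $\Gamma_{[i+1;i+k+1]}^q$ and $\Gamma_{[1;i]\cup\{i+k+1\}}^q$, while every factor occurring in the quadratic terms, namely $\Gamma_{[i+1;i+k]}^q$, $\Gamma_{[1;i+k+1]}^q$, $\Gamma_{[1;i]}^q$ and $\Gamma_{\{i+k+1\}}^q$, is diagonal on $\vert\mathbf{j}^{(i,k)}\rangle$ with an eigenvalue independent of $s$, by \eqref{eigenvalue equation 1}, \eqref{eigenvalue equation 2} and \eqref{Remark singletons}. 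Thus on the relevant subspace the triple realizes the rank-one $q$-Bannai--Ito algebra with scalar structure constants. Writing $\lambda_s$ for the $\Gamma_{[1;i+k]}^q$-eigenvalue and $B_{ts}$ for the matrix element of $\Gamma_{[i+1;i+k+1]}^q$ between the states labelled by $t$ and $s$, I would take matrix elements of the first and third relations of \eqref{q-anticommutation Gamma} and eliminate the matrix elements of $\Gamma_{[1;i]\cup\{i+k+1\}}^q$. For $t\neq s$ this yields $\bigl(\lambda_t^2+\lambda_s^2+(q+q^{-1})\lambda_t\lambda_s-1\bigr)B_{ts}=0$, and a short computation with the explicit $(-q)$-numbers shows the bracket vanishes exactly when $t=s\pm1$ and is non-zero otherwise, establishing the tridiagonal shape.

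It remains to identify the entries. The diagonal coefficient $V_{\mathbf{j}^{(i,k)}}$ comes from the $t=s$ case of the same two relations, which determine $B_{ss}$ in terms of $\lambda_s$ and the two scalar constants; substituting the explicit eigenvalues and the parameters \eqref{akk, bkk} reduces it to the expression in \eqref{tridiagonal coefficients}. For the off-diagonal entry, self-adjointness of $\Gamma_{[i+1;i+k+1]}^q$ (Lemma \ref{lemma self-adjoint}) makes its matrix symmetric in the orthonormal basis, so only the product $B_{s,s-1}B_{s-1,s}=U_{\mathbf{j}^{(i,k)}}^2$ is intrinsic. This product is fixed by requiring the tridiagonal matrix to carry the $\Gamma_{[i+1;i+k+1]}^q$-spectrum prescribed by \eqref{eigenvalue equation 2}, equivalently that its eigenvectors be the truncated $(-q)$-Racah polynomials; after the substitution \eqref{akk, bkk} one recognizes the standard $(-q)$-Racah recurrence coefficients \eqref{Expressions A_sk, C_sk}, giving $U_{\mathbf{j}^{(i,k)}}=\sqrt{A^{(i,k)}_{s-1}C^{(i,k)}_{s}}/(q-q^{-1})$.

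The genuine obstacle is this final bookkeeping: verifying that the eigenvalues furnished by \eqref{eigenvalue equation 1} and \eqref{eigenvalue equation 2} reassemble, under the change of variables \eqref{akk, bkk}, into precisely the $q$-Racah recurrence coefficients with $q$ replaced by $-q$. The tridiagonality itself is inexpensive once the rank-one reduction is in hand; the real effort lies in the parameter matching and in carefully tracking the alternating signs $(-1)^{\vert\mathbf{j}^{(i,k)}_{m}\vert}$ in the eigenvalues, which are exactly what turn the $q$-Racah data into $(-q)$-Racah data.
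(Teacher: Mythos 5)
Your reduction to a rank-one problem is correct and is essentially the route the paper takes (via Theorem 2 of \cite{DeBie&DeClercq&vandeVijver-2018}): the commutation of $\Gamma_{[i+1;i+k+1]}^q$ with all generators of $\mathcal{Y}_{i,k}$ except $\Gamma_{[1;i+k]}^q$ and with $K_{[1;n]}$, together with Proposition \ref{prop decomposition}, confines the action to the one-parameter family $\vert\mathbf{j}^{(i,k)}+\ell\,\mathbf{h}_{i+k-1}\rangle$; your choice $A_1=[1;i]$, $A_2=[i+1;i+k]$, $A_3=\{i+k+1\}$, $A_4=\emptyset$ is the right instance of Proposition \ref{prop q-anticommutation Gamma}; and eliminating the matrix elements of $\Gamma^q_{[1;i]\cup\{i+k+1\}}$ from the first and third relations of \eqref{q-anticommutation Gamma} does yield $\bigl(\lambda_t^2+\lambda_s^2+(q+q^{-1})\lambda_t\lambda_s-1\bigr)B_{ts}=0$, whose bracket factors as $(\lambda_t+q\lambda_s)(\lambda_t+q^{-1}\lambda_s)-1$ and, for eigenvalues of the form $(-1)^{c+s}[c+s+\gamma-\tfrac12]_q$, vanishes precisely for $t=s\pm1$. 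The $t=s$ case likewise determines $V_{\mathbf{j}^{(i,k)}}$. All of this is sound.

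The gap is in your determination of the off-diagonal entry. A symmetric tridiagonal matrix is \emph{not} in general determined by its diagonal together with its spectrum: for diagonal $(0,0,0)$ and off-diagonal entries $a_1,a_2>0$ the spectrum is $\{0,\pm\sqrt{a_1^2+a_2^2}\}$, so only $a_1^2+a_2^2$ is spectrally visible. So "requiring the tridiagonal matrix to carry the prescribed spectrum" does not by itself fix $U_{\mathbf{j}^{(i,k)}}^2$, and your fallback "equivalently that its eigenvectors be the truncated $(-q)$-Racah polynomials" is circular at this point in the paper, since the identification of the eigenvectors with $(-q)$-Racah polynomials (Proposition \ref{prop G hat}) is \emph{derived from} the present proposition via the recurrence \eqref{recursion G hat}. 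The standard repair, and the one implicit in the paper's appeal to the algebra relations, is to use the one relation of \eqref{q-anticommutation Gamma} you have not yet exploited, namely $\{\Gamma_B^q,\Gamma_C^q\}_q=\Gamma_A^q+\cdots$ with $B=[i+1;i+k+1]$ and $C=[1;i]\cup\{i+k+1\}$: substituting $C_{ts}=(q^{1/2}\lambda_t+q^{-1/2}\lambda_s)B_{ts}-e\,\delta_{ts}$ into its $(s,s)$ matrix element produces a first-order recursion in $s$ for $X_s:=B_{s,s-1}B_{s-1,s}$, which together with the boundary condition $X_0=0$ determines $U_{\mathbf{j}^{(i,k)}}^2$ uniquely; the factorization \eqref{Expressions A_sk, C_sk}--\eqref{tridiagonal coefficients} is then a direct (computer-algebra) verification, as the paper states.
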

\begin{proof}
	The proof follows the same steps as Theorem 2 in \cite{DeBie&DeClercq&vandeVijver-2018}. The eigenvalue equations (\ref{eigenvalue equation 1}), (\ref{eigenvalue equation 2}) and the algebra relations (\ref{q-anticommutation Gamma}) allow to obtain expressions for the coefficients \(V_{\mathbf{j}^{(i,k)}}\) and \(U_{\mathbf{j}^{(i,k)}}\), whose factorizations (\ref{tridiagonal coefficients}) can be checked using any computer algebra package.
\end{proof}

Assuming that the conditions (\ref{condition 1}) and (\ref{condition 2}) are fulfilled and using Proposition \ref{prop tridiagonal action} and the notation (\ref{eigenvalue 2}), we obtain a three-term recurrence relation for the functions \(G_{j_{i+k-1}^{(i,k)}}\left(\mathbf{j}^{(i,k+1)}\right)\), similar to \cite[formula (3.33)]{Genest&Vinet&Zhedanov-2016}, namely
\begin{align}
\label{recursion G hat}
\begin{split}
& \langle \mathbf{j}^{(i,k+1)}\vert\Gamma_{[i+1;i+k+1]}^q\vert \mathbf{j}^{(i,k)}\rangle \\ = &\  \lambda_{[i+1;i+k+1]}^{(i,k+1)}(\mathbf{j}^{(i,k+1)})\, \omega_{\mathbf{j}^{(i,k+1)}} G_{j_{i+k-1}^{(i,k)}}\left(\mathbf{j}^{(i,k+1)}\right)\\
= &\ \omega_{\mathbf{j}^{(i,k+1)}}V_{\mathbf{j}^{(i,k)}}G_{j_{i+k-1}^{(i,k)}}\left(\mathbf{j}^{(i,k+1)}\right)\\&+ \omega_{\mathbf{j}^{(i,k+1)}}\left(U_{\mathbf{j}^{(i,k)}}G_{j_{i+k-1}^{(i,k)}-1}\left(\mathbf{j}^{(i,k+1)}\right)+U_{\mathbf{j}^{(i,k)}+\mathbf{h}_{i+k-1}}G_{j_{i+k-1}^{(i,k)}+1}\left(\mathbf{j}^{(i,k+1)}\right) \right).
\end{split}
\end{align}
We will also renormalize as in \cite{Genest&Vinet&Zhedanov-2016}: define
\begin{equation}
\label{renormalization}
\widehat{G}_{j_{i+k-1}^{(i,k)}}(\mathbf{j}^{(i,k+1)}) = \left(-\akk\right)^{j_{i+k-1}^{(i,k)}}\left(\prod_{\ell = 1}^{j_{i+k-1}^{(i,k)}}\sqrt{A_{\ell-1}^{(i,k)}C_{\ell}^{(i,k)}}\right) G_{j_{i+k-1}^{(i,k)}}(\mathbf{j}^{(i,k+1)}).
\end{equation}
These functions turn out to be proportional to \((-q)\)-Racah polynomials.
\begin{proposition}
	\label{prop G hat}
	The functions \(\widehat{G}_{j_{i+k-1}^{(i,k)}}(\mathbf{j}^{(i,k+1)})\) can be expressed as
	\begin{align}
	\label{G hat}
	\begin{split}
	\widehat{G}_{j_{i+k-1}^{(i,k)}}(\mathbf{j}^{(i,k+1)}) = F_{i,k} r_{j_{i+k-1}^{(i,k+1)}}\left(\left.j_{i+k-1}^{(i,k)}; \gammakk, \deltakk, \betakk,\widetilde{N}^{(i,k)}\right| -q\right),
	\end{split}
	\end{align}
	where \(r_n(x;a,b,c,N\vert -q)\) is the \((-q)\)-Racah polynomial (\ref{q-Racah univariate}), where \(\widetilde{N}^{(i,k)} = \vj{i,k}{i+k}-\vj{i,k}{i+k-2}\) and
	\begin{equation}
	\label{def alphakk}
	\begin{alignedat}{3}
	\alphakk & = q^{-1}\akk\bkk, \qquad &\betakk & = q^{-1}\ckk\dkk, \\
	\gammakk & = -q^{-1}\akk\dkk, \qquad &\deltakk & = -\frac{\akk}{\dkk},
	\end{alignedat}
	\end{equation}
	and where the proportionality coefficient is given by
	\[
	F_{i,k} = \frac{\left(\betakk(-q)^{-\widetilde{N}^{(i,k)}}\right)^{j_{i+k-1}^{(i,k+1)}/2}}{\left(\betakk(-q)^{-j_{i+k}^{(i,k)}};-q\right)_{j_{i+k-1}^{(i,k)}}}\frac{(-\gammakk q,-\betakk\deltakk q, (-q)^{-\widetilde{N}^{(i,k)}}; -q)_{j_{i+k-1}^{(i,k)}}}{(-\gammakk q,-\betakk\deltakk q, (-q)^{-\widetilde{N}^{(i,k)}}; -q)_{j_{i+k-1}^{(i,k+1)}}}.
	\]
\end{proposition}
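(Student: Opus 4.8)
The plan is to follow the univariate $(-q)$-Racah analysis of \cite{Genest&Vinet&Zhedanov-2016}, using the tridiagonal action of Proposition \ref{prop tridiagonal action} as the essential input. Write $x := j_{i+k-1}^{(i,k)}$ for the Racah argument and $n := j_{i+k-1}^{(i,k+1)}$ for the degree. The first step is to extract the three-term recurrence (\ref{recursion G hat}) for the functions $G_{x}(\mathbf j^{(i,k+1)})$ by evaluating the matrix element $\langle\mathbf j^{(i,k+1)}|\Gamma_{[i+1;i+k+1]}^q|\mathbf j^{(i,k)}\rangle$ in two ways: letting $\Gamma_{[i+1;i+k+1]}^q$ act on the bra produces the eigenvalue $\lambda_{[i+1;i+k+1]}^{(i,k+1)}(\mathbf j^{(i,k+1)})$ via (\ref{eigenvalue equation 2}) and Lemma \ref{lemma self-adjoint}, while letting it act on the ket produces the tridiagonal combination of Proposition \ref{prop tridiagonal action}. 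After dividing out the normalization $\omega_{\mathbf j^{(i,k+1)}}$ of Lemma \ref{lemma sequence of deltas}, this gives a second-order difference relation in $x$ with coefficients $V_{\mathbf j^{(i,k)}}$ and $U_{\mathbf j^{(i,k)}}$ from (\ref{tridiagonal coefficients}); the delta constraints (\ref{condition 1})--(\ref{condition 2}) freeze every other component and encode $\widetilde N^{(i,k)} = j_{i+k-1}^{(i,k)} + j_{i+k}^{(i,k)}$ as a fixed truncation parameter, with $n$ entering only through $\lambda_{[i+1;i+k+1]}^{(i,k+1)}$.

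The second step is the renormalization (\ref{renormalization}). As $U_{\mathbf j^{(i,k)}}$ carries the symmetric factor $\sqrt{A^{(i,k)}_{x-1}C^{(i,k)}_{x}}$, conjugating $G$ by $(-\akk)^{x}\prod_{\ell=1}^{x}\sqrt{A^{(i,k)}_{\ell-1}C^{(i,k)}_{\ell}}$ converts the Jacobi-type relation into the asymmetric form
\[
(-\akk)^{-1}\widehat{G}_{x+1} + (-\akk)A^{(i,k)}_{x-1}C^{(i,k)}_{x}\,\widehat{G}_{x-1} = \left[(q-q^{-1})\lambda_{[i+1;i+k+1]}^{(i,k+1)} - \akk + (\akk)^{-1} + A^{(i,k)}_{x} + C^{(i,k)}_{x}\right]\widehat{G}_{x},
\]
in which the coefficients $A^{(i,k)}_s$ and $C^{(i,k)}_s$ appear directly. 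Inserting the factorizations (\ref{Expressions A_sk, C_sk}), the parameter dictionary (\ref{def alphakk}) and replacing $q$ by $-q$, I would then recognize this relation as a conjugate of the standard $q$-difference equation satisfied by the $(-q)$-Racah polynomial ${}_4\phi_3$ in (\ref{q-Racah univariate}), with $a=\gammakk$, $b=\deltakk$, $c=\betakk$ and truncation $N=\widetilde N^{(i,k)}$.

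It then follows that $\widehat{G}_x$ is the terminating ${}_4\phi_3$ multiplied by the first-order weight $w(x)$ that absorbs the $x$-dependence of the up-shift coefficient, i.e. the weight conjugating the standard difference operator into the asymmetric form above. Solving this first-order relation gives $w(x)$ as the ratio of shifted factorials $(-\gammakk q,-\betakk\deltakk q,(-q)^{-\widetilde N^{(i,k)}};-q)_{x}\big/(\betakk(-q)^{-j_{i+k}^{(i,k)}};-q)_{x}$, where $j_{i+k}^{(i,k)} = \widetilde N^{(i,k)} - x$; this is precisely the $x$-dependent part of the claimed $F_{i,k}$. The remaining degree-dependent factor is fixed by the initial condition: at $x=0$ both $w(0)$ and the ${}_4\phi_3$ equal $1$, so $\widehat{G}_0=1$, which must agree with $\widehat{G}_0=G_0=1$ coming from (\ref{normalization G0}); matching against the degree-dependent prefactor of (\ref{q-Racah univariate}) then forces the factor $(\betakk(-q)^{-\widetilde N^{(i,k)}})^{n/2}\big/(-\gammakk q,-\betakk\deltakk q,(-q)^{-\widetilde N^{(i,k)}};-q)_{n}$ and completes the identification of $F_{i,k}$.

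I expect the main obstacle to be the algebraic verification in the second step: confirming that the explicit rational expressions $A^{(i,k)}_s$ and $C^{(i,k)}_s$ of (\ref{Expressions A_sk, C_sk}), after the renormalization and the substitution $q\to -q$, reproduce verbatim the coefficients of the $(-q)$-Racah difference equation under the dictionary (\ref{def alphakk}). This is a lengthy but purely mechanical rational-function identity, best checked with a computer algebra package as in Proposition \ref{prop tridiagonal action}. A secondary subtlety is the bookkeeping of the weight $w(x)$ and the Pochhammer prefactors needed to read off $F_{i,k}$ in closed form, in particular the use of the constraint $j_{i+k}^{(i,k)} = \widetilde N^{(i,k)} - j_{i+k-1}^{(i,k)}$ that couples the shifted factorial $(\betakk(-q)^{-j_{i+k}^{(i,k)}};-q)_{\bullet}$ to the argument $x$.
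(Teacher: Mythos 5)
Your proposal follows essentially the same route as the paper: derive the three-term recurrence (\ref{recursion G hat}) from Proposition \ref{prop tridiagonal action}, renormalize via (\ref{renormalization}), identify the result with a standard $(-q)$-Racah relation whose coefficient factorizations must be checked mechanically, and fix the remaining freedom by the initial condition (\ref{normalization G0}). The only cosmetic difference is that the paper reads the renormalized recurrence as the \emph{recurrence relation} for a $(-q)$-Racah polynomial of degree $j_{i+k-1}^{(i,k)}$ and then applies the permutation identity (\ref{Gasper Rahman notation}) to swap degree and variable, whereas you read it directly as the \emph{difference equation} in $j_{i+k-1}^{(i,k)}$ satisfied by $r_{j_{i+k-1}^{(i,k+1)}}$ --- the same self-duality invoked in a different order.
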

\begin{proof}
	Let us introduce some more notation. Define
	\[
	\mu(j_{i+k-1}^{(i,k+1)}) = (-q)^{-j_{i+k-1}^{(i,k+1)}}+\gammakk\deltakk(-q)^{j_{i+k-1}^{(i,k+1)}+1},
	\]
	with \(\gammakk,\deltakk\) as in (\ref{def alphakk}) and let
	\[
	\check{A}_s^{(i,k)} = -\akk A_s^{(i,k)}, \quad \check{C}_s^{(i,k)} = -\akk C_s^{(i,k)},
	\]
	with \(A_s^{(i,k)},C_s^{(i,k)}\) as in (\ref{Expressions A_sk, C_sk}). Observe that
	\begin{align*}
	\check{A}_s^{(i,k)} & = \frac{(1-(-q)^{s+1}\alphakk)(1-(-q)^{s+1}\alphakk\betakk)(1-(-q)^{s+1}\betakk\deltakk)}{(1-(-q)^{2s+1}\alphakk\betakk)(1-(-q)^{2s+2}\alphakk\betakk)}\\&\times (1-(-q)^{s+1}\gammakk), \\
	\check{C}_s^{(i,k)} & = \frac{(-q)(1-(-q)^{s})(1-(-q)^s\betakk)(\gammakk-(-q)^s\alphakk\betakk)}{(1-(-q)^{2s}\alphakk\betakk)(1-(-q)^{2s+1}\alphakk\betakk)}\\&\times (\deltakk-(-q)^s\alphakk).
	\end{align*}
	Using these notations and (\ref{renormalization}), the recursion relation (\ref{recursion G hat}) reduces to
	\begin{align*}
	& \mu(j_{i+k-1}^{(i,k+1)})\widehat{G}_{j_{i+k-1}^{(i,k)}}(\mathbf{j}^{(i,k+1)}) \\ = &\ \widehat{G}_{j_{i+k-1}^{(i,k)}+1}(\mathbf{j}^{(i,k+1)}) + \left(1-q\gammakk\deltakk-\check{A}_{j_{i+k-1}^{(i,k)}}^{(i,k)}- \check{C}_{j_{i+k-1}^{(i,k)}}^{(i,k)}\right)\widehat{G}_{j_{i+k-1}^{(i,k)}}(\mathbf{j}^{(i,k+1)}) \\ & + \check{A}_{j_{i+k-1}^{(i,k)}-1}^{(i,k)}\check{C}_{j_{i+k-1}^{(i,k)}}^{(i,k)}\widehat{G}_{j_{i+k-1}^{(i,k)}-1}(\mathbf{j}^{(i,k+1)}).
	\end{align*}
	This is the well-known recursion relation for \((-q)\)-Racah polynomials \cite{Koekoek&Lesky&Swarttouw-2010}. Taking into account the initial condition (\ref{normalization G0}), we can write its solution as
	\begin{align}
	\label{recurrence relation solution}
	\begin{split}
	\widehat{G}_{j_{i+k-1}^{(i,k)}}(\mathbf{j}^{(i,k+1)}) = & \ \frac{(-\alphakk q,-\betakk\deltakk q, -\gammakk q; -q)_{j_{i+k-1}^{(i,k)}}}{(\alphakk\betakk(-q)^{j_{i+k-1}^{(i,k)}+1};-q)_{j_{i+k-1}^{(i,k)}}}\\&\times f_{j_{i+k-1}^{(i,k)}}\left(\left.\mu(j_{i+k-1}^{(i,k+1)}); \alphakk,\betakk,\gammakk,\deltakk\right| -q\right).
	\end{split}
	\end{align}
	where
	\begin{align*}
	f_{n}\left(\mu(x); \alpha,\beta,\gamma,\delta\vert -q\right) & = \,_4\phi_3\left(\left.
	\begin{array}{c}
	(-q)^{-n}, \alpha\beta(-q)^{n+1}, (-q)^{-x}, \gamma\delta(-q)^{x+1} \\
	-\alpha q, -\beta\delta q, -\gamma q
	\end{array}
	\right| -q,-q 
	\right),
	\end{align*}
	and
	\[
	\mu(x) = (-q)^{-x}+\gamma\delta(-q)^{x+1}.
	\]
	Upon permuting the arguments of the basic hypergeometric function, it is seen that these functions relate to the \((-q)\)-Racah polynomials as
	\begin{align}
	\label{Gasper Rahman notation}
	\begin{split}
	r_n\left(x;a,b,c,N\vert-q\right)
	= & \, (-aq,-bcq,(-q)^{-N};-q)_n\left(\frac{(-q)^N}{c}\right)^{\frac{n}{2}}\\&\times f_x\left(\mu(n); (-q)^{-N-1}, c, a, b\vert -q\right).
	\end{split}
	\end{align}
	Observing that \(\alphakk = (-q)^{-\widetilde{N}^{(i,k)}-1}\), we can combine (\ref{recurrence relation solution}) and (\ref{Gasper Rahman notation}) to find the expression (\ref{G hat}).
\end{proof}

\subsection{Multivariate $(-q)$-Racah polynomials}
\label{Paragraph multivariate}
Gasper and Rahman introduced their multivariable \(q\)-Racah polynomials in \cite{Gasper&Rahman-2007} as \(q\)-analogs of Tratniks \(q = 1\) Racah polynomials \cite{Tratnik-1991}. They were defined as entangled products of univariate \(q\)-Racah polynomials, depending on \(s\) discrete indices \(n_i\), \(s\) continuous variables \(x_i\), and parameters \(a_1,\dots,a_{s+1},b\in \R\) and \(N\in\N\): 
\begin{align}
\label{q-Racah multivariate def}
\begin{split}
& R_{(n_1,\dots,n_s)}(\left.(x_1,\dots,x_s);a_1,\dots,a_{s+1},b,N\right|q)  \\
= & \, \prod_{k = 1}^s r_{n_k}\left(\left.x_k-N_{k-1}; \frac{bA_k}{a_1}q^{2N_{k-1}}, \frac{a_{k+1}}{q}, A_kq^{x_{k+1}+N_{k-1}}, x_{k+1}-N_{k-1}\right|q\right),
\end{split}
\end{align}
where
\begin{equation}
\label{notation Nk, Ak}
N_k = \sum_{i = 1}^kn_i, \quad A_k = \prod_{i = 1}^ka_i, \quad x_{s+1} = N.
\end{equation}
They are homogeneous polynomials of total degree \(N_s\) in the variables 
\[
q^{-x_k}+A_kq^{x_k}, \qquad k \in \{1,\dots,s\}.
\]
They turn out to be orthogonal on the simplex 
\[
\{(x_1,\dots,x_s)\in\N^s: 0\leq x_1\leq x_2\leq\dots\leq x_s\leq N\},
\]
in the sense that
\begin{equation}
\label{multivariate q-racah orthogonality}
\sum_{x_s = 0}^N\sum_{x_{s-1} = 0}^{x_s}\dots\sum_{x_1 = 0}^{x_2}\rho(\mathbf{x})R_{\mathbf{n}}\left(\left.\mathbf{x}; \mathbf{a}, b, N \right| q\right)R_{\mathbf{n'}}\left(\left.\mathbf{x}; \mathbf{a}, b, N \right| q\right) = h_{\mathbf{n}}\delta_{\mathbf{n},\mathbf{n'}},
\end{equation}
for a certain weight function \(\rho(\mathbf{x})\) and normalization coefficient \(h_{\mathbf{n}}\). We refer to \cite[(2.14) and (2.16)]{Gasper&Rahman-2007} for explicit expressions.

As before, the polynomials that we will make use of, are multivariate \((-q)\)-Racah polynomials, i.e. the polynomials (\ref{q-Racah multivariate def}) where we change all \(q\) to \(-q\).

In Section \ref{Paragraph univariate} we found expressions for the overlap coefficients between the eigenbases of the algebras \(\mathcal{Y}_{i,k}\) and \(\mathcal{Y}_{i,k+1}\). Gluing these intermediate steps together, we can compute the overlaps
\[
\langle \mathbf{j}^{(i,r)}\vert\mathbf{j}^{(1,1)}\rangle = \langle j_{n-1}^{(i,r)},\dots,j_1^{(i,r)} \vert j_{n-1}^{(1,1)},\dots,j_1^{(1,1)}\rangle
\]
corresponding to the subalgebras \(\mathcal{Y}_{1,1} = \mathcal{Y}_{i,1}\) and \(\mathcal{Y}_{i,r}\).

\begin{lemma}
	\label{lemma products of G}
	The connection coefficients between the eigenbases of the subalgebras \(\mathcal{Y}_{1,1}\) and \(\mathcal{Y}_{i,r}\) have the expression
	\begin{align*}
	\langle \mathbf{j}^{(i,r)}\vert\mathbf{j}^{(1,1)}\rangle = &\ \delta_{\vj{1,1}{i+r-1},\vj{i,r}{i+r-1}}\prod_{\ell = 1}^{i-1} \delta_{j_\ell^{(1,1)},j_{\ell}^{(i,r)}}\prod_{\ell = i+r}^{n-1}\delta_{j_{\ell}^{(1,1)},j_{\ell}^{(i,r)}}\prod_{\ell= i}^{i+r-2}\omega_{\,\widehat{\mathbf{j}}^{(i,r,\ell)}}G_{\vj{1,1}{\ell}-\vj{i,r}{\ell-1}}\left(\widehat{\mathbf{j}}^{(i,r,\ell)}\right)
	\end{align*}
	with
	\[
	\widehat{\mathbf{j}}^{(i,r,\ell)} = (j_1^{(i,r)}\dots,j_{\ell}^{(i,r)},\vj{1,1}{\ell+1}-\vj{i,r}{\ell},j^{(1,1)}_{\ell+2},\dots,j^{(1,1)}_{n-1}).
	\]
\end{lemma}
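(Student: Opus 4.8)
The plan is to prove the formula by induction on $r$, building the multivariate overlap as a chain of the single-variable transitions computed in Lemma \ref{lemma sequence of deltas}. The base case $r=1$ is immediate: since $\mathcal{Y}_{i,1}=\mathcal{Y}_{1,1}$ and $\vert\mathbf{j}^{(i,1)};0\rangle=\vert\mathbf{j}^{(1,1)};0\rangle$ by Remark \ref{Remark equality i and 1}, the overlap equals $\delta_{\mathbf{j}^{(i,1)},\mathbf{j}^{(1,1)}}$ by orthonormality, and one checks that the right-hand side (whose $G$-product is empty) reduces to the same product of Kronecker deltas, the constraint $\delta_{\vj{1,1}{i},\vj{i,1}{i}}$ supplying the one remaining index once the flanking deltas are imposed.

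For the inductive step I would pass from $r$ to $r+1$ by inserting a resolution of the identity in the orthonormal $\mathcal{Y}_{i,r}$-eigenbasis of Proposition \ref{prop decomposition}:
\[
\langle\mathbf{j}^{(i,r+1)}\vert\mathbf{j}^{(1,1)}\rangle=\sum_{\mathbf{j}^{(i,r)},N}\langle\mathbf{j}^{(i,r+1)}\vert\mathbf{j}^{(i,r)};N\rangle\,\langle\mathbf{j}^{(i,r)};N\vert\mathbf{j}^{(1,1)}\rangle.
\]
Because $\vert\mathbf{j}^{(1,1)};0\rangle$ is annihilated by $(A_-)_{[1;n]}$ — the coefficient on the right of (\ref{eigenvalue A_- Theta}) vanishes for $N=0$ — the adjoint relation (\ref{hermitian conjugate generators}) forces every intermediate term with $N\geq 1$ to drop out, leaving only $N=0$. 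The first factor is then the single transition $\langle\mathbf{j}^{(i,r+1)}\vert\mathbf{j}^{(i,r)}\rangle$ given by Lemma \ref{lemma sequence of deltas} with $k=r$, and the second factor is supplied by the induction hypothesis.

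The heart of the argument is the Kronecker-delta bookkeeping. The transition factor fixes $j_\ell^{(i,r)}=j_\ell^{(i,r+1)}$ for $\ell\in[1;i+r-2]\cup[i+r+1;n-1]$ and preserves the sum $j_{i+r-1}^{(i,r)}+j_{i+r}^{(i,r)}$, while the induction hypothesis fixes $j_\ell^{(i,r)}=j_\ell^{(1,1)}$ for $\ell\in[1;i-1]\cup[i+r;n-1]$ together with $\vj{1,1}{i+r-1}=\vj{i,r}{i+r-1}$. I would show these two sets of constraints jointly pin down every component of the summation variable $\mathbf{j}^{(i,r)}$: in particular $j_{i+r}^{(i,r)}=j_{i+r}^{(1,1)}$ forces $j_{i+r-1}^{(i,r)}=j_{i+r-1}^{(i,r+1)}+j_{i+r}^{(i,r+1)}-j_{i+r}^{(1,1)}$, so the sum collapses to a single term. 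Reconciling the clashing constraints at $\ell\in[1;i-1]\cup[i+r+1;n-1]$ yields the flanking deltas of the target formula, and substituting the pinned values into the surviving constraint $\vj{1,1}{i+r-1}=\vj{i,r}{i+r-1}$ transmutes it precisely into $\delta_{\vj{1,1}{i+r},\vj{i,r+1}{i+r}}$.

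The main obstacle I anticipate is verifying that the surviving $G$-functions carry exactly the arguments $\widehat{\mathbf{j}}^{(i,r+1,\ell)}$ and subscripts $\vj{1,1}{\ell}-\vj{i,r+1}{\ell-1}$ claimed. For the $r-1$ factors coming from the induction hypothesis this follows by substituting $j_\ell^{(i,r)}=j_\ell^{(i,r+1)}$, valid for $\ell\leq i+r-2$, which turns each $\widehat{\mathbf{j}}^{(i,r,\ell)}$ into $\widehat{\mathbf{j}}^{(i,r+1,\ell)}$ and each subscript $\vj{1,1}{\ell}-\vj{i,r}{\ell-1}$ into $\vj{1,1}{\ell}-\vj{i,r+1}{\ell-1}$. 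For the new $\ell=i+r-1$ factor $\omega_{\mathbf{j}^{(i,r+1)}}G_{j_{i+r-1}^{(i,r)}}(\mathbf{j}^{(i,r+1)})$ I would verify the identity $j_{i+r-1}^{(i,r)}=\vj{1,1}{i+r-1}-\vj{i,r+1}{i+r-2}$ for the subscript, and then check that the full vector $\mathbf{j}^{(i,r+1)}$ coincides with $\widehat{\mathbf{j}}^{(i,r+1,i+r-1)}$; the only nontrivial entry is the $(i+r)$-th, where the pinned values reconstruct $\vj{1,1}{i+r}-\vj{i,r+1}{i+r-1}=j_{i+r}^{(i,r+1)}$. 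Together these assemble the extended product $\prod_{\ell=i}^{i+r-1}$ and complete the induction.
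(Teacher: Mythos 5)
Your proposal is correct and follows essentially the same route as the paper: induction on \(r\), inserting a resolution of the identity in the \(\mathcal{Y}_{i,r}\)-eigenbasis, and combining Lemma \ref{lemma sequence of deltas} with the induction hypothesis via the Kronecker-delta bookkeeping, which you carry out correctly (including the reconstruction of \(\widehat{\mathbf{j}}^{(i,r+1,i+r-1)}=\mathbf{j}^{(i,r+1)}\) on the support of the deltas). The only differences are cosmetic: you start at \(r=1\) rather than \(r=2\), and you explicitly justify why only the \(N=0\) terms of the full resolution of identity survive — a point the paper leaves implicit by working throughout with \(\vert\mathbf{j}^{(i,r)}\rangle=\vert\mathbf{j}^{(i,r)};0\rangle\).
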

\begin{proof}
	By induction on \(r\). The case \(r = 2\) follows from Lemma \ref{lemma sequence of deltas} with \(k = 1\), upon observing that \(\vert\mathbf{j}^{(i,1)}\rangle = \vert\mathbf{j}^{(1,1)}\rangle\) by (\ref{(i,1) = (1,1)}). Assuming the statement holds for \(r\), one proceeds by writing
	\[
	\langle\mathbf{j}^{(i,r+1)}\vert\mathbf{j}^{(1,1)}\rangle = \sum_{\mathbf{j}^{(i,r)}}\langle\mathbf{j}^{(i,r+1)}\vert\mathbf{j}^{(i,r)}\rangle\langle\mathbf{j}^{(i,r)}\vert\mathbf{j}^{(1,1)}\rangle
	\]
	and using the induction hypothesis and Lemma \ref{lemma sequence of deltas}. The products of Kronecker deltas reveal that each term in the sum above vanishes unless
	\begin{align}
	\label{fixing indices}
	\begin{split}
	j^{(i,r)}_{\ell} & = j^{(i,r+1)}_{\ell}, \qquad \qquad\qquad \ell\in[1;i+r-2] \\
	j^{(i,r)}_{i+r-1} & = \vj{1,1}{i+r-1}-\vj{i,r+1}{i+r-2}\\
	j^{(i,r)}_{\ell} & = j^{(1,1)}_{\ell}, \qquad\qquad\qquad\quad \ell\in[i+r;n-1].
	\end{split}
	\end{align}
	This concludes the induction.
\end{proof}
	
Substituting the expressions for \(G_{\vj{1,1}{\ell}-\vj{i,r}{\ell-1}}\left(\widehat{\mathbf{j}}^{(i,r,\ell)}\right)\) as obtained in Proposition \ref{prop G hat}, we find an explicit expression for the overlap coefficients. In order to assure that these coefficients are real-valued, we will from now on impose the supplementary requirement that \(q\) be a real number between 0 and 1.

\begin{theorem}
	\label{th - Overlap q-case}
	The overlap coefficients between the eigenbases of \(\mathcal{Y}_{1,1}\) and \(\mathcal{Y}_{i,r}\) can be expressed as
	\begin{align}
	\label{connection coeff without proportionality constant}
	\begin{split}
	& \langle \mathbf{j}^{(i,r)}\vert \mathbf{j}^{(1,1)}\rangle\\ = & \ \delta_{\vj{1,1}{i+r-1},\vj{i,r}{i+r-1}}\prod_{\ell = 1}^{i-1} \delta_{j_\ell^{(1,1)},j_{\ell}^{(i,r)}}\prod_{\ell = i+r}^{n-1}\delta_{j_{\ell}^{(1,1)},j_{\ell}^{(i,r)}} \sqrt{\frac{\rho^{(i,r)}(\mathbf{j}^{(1,1)})}{h_{\mathbf{j}^{(i,r)}}}} \\& R_{\mathbf{n}}\left(\left.\mathbf{x}; (-q)^{2\vj{1,1}{i-1}-1}q^{2\gamma_{[1;i+1]}}, q^{2\gamma_{i+2}},\dots, q^{2\gamma_{i+r}}, -q^{2\gamma_{i+1}-1}, \vj{1,1}{i+r-1}-\vj{1,1}{i-1}\right| -q\right),
	\end{split}
	\end{align}
	where \(\mathbf{n}\) and \(\mathbf{x}\) stand for
	\begin{equation}
	\label{n_k, x_k}
	\mathbf{n} = (n_1,\dots,n_{r-1}), \quad n_k = j_{i+k-1}^{(i,r)}, \quad \mathbf{x} = (x_1,\dots,x_{r-1}), \quad x_k = \vj{1,1}{i+k-1}-\vj{1,1}{i-1}.
	\end{equation}
	The expressions for the functions \(\rho^{(i,r)}(\mathbf{j}^{(1,1)})\) and \(h_{\mathbf{j}^{(i,r)}}\) can be found in Appendix A.
\end{theorem}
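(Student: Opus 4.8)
The plan is to feed the univariate result of Proposition \ref{prop G hat} into the factorization of Lemma \ref{lemma products of G} and recognize the emerging product of one-variable $(-q)$-Racah polynomials as a single Gasper and Rahman multivariate $(-q)$-Racah polynomial (\ref{q-Racah multivariate def}).

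First I would start from the expression in Lemma \ref{lemma products of G}, namely that up to the product of Kronecker deltas the overlap equals
\[
\prod_{\ell = i}^{i+r-2}\omega_{\,\widehat{\mathbf{j}}^{(i,r,\ell)}}\,G_{\vj{1,1}{\ell}-\vj{i,r}{\ell-1}}\left(\widehat{\mathbf{j}}^{(i,r,\ell)}\right).
\]
I would then invert the renormalization (\ref{renormalization}) to express each $G$ through the corresponding $\widehat G$ and substitute the explicit univariate $(-q)$-Racah polynomial from Proposition \ref{prop G hat}. For the factor indexed by $\ell$ this yields a polynomial $r_n(x;a,b,c,N\,\vert\,-q)$ whose degree, argument and four parameters are obtained from (\ref{akk, bkk}) and (\ref{def alphakk}) evaluated at $k=\ell-i+1$ on the entangled index $\widehat{\mathbf{j}}^{(i,r,\ell)}$, together with a scalar prefactor $F_{i,k}$, the renormalization factor and $\omega_{\,\widehat{\mathbf{j}}^{(i,r,\ell)}}$.

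The heart of the argument is the parameter identification. Setting $s=r-1$ and matching the product index $k$ of (\ref{q-Racah multivariate def}) to $\ell=i+k-1$, I would check that the degrees and variables agree with (\ref{n_k, x_k}), $n_k=j_{i+k-1}^{(i,r)}$ and $x_k=\vj{1,1}{i+k-1}-\vj{1,1}{i-1}$. The main obstacle is to verify that the running sums $N_{k-1}$ and running products $A_k$ that govern the $k$-th factor of the Gasper and Rahman polynomial reproduce precisely the $\ell$-dependent shifts and $\gamma_{[\cdot;\cdot]}$-exponents buried in $\akk,\bkk,\ckk,\dkk$. This is exactly the point where the entangled argument $\widehat{\mathbf{j}}^{(i,r,\ell)}$ threads the partial sums of the degrees from one univariate factor into the next, so the matching is a telescoping computation rather than a factor-by-factor comparison. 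Carrying it out fixes the parameters $a_1=(-q)^{2\vj{1,1}{i-1}-1}q^{2\gamma_{[1;i+1]}}$, $a_m=q^{2\gamma_{i+m}}$ for $2\le m\le r$, $b=-q^{2\gamma_{i+1}-1}$ and $N=\vj{1,1}{i+r-1}-\vj{1,1}{i-1}$ recorded in the statement.

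It remains to collect the scalar prefactors. The combined product of the constants $F_{i,k}$, the renormalization factors from (\ref{renormalization}) and the normalizations $\omega_{\,\widehat{\mathbf{j}}^{(i,r,\ell)}}$ must be identified with $\sqrt{\rho^{(i,r)}(\mathbf{j}^{(1,1)})/h_{\mathbf{j}^{(i,r)}}}$. Since $\langle\mathbf{j}^{(i,r)}\vert\mathbf{j}^{(1,1)}\rangle$ is an entry of an orthogonal transition matrix between the two orthonormal bases, the orthonormality of its rows and columns is precisely the orthogonality relation (\ref{multivariate q-racah orthogonality}); this forces the prefactor to be the square root of the ratio of the Gasper and Rahman weight $\rho$ to the normalization $h$. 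The explicit bookkeeping of the $q$-Pochhammer symbols producing the formulas for $\rho^{(i,r)}$ and $h_{\mathbf{j}^{(i,r)}}$ is routine but lengthy and is deferred to Appendix A.
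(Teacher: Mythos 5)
Your proposal is correct and follows essentially the same route as the paper: substitute the univariate expressions of Proposition \ref{prop G hat} into the factorization of Lemma \ref{lemma products of G}, match the resulting entangled product against the Gasper--Rahman definition (\ref{q-Racah multivariate def}) with the parameters (\ref{parameters in Gasper-Rahman notation}), and pin down the scalar prefactor by expanding \(\langle\mathbf{j'}^{(i,r)}\vert\mathbf{j}^{(i,r)}\rangle\) over the \(\vert\mathbf{j}^{(1,1)}\rangle\) basis and invoking the orthogonality relation (\ref{multivariate q-racah orthogonality}). The only detail worth adding is that the identification of \(C_{\mathbf{j}^{(1,1)},\mathbf{j}^{(i,r)}}\) with the positive square root uses the phase freedom in the definition of the basis vectors together with the positivity of \(\rho/h\) for \(0<q<1\).
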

\begin{proof}
	Combining Lemma \ref{lemma products of G}, Proposition \ref{prop G hat} and the expression (\ref{renormalization}), we find
	\begin{align*}
	& \langle \mathbf{j}^{(i,r)}\vert \mathbf{j}^{(1,1)}\rangle\\ = & \ \delta_{\vj{1,1}{i+r-1},\vj{i,r}{i+r-1}}\prod_{\ell = 1}^{i-1} \delta_{j_\ell^{(1,1)},j_{\ell}^{(i,r)}}\prod_{\ell = i+r}^{n-1}\delta_{j_{\ell}^{(1,1)},j_{\ell}^{(i,r)}}  \\& C_{\mathbf{j}^{(1,1)},\mathbf{j}^{(i,r)}}\prod_{k = i}^{i+r-2}r_{j^{(i,r)}_k}\left(\left.\vj{1,1}{k}-\vj{i,r}{k-1}; \gammak, \deltak, \betak,\vj{1,1}{k+1}-\vj{i,r}{k-1}\right| -q\right),
	\end{align*}
	for some proportionality factor \(C_{\mathbf{j}^{(1,1)},\mathbf{j}^{(i,r)}}\) depending on \(\mathbf{j}^{(1,1)}\) and \(\mathbf{j}^{(i,r)}\), and with
	\begin{align*}
	\betak & = q^{-1}\ck\dk, \qquad & \gammak & = -q^{-1}\ak\dk, \qquad &\deltak & = -\frac{\ak}{\dk},
	\end{align*}
	where
	\begin{align*}
	\ak & = (-q)^{\vj{i,r}{k-1}-\vj{i,r}{i-1}}q^{\gamma_{[i+1;k+2]}-\frac12} \\
	\bkm & = -(-q)^{-\vj{1,1}{k+1}+\vj{i,r}{i-1}}q^{-\gamma_{[i+1;k+2]}+\frac12} \\
	\ck & = -(-q)^{\vj{1,1}{k+1}+\vj{i,r}{i-1}}q^{2\gamma_{[1;i]}+\gamma_{[i+1;k+2]}-\frac12} \\
	\dk & = (-q)^{\vj{i,r}{k-1}-\vj{i,r}{i-1}}q^{\gamma_{[i+1;k+1]}-\gamma_{k+2}+\frac12},
	\end{align*}
	are obtained from (\ref{akk, bkk}) upon consecutively fixing the intermediary indices \(j_{\ell}^{(i,k)}\) by (\ref{fixing indices}) with \(k\) instead of \(r\). Comparison with (\ref{q-Racah multivariate def}), with all \(q\) changed to \(-q\), tells us that the product of univariate polynomials above is in fact a multivariate \((-q)\)-Racah polynomial with parameters
	\begin{equation}
	\label{parameters in Gasper-Rahman notation}
	\begin{alignedat}{5}
	s & = r-1, \qquad & a_1 & = (-q)^{2\vj{1,1}{i-1}-1}q^{2\gamma_{[1;i+1]}}, \qquad&  a_k & = q^{2\gamma_{i+k}}, k\in \{2,\dots,r\}, \\ b & = -q^{2\gamma_{i+1}-1}, \qquad & N & = \vj{1,1}{i+r-1}-\vj{1,1}{i-1} &&
	\end{alignedat}
	\end{equation}
	and with \(n_k\) and \(x_k\) as in (\ref{n_k, x_k}). 
	
	The factor \(C_{\mathbf{j}^{(1,1)},\mathbf{j}^{(i,r)}}\) can be obtained from the orthogonality relation for the multivariate \((-q)\)-Racah polynomials. Since the vectors \(\vert\mathbf{j}^{(i,k)}\rangle\) are only determined up to a phase factor, we may assume the \(C_{\mathbf{j}^{(1,1)},\mathbf{j}^{(i,r)}}\) to be real and positive. As we have chosen the functions \(\vert \mathbf{j}^{(i,r)}\rangle\) to be mutually orthonormal, we have
	\begin{align*}
	\langle\mathbf{j'}^{(i,r)}\vert \mathbf{j}^{(i,r)}\rangle = \delta_{\mathbf{j}^{(i,r)},\mathbf{j'}^{(i,r)}},
	\end{align*}
	whereas on the other hand we have
	\begin{align*}
	\langle\mathbf{j'}^{(i,r)}\vert \mathbf{j}^{(i,r)}\rangle & = \sum_{\mathbf{j}^{(1,1)}} \langle\mathbf{j'}^{(i,r)}\vert \mathbf{j}^{(1,1)}\rangle\langle\mathbf{j}^{(1,1)}\vert \mathbf{j}^{(i,r)}\rangle \\
	& = \delta_{\vj{i,r}{i+r-1},\vert\mathbf{j'}^{(i,r)}_{i+r-1}\vert}\prod_{\ell = 1}^{i-1} \delta_{j_\ell^{(i,r)},{j'}_{\ell}^{(i,r)}}\prod_{\ell = i+r}^{n-1}\delta_{j_{\ell}^{(i,r)},{j'}_{\ell}^{(i,r)}}\\&\quad \sum_{\mathbf{j}^{(1,1)}} C_{\mathbf{j}^{(1,1)},\mathbf{j}^{(i,r)}}C_{\mathbf{j}^{(1,1)},\mathbf{j'}^{(i,r)}}R_{\mathbf{n}}\left(\mathbf{x};\mathbf{a},b,N\vert -q\right)R_{\mathbf{n'}}\left(\mathbf{x};\mathbf{a},b,N\vert -q\right),
	\end{align*}
	with \(\mathbf{a}=(a_1,\dots,a_{r})\), \(b\) and \(N\) as in (\ref{parameters in Gasper-Rahman notation}), \(\mathbf{x}\) as in (\ref{n_k, x_k}) and
	\[
	\mathbf{n} = (j_{i}^{(i,r)},\dots,j_{i+r-2}^{(i,r)}), \quad \mathbf{n'} = ({j'}_{i}^{(i,r)},\dots,{j'}_{i+r-2}^{(i,r)}).
	\]
	The sum is over all vectors \(\mathbf{j}^{(1,1)}\) with
	\[
	j_{\ell}^{(1,1)} = j_{\ell}^{(i,r)}, \quad \ell \in [1;i-1]\cup[i+r;n-1], \qquad \vj{1,1}{i+r-1}=\vj{i,r}{i+r-1},
	\]
	hence we are in fact summing over all \((j_i^{(1,1)},j_{i+1}^{(1,1)},\dots,j_{i+r-2}^{(1,1)})\in\N^{r-1}\) satisfying 
	\[
	0 \leq j_i^{(1,1)} \leq j_i^{(1,1)}+j_{i+1}^{(1,1)} \leq \dots \leq \vj{1,1}{i+r-2}-\vj{1,1}{i-1} \leq N,
	\]
	precisely as required in the orthogonality relation (\ref{multivariate q-racah orthogonality}) of the multivariate \((-q)\)-Racah polynomials. We conclude that the factor \(C_{\mathbf{j}^{(1,1)},\mathbf{j}^{(i,r)}}C_{\mathbf{j}^{(1,1)},\mathbf{j'}^{(i,r)}}\) coincides with 
	\[
	\frac{\rho(\mathbf{x})}{h_{\mathbf{n}}},
	\]
	with the substitutions (\ref{n_k, x_k}) and (\ref{parameters in Gasper-Rahman notation}), which lead to the expressions in Appendix A. This factor is positive by our requirement that \(0<q<1\), hence \(C_{\mathbf{j}^{(1,1)},\mathbf{j}^{(i,r)}}\) can be identified with its positive square root. This concludes the proof.
\end{proof}

\begin{remark}
	From the orthonormality of the vectors \(\vert\mathbf{j}^{(1,1)}\rangle\) one can deduce a second identity for the \((-q)\)-Racah polynomials, known as the dual orthogonality relation. Indeed, equating
	\[
	\delta_{\mathbf{j}^{(1,1)},\mathbf{j'}^{(1,1)}} = \langle\mathbf{j'}^{(1,1)}\vert \mathbf{j}^{(1,1)}\rangle = \sum_{\mathbf{j}^{(i,r)}} \langle\mathbf{j'}^{(1,1)}\vert \mathbf{j}^{(i,r)}\rangle\langle\mathbf{j}^{(i,r)}\vert \mathbf{j}^{(1,1)}\rangle,
	\]
	we obtain an expression of the form
	\[
	\sum_{\mathbf{n}\in\N^{r-1}}\frac{\rho(\mathbf{x})}{h_{\mathbf{n}}}R_{\mathbf{n}}\left(\left.\mathbf{x};\mathbf{a},b,N\right| -q\right)R_{\mathbf{n}}\left(\left.\mathbf{x'};\mathbf{a},b,N\right| -q\right) = \delta_{\mathbf{x},\mathbf{x'}},
	\]
	as was obtained in \cite[formula (49b)]{Genest&Iliev&Vinet-2018}, using duality properties of the multivariate \(q\)-Racah polynomials. This duality between the degrees \(n_k\) and the variables \(x_k\) will also be of use for our discrete realization of the \(q\)-Bannai--Ito algebra in the next section.
\end{remark}

\section{Realization with difference operators}
\label{Section - Discrete realization}

The multivariate \(q\)-Racah polynomials (\ref{q-Racah multivariate def}), like their univariate counterparts, can be obtained upon reparametrization and truncation from the multivariate Askey--Wilson polynomials. For convenience of the reader we repeat here the definition given by Gasper and Rahman in \cite{Gasper&Rahman-2005}. Let \(\mathbf{y} = (y_1,\dots,y_{s})\in\R^{s}\), \(\mathbf{n}=(n_1,\dots,n_s)\in\N^{s}\) and \(\boldsymbol{\alpha} = (\alpha_0,\dots,\alpha_{s+2})\in\R^{s+3}\), then the multivariate Askey--Wilson polynomials are given by
\begin{equation}
\label{multivariate AW def}
P_s\left(\left.\mathbf{n}; \mathbf{y}, \boldsymbol{\alpha}\right| q\right) = \prod_{k = 1}^s p_{n_k}\left(\left.y_k; \alpha_kq^{N_{k-1}},\frac{\alpha_k}{\alpha_0^2}q^{N_{k-1}}, \frac{\alpha_{k+1}}{\alpha_k}z_{k+1}, \frac{\alpha_{k+1}}{\alpha_k} z_{k+1}^{-1}\right| q\right),
\end{equation}
where \(z_i\) is such that \(y_i = \frac12(z_i+z_i^{-1})\) for \(i\in[1;s]\) and \(z_{s+1} = \alpha_{s+2}\), and where the univariate Askey--Wilson polynomials are defined as \cite{Askey&Wilson-1985}
\[
p_n(y;a,b,c,d\vert q) = \frac{(ab,ac,ad;q)_n}{a^n}\,_4\phi_3\left(\begin{array}{c}
q^{-n}, abcdq^{n-1}, az,az^{-1} \\ ab, ac, ad
\end{array}; q,q \right),
\]
with again \(y = \frac12(z+z^{-1})\).

In \cite{Iliev-2011} Iliev obtained a set of \(s\) mutually commuting and algebraically independent difference operators acting on the variables \(z_i\), which are all diagonalized by the polynomials (\ref{multivariate AW def}). A duality transformation between the degrees \(\mathbf{n}\) and the variables \(\mathbf{y}\) allowed to define a second set of \(s\) difference operators diagonalizing the same polynomials, again mutually commuting but this time acting by discrete shifts in the degrees \(n_i\). Both these sets of difference operators serve as generating sets for a commutative algebra. In this section, we will provide a larger algebraic framework for these algebras.

\subsection{Bispectrality of the multivariate $(-q)$-Racah polynomials}
\label{Paragraph Bispectrality}

Throughout this section we will take \(s\) to be the total number of variables, so we will consider \((-q)\)-Racah polynomials in \(s\) real variables \(x_i\), indexed by a tuple \(\mathbf{n} = (n_1,\dots,n_s)\) of \(s\) natural numbers, and difference operators in several of the variables \(x_i\) and the degrees \(n_i\).

The precise reparametrization that one needs to go from the multivariate \((-q)\)-Askey--Wilson polynomials \(P_s\left(\left.\mathbf{n}; \mathbf{y}, \boldsymbol{\alpha}\right| -q\right)\) in (\ref{multivariate AW def}) to the multivariate \((-q)\)-Racah polynomials \(R_{\mathbf{n}}\left(\left.\mathbf{x}; \mathbf{a},b,N\right|-q\right)\) of (\ref{q-Racah multivariate def}) is the following:
\begin{equation}
\label{Reparametrization AW to q-Racah}
\begin{alignedat}{5}
\alpha_0 & = \sqrt{-\frac{a_1}{qb}}, \quad & \alpha_i & = \prod_{k = 1}^{i}\sqrt{a_k}, \ \  i \in \{1,\dots,s+1\}, \quad & \alpha_{s+2}& = (-q)^{N}\prod_{k = 1}^{s+1}\sqrt{a_k}, \\
z_0& = \sqrt{-\frac{a_1}{qb}}, \quad& z_i &= (-q)^{x_i}\prod_{k = 1}^{i}\sqrt{a_k}, \ \ i \in \{1,\dots,s\}, \quad & z_{s+1}& = (-q)^{N}\prod_{k = 1}^{s+1}\sqrt{a_k}.
\end{alignedat}
\end{equation}
As observed in \cite[Remark 2.3]{Iliev-2011}, the \(q\)-difference operators for the Askey--Wilson polynomials translate to similar operators diagonalizing the multivariate \((-q)\)-Racah polynomials under the above reparametrization. For convenience of the reader, we repeat here the statements from \cite[Propositions 4.2, 4.5 and Theorem 5.5]{Iliev-2011}, translated to the setting of \((-q)\)-Racah polynomials by means of (\ref{Reparametrization AW to q-Racah}). Note that a \((-q)\)-shift \(z_j \to -qz_j\) corresponds to a discrete shift \(x_j \to x_j + 1\), as is immediate from (\ref{Reparametrization AW to q-Racah}). This suggests the notation \(T_{+,x_i}\) for the operator
\[
T_{+,x_i}f(x_1,\dots,x_s) = f(x_1,\dots,x_i+1,\dots,x_s).
\]
Let us first study the shift operators in the variables \(x_i\).
\begin{proposition}[\cite{Iliev-2011}]
	\label{prop Iliev x}
	Let us define the operator
		\begin{align*}
		& \mathcal{L}_j^{\mathbf{x}}(x_1,\dots,x_j;a_1,\dots,a_{j+1},b,x_{j+1}\vert -q)
		\\ = & \sum_{\nuu \in \{-1,0,1\}^j}C_{\nuu}(x_1,\dots,x_{j+1})T_{+,x_1}^{\nu_1}\dots T_{+,x_j}^{\nu_j} \\ &\phantom{=} - \left(1+\frac{bA_{j+1}}{a_1} - \tfrac{(-q)^{-x_{j+1}}}{1-q}\left(1-\tfrac{qb}{a_1}\right)\left(1+A_{j+1}(-q)^{2 x_{j+1}}\right) \right),
		\end{align*}
	referring to the notation (\ref{notation Nk, Ak}), with the convention that \(x_{s+1}=N\) and where the \(C_{\nuu}(x_1,\dots,x_{j+1})\) are functions in the variables \((-q)^{x_1},\dots,(-q)^{x_{j+1}}\), explicit expressions of which can be found in Appendix B. Then the operators
	\[
	\mathcal{L}_j^{\mathbf{x}}(x_1,\dots,x_j;a_1,\dots,a_{j+1},b,x_{j+1}\vert -q), \qquad j\in \{1,\dots,s\},
	\]
	form a set of \(s\) mutually commuting operators, with the multivariate \((-q)\)-Racah polynomials
	\[
	R_{\mathbf{n}}\left(\mathbf{x};\mathbf{a},b,N\vert -q\right) = R_{(n_1,\dots,n_s)}\left((x_1,\dots,x_s);a_1,\dots,a_{s+1},b,N\vert -q\right)
	\]
	as common eigenfunctions:
	\[
	\mathcal{L}_j^{\mathbf{x}}(x_1,\dots,x_j;a_1,\dots,a_{j+1},b,x_{j+1}\vert -q) R_{\mathbf{n}}\left(\mathbf{x};\mathbf{a},b,N\vert -q\right) = \mu_jR_{\mathbf{n}}\left(\mathbf{x};\mathbf{a},b,N\vert -q\right),
	\]
	with eigenvalue
	\[
	\mu_j = -\left(1-(-q)^{-N_j}\right)\left(1-\frac{bA_{j+1}}{a_1}(-q)^{N_{j}} \right).
	\]
\end{proposition}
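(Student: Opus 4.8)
The statement is essentially Iliev's result \cite{Iliev-2011} transported from the multivariate Askey--Wilson to the multivariate $(-q)$-Racah setting, so the plan is \emph{not} to reprove the bispectrality from scratch but to carefully transfer it along the reparametrization (\ref{Reparametrization AW to q-Racah}). First I would recall the starting point: in \cite[Propositions 4.2, 4.5]{Iliev-2011} Iliev exhibits $s$ mutually commuting $q$-difference operators $\mathcal{L}_j$ acting by $q$-shifts $z_k \to qz_k$ on the variables $z_k$, all diagonalized by the multivariate Askey--Wilson polynomials $P_s(\mathbf{n};\mathbf{y},\boldsymbol{\alpha}|q)$, together with their explicit eigenvalues. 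Since these identities are rational in $q$ and hold for all $q$ that are not roots of unity, they remain valid after the formal replacement $q \to -q$; this turns the Askey--Wilson polynomials into $(-q)$-Askey--Wilson polynomials and the shifts $z_k \to qz_k$ into $z_k \to -qz_k$.

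The second step is to apply the reparametrization (\ref{Reparametrization AW to q-Racah}). Under this substitution the $(-q)$-Askey--Wilson polynomials collapse to the multivariate $(-q)$-Racah polynomials $R_{\mathbf{n}}(\mathbf{x};\mathbf{a},b,N|-q)$ of (\ref{q-Racah multivariate def}), exactly as in the univariate truncation recalled in Section \ref{Paragraph univariate}. Crucially, (\ref{Reparametrization AW to q-Racah}) sets $z_k = (-q)^{x_k}\prod_{\ell=1}^k\sqrt{a_\ell}$, so a multiplicative $(-q)$-shift $z_k \to -qz_k$ becomes precisely the additive shift $x_k \to x_k+1$, i.e.\ the operator $T_{+,x_k}$. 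Hence Iliev's $q$-difference operators become, after $q\to -q$ and reparametrization, difference operators in the discrete variables $x_k$ of exactly the form displayed for $\mathcal{L}_j^{\mathbf{x}}$, with shift vector $\nuu \in \{-1,0,1\}^j$ and coefficient functions $C_{\nuu}$ that are rational in $(-q)^{x_1},\dots,(-q)^{x_{j+1}}$; carrying out this substitution in Iliev's explicit coefficients yields the functions recorded in Appendix B.

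Commutativity is then inherited for free. The reparametrization is an invertible change of variables on the relevant lattice, so it conjugates Iliev's commuting family into the family $\{\mathcal{L}_j^{\mathbf{x}}\}$, and conjugation preserves all commutators; thus the $\mathcal{L}_j^{\mathbf{x}}$ mutually commute and the polynomials $R_{\mathbf{n}}$ remain joint eigenfunctions. The eigenvalue $\mu_j$ is obtained simply by feeding (\ref{Reparametrization AW to q-Racah}) into Iliev's Askey--Wilson eigenvalue (a function of $q^{N_j}$ and the $\alpha_i$) and simplifying, which produces $-\left(1-(-q)^{-N_j}\right)\left(1-\frac{bA_{j+1}}{a_1}(-q)^{N_j}\right)$.

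The main obstacle is not conceptual but a matter of careful bookkeeping. One must verify that the generically defined Askey--Wilson difference operators restrict consistently to the \emph{truncated} finite support $0\le x_1\le\dots\le x_s\le N$ of the $(-q)$-Racah polynomials: the boundary coefficients $C_{\nuu}$ must vanish whenever a shift $T_{+,x_k}^{\nu_k}$ would move a point outside the simplex, so that the operators genuinely preserve the space spanned by the $R_{\mathbf{n}}$. Checking this, together with matching the rather intricate explicit form of the $C_{\nuu}$ against Iliev's expressions under the substitution (\ref{Reparametrization AW to q-Racah}), is where all the real labour lies; it is routine but lengthy, and is best delegated to the explicit formulas collected in Appendix B and to a computer algebra verification of a few base cases.
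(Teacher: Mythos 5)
Your proposal matches the paper's own treatment: Proposition \ref{prop Iliev x} is stated there as a direct transcription of \cite[Propositions 4.2, 4.5]{Iliev-2011} after the substitution $q\to-q$ and the reparametrization (\ref{Reparametrization AW to q-Racah}), with the key observation (also made explicitly in the paper) that the multiplicative shift $z_j\to -qz_j$ becomes the additive shift $x_j\to x_j+1$, so commutativity and the eigenvalue equations carry over verbatim and the eigenvalue $\mu_j$ follows by substitution. Your additional remarks on the boundary behaviour of the $C_{\nuu}$ on the truncated simplex are sensible bookkeeping but do not change the argument.
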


In analogy to \cite{Iliev-2011}, we will denote by \(\mathcal{D}_{\mathbf{x},s}^{\mathbf{a},b,N}\) the algebra
\begin{equation}
\label{D_x,a def}
\mathcal{D}_{\mathbf{x},s}^{\mathbf{a},b,N} = \C\left((-q)^{x_1},\dots,(-q)^{x_s},\sqrt{a_1},\dots,\sqrt{a_{s+1}},\sqrt{b},(-q)^N\right)[T_{+,x_1}^{\pm1},\dots,T_{+,x_s}^{\pm 1}]
\end{equation}
of difference operators with coefficients rational in the variables \((-q)^{x_i}\) and the parameters \(\sqrt{a_i},\sqrt{b}\) and \((-q)^N\). The analog of Iliev's algebra \(\mathcal{A}_z\) in our \((-q)\)-Racah setting is the algebra
\begin{equation}
\label{A_z def}
\mathcal{A}_{\mathbf{x},s}(x_1,\dots,x_s) = \left\langle\mathcal{L}_j^{\mathbf{x}}(x_1,\dots,x_j;a_1,\dots,a_{j+1},b,x_{j+1}\vert -q): j\in \{1,\dots,s\}\right\rangle,
\end{equation}
considered as subalgebra of \(\mathcal{D}_{\mathbf{x},s}^{\mathbf{a},b,N}\). 
By Proposition \ref{prop Iliev x}, \(\mathcal{A}_{\mathbf{x},s}(x_1,\dots,x_s)\) is an abelian algebra of difference operators in the variables \(x_i\). We will relate this algebra to the higher rank \(q\)-Bannai--Ito algebra in Section \ref{Paragraph - Discrete realization}.

Like before, let us denote by \(T_{+,n_i}\) the forward shift operator in the discrete variables \(n_i\), i.e.
\[
T_{+,n_i}f(n_1,\dots,n_s) = f(n_1,\dots,n_i+1,\dots,n_s).
\]
Changing all \(x_i\) to \(n_i\) in (\ref{D_x,a def}), one obtains the algebra
\begin{equation}
\label{D_n,a def}
\mathcal{D}_{\mathbf{n},s}^{\mathbf{a},b,N} = \C\left((-q)^{n_1},\dots,(-q)^{n_s},\sqrt{a_1},\dots,\sqrt{a_{s+1}},\sqrt{b},(-q)^N\right)[T_{+,n_1}^{\pm1},\dots,T_{+,n_s}^{\pm 1}].
\end{equation}
Let us now introduce a transformation \(\mathfrak{b}\) acting as 
\begin{align*}
\bb((-q)^{x_j}) & = \frac{b A_{s-j+2}}{a_1A_{s+1}}(-q)^{N_{s-j+1}-N}, && \\
\bb(a_1) & = \frac{a_1}{b}A_{s+1}(-q)^{2N}, &\bb(a_j) & = a_{s-j+3}, \qquad\qquad\qquad\, j\in [2;s+1], \\
\bb(T_{+,x_1}) & = T_{+,n_s}, & \bb(T_{+,x_j}) & = T_{+,n_{s-j+1}}T_{+,n_{s-j+2}}^{-1},\quad j\in[2;s], \\
\bb((-q)^N) & = \frac{b}{A_{s+1}}(-q)^{-N}, & \bb(b) & = A_{s+1}(-q)^{2N},
\end{align*}
and extended by linearity and multiplicativity to an isomorphism from \(\mathcal{D}_{\mathbf{x},s}^{\mathbf{a},b,N}\) to \(\mathcal{D}_{\mathbf{n},s}^{\mathbf{a},b,N}\). 
Define also
\begin{align*}
g_{\mathbf{n}} = g_{\mathbf{n},\mathbf{a},b,N} & = \frac{\left(A_{s+1}(-q)^N\right)^{N_s}}{\left(A_{s+1}(-q)^N, \frac{b}{a_1}A_{s+1}(-q)^{N+1}; -q\right)_{N_s}}\left(\prod_{j = 1}^s\frac{\left(A_j\right)^{\frac{-n_j}{2}}}{\left(a_{j+1};-q\right)_{n_j}}\right),
\end{align*}
where we have again used the \((-q)\)-Pochhammer symbol (\ref{q-Pochhammer symbol}). The analogs of Iliev's renormalized Askey--Wilson polynomials, see \cite[(5.4)]{Iliev-2011}, are the renormalized \((-q)\)-Racah polynomials
\begin{equation}
\label{renormalized multivariate -q-Racah def}
\widehat{R_{\mathbf{n}}}(\mathbf{x};\mathbf{a},b,N\vert -q) = g_{\mathbf{n},\mathbf{a},b,N}R_{\mathbf{n}}(\mathbf{x};\mathbf{a},b,N\vert -q).
\end{equation}
Iliev's second set of difference operators is described in the following proposition.

\begin{proposition}[\cite{Iliev-2011}]
	\label{prop Iliev n}
	Let us define the operator
	\begin{align*}
	& \mathfrak{L}_j^{\mathbf{n}}\left(n_1,\dots,n_s; a_1,\dots,a_{s+1},b,N\vert -q\right) = \bb(\mathcal{L}_j^{\mathbf{x}}\left(x_1,\dots,x_j;a_1,\dots,a_{j+1},b,x_{j+1}\vert -q)\right) \\
	= & \sum_{\nuu\in \{-1, 0, 1\}^j}D_{\nuu}(n_1,\dots,n_s)T_{+,n_{s-j+1}}^{\nu_j}T_{+,n_{s-j+2}}^{\nu_{j-1}-\nu_j}\dots T_{+,n_s}^{\nu_1-\nu_2}\\&-\left(1+\frac{A_{s+1}^2}{A_{s-j+1}}(-q)^{2N}-\frac{A_{s+1}(-q)^{N}}{1-q}\left(1-\frac{qb}{a_1}\right)\left(\frac{a_1 }{bA_{s-j+1}}(-q)^{-N_{s-j}}+(-q)^{N_{s-j}}\right)\right),
	\end{align*}
	where the \(D_{\nuu}(n_1,\dots,n_s) = \bb(C_{\nuu}(x_1,\dots,x_{j+1}))\) are functions in the variables \((-q)^{n_1}\), \dots, \((-q)^{n_s}\), explicit expressions of which can be found in Appendix B. Then the operators
	\[
	\mathfrak{L}_j^{\mathbf{n}}\left(n_1,\dots,n_s; a_1,\dots,a_{s+1},b,N\vert -q\right), \qquad j\in\{1,\dots,s\},
	\] 
	are mutually commuting operators with the renormalized \((-q)\)-Racah polynomials as common eigenfunctions:
	\[
	\mathfrak{L}_j^{\mathbf{n}}\left(n_1,\dots,n_s; a_1,\dots,a_{s+1},b,N\vert -q\right)\widehat{R_{\mathbf{n}}}(\mathbf{x};\mathbf{a},b,N\vert -q) = \kappa_j\widehat{R_{\mathbf{n}}}(\mathbf{x};\mathbf{a},b,N\vert -q),
	\]
	or equivalently
	\[
	\left(g_{\mathbf{n}}^{-1}\mathfrak{L}_j^{\mathbf{n}}\left(n_1,\dots,n_s; a_1,\dots,a_{s+1},b,N\vert -q\right)g_{\mathbf{n}}\right)R_{\mathbf{n}}\left(\mathbf{x};\mathbf{a},b,N\vert -q\right) = \kappa_jR_{\mathbf{n}}\left(\mathbf{x};\mathbf{a},b,N\vert -q\right),
	\]
	with eigenvalue
	\[
	\kappa_j = -1 - \frac{A_{s+1}^2}{A_{s-j+1}}(-q)^{2N} + A_{s+1}(-q)^N\left(\frac{(-q)^{-x_{s-j+1}}}{A_{s-j+1}}+(-q)^{x_{s-j+1}}\right).
	\]
\end{proposition}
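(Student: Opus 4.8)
The plan is to obtain this statement as the $(-q)$-Racah translation of \cite[Theorem 5.5]{Iliev-2011}, in the same way that Proposition \ref{prop Iliev x} translates \cite[Propositions 4.2 and 4.5]{Iliev-2011}. Two assertions must be verified: that the operators $\mathfrak{L}_j^{\mathbf{n}}$ pairwise commute, and that the renormalized polynomials $\widehat{R_{\mathbf{n}}}$ are their common eigenfunctions with eigenvalues $\kappa_j$. The commutativity is immediate, since $\mathfrak{b}$ is by construction an algebra isomorphism from $\mathcal{D}_{\mathbf{x},s}^{\mathbf{a},b,N}$ onto $\mathcal{D}_{\mathbf{n},s}^{\mathbf{a},b,N}$ and the $\mathcal{L}_j^{\mathbf{x}}$ pairwise commute by Proposition \ref{prop Iliev x}; hence
\[
[\mathfrak{L}_j^{\mathbf{n}},\mathfrak{L}_k^{\mathbf{n}}] = \mathfrak{b}\big([\mathcal{L}_j^{\mathbf{x}},\mathcal{L}_k^{\mathbf{x}}]\big) = 0.
\]

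For the eigenfunction property, the essential ingredient is the bispectral self-duality of the multivariate $(-q)$-Racah polynomials. I would first record this as a lemma: under the reparametrization (\ref{Reparametrization AW to q-Racah}), the duality of the Askey--Wilson polynomials established in \cite{Iliev-2011} yields a relation of the form
\[
\widehat{R_{\mathbf{n}}}(\mathbf{x};\mathbf{a},b,N\vert -q) = \widehat{R_{\mathbf{x}}}(\mathbf{n};\mathbf{a}^*,b^*,N^*\vert -q),
\]
in which the degree tuple $\mathbf{n}$ and the variable tuple $\mathbf{x}$ are interchanged and the parameters $\mathbf{a},b,N$ are replaced by their $\mathfrak{b}$-images $\mathbf{a}^*,b^*,N^*$. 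The transformation $\mathfrak{b}$ is precisely the map implementing this interchange at the level of the operator algebras, and the normalization factor $g_{\mathbf{n}}$ in (\ref{renormalized multivariate -q-Racah def}) is chosen so that the relation above holds on the nose. Granting the lemma, I would apply $\mathfrak{b}$ to the eigenvalue equation of Proposition \ref{prop Iliev x}: the operator $\mathcal{L}_j^{\mathbf{x}}$ becomes $\mathfrak{L}_j^{\mathbf{n}}$, the variables on which it acts become the degrees, and the self-duality converts the eigenfunction $R_{\mathbf{n}}$ into $\widehat{R_{\mathbf{n}}}$ regarded as a function of $\mathbf{n}$.

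It remains to identify the resulting scalar with $\kappa_j$. Because the eigenvalue $\mu_j = -\big(1-(-q)^{-N_j}\big)\big(1-\tfrac{bA_{j+1}}{a_1}(-q)^{N_j}\big)$ depends on the spectral data only through the degrees $N_j$, the interchange turns it into a function of the single variable $x_{s-j+1}$; substituting the $\mathfrak{b}$-images of $(-q)^{N_j}$, $a_1$, $b$ and $A_{j+1}$ should reproduce the stated expression for $\kappa_j$, and the equivalent conjugated form $g_{\mathbf{n}}^{-1}\mathfrak{L}_j^{\mathbf{n}}g_{\mathbf{n}}$ then follows at once from $\widehat{R_{\mathbf{n}}} = g_{\mathbf{n}}R_{\mathbf{n}}$. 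The main obstacle is the self-duality lemma itself, which reduces to matching normalizations: one must check that $g_{\mathbf{n}}$ is exactly the $(-q)$-Pochhammer prefactor absorbing the parameter-dependent constants produced by the reparametrization (\ref{Reparametrization AW to q-Racah}) and by the swap $\mathbf{n}\leftrightarrow\mathbf{x}$, so that no residual $\mathbf{n}$-dependent factor survives. This is a careful but routine computation with $(-q)$-Pochhammer symbols, and it is where the precise form of $g_{\mathbf{n}}$ gets pinned down.
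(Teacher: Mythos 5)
Your proposal is correct and follows essentially the same route as the paper: the paper offers no independent proof here, stating the proposition as the direct translation of \cite[Theorem 5.5]{Iliev-2011} under the reparametrization (\ref{Reparametrization AW to q-Racah}), and Iliev's own argument is precisely the one you reconstruct --- commutativity transported through the isomorphism $\mathfrak{b}$, plus the self-duality of the renormalized polynomials (with $g_{\mathbf{n}}$ chosen to make the duality exact) converting the eigenvalue equation for $\mathcal{L}_j^{\mathbf{x}}$ into the one for $\mathfrak{L}_j^{\mathbf{n}}$ with the index reversal $j\mapsto s-j+1$ producing $\kappa_j$ from $\mu_j$.
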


The role of Iliev's second algebra \(\mathcal{A}_n\) is now played by
\begin{equation}
\label{A_n def}
\mathcal{A}_{\mathbf{n},s}(n_1,\dots,n_s) = \left\langle\mathfrak{L}_j^{\mathbf{n}}(n_1,\dots,n_s,;a_1,\dots,a_{s+1},b,N\vert -q): j\in \{1,\dots,s\} \right\rangle,
\end{equation}
considered as subalgebra of \(\mathcal{D}_{\mathbf{n},s}^{\mathbf{a},b,N}\). 
By Proposition \ref{prop Iliev n}, \(\mathcal{A}_{\mathbf{n},s}(n_1,\dots,n_s)\) is an abelian algebra of difference operators in the variables \(n_i\).

\subsection{Discrete realization of the higher rank $q$-Bannai--Ito algebra}
\label{Paragraph - Discrete realization}
For ease of notation, let us from now on write \(\vert\mathbf{s}\rangle = \vert s_{n-1},s_{n-2},\dots,s_1\rangle\) for the vectors
\begin{equation}
\label{vector s}
\vert \mathbf{s}\rangle = \vert \mathbf{j}^{(1,1)}\rangle
\end{equation}
diagonalizing the subalgebra 
\[
\mathcal{Y}_{i,1} = \mathcal{Y}_{1,1} = \langle \Gamma_{[1;2]}^q,\Gamma_{[1;3]}^q,\dots,\Gamma_{[1;n]}^q\rangle,
\]
with \(i\in [1;n-1]\) arbitrary, where we have used the equality from Remark \ref{Remark Y_i,r}. Similarly, we write \(\vert\mathbf{j}\rangle = \vert j_{n-1}, j_{n-2},\dots, j_1\rangle\) for the vectors
\begin{equation}
\label{vector j}
\vert\mathbf{j}\rangle = \vert \mathbf{j}^{(1,n-1)}\rangle
\end{equation}
that diagonalize
\[
\mathcal{Y}_{1,n-1} = \langle \Gamma_{[2;3]}^q, \Gamma_{[2;4]}^q,\dots,\Gamma_{[2;n]}^q,\Gamma_{[1;n]}^q\rangle.
\]

We will now use the results from Section \ref{Paragraph Bispectrality}, for a total number of variables \(s = n-2\) and with certain substitutions for the parameters \(a_i\) and \(b\) and renamings for the variables \(x_i\) and the degrees \(n_i\). These will lead us to a realization of the higher rank \(q\)-Bannai--Ito algebra \(\mathcal{A}_n^q\), or more precisely of the algebra abstractly defined by the relations of Proposition \ref{prop q-anticommutation Gamma}. At this time, it is not clear whether the two are isomorphic, we refer to \cite{DeClercq-2019} for a discussion. Nevertheless, we will take the freedom to call the operator system we are about to construct a realization of the algebra \(\mathcal{A}_n^q\). We will call this realization discrete, as it represents the algebra generators \(\Gamma_A^q\) as shift operators in \(2n-2\) discrete variables \(j_1,\dots,j_{n-1},s_1,\dots,s_{n-1}\in\N\), i.e.\ the quantum numbers labelling the vectors \(\vert\mathbf{j}\rangle\) and \(\vert\mathbf{s}\rangle\). We will first introduce two function spaces, one of which will be our representation space.

\begin{definition}
\label{representation spaces Vq def}
We denote by \(\widetilde{\mathcal{V}}_q\) the infinite-dimensional vector space over \(\mathbb{C}\) spanned by all overlap coefficients
\[
\langle \mathbf{j}\vert\mathbf{s}\rangle, \quad \mathbf{j}\in\N^{n-1},
\]
considered as functions of \(s_1,\dots,s_{n-1}\in\N\). We will write \(\mathcal{V}_q\) for the infinite-dimensional vector space spanned by the renormalized multivariate \((-q)\)-Racah polynomials
\[
\widehat{R}_{(j_1,\dots,j_{n-2})}\left((s_1,s_1+s_2,\dots,\vss{n-2}); \mathbf{a}, b, \vss{n-1}\left|\right. -q\right), \quad \mathbf{j}\in\N^{n-1},
\]
again considered as functions of \(\mathbf{s}\in\N^{n-1}\), where \(j_{n-1}\) is fixed by the constraint \(\vert\mathbf{j}_{n-1}\vert = \vert\mathbf{s}_{n-1}\vert\) and with
\begin{equation}
\label{parametrization overlap j and s}
\mathbf{a} = (a_1,\dots,a_{n-1}), \quad a_1 = -q^{2(\gamma_1+\gamma_2)-1}, \quad a_k = q^{2\gamma_{k+1}}, k\in [2;n-1], \quad b = -q^{2\gamma_2-1}.
\end{equation}
\end{definition}

It will suffice to give an explicit realization for each of the elements \(\Gamma_{[i+1;i+r]}^q\), \(i\in[0;n-1], r\in[1;n-i]\), agreeing with the algebra relations in Propositions \ref{prop q-anticommutation Gamma} and \ref{prop A contained in B}, since these generate the full algebra \(\mathcal{A}_n^q\) by Corollary \ref{cor Generating set}. We will use the approach from \cite{DeBie&vandeVijver-2018} to find appropriate realizations. In order to pass from the abstract algebraic perspective to the setting of operators acting on functions of discrete variables, we will lift the action of the algebra to the connection coefficients. More precisely, in a first step we will look for suitable operators \(\widetilde{\Gamma_A^q}\) which act on the connection coefficients as follows:
\begin{equation}
\label{lift the action}
\widetilde{\Gamma_A^q}\langle \mathbf{j}\vert\mathbf{s}\rangle = \langle \mathbf{j}\vert \Gamma_A^q\vert \mathbf{s}\rangle.
\end{equation}
Afterwards, we will slightly modify the considered \(\widetilde{\Gamma_A^q}\) to obtain more elegant expressions. This will be our strategy of proof in the following theorem. 

\begin{theorem}
	\label{th discrete realization}
	Let \(\gamma_i > \frac12\), \(i\in[1;n]\), be a set of real parameters. Let us define
	\begin{align}
	\label{Gamma start from 1}
	\begin{split}
	\Gamma_{[1;m+1]}^q = &\, -\frac{(-q)^{-\vss{n-1}}q^{-\gamma_{[1;n]}-\gamma_{[m+2;n]}+\frac12}}{q-q^{-1}}\mathfrak{L}_{n-m-1}^{\mathbf{n}}\\& + (-1)^{\vss{n-1}}\left[\vss{n-1}+\gamma_{[1;n]}+\gamma_{[m+2;n]}-\frac12\right]_q,
	\end{split}
	\end{align}
	for \(m\in [0;n-1]\) and
	\begin{align}
	\label{Gamma start from i}
	\Gamma_{[i+1;i+r]}^q & = -\frac{q^{-\gamma_{[i+1;i+r]}+\frac12}}{q-q^{-1}}\mathcal{L}_{i,r-1}^{\mathbf{x}} + \left[\gamma_{[i+1;i+r]}-\frac12\right]_q,
	\end{align}
	for \(i\in [1;n-1], r\in[1;n-i]\), where we have written \(\mathfrak{L}_{n-m-1}^{\mathbf{n}}\) for 
	\[
	\mathfrak{L}_{n-m-1}^{\mathbf{n}}\left(j_1,\dots,j_{n-2}; -q^{2(\gamma_1+\gamma_2)-1},q^{2\gamma_3},\dots,q^{2\gamma_n},-q^{2\gamma_2-1},\vjj{n-1}\right)
	\]
	and \(\mathcal{L}_{i,r-1}^{\mathbf{x}}\) for 
	\[
	\mathcal{L}_{r-1}^{\mathbf{x}}\left(s_i,\dots,\vss{i+r-2}-\vss{i-1};(-q)^{2\vss{i-1}-1}q^{2\gamma_{[1;i+1]}}, q^{2\gamma_{i+2}},\dots,q^{2\gamma_{i+r}}, -q^{2\gamma_{i+1}-1},\vss{i+r-1}-\vss{i-1}\right),
	\]
	with the convention that \(\mathcal{L}_{i,0}^{\mathbf{x}} = \mathfrak{L}_0^{\mathbf{n}} = 0\) and that
	\[
	\mathfrak{L}_{n-1}^{\mathbf{n}} = (q-q^{-1})q^{\vss{n-1}+\gamma_1+2\gamma_{[2;n]}-\frac12}\left(\left[\vss{n-1}+\gamma_1+2\gamma_{[2;n]}-\frac12\right]_q-(-1)^{\vss{n-1}}\left[\gamma_1-\frac12\right]_q\right).
	\]
	The algebra generated by the operators (\ref{Gamma start from 1}) and (\ref{Gamma start from i}) forms a discrete realization of the \(q\)-Bannai--Ito algebra \(\mathcal{A}_n^q\) of rank \(n-2\) on the module \(\mathcal{V}_q\).
\end{theorem}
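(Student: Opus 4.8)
The plan is to follow the two-step route announced just before the statement: first produce, for each generator, an operator $\widetilde{\Gamma_A^q}$ on $\widetilde{\mathcal{V}}_q$ obeying the lifting identity (\ref{lift the action}), and then conjugate by a normalization to reach the compact expressions (\ref{Gamma start from 1}) and (\ref{Gamma start from i}). By Corollary \ref{cor Generating set} it is enough to treat the two families $\Gamma_{[1;m+1]}^q$ and $\Gamma_{[i+1;i+r]}^q$. The guiding principle is that any operator satisfying (\ref{lift the action}) is one and the same difference operator in the variables $\mathbf{s}$, regardless of the label $\mathbf{j}$, and so may be pinned down on whichever spanning family of overlap functions is most convenient. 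I will use the global functions $\langle\mathbf{j}\vert\mathbf{s}\rangle$ for the first family and, exploiting the invertibility of the connection matrices $\langle\mathbf{j}\vert\mathbf{j}^{(i,r)}\rangle$ supplied by Lemma \ref{lemma products of G}, the intermediate overlaps $\langle\mathbf{j}^{(i,r)}\vert\mathbf{s}\rangle$ for the second.

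For the first family I would use that $\vert\mathbf{s}\rangle=\vert\mathbf{j}^{(1,1)}\rangle$ diagonalizes $\mathcal{Y}_{1,1}$, so that $\Gamma_{[1;m+1]}^q\vert\mathbf{s}\rangle=(-1)^{\vss{m}}[\vss{m}+\gamma_{[1;m+1]}-\tfrac12]_q\vert\mathbf{s}\rangle$ and the lift is multiplication by this $(-q)$-number. On the other hand $\langle\mathbf{j}\vert\mathbf{s}\rangle$ is a renormalization of $\widehat{R}_{\mathbf{j}}$, which by Proposition \ref{prop Iliev n} is an eigenfunction of $\mathfrak{L}_{n-m-1}^{\mathbf{n}}$ with eigenvalue $\kappa_{n-m-1}$ depending only on $x_m=\vss{m}$; thus $\mathfrak{L}_{n-m-1}^{\mathbf{n}}$ likewise acts on $\mathcal{V}_q$ as multiplication by a function of $\mathbf{s}$. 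The substance of (\ref{Gamma start from 1}) is then the algebraic identity that, after the substitution (\ref{parametrization overlap j and s}), the affine rescaling of $\kappa_{n-m-1}$ equals the eigenvalue above, which I would check directly, treating separately the degenerate cases $m=n-1$ (where $\mathfrak{L}_0^{\mathbf{n}}=0$ reproduces the diagonal action of $\Gamma_{[1;n]}^q$) and $m=0$ (where the special form of $\mathfrak{L}_{n-1}^{\mathbf{n}}$ must collapse to the scalar $[\gamma_1-\tfrac12]_q$ forced by (\ref{Remark singletons})).

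For the second family I would instead evaluate the lift on the intermediate overlaps. The operator $\Gamma_{[i+1;i+r]}^q$ diagonalizes $\vert\mathbf{j}^{(i,r)}\rangle$ with the eigenvalue (\ref{eigenvalue 2}), while by Theorem \ref{th - Overlap q-case} the overlap $\langle\mathbf{j}^{(i,r)}\vert\mathbf{s}\rangle$ is, up to Kronecker constraints, a multivariate $(-q)$-Racah polynomial in the shifted variables $x_k=\vss{i+k-1}-\vss{i-1}$ with exactly the parameters occurring in $\mathcal{L}_{i,r-1}^{\mathbf{x}}$. Proposition \ref{prop Iliev x} then yields $\mathcal{L}_{i,r-1}^{\mathbf{x}}\langle\mathbf{j}^{(i,r)}\vert\mathbf{s}\rangle=\mu_{r-1}\langle\mathbf{j}^{(i,r)}\vert\mathbf{s}\rangle$, and a second eigenvalue-matching computation shows that the rescaling (\ref{Gamma start from i}) turns $\mu_{r-1}$ into (\ref{eigenvalue 2}). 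Inserting the resolution of the identity in the intermediate basis and using that $\mathcal{L}_{i,r-1}^{\mathbf{x}}$ is diagonal on each resulting overlap then confirms (\ref{lift the action}) for this family on all of $\mathcal{V}_q$; for $i>1$ this routing is essential, since $\Gamma_{[i+1;i+r]}^q$ is diagonal on neither $\vert\mathbf{s}\rangle$ nor $\vert\mathbf{j}\rangle$.

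It remains to deduce the algebra relations. Here the decisive observation is that (\ref{lift the action}) realizes $\widetilde{\Gamma_A^q}$ as the module operator $\Gamma_A^q$ transported through the orthonormal change of basis between $\vert\mathbf{s}\rangle$ and $\vert\mathbf{j}\rangle$; since every generator is self-adjoint (Lemma \ref{lemma self-adjoint}) and, for $0<q<1$, has real matrix elements in these bases, the transported assignment is conjugate to the genuine module representation and is therefore itself a representation. Consequently the relations of Proposition \ref{prop q-anticommutation Gamma} and the commutativities of Proposition \ref{prop A contained in B} pass to the $\widetilde{\Gamma_A^q}$ for free, and the ``slight modification'' — conjugation by the $\mathbf{j}$-dependent normalization relating $\langle\mathbf{j}\vert\mathbf{s}\rangle$ to the renormalized polynomials spanning $\mathcal{V}_q$ — is an algebra similarity that preserves them while producing the bispectral operators $\mathfrak{L}^{\mathbf{n}}$ and $\mathcal{L}^{\mathbf{x}}$. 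I expect the real work to lie in the two eigenvalue-matching identities, whose factorizations are best confirmed with a computer algebra package as in Proposition \ref{prop tridiagonal action}, and in the general-$i$ gluing of the second family; by contrast, once self-adjointness is invoked the relation-checking is essentially automatic.
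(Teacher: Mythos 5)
Your proposal is correct and follows essentially the same route as the paper: lift each generator to an operator on the overlap coefficients via eigenvalue matching with Iliev's operators $\mathfrak{L}^{\mathbf{n}}$ and $\mathcal{L}^{\mathbf{x}}$ (using the resolution of the identity in the intermediate basis $\vert\mathbf{j}^{(i,r)}\rangle$ for the non-diagonal case), deduce the algebra relations from the lifting identity, and then conjugate by the normalization $g_{\mathbf{j}}h_{\mathbf{j}}^{1/2}/(\rho^{(1,n-1)}(\mathbf{s}))^{1/2}$ to land on $\mathcal{V}_q$. The only cosmetic differences are that you justify the passage to products via self-adjointness and reality of the matrix elements where the paper invokes Proposition \ref{prop decomposition}, and that you treat $i=1$ together with general $i$ rather than as a separate case.
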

\begin{proof}
	Recall from Theorem \ref{th - Overlap q-case}, with \(i = 1\) and  \(r = n-1\), that
	\begin{align}
	\label{overlap with j and s}
	\begin{split}
	& \langle\mathbf{j}\vert\mathbf{s}\rangle = \langle j_{n-1},\dots,j_1\vert s_{n-1},\dots,s_1\rangle  \\= &\  \sqrt{\frac{\rho^{(1,n-1)}(\mathbf{s})}{h_{\mathbf{j}}}} R_{(j_1,\dots,j_{n-2})}\left((s_1,s_1+s_2,\dots,\vss{n-2}); \mathbf{a}, b, \vss{n-1}\left|\right. -q\right),
	\end{split}
	\end{align}
	where the parameters \(\mathbf{a}\) and \(b\) are given by (\ref{parametrization overlap j and s}). Here we assume that \(\vjj{n-1} = \vss{n-1}\),  otherwise \(\langle\mathbf{j}\vert\mathbf{s}\rangle = 0\).
	
	Our first objective will be to find suitable operators \(\widetilde{\Gamma_A^q}\) subject to (\ref{lift the action}). If the proposed operators satisfy this requirement, then they will automatically comply with the algebra relations and thus form a proper realization of our algebra on the space \(\widetilde{\mathcal{V}}_q\). 
	Indeed, referring to the notation from Definition \ref{definition bases final}, Proposition \ref{prop decomposition} asserts that for any set \(B\), \(\Gamma_B^q\vert\mathbf{s}\rangle\) can be written as a linear combination of vectors \(\vert\mathbf{s}';m'\rangle\) with \(\mathbf{s}'\in\N^{n-1}\) and \(m'\in\N\). Moreover \(\Gamma_B^q\) commutes with both \(K_{[1;n]}\) and \(\Gamma_{[1;n]}^q\), such that the linear combination will only contain terms with \(m'= m\) and \(\vert\mathbf{s}'_{n-1}\vert = \vss{n-1}\). As a consequence, if we assume that (\ref{lift the action}) holds for any set \(A\), then we also find
	\[
	\widetilde{\Gamma_A^q}\widetilde{\Gamma_B^q}\langle\mathbf{j}\vert\mathbf{s}\rangle = \langle\mathbf{j}\vert\Gamma_A^q\Gamma_B^q\vert\mathbf{s}\rangle,
	\]
	for all sets \(A\) and \(B\). It follows that the \(\widetilde{\Gamma_A^q}\) will satisfy the algebra relations from Propositions \ref{prop q-anticommutation Gamma} and \ref{prop A contained in B}. It will suffice to fix an operator realization for each of the generators \(\Gamma_A^q\) with \(A\) a set of consecutive numbers. By Corollary \ref{cor Generating set}, this uniquely determines the corresponding expressions for each of the remaining generators.
	
	In case \(A\) is a singleton, (\ref{lift the action}) is met by (\ref{Remark singletons}). Hence we may restrict ourselves to sets of consecutive numbers with at least two elements. We will subdivide those sets into three different classes.
	
	\noindent\textbf{Case 1:} \(A = {[1;m+1]}\) with \(m\in [1;n-1]\)
	
	\noindent The eigenvalue equations (\ref{eigenvalue equation 1}) for \(\vert \mathbf{s}\rangle \) assert that
	\begin{equation}
	\label{Gamma starts with 1 assertion}
	\langle \mathbf{j}\vert \Gamma_{[1;m+1]}^q\vert \mathbf{s}\rangle = (-1)^{\vss{m}}\left[\vss{m}+\gamma_{[1;m+1]}-\frac12\right]_q\langle \mathbf{j}\vert \mathbf{s}\rangle.
	\end{equation}
	On the other hand, one can observe from Proposition \ref{prop Iliev n} and (\ref{overlap with j and s}) that
	\begin{align*}
	& \left( h_{\mathbf{j}}^{-\frac12}g_{\mathbf{j}}^{-1}\mathfrak{L}_{n-m-1}^{\mathbf{n}} g_{\mathbf{j}}h_{\mathbf{j}}^{\frac12}\right)\langle \mathbf{j}\vert \mathbf{s}\rangle \\
	= &\  \sqrt{\frac{\rho^{(1,n-1)}(\mathbf{s})}{h_{\mathbf{j}}}}\left(g_{\mathbf{j}}^{-1}\mathfrak{L}_{n-m-1}^{\mathbf{n}} g_{\mathbf{j}}\right)R_{(j_1,\dots,j_{n-2})}\left(\left.(s_1,s_1+s_2,\dots,\vss{n-2}); \mathbf{a},b,\vss{n-1}\right| -q\right) \\
	= & \ \kappa_{n-m-1}\langle \mathbf{j}\vert \mathbf{s}\rangle,
	\end{align*}
	where the eigenvalue is given by
	\begin{align*}
	\kappa_{n-m-1} = &\  -(q-q^{-1})(-q)^{\vss{n-1}}q^{\gamma_{[1;n]}+\gamma_{[m+2;n]}-\frac12}\\\times&\left((-1)^{\vss{n-1}+1}\left[\vss{n-1}+\gamma_{[1;n]}+\gamma_{[m+2;n]}-\frac12\right]_q + (-1)^{\vss{m}}\left[\vss{m}+\gamma_{[1;m+1]}-\frac12\right]_q\right).
	\end{align*}
	This suggests to define
	\begin{align}
	\label{Gamma tilde case 1}
	\begin{split}
	\widetilde{\Gamma_{[1;m+1]}^q} = & -\frac{(-q)^{-\vss{n-1}}q^{-\gamma_{[1;n]}-\gamma_{[m+2;n]}+\frac12}}{q-q^{-1}}\left(h_{\mathbf{j}}^{-\frac12}g_{\mathbf{j}}^{-1}\mathfrak{L}_{n-m-1}^{\mathbf{n}} g_{\mathbf{j}}h_{\mathbf{j}}^{\frac12}\right) \\  & + (-1)^{\vss{n-1}}\left[\vss{n-1}+\gamma_{[1;n]}+\gamma_{[m+2;n]}-\frac12\right]_q\\
	= &\, \left(g_{\mathbf{j}}h_{\mathbf{j}}^{\frac12}\right)^{-1}\Gamma_{[1;m+1]}^q\left(g_{\mathbf{j}}h_{\mathbf{j}}^{\frac12}\right),
	\end{split}
	\end{align}
	referring to the notation (\ref{Gamma start from 1}), as this leads to
	\begin{align*}
	\widetilde{\Gamma_{[1;m+1]}^q}\langle \mathbf{j}\vert \mathbf{s}\rangle & = (-1)^{\vss{m}}\left[\vss{m}+\gamma_{[1;m+1]}-\frac12\right]_q\langle \mathbf{j}\vert \mathbf{s}\rangle,
	\end{align*}
	in agreement with (\ref{Gamma starts with 1 assertion}). We will correct for the factors \(g_{\mathbf{j}}\) and \(h_{\mathbf{j}}\) in the final step.
	
	\noindent\textbf{Case 2:} \(A = {[2;m+1]}\) with \(m\in [2;n-1]\)
	
	\noindent The eigenvalue equations (\ref{eigenvalue equation 2}) for \(\vert \mathbf{j}\rangle \) assert that
	\begin{equation}
	\label{Gamma starts with 2 assertion}
	\langle \mathbf{j}\vert \Gamma_{[2;m+1]}^q\vert \mathbf{s}\rangle = (-1)^{\vjj{m-1}}\left[\vjj{m-1}+\gamma_{[2;m+1]}-\frac12\right]_q\langle \mathbf{j}\vert \mathbf{s}\rangle.
	\end{equation}
	On the other hand, Proposition \ref{prop Iliev x} and (\ref{overlap with j and s}) imply that
	\begin{align*}
	& \left(\left(\rho^{(1,n-1)}(\mathbf{s})\right)^{\frac12}\mathcal{L}_{1,m-1}^{\mathbf{x}}\left(\rho^{(1,n-1)}(\mathbf{s})\right)^{-\frac12}\right)\langle \mathbf{j}\vert \mathbf{s}\rangle \\
	= &\  \sqrt{\frac{\rho^{(1,n-1)}(\mathbf{s})}{h_{\mathbf{j}}}}\mathcal{L}_{1,m-1}^{\mathbf{x}}R_{(j_1,\dots,j_{n-2})}\left((s_1,s_1+s_2,\dots,\vss{n-2});\mathbf{a},b,\vss{n-1}\vert -q\right) \\
	= & \ \mu_{m-1}\langle \mathbf{j}\vert \mathbf{s}\rangle,
	\end{align*}
	with eigenvalue
	\begin{align*}
	\mu_{m-1} = -(q-q^{-1})q^{\gamma_{[2;m+1]}-\frac12}\left((-1)^{\vjj{m-1}}\left[\vjj{m-1}+\gamma_{[2;m+1]}-\frac12\right]_q-\left[\gamma_{[2;m+1]}-\frac12\right]_q\right).
	\end{align*}
	This suggests the definition
	\begin{align}
	\label{Gamma tilde case 2}
	\begin{split}
	\widetilde{\Gamma_{[2;m+1]}^q} & = -\frac{q^{-\gamma_{[2;m+1]}+\frac12}}{q-q^{-1}}\left(\left(\rho^{(1,n-1)}(\mathbf{s})\right)^{\frac12}\mathcal{L}_{1,m-1}^{\mathbf{x}}\left(\rho^{(1,n-1)}(\mathbf{s})\right)^{-\frac12}\right) + \left[\gamma_{[2;m+1]}-\frac12\right]_q \\
	& = \left(\rho^{(1,n-1)}(\mathbf{s})\right)^{\frac12}\Gamma_{[2;m+1]}^q\left(\rho^{(1,n-1)}(\mathbf{s})\right)^{-\frac12},
	\end{split}
	\end{align}
	referring to the notation (\ref{Gamma start from i}), as this leads to
	\begin{align*}
	\widetilde{\Gamma_{[2;m+1]}^q} \langle \mathbf{j}\vert \mathbf{s}\rangle & = (-1)^{\vjj{m-1}}\left[\vjj{m-1}+\gamma_{[2;m+1]}-\frac12\right]_q\langle \mathbf{j}\vert \mathbf{s}\rangle
	\end{align*}
	agreeing with (\ref{Gamma starts with 2 assertion}). We will correct for the factor \(\rho^{(1,n-1)}(\mathbf{s})\) in the final step. 
	
	\noindent \textbf{Case 3:} \(A = [i+1;i+r]\) with\, \(i \in [2;n-2], r\in [2;n-i]\)
	
	\noindent The operator \(\Gamma_{[i+1;i+r]}^q\) is not diagonalized by \(\vert \mathbf{j}\rangle\) or \(\vert \mathbf{s}\rangle\), but writing
	\begin{equation}
	\label{resolution of the identity}
	\vert \mathbf{s}\rangle = \sum_{\mathbf{j}^{(i,r)}} \vert \mathbf{j}^{(i,r)}\rangle\langle \mathbf{j}^{(i,r)}\vert \mathbf{s}\rangle,
	\end{equation}
	where the sum is over all vectors \(\mathbf{j}^{(i,r)}\in\N^{n-1}\), and using the eigenvalue equations (\ref{eigenvalue equation 2}) for \(\vert\mathbf{j}^{(i,r)}\rangle\), we find
	\begin{align}
	\label{Gamma starts with i assertion}
	\langle \mathbf{j}\vert \Gamma_{[i+1;i+r]}^q\vert \mathbf{s}\rangle
	= \sum_{\mathbf{j}^{(i,r)}} (-1)^{\vj{i,r}{i+r-2}-\vj{i,r}{i-1}}\left[\vj{i,r}{i+r-2}-\vj{i,r}{i-1}+\gamma_{[i+1;i+r]}-\frac12\right]_q \langle \mathbf{j}\vert \mathbf{j}^{(i,r)}\rangle \langle \mathbf{j}^{(i,r)}\vert \mathbf{s} \rangle.
	\end{align}
	Recall from Theorem \ref{th - Overlap q-case} that
	\begin{align*}
	\langle \mathbf{j}^{(i,r)}\vert \mathbf{s}\rangle =  \sqrt{\frac{\rho^{(i,r)}(\mathbf{s})}{h_{\mathbf{j}^{(i,r)}}}} R_{(j_i^{(i,r)},\dots,j_{i+r-2}^{(i,r)})}\left(\left.(s_i,\dots,\vss{i+r-2}-\vss{i-1}); \mathbf{a}, b, \vss{i+r-1}-\vss{i-1}\right| -q\right),
	\end{align*}
	with
	\[
	\mathbf{a} = (a_1,\dots,a_{r}), \quad a_1 = (-q)^{2\vss{i-1}-1}q^{2\gamma_{[1;i+1]}}, \qquad a_k = q^{2\gamma_{i+k}},k\in[2;r], \qquad b =  -q^{2\gamma_{i+1}-1},
	\]
	in the assumption that
	\[
	j_{\ell}^{(i,r)} = s_{\ell}, \quad \ell\in[1;i-1]\cup[i+r;n-1], \qquad \vj{i,r}{i+r-1} = \vss{i+r-1},
	\]
	otherwise \(\langle \mathbf{j}^{(i,r)}\vert \mathbf{s}\rangle = 0\). As a consequence of Proposition \ref{prop Iliev x} we thus have
	\begin{align*}
	& \left(\left(\rho^{(i,r)}(\mathbf{s})\right)^{\frac12}\mathcal{L}_{i,r-1}^{\mathbf{x}}\left(\rho^{(i,r)}(\mathbf{s})\right)^{-\frac12}\right)\langle \mathbf{j}^{(i,r)}\vert \mathbf{s}\rangle \\
	= &\  \sqrt{\frac{\rho^{(i,r)}(\mathbf{s})}{h_{\mathbf{j}^{(i,r)}}}}\mathcal{L}_{i,r-1}^{\mathbf{x}}R_{(j_i^{(i,r)},\dots,j_{i+r-2}^{(i,r)})}\left(\left.(s_i,\dots,\vss{i+r-2}-\vss{i-1}); \mathbf{a}, b, \vss{i+r-1}-\vss{i-1}\right| -q\right) \\
	= & \ \mu_{i,r-1}\langle \mathbf{j}^{(i,r)}\vert \mathbf{s}\rangle,
	\end{align*}
	where the eigenvalue is given by
	\begin{align*}
	\mu_{i,r-1} = & -(q-q^{-1})q^{\gamma_{[i+1;i+r]}-\frac12}\\\times &\left((-1)^{\vj{i,r}{i+r-2}-\vj{i,r}{i-1}}\left[\vj{i,r}{i+r-2}-\vj{i,r}{i-1}+\gamma_{[i+1;i+r]}-\frac12\right]_q-\left[\gamma_{[i+1;i+r]}-\frac12\right]_q\right).
	\end{align*}
	This suggests the definition
	\begin{align*}
	\widetilde{\Gamma_{[i+1;i+r]}^q} & = -\frac{q^{-\gamma_{[i+1;i+r]}+\frac12}}{q-q^{-1}}\left(\left(\rho^{(i,r)}(\mathbf{s})\right)^{\frac12}\mathcal{L}_{i,r-1}^{\mathbf{x}}\left(\rho^{(i,r)}(\mathbf{s})\right)^{-\frac12}\right)+\left[\gamma_{[i+1;i+r]}-\frac12\right]_q \\
	& = \left(\rho^{(i,r)}(\mathbf{s})\right)^{\frac12}\Gamma_{[i+1;i+r]}^q\left(\rho^{(i,r)}(\mathbf{s})\right)^{-\frac12},
	\end{align*}
	referring to the notation (\ref{Gamma start from i}), as this leads to
	\begin{align*}
	\widetilde{\Gamma_{[i+1;i+r]}^q}\langle \mathbf{j}^{(i,r)}\vert \mathbf{s}\rangle =  (-1)^{\vj{i,r}{i+r-2}-\vj{i,r}{i-1}}\left[\vj{i,r}{i+r-2}-\vj{i,r}{i-1}+\gamma_{[i+1;i+r]}-\frac12\right]_q \langle \mathbf{j}^{(i,r)}\vert \mathbf{s}\rangle.
	\end{align*}
	Since \(\widetilde{\Gamma_{[i+1;i+r]}^q}\) only acts on the variables \(s_i,\dots,s_{i+r-2}\), we may use (\ref{resolution of the identity}) to write
	\begin{align*}
	& \ \widetilde{\Gamma_{[i+1;i+r]}^q}\langle \mathbf{j}\vert \mathbf{s}\rangle \\
	= & \sum_{\mathbf{j}^{(i,r)}} \langle \mathbf{j}\vert \mathbf{j}^{(i,r)}\rangle \widetilde{\Gamma_{[i+1;i+r]}^q}\langle \mathbf{j}^{(i,r)}\vert \mathbf{s}\rangle \\
	= &  \sum_{\mathbf{j}^{(i,r)}} (-1)^{\vj{i,r}{i+r-2}-\vj{i,r}{i-1}}\left[\vj{i,r}{i+r-2}-\vj{i,r}{i-1}+\gamma_{[i+1;i+r]}-\frac12\right]_q \langle \mathbf{j}\vert \mathbf{j}^{(i,r)}\rangle \langle \mathbf{j}^{(i,r)}\vert \mathbf{s} \rangle,
	\end{align*}
	in agreement with (\ref{Gamma starts with i assertion}).

	Each of the obtained operators \(\widetilde{\Gamma_A^q}\) satisfies the requirement (\ref{lift the action}), hence they form a representation of \(\mathcal{A}_n^q\) on the space \(\widetilde{\mathcal{V}}_q\) of overlap coefficients \(\langle \mathbf{j}\vert \mathbf{s} \rangle\). As a final step, we will transform these operators into the anticipated \(\Gamma_A^q\) in (\ref{Gamma start from 1}) and (\ref{Gamma start from i}), thereby changing the representation space to \(\mathcal{V}_q\), but without altering the algebra relations. 
	
	First observe that each of the factors
	\[
	\frac{\rho^{(1,n-1)}(\mathbf{s})}{\rho^{(i,r)}(\mathbf{s})},
	\]
	explicit expressions of which are given in Appendix C, is independent of the variables \(\vss{i},\dots,\vss{i+r-2}\). As a consequence, they commute with the difference operators \(\mathcal{L}_{i,r-1}^{\mathbf{x}}\) acting on precisely these variables, such that one can write
	\begin{align}
	\label{Gamma tilde case 3}
	\begin{split}
	\widetilde{\Gamma_{[i+1;i+r]}^q} & = \left(\frac{\rho^{(1,n-1)}(\mathbf{s})}{\rho^{(i,r)}(\mathbf{s})}\right)^{\frac12}\widetilde{\Gamma_{[i+1;i+r]}^q} \left(\frac{\rho^{(1,n-1)}(\mathbf{s})}{\rho^{(i,r)}(\mathbf{s})}\right)^{-\frac12} \\ & = -\frac{q^{-\gamma_{[i+1;i+r]}+\frac12}}{q-q^{-1}}\left(\rho^{(1,n-1)}(\mathbf{s})\right)^{\frac12}\mathcal{L}_{i,r-1}^{\mathbf{x}}\left(\rho^{(1,n-1)}(\mathbf{s})\right)^{-\frac12} +\left[\gamma_{[i+1;i+r]}-\frac12\right]_q \\
	& = \left(\rho^{(1,n-1)}(\mathbf{s})\right)^{\frac12}\Gamma_{[i+1;i+r]}^q\left(\rho^{(1,n-1)}(\mathbf{s})\right)^{-\frac12}.
	\end{split}
	\end{align}
	
	Finally, observe that \(\widetilde{\Gamma_{[1;m+1]}^q}\) commutes with functions of \(\mathbf{s}\), and is hence invariant under conjugation with such functions. The analogous statement holds for \(\widetilde{\Gamma_{[i+1;i+r]}^q}\), \(i\geq 1\), and functions of \(\mathbf{j}\).	Hence it follows from (\ref{Gamma tilde case 1}), (\ref{Gamma tilde case 2}) and (\ref{Gamma tilde case 3}) that conjugation with the function
	\[
	\frac{g_{\mathbf{j}}h_{\mathbf{j}}^{\frac12}}{\left(\rho^{(1,n-1)}(\mathbf{s})\right)^{\frac12}}
	\]
	acts on each of the generators as
	\[
	\widetilde{\Gamma_A^q} \to \Gamma_A^q.
	\]
	Such a conjugation leaves the algebra relations invariant, but requires to take as a representation space the space of all functions
	\[
	\frac{g_{\mathbf{j}}h_{\mathbf{j}}^{\frac12}}{\left(\rho^{(1,n-1)}(\mathbf{s})\right)^{\frac12}}\langle \mathbf{j}\vert \mathbf{s} \rangle = \widehat{R}_{(j_1,\dots,j_{n-2})}\left((s_1,s_1+s_2,\dots,\vss{n-2}); \mathbf{a}, b, \vss{n-1}\left|\right. -q\right),
	\]
	with the parametrization (\ref{parametrization overlap j and s}), i.e.\ the space \(\mathcal{V}_q\). This concludes the proof.
\end{proof}

Let us now explain the significance of the previous theorem in relation to Iliev's work. In \cite{Iliev-2011} two commutative algebras of difference operators were defined, denoted here by \(\mathcal{A}_{\mathbf{x},s}(x_1,\dots,x_s)\), see (\ref{A_z def}), and \(\mathcal{A}_{\mathbf{n},s}(n_1,\dots,n_s)\), see (\ref{A_n def}). These algebras are diagonalized by the renormalized multivariate \((-q)\)-Racah polynomials (\ref{renormalized multivariate -q-Racah def}). Restricting their action to the space \(\mathcal{V}_q\), the algebraic relations between these two algebras are encoded in Theorem \ref{th discrete realization}. In fact they are observed to be embedded in a larger algebra, namely the rank \(n-2\) \(q\)-Bannai--Ito algebra \(\mathcal{A}_n^q\). In the realization of Theorem \ref{th discrete realization}, one observes that
\begin{align*}
\langle \Gamma_{[1;m+1]}^q: m\in [0;n-1]\rangle = \mathcal{A}_{\mathbf{n},n-2}(j_1,j_2,\dots,j_{n-2})
\end{align*}
and
\begin{align*}
\langle \Gamma_{[i+1; i+k]}^q: k \in [1;n-i]\rangle = \mathcal{A}_{\mathbf{x},n-i-1}(s_i,s_i+s_{i+1},\dots,\vss{n-2}-\vss{i-1}),
\end{align*}
for all \(i\in [1;n-2]\).
Combining this observation with Corollary \ref{cor Generating set}, we conclude that several such difference operator algebras are enough to generate the rank \(n-2\) \(q\)-Bannai--Ito algebra, namely
\begin{align*}
\mathcal{A}_{\mathbf{n},n-2}(j_1,\dots,j_{n-2}), \mathcal{A}_{\mathbf{x},n-2}(s_1,\dots,\vss{n-2}), \mathcal{A}_{\mathbf{x},n-3}(s_2,\dots,\vss{n-2}-s_1), \dots, \mathcal{A}_{\mathbf{x},1}(s_{n-2})
\end{align*} 
together generate the whole \(\mathcal{A}_n^q\). 

\begin{remark}
	Another discrete realization could be obtained upon replacing (\ref{Gamma start from 1}) by
	\[
	\Gamma_{[1;m+1]}^q = (-1)^{\vss{m}}\left[\vss{m}+\gamma_{[1;m+1]}-\frac12\right]_q,
	\]
	in analogy with \cite{DeBie&vandeVijver-2018}, which by (\ref{Gamma starts with 1 assertion}) immediately agrees with (\ref{lift the action}). This reduces the total number of variables from \(2n-2\) to \(n-1\), but has the disadvantage that neither Iliev's algebras \(\mathcal{A}_{\mathbf{n},s}(n_1,\dots,n_{s})\) nor the bispectrality of the considered polynomials will play a role.
\end{remark}

\section{The limit $q \to 1$}
\label{Section - limit q to 1}

The results obtained so far establish a strong connection between the higher rank \(q\)-Bannai--Ito algebra \(\mathcal{A}_n^q\) on the one hand and the multivariate \((-q)\)-Racah polynomials on the other. In the limit \(q\to 1\), \(\mathcal{A}_n^q\) reduces to the rank \(n-2\) Bannai--Ito algebra, which was introduced in \cite{DeBie&Genest&Vinet-2016} and has been in its own right the subject of intensive study, see for example \cite{DeBie&Genest&Lemay&Vinet-2017}. In this section we will subject our results to this limiting process and show how this establishes an algebraic framework for a novel class of multivariate \((-1)\)-orthogonal polynomials.

\subsection{Multivariate Bannai--Ito polynomials}

The Bannai--Ito polynomials were introduced in \cite{Bannai&Ito-1984} in the context of algebraic combinatorics. More specifically, Bannai and Ito obtained them as limits \(q\to -1\) of the \(q\)-Racah polynomials, in their classification of orthogonal polynomials satisfying the Leonard duality property. Following the conventions of most recent literature on the subject, for example \cite{Tsujimoto&Vinet&Zhedanov-2012}, we will denote them as \(B_n(x) = B_n(x;\rho_1,\rho_2,r_1,r_2,N)\), where \(\rho_1,\rho_2,r_1,r_2\) are real parameters and \(N\in\N\) is a truncation parameter. They are invariant under the transformations \(\rho_1\leftrightarrow\rho_2 \) and \(r_1\leftrightarrow r_2\) and can be defined through the three-term recurrence relation
\begin{equation}
\label{univariate BI - recurrence relation}
xB_n(x) = B_{n+1}(x) + (\rho_1-A_n-C_n)B_n(x) + A_{n-1}C_nB_{n-1}(x)
\end{equation}
with initial conditions \(B_{-1}(x) = 0, B_0(x) = 1\) and recurrence coefficients
\begin{align*}
A_n = & 
\frac{(n+2\rho_1-2r_1+n_p(2\rho_2-2r_2)+1)(n+2\rho_1-2r_2+n_p(2r_2+2\rho_2)+1)}{4(n+\rho_1+\rho_2-r_1-r_2+1)}, \\
C_n = & 
-\frac{(n+n_p(2\rho_2-2r_2))(n-2r_1-2r_2+n_p(2r_2+2\rho_2))}{4(n+\rho_1+\rho_2-r_1-r_2)},
\end{align*}
where we have written
\[
n = 2n_e + n_p, \quad n_e\in\N, n_p \in \{0,1\}
\]
as in \cite{Lemay&Vinet-2018}, in order to combine the expressions for even and odd \(n\). For explicit expressions in terms of hypergeometric series \(\,_4F_3\) we refer the reader to \cite{Tsujimoto&Vinet&Zhedanov-2012}.

The Bannai--Ito polynomials were observed to satisfy a discrete orthogonality relation of the form
\begin{equation}
	\label{orthogonality BI}
	\sum_{k = 0}^{N}w_{k}B_n(x_k;\rho_1,\rho_2,r_1,r_2,N)B_m(x_k;\rho_1,\rho_2,r_1,r_2,N) = h_n\delta_{n,m},
\end{equation}
given that the positivity condition
\begin{equation}
\label{positivity condition}
A_{n-1}C_n > 0, \qquad \forall n \in \{1,\dots,N \}
\end{equation}
is satisfied, as well as one of the following truncation conditions. For \(N\) even we must have
\begin{equation}
\label{truncation 1}
i)\ r_j - \rho_{\ell} = \frac{N+1}{2},
\end{equation}
for some \(j,\ell\in \{1,2\}\), whereas for \(N\) odd one of the following requirements must be met:
\begin{equation}
\label{truncation 2}
ii)\ \rho_1+\rho_2 = -\frac{N+1}{2}, \qquad iii)\ r_1+r_2 = \frac{N+1}{2}, \qquad iv)\ \rho_1+\rho_2-r_1-r_2 = -\frac{N+1}{2}.
\end{equation}
These conditions are typically referred to as type \(i)\) to \(iv)\). The cases of interest to us will be type \(i)\) with \(j = \ell = 1\) and type \(ii)\), so
\begin{equation}
\label{truncation conditions}
\left\{
\arraycolsep=1.4pt\def\arraystretch{2}
\begin{array}{ll}
r_1-\rho_1 = \dfrac{N+1}{2}, \qquad &\mathrm{for}\ N\ \mathrm{even}, \\
\rho_1+\rho_2 = -\dfrac{N+1}{2}, \qquad &\mathrm{for}\ N\ \mathrm{odd}.
\end{array}
\right.
\end{equation}
If (\ref{positivity condition}) and (\ref{truncation conditions}) are fulfilled, then the Bannai--Ito polynomials satisfy the relation (\ref{orthogonality BI}), with explicit expressions \cite{Genest&Vinet&Zhedanov-2012}
\[
x_k = (-1)^k\left(\frac{k}{2}+\rho_1+\frac14\right)-\frac14
\]
for the grid points,
\begin{align*}
w_k & = (-1)^{k_p}\frac{\left(\rho_1+\rho_2+1\right)_{k_e}\left(\rho_1-r_1+\frac12\right)_{k_e+k_p}\left(\rho_1-r_2+\frac12\right)_{k_e+k_p}\left(2\rho_1+1\right)_{k_e}}{k_e!\left(\rho_1+r_2+\frac12\right)_{k_e+k_p}\left(\rho_1+r_1+\frac12\right)_{k_e+k_p}\left(\rho_1-\rho_2+1\right)_{k_e}}
\end{align*}
for the weight function and
\begin{align*}
h_n & = \frac{N_e!n_e!}{\left(N_e-n_e-n_p(1-N_p)\right)!}\frac{\left(\rho_1+\rho_2-r_1-r_2+n_e+1\right)_{N_e+N_p-n_e}}{\left(\rho_2-r_1+n_e+n_p+\frac12\right)_{N_e+N_p-n_e-n_p}}\\
\times & \frac{\left(2\rho_1+1\right)_{N_e+N_p}\left(N_p(\rho_1-\rho_2-r_1+r_2)+\rho_2-r_2+\frac12\right)_{n_e+n_p}}{\left(\rho_1+r_2+\frac12\right)_{N_e-n_e+N_p(1-n_p)}\left(\left(\rho_1+\rho_2-r_1-r_2+n_e+1\right)_{n_e+n_p} \right)^2} \\
\times & \left(\rho_1-r_2+\frac12\right)_{n_e+n_p}\left(-N_p(\rho_1+\rho_2+r_1+r_2)+\rho_1+\rho_2+1\right)_{n_e}
\end{align*}
for the normalization coefficient. Here we have used the notation
\[
(a)_n = \prod_{\ell = 0}^{n-1}(a+\ell)
\]
for the Pochhammer symbol and as before we have written
\[
N = 2N_e + N_p, \qquad k = 2k_e + k_p, \qquad n = 2n_e+n_p.
\]
with \(N_e, k_e, n_e \in\N\) and \(N_p,k_p,n_p\in\{0,1\}\). In what follows we will also use the hypergeometric series
\[
\,_4F_3\left(\left. 
\begin{array}{c}
a_1,a_2,a_3,a_4 \\ b_1, b_2, b_3
\end{array}\right| z
\right) = \sum_{n = 0}^{\infty}\frac{(a_1,a_2,a_3,a_4)_n}{(b_1,b_2,b_3)_n}\frac{z^n}{n!},
\]
with
\[
(a_1,\dots,a_s)_n = \prod_{k = 1}^s (a_k)_n.
\]

In order to establish the connection with the previously obtained results, we want to provide an extension of these polynomials to multiple variables. In the spirit of Tratnik \cite{Tratnik-1991} and Gasper and Rahman \cite{Gasper&Rahman-2005, Gasper&Rahman-2007}, these multivariate Bannai--Ito polynomials should be entangled products of their univariate counterparts, which moreover should correspond to a limit \(q \to -1\) of the multivariate \(q\)-Racah polynomials (\ref{q-Racah multivariate def}). In order to see how the parameters should be related and to motivate our upcoming Definition \ref{multivariate Bannai-Ito def}, we compute in the next lemma the limit \(q\to 1\) of the basic hypergeometric functions that arise in the definition (\ref{q-Racah multivariate def}) of the multivariate \((-q)\)-Racah polynomials. The proof uses the same techniques as \cite[Section 2.1]{Lemay&Vinet-2018}.

\begin{lemma}
	\label{lemma basic hypergeometric in limit}
	Let \(n_1,\dots,n_k\) and \(x_k,x_{k+1}\) be integers and let \(\alpha_1,\dots,\alpha_{k+1}\) and \(\beta\) be real parameters. In the limit \(q \to 1\), the basic hypergeometric function 
	\begin{equation}
	\label{basic hypergeometric series in the limit}
	\,_4\phi_3\left(\left.
	\begin{array}{c}
	(-q)^{-n_k}, -(-q)^{n_k}q^{2N_{k-1}+\alpha_{[2;k+1]}+\beta}, (-q)^{N_{k-1}-x_{k}}, -(-q)^{N_{k-1}+x_{k}}q^{\alpha_{[1;k]}} \\
	q^{2N_{k-1}+\alpha_{[2;k]}+\beta+1}, -(-q)^{N_{k-1}+x_{k+1}}q^{\alpha_{[1;k+1]}}, (-q)^{N_{k-1}-x_{k+1}}
	\end{array}
	\right|-q,-q
	\right),
	\end{equation}
	with \(N_k = \sum_{i = 1}^kn_i\) and \(\alpha_A = \sum_{i\in A}\alpha_i\), reduces to 
	\begin{equation}
	\label{4F3 expressions}
	_4F_3\left(\left.\begin{array}{c}
	a_1, a_2, a_3, a_4 \\
	b_1,b_2,b_3
	\end{array}\right|1 \right) 
	+ t_{\mathbf{n},\mathbf{x}}\,_4F_3\left(\left.\begin{array}{c}
	a_1+1, a_2, a_3+1, a_4 \\
	b_1+1,b_2+1,b_3
	\end{array}\right|1 \right),
	\end{equation}
	where
	\begin{align}
	\label{4F3 parametrization}
	\begin{split}
	t_{\mathbf{n},\mathbf{x}} & =  \frac{n_k-\left(1-(-1)^{n_k}\right)\left(N_{k}+\frac12(\alpha_{[2;k+1]}+\beta)\right)}{2N_{k-1}+\alpha_{[2;k]}+\beta+1}\\&\times\frac{N_{k-1}-x_{k}+\left(1-(-1)^{N_{k-1}-x_{k}}\right)\left(x_{k}+\frac{\alpha_{[1;k]}}{2}\right)}{N_{k-1}-x_{k+1}+\left(1-(-1)^{N_{k-1}-x_{k+1}}\right)\left(x_{k+1}+\frac{\alpha_{[1;k+1]}}{2}\right)}, \\
	a_1 & = \frac12\left( -n_k+\left(1-(-1)^{-n_k}\right)\left(N_{k}+\frac12(\alpha_{[2;k+1]}+\beta)\right)\right), \\
	a_2 & = \frac12\left( -n_k+1+\left(1-(-1)^{-n_k+1}\right)\left(N_{k}+\frac12(\alpha_{[2;k+1]}+\beta)\right)\right),\\
	a_3 & = \frac12\left(  N_{k-1}-x_{k}+\left(1-(-1)^{N_{k-1}-x_{k}}\right)\left(x_k+\frac{\alpha_{[1;k]}}{2}\right)\right), \\
	a_4 & = \frac12\left(  N_{k-1}-x_{k}+1+\left(1-(-1)^{N_{k-1}-x_{k}+1}\right)\left(x_k+\frac{\alpha_{[1;k]}}{2}\right)\right), \\
	b_1 & = N_{k-1}+\frac{\alpha_{[2;k]}+\beta+1}{2}, \\
	b_2 & = \frac12\left( N_{k-1}-x_{k+1}+\left(1-(-1)^{N_{k-1}-x_{k+1}}\right)\left(x_{k+1}+\frac{\alpha_{[1;k+1]}}{2}\right)\right), \\
	b_3 & = \frac12\left( N_{k-1}-x_{k+1}+1+\left(1-(-1)^{N_{k-1}-x_{k+1}+1}\right)\left(x_{k+1}+\frac{\alpha_{[1;k+1]}}{2}\right)\right).
	\end{split}
	\end{align}
\end{lemma}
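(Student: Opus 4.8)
The plan is to expand the basic hypergeometric function \eqref{basic hypergeometric series in the limit} as its defining series in the summation index $m$ and to pass to the limit $q \to 1$ term by term. The first observation I would make is that the top argument $(-q)^{-n_k}$ causes the Pochhammer factor $\big((-q)^{-n_k};-q\big)_m$ to vanish for $m > n_k$, so the series is in fact a \emph{finite} sum over $m \in \{0,\dots,n_k\}$; consequently no uniformity or convergence issues arise and it suffices to evaluate $\lim_{q\to1}$ of each summand. The essential feature forcing the bifurcation into two $\,_4F_3$'s is that the base is $-q$ rather than $q$, so that the parity of $m$ governs the behavior of every $(-q)$-Pochhammer symbol. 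Accordingly I would split the sum into its even part $m = 2\ell$ and its odd part $m = 2\ell+1$.

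To handle each part I would invoke the quadratic decompositions
\[
(a;-q)_{2\ell} = (a;q^2)_\ell\,(-aq;q^2)_\ell, \qquad (a;-q)_{2\ell+1} = (a;q^2)_{\ell+1}\,(-aq;q^2)_\ell,
\]
valid since $(-q)^{2i} = q^{2i}$ and $(-q)^{2i+1} = -q^{2i+1}$. These rewrite every summand purely in terms of $q^2$-Pochhammer symbols, for which two elementary limits apply: a factor whose argument tends to $1$ satisfies $\lim_{q\to1}(q^{c};q^2)_\ell/(-2\ln q)^\ell = (c/2)_\ell$, whereas a factor whose argument tends to $-1$ satisfies $(-q^{c};q^2)_\ell \to 2^\ell$, a finite nonzero constant. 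Thus each of the seven arguments $A_1,\dots,A_4,B_1,B_2,B_3$ contributes, after decomposition, one vanishing and one constant sub-factor, the vanishing one being an ordinary Pochhammer symbol in the limit.

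The core of the argument is the bookkeeping of which sub-factor degenerates. The value of each argument at $q=1$ is $\pm1$, the sign being fixed by the parity of the relevant exponent ($n_k$, $N_{k-1}-x_k$, $N_{k-1}+x_k$, $N_{k-1}\pm x_{k+1}$, etc.), and the arguments sharing a common exponent parity (e.g.\ the pair $A_3,A_4$, which share the parity of $N_{k-1}-x_k$, and $B_2,B_3$, which share that of $N_{k-1}-x_{k+1}$) group together. I would treat the parities case by case and verify that in every case the surviving Pochhammer argument can be written \emph{uniformly} as $\tfrac12\big(c + (1-(-1)^{c})(\cdots)\big)$, which is precisely the packaging appearing in the parameters $a_1,\dots,b_3$ of \eqref{4F3 parametrization}. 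The quantitative check accompanying this is that the accumulated powers of the vanishing scale $(-2\ln q)$ in numerator and denominator cancel exactly; this balance is what collapses the $(-q)$-series to an ordinary $\,_4F_3$ of unit argument.

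Carrying this out, the even part converges to the $\,_4F_3$ of unit argument with numerator parameters $a_1,a_2,a_3,a_4$ and denominator parameters $b_1,b_2,b_3$, while the odd part, once its $\ell=0$ term is factored out, converges to $t_{\mathbf{n},\mathbf{x}}$ times the shifted series with parameters $a_1+1,a_2,a_3+1,a_4$ over $b_1+1,b_2+1,b_3$. The prefactor $t_{\mathbf{n},\mathbf{x}}$ is exactly the limit of the $m=1$ term relative to the $m=0$ term of the original series, obtained by evaluating the factors $(1-A_i)$ and $(1-B_i)$ to leading order in $q-1$ and again grouping them by the parity of their exponents, which reproduces \eqref{4F3 parametrization}. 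I expect the main obstacle to be this exhaustive parity bookkeeping: confirming that in each of the many parity combinations the expected sub-factor degenerates, that the $(-2\ln q)$ powers balance, and that the parameter shifts between the even and odd series emerge precisely as $a_1\to a_1+1$, $a_3\to a_3+1$, $b_1\to b_1+1$, $b_2\to b_2+1$. Following the computation in \cite[Section 2.1]{Lemay&Vinet-2018} should keep this manageable.
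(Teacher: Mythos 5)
Your proposal is correct and arrives at the same two-term $\,_4F_3$ structure, but it organizes the computation in the reverse order from the paper. The paper first arranges the summand so that each numerator factor $1-A(-q)^{\ell}$ is paired with its conjugate --- the partner whose value at $q=1$ has the opposite sign ($A_1$ with $A_2$, $A_3$ with $A_4$, $B_1$ with the $(-q;-q)_m$ factor, $B_2$ with $B_3$) --- so that every paired ratio is a clean $\tfrac00$ form; it then evaluates each ratio by L'H\^opital to obtain a limit function $f(\ell)$, and only afterwards splits the sum over $m$ and the product over $\ell$ by parity to read off the Pochhammer symbols. You invert this: parity-split the sum first, then apply the quadratic factorization $(a;-q)_{2\ell}=(a;q^2)_\ell(-aq;q^2)_\ell$ and the scaling $(q^c;q^2)_\ell\sim(-2\ln q)^\ell(c/2)_\ell$, with the powers of $(-2\ln q)$ cancelling globally rather than ratio by ratio. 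The two devices are equivalent, and your route has the merit of making explicit both the termination of the series at $m=n_k$ (hence the legitimacy of term-by-term limits) and the origin of the shifts $a_1\to a_1+1$, $a_3\to a_3+1$, $b_1\to b_1+1$, $b_2\to b_2+1$ in the odd part, namely the asymmetric split $(a;-q)_{2\ell+1}=(a;q^2)_{\ell+1}(-aq;q^2)_\ell$.

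Two details in your bookkeeping need care. First, the balancing of the $(-2\ln q)$ powers requires the eighth factor $(-q;-q)_m$ from the definition of $\,_4\phi_3$, which you omit from your list of seven: for $m=2\ell$ it supplies the remaining $(q^2;q^2)_\ell\sim(-2\ln q)^\ell\,\ell!$ in the denominator, giving $4\ell$ versus $4\ell$ (and $4\ell+2$ versus $4\ell+2$ for $m=2\ell+1$), and it produces the $\ell!$ of the resulting $\,_4F_3$. Second, the uniform packaging $\tfrac12\bigl(c+(1-(-1)^{c})(\cdots)\bigr)$ is a property of the conjugate \emph{pairs}, not of the individual arguments: e.g.\ for $n_k$ odd the vanishing sub-factor of $((-q)^{-n_k};-q)_{2\ell}$ has parameter $\tfrac{1-n_k}{2}=a_2$, while $a_1=\tfrac{n_k}{2}+N_{k-1}+\tfrac12(\alpha_{[2;k+1]}+\beta)$ comes from the partner argument. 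What is uniform is that $a_1,a_3,b_1,b_2$ are always the parameters attached to the member of each pair tending to $+1$ (equivalently, to the even-$j$ sub-factor, the one receiving $(\cdot)_{\ell+1}$ in the odd part), which is precisely why these four, and only these four, acquire the unit shift. Once these two points are incorporated, your case-by-case parity check goes through and reproduces the stated $t_{\mathbf{n},\mathbf{x}}=-a_1a_3/(b_1b_2)$.
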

\begin{proof}
	Writing down the definition (\ref{basic hypergeometric series def}) of the function \(\,_4\phi_3\), the function (\ref{basic hypergeometric series in the limit}) becomes
	\begin{align}
	\label{4phi3 fully}
	\begin{split}
	& \sum_{n = 0}^{\infty}\left[\prod_{\ell = 0}^{n-1}\left(\frac{\left(1-(-q)^{-n_k+\ell}\right)\left(1-(-q)^{-n_k+\ell+1}q^{2N_{k-1}+2n_k+\alpha_{[2;k+1]}+\beta-1}\right)}{\left(1-(-q)^{\ell+1}\right)\left(1-(-q)^{\ell+2}q^{2N_{k-1}+\alpha_{[2;k]}+\beta-1}\right)}\right.\right.\\
	& \phantom{\sum_{n = 0}^{\infty}\prod_{\ell = 0}^{n-1}}\times\left.\left.\frac{\left(1-(-q)^{N_{k-1}-x_{k}+\ell}\right)\left(1-(-q)^{N_{k-1}-x_{k}+\ell+1}q^{2x_{k}+\alpha_{[1;k]}-1}\right)}{\left(1-(-q)^{N_{k-1}-x_{k+1}+\ell}\right)\left(1-(-q)^{N_{k-1}-x_{k+1}+\ell+1}q^{2x_{k+1}+\alpha_{[1;k+1]}-1}\right)}\right)(-q)^n\right].
	\end{split}
	\end{align}
	The factors in this product have a remarkable symmetry: they have all been arranged in the form 
	\[
	\frac{\left(1-(-q)^{d+\ell}\right)\left(1-(-q)^{d+\ell+1}q^{\delta}\right)}{\left(1-(-q)^{e+\ell}\right)\left(1-(-q)^{e+\ell+1}q^{\epsilon}\right)},
	\]
	with \(d,e\in\Z\) and \(\delta,\epsilon\in\R\). Irrespective of the parities of \(d\) and \(e\), we will always find \(\frac00 \) in the limit \(q\to 1\), which by L'H\^{o}pital's rule reduces to
	\[
	\frac{2d+2\ell+\left(1-(-1)^{d+\ell}\right)(\delta+1)}{2e+2\ell+\left(1-(-1)^{e+\ell}\right)(\epsilon+1)}.
	\]
	Applying this to (\ref{4phi3 fully}), we find that the function (\ref{basic hypergeometric series in the limit}) in the limit \(q\to 1\) becomes
	\begin{align*}
	& \sum_{n = 0}^{\infty}\left((-1)^n\prod_{\ell = 0}^{n-1}f(\ell)\right),
	\end{align*}
	with
	\begin{align*}
	f(\ell) = &\, 
	\frac{2\ell-2n_k+\left(1-(-1)^{-n_k+\ell}\right)\left(2N_{k}+\alpha_{[2;k+1]}+\beta\right)}{2\ell+2+\left(1-(-1)^{\ell+1}\right)\left(2N_{k-1}+\alpha_{[2;k]}+\beta\right)}\\
	& \times\frac{2\ell+2N_{k-1}-2x_{k}+\left(1-(-1)^{N_{k-1}-x_{k}+\ell}\right)\left(2x_{k}+\alpha_{[1;k]}\right)}{2\ell+2N_{k-1}-2x_{k+1}+\left(1-(-1)^{N_{k-1}-x_{k+1}+\ell}\right)\left(2x_{k+1}+\alpha_{[1;k+1]}\right)}.
	\end{align*}
	In order to recognize a hypergeometric series \(\,_4F_3\) in this expression, we proceed as follows:
	\begin{enumerate}
		\item We split the sum over \(n\) in a sum over \(n\) even and one over \(n\) odd, thereby eliminating the \((-1)^n\). Then we rename \(n\to 2n\) in the first sum, and \(n\to 2n+1\) in the second, such that we obtain
		\[
		\sum_{n = 0}^{\infty}\prod_{\ell = 0}^{2n-1}f(\ell) - \sum_{n = 0}^{\infty}\prod_{\ell = 0}^{2n}f(\ell).
		\]
		\item We split the first product over \(\ell\) above into a product for \(\ell\) even, where we rename \(\ell\to 2\ell\), and one for \(\ell\) odd, where we rename \(\ell\to 2\ell+1\). We do the same thing for the second product, but we first separate the factor corresponding to \(\ell = 0\), which will be contained in \(t_{\mathbf{n},\mathbf{x}}\). The limit now becomes
		\[
		\sum_{n = 0}^{\infty}\prod_{\ell = 0}^{n-1}f(2\ell)f(2\ell+1) - f(0)\sum_{n = 0}^{\infty}\prod_{\ell = 0}^{n-1}f(2\ell+2)f(2\ell +1)
		\]
		\item In all obtained fractions we divide numerator and denominator by 4, such we are left with only numerators and denominators of the form \(\prod_{\ell = 0}^{n-1}(\ell+ c)\), for \(c\in\R\) independent of \(\ell\), which we can write as a Pochhammer symbol \((c)_n\).
	\end{enumerate}
	Combining all these steps, the anticipated result follows.
\end{proof}

We are now ready to state our definition of the multivariate Bannai--Ito polynomials.
\begin{definition}
	\label{multivariate Bannai-Ito def}
	The Bannai--Ito polynomials in \(s\) real variables \(x_k\) are defined as
	\begin{align}
	\label{multivariate Bannai-Ito}
	\begin{split}
	& B_{(n_1,\dots,n_s)}\left((x_1,\dots,x_s); \alpha_1,\dots,\alpha_{s+1}, \beta, N\right)\\ = & \, \prod_{k = 1}^sB_{n_k}\left(\frac{(-1)^{N_{k-1}+x_k}}{2}\left(x_k+\frac{\alpha_{[1;k]}}{2}\right)-\frac14;\rho_1^{(k)},\rho_2^{(k)}, r_1^{(k)}, r_2^{(k)}, M^{(k)} \right)
	\end{split}
	\end{align}
	where the parameters are given by
	\begin{equation}
	\label{multivariate BI parameters}
	\begin{split}
	\rho_1^{(k)} & = \frac{N_{k-1}}{2}+\frac{\alpha_{[1;k]}-1}{4}, \\ \rho_2^{(k)} & = \frac{(-1)^{N_{k-1}+x_{k+1}}}{2}\left(x_{k+1}+\frac{\alpha_{[1;k+1]}}{2}\right)+\frac{\alpha_{k+1}-1}{4}, \\
	r_1^{(k)} & = \frac{(-1)^{N_{k-1}+x_{k+1}}}{2}\left(x_{k+1}+\frac{\alpha_{[1;k+1]}}{2}\right)-\left(\frac{\alpha_{k+1}-1}{4}\right), \\ r_2^{(k)} & = -\frac{N_{k-1}}{2}+\frac{\alpha_1-\alpha_{[2;k]}-1}{4}-\frac{\beta}{2}, \\
	M^{(k)} & = x_{k+1}-N_{k-1},
	\end{split}
	\end{equation}
	with \(x_{s+1} = N\in\N\), where \(n_1,\dots,n_s\) are natural numbers and \(\alpha_1,\dots,\alpha_{s+1}\) and \(\beta\) are real parameters subject to the conditions 
	\begin{equation}
	\label{positivity condition alpha}
	\alpha_1-1>\beta>0, \qquad \alpha_i>1, i\in \{2,\dots,s+1\}.
	\end{equation}
\end{definition}
\begin{remark}
	\label{Remark orthogonality}
	Note that the conditions (\ref{positivity condition}) and (\ref{truncation conditions}) for orthogonality are satisfied by each of the univariate Bannai--Ito polynomials in (\ref{multivariate Bannai-Ito}), if we require each \(M^{(k)}\) to be a natural number.
	Indeed, the truncation follows immediately from (\ref{multivariate BI parameters}). As shown in \cite[(1.16) and (1.24)]{Genest&Vinet&Zhedanov-2012}, the positivity condition (\ref{positivity condition}) is fulfilled if
	\[
	\rho_1^{(k)}-r_2^{(k)} > 0, \qquad \rho_1^{(k)}+r_2^{(k)}>0, \qquad \rho_2^{(k)}-r_1^{(k)}>0.
	\]
	With the parametrization (\ref{multivariate BI parameters}), this is tantamount to (\ref{positivity condition alpha}).
\end{remark}
\begin{remark}
	In the special case \(s = 2\), these polynomials coincide with the bivariate Bannai--Ito polynomials recently defined by Lemay and Vinet, up to a change in expression for \(\rho_2^{(2)}\) and \(r_2^{(2)}\). The parametrization in \cite{Lemay&Vinet-2018} can be obtained through the formulas 
	\[
	p_1 = \frac{-\alpha_1+2\beta+3}{4}, \quad p_i = \frac{\alpha_i}{4}, i\in\{2,3\}, \quad c = \frac{\beta}{2}, \quad z_k = \frac{(-1)^{x_k}}{2}\left(x_k + \frac{\alpha_{[1;k]}}{2}\right).
	\]
	The difference in expression of \(\rho_2^{(2)}\) and \(r_2^{(2)}\) can be explained by the truncation: for odd \(N\) the truncation condition of type \(iii)\) was used in \cite{Lemay&Vinet-2018}, whereas we use type \(ii)\).
\end{remark}

Our definition originates from the following lemma, where we compute the limit \(q\to 1\) of a renormalized multivariate \((-q)\)-Racah polynomial with a certain parametrization.

\begin{lemma}
	\label{lemma -q-Racah limit}
	Let \(\widehat{R_{\mathbf{n}}}(\mathbf{x};\mathbf{a},b,N\vert -q)\) be a renormalized \((-q)\)-Racah polynomial (\ref{renormalized multivariate -q-Racah def}) in \(s\) integer variables, where the parameters are such that
	\begin{equation}
	\label{q-Racah parametrization}
	a_1 = -q^{\alpha_1}, \quad a_k = q^{\alpha_k}, k \in [2;s+1], \quad b = -q^{\beta},
	\end{equation}
	for certain \(\alpha_i,\beta\in\R\) subject to (\ref{positivity condition alpha}). Then in the limit \(q\to 1\) these polynomials become proportional to the multivariate Bannai--Ito polynomials:
	\[
	\lim_{q\to 1}\left(\widehat{R_{\mathbf{n}}}(\mathbf{x};\mathbf{a},b,N\vert -q) \right) = k_{\mathbf{n},\mathbf{x},\boldsymbol{\alpha},\beta,N} B_{\mathbf{n}}\left(\mathbf{x};\alpha_1,\dots,\alpha_{s+1},\beta,N\right),
	\]
	with \(\mathbf{n} = (n_1,\dots,n_s)\in\N^s\) and \(\mathbf{x} = (x_1,\dots,x_s)\in\Z^s\), and where the proportionality coefficient is given by
	\begin{equation}
	\label{k_n,x}
	k_{\mathbf{n},\mathbf{x},\boldsymbol{\alpha},\beta,N} =\frac{(-1)^{N_sN+\sum_{k=2}^sn_kN_{k-1}}\prod_{k = 1}^s \eta_{\mathbf{n};k}\zeta_{\mathbf{n},\mathbf{x};k}\xi_{\mathbf{n},\mathbf{x};k}}{\prod_{\ell = 0}^{N_s-1}\left(-(2\ell+2N+\alpha_1+2\alpha_{[2;s+1]}+\beta+1) + (-1)^{N+\ell}(\alpha_1-\beta-1) \right)}
	\end{equation}
	with
	\begin{align}
	\label{eta, zeta, xi}
	\begin{split}
	\eta_{\mathbf{n};k} & = \prod_{\ell = 0}^{(n_k)_e+(n_k)_p-1}\frac{2\ell+2N_{k-1}+\alpha_{[2;k]}+\beta+1}{2\ell+\alpha_{k+1}} \\
	\zeta_{\mathbf{n},\mathbf{x};k} & =
	\dfrac{4^{n_k}(\rho_1^{(k)}+\rho_2^{(k)}-r_1^{(k)}-r_2^{(k)}+1)_{n_k}}{\prod_{\ell = 0}^{n_k-1}\left(\ell + 2\rho_1^{(k)}-2r_1^{(k)}+1+(1-(-1)^{\ell})\left(\rho_2^{(k)}-r_2^{(k)}\right)\right)}\\
	& \times \prod_{\ell = 0}^{n_k-1}\left(\ell + 2\rho_1^{(k)}-2r_2^{(k)}+1+(1-(-1)^{\ell})\left(\rho_2^{(k)}+r_2^{(k)}\right)\right)^{-1} \\
	\xi_{\mathbf{n},\mathbf{x};k} & = \prod_{\ell = 0}^{n_k-1}\left(-(2\ell+2N_{k-1}+\alpha_{[1;k+1]})+(-1)^{x_{k+1}+N_{k-1}+\ell}(2x_{k+1}+\alpha_{[1;k+1]})\right)
	\end{split}
	\end{align}
	and with \(\rho_i^{(k)}\) and \(r_i^{(k)}\) as in (\ref{multivariate BI parameters}).
\end{lemma}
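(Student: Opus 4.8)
The plan is to substitute the parametrization (\ref{q-Racah parametrization}) into the product (\ref{q-Racah multivariate def}), apply Lemma \ref{lemma basic hypergeometric in limit} factor by factor, and then identify the resulting object with the product (\ref{multivariate Bannai-Ito}) while collecting all prefactors into \(k_{\mathbf{n},\mathbf{x},\boldsymbol{\alpha},\beta,N}\). First I would write the renormalized polynomial \(\widehat{R_{\mathbf{n}}}\) of (\ref{renormalized multivariate -q-Racah def}) as \(g_{\mathbf{n}}\prod_{k=1}^s r_{n_k}(\cdots\vert -q)\), where the \(k\)-th univariate \((-q)\)-Racah factor carries arguments \(a=\frac{bA_k}{a_1}(-q)^{2N_{k-1}}\), \(b=\frac{a_{k+1}}{-q}\), \(c=A_k(-q)^{x_{k+1}+N_{k-1}}\) and truncation \(x_{k+1}-N_{k-1}\). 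A direct computation, using \(A_k=-q^{\alpha_{[1;k]}}\) under (\ref{q-Racah parametrization}), shows that the four upper and three lower parameters of the \({}_4\phi_3\) in (\ref{q-Racah univariate}) with all \(q\) replaced by \(-q\) coincide exactly with those of the basic hypergeometric function (\ref{basic hypergeometric series in the limit}). Lemma \ref{lemma basic hypergeometric in limit} then yields the limit of each such \({}_4\phi_3\) as the combination (\ref{4F3 expressions}) of two \({}_4F_3\) series with parameters (\ref{4F3 parametrization}).

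The second step is to recognize this combination as a univariate Bannai--Ito polynomial. The Bannai--Ito polynomials admit a hypergeometric representation, given in \cite{Tsujimoto&Vinet&Zhedanov-2012}, in which the even and odd parts are expressed through exactly two \({}_4F_3\) series of the shape appearing in (\ref{4F3 expressions}). Matching the parameters \(a_1,\dots,a_4,b_1,b_2,b_3\) and the coefficient \(t_{\mathbf{n},\mathbf{x}}\) of (\ref{4F3 parametrization}) against this representation, one reads off precisely the identification (\ref{multivariate BI parameters}) for \(\rho_1^{(k)},\rho_2^{(k)},r_1^{(k)},r_2^{(k)},M^{(k)}\), with argument \(\frac{(-1)^{N_{k-1}+x_k}}{2}(x_k+\frac{\alpha_{[1;k]}}{2})-\frac14\) as in (\ref{multivariate Bannai-Ito}). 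This comparison also produces the normalization \(\zeta_{\mathbf{n},\mathbf{x};k}\) of (\ref{eta, zeta, xi}), which is exactly the constant needed to pass from the bare \({}_4F_3\) combination to the polynomial \(B_{n_k}\) normalized by \(B_0=1\) through the recurrence (\ref{univariate BI - recurrence relation}); the factor \(4^{n_k}\) and the Pochhammer symbols in \(\zeta_{\mathbf{n},\mathbf{x};k}\) are characteristic of this leading-coefficient bookkeeping.

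It remains to take the limit of all prefactors, and this is where the crucial subtlety lies: several of the \((-q)\)-Pochhammer symbols tend to zero. Under (\ref{q-Racah parametrization}) the factors \(1-\alpha(-q)^\ell\) of \((-aq;-q)_{n_k}\) and \(((-q)^{-N};-q)_{n_k}\) vanish for the appropriate parities of \(\ell\) as \(q\to 1\), and the same happens for the symbols \((A_{s+1}(-q)^N;-q)_{N_s}\) and \((\frac{b}{a_1}A_{s+1}(-q)^{N+1};-q)_{N_s}\) sitting in the denominator of \(g_{\mathbf{n}}\). The renormalization \(g_{\mathbf{n}}\) is tailored so that these zeros cancel: I would pair \((-aq;-q)_{n_k}\) with the factor \((a_{k+1};-q)_{n_k}^{-1}\) of \(g_{\mathbf{n}}\), and resolve each resulting \(\tfrac00\) by L'H\^opital's rule exactly as in the proof of Lemma \ref{lemma basic hypergeometric in limit}, splitting the products according to the parity of \(\ell\) and reindexing the even part. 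The even-\(\ell\) ratios produce the finite factor \(\eta_{\mathbf{n};k}\), the symbols built from \(b,c\) and the truncation produce \(\xi_{\mathbf{n},\mathbf{x};k}\), and the global quotient in \(g_{\mathbf{n}}\) produces the denominator \(\prod_{\ell=0}^{N_s-1}(-(2\ell+2N+\alpha_1+2\alpha_{[2;s+1]}+\beta+1)+(-1)^{N+\ell}(\alpha_1-\beta-1))\) together with the overall sign \((-1)^{N_sN+\sum_{k=2}^s n_kN_{k-1}}\) of (\ref{k_n,x}). Taking the product over \(k\) of the limits of the univariate factors then assembles \(\prod_k \zeta_{\mathbf{n},\mathbf{x};k}B_{n_k}=k_{\mathbf{n},\mathbf{x},\boldsymbol{\alpha},\beta,N}B_{\mathbf{n}}\), as claimed.

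The main obstacle is precisely this last prefactor computation: keeping track of which \((-q)\)-Pochhammer factors degenerate, pairing them correctly across the numerator of each \(r_{n_k}\) and the denominator of \(g_{\mathbf{n}}\), and applying L'H\^opital uniformly so that all finite ratios assemble into the exact expressions \(\eta_{\mathbf{n};k}\), \(\zeta_{\mathbf{n},\mathbf{x};k}\), \(\xi_{\mathbf{n},\mathbf{x};k}\) and the global denominator rather than into some equivalent but unrecognizable form. The conditions (\ref{positivity condition alpha}) guarantee that none of the surviving denominators vanishes, so that \(k_{\mathbf{n},\mathbf{x},\boldsymbol{\alpha},\beta,N}\) is well defined; I would verify this alongside the computation.
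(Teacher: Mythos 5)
Your proposal follows essentially the same route as the paper's proof: substitute the parametrization, apply Lemma \ref{lemma basic hypergeometric in limit} factor by factor, identify the resulting \({}_4F_3\) combinations with (rescaled) univariate Bannai--Ito polynomials via their known hypergeometric representation, and obtain the prefactor \(k_{\mathbf{n},\mathbf{x},\boldsymbol{\alpha},\beta,N}\) by resolving the degenerating \((-q)\)-Pochhammer symbols against the renormalization \(g_{\mathbf{n}}\) with L'H\^opital. The only cosmetic difference is that the paper routes the identification through the original Bannai--Ito polynomials \(u_i(\theta_j)\) of \cite{Bannai&Ito-1984} and then invokes \cite{Vinet&Zhedanov-2012} for the rescaling \(\zeta_{\mathbf{n},\mathbf{x};k}\), whereas you match directly against the \({}_4F_3\) form in \cite{Tsujimoto&Vinet&Zhedanov-2012}; your treatment of the vanishing Pochhammer factors is in fact more explicit than the paper's.
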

\begin{proof}
	Comparing the definitions (\ref{q-Racah univariate}), (\ref{q-Racah multivariate def}) and (\ref{renormalized multivariate -q-Racah def}) and the parametrization (\ref{q-Racah parametrization}), we find that \(\widehat{R_{\mathbf{n}}}(\mathbf{x}; \mathbf{a}, b,N\vert -q)\) is proportional to
	\[
	\prod_{k = 1}^s \,_4\phi_3\left(\left.
	\begin{array}{c}
	(-q)^{-n_k}, -(-q)^{n_k}q^{2N_{k-1}+\alpha_{[2;k+1]}+\beta}, (-q)^{N_{k-1}-x_{k}}, -(-q)^{N_{k-1}+x_{k}}q^{\alpha_{[1;k]}} \\
	q^{2N_{k-1}+\alpha_{[2;k]}+\beta+1}, -(-q)^{N_{k-1}+x_{k+1}}q^{\alpha_{[1;k+1]}}, (-q)^{N_{k-1}-x_{k+1}}
	\end{array}
	\right|-q,-q
	\right),
	\]
	where we recognize the basic hypergeometric function considered in Lemma \ref{lemma basic hypergeometric in limit}. In the limit \(q\to 1\), the polynomials \(\widehat{R_{\mathbf{n}}}(\mathbf{x};\mathbf{a},b,N\vert -q)\) thus become proportional to products of hypergeometric functions of the form (\ref{4F3 expressions}) with (\ref{4F3 parametrization}). Each such function determines a polynomial \(u_{i}(\theta_j;s,s^{\ast},r_1,r_2,r_3,N)\) as originally defined by Bannai and Ito in \cite[p. 272--273]{Bannai&Ito-1984}, under the reparametrizations
	\begin{align*}
	& \begin{array}{ll}
	i = n_k, & j  = x_{k}-N_{k-1} \\
	s = -2N_{k-1}-\alpha_{[1;k]}+1, \qquad\qquad\quad\ & s^{\ast}  = -2N_{k-1}-\alpha_{[2;k+1]}-\beta+1 \\
	r_1 = 2N_{k-1} + \alpha_{[2;k]}+\beta & 	N = x_{k+1}-N_{k-1},
	\end{array} \\
	& r_2 = \left\{
	\begin{array}{ll}
	-x_{k+1}+N_{k-1}-1& \mathrm{if}\ N\ \mathrm{is\ even} \\
	N_{k-1}+x_{k+1}+\alpha_{[1;k+1]}-1\quad\ \ & \mathrm{if}\ N\ \mathrm{is\ odd} \\
	\end{array}
	\right. \\
	& r_3 = \left\{
	\begin{array}{ll}
	-N_{k-1}-x_{k+1}-\alpha_{[1;k+1]}+1\quad  & \mathrm{if}\ N\ \mathrm{is\ even} \\
	x_{k+1}-N_{k-1}+1& \mathrm{if}\ N\ \mathrm{is\ odd} \\
	\end{array}
	\right.
	\end{align*}
	and where 
	\[
	\theta_j = \left\{
	\arraycolsep=1.4pt\def\arraystretch{1.4}
	\begin{array}{ll}
	-\frac{s}{4} + \frac{j}{2} & \mathrm{if}\ j\ \mathrm{is\ even} \\
	-\frac{s}{4} - \frac12(j+1-s)\qquad & \mathrm{if}\ j\ \mathrm{is\ odd}.
	\end{array}
	\right.
	\]
	This can be checked upon comparing (\ref{4F3 expressions}) and (\ref{4F3 parametrization}) with the hypergeometric expressions in \cite{Bannai&Ito-1984}. Following \cite{Vinet&Zhedanov-2012}, it is clear that the polynomials \(u_i(\theta_j)\) are rescaled Bannai--Ito polynomials
	\[
	\zeta_{\mathbf{n},\mathbf{x};k}\, B_{n_k}\left(\frac{(-1)^{N_{k-1}+x_k}}{2}\left(x_k+\frac{\alpha_{[1;k]}}{2}\right)-\frac14;\rho_1^{(k)},\rho_2^{(k)}, r_1^{(k)}, r_2^{(k)}, M^{(k)} \right),
	\]
	with \(\zeta_{\mathbf{n},\mathbf{x};k}\) as in (\ref{eta, zeta, xi}). The other factors in \(k_{\mathbf{n},\mathbf{x},\boldsymbol{\alpha},\beta,N}\) follow by taking limits of the proportionality coefficients in (\ref{q-Racah univariate}) and (\ref{renormalized multivariate -q-Racah def}). This proves our claim.
\end{proof}
\begin{remark}
	Note that the same choice of parametrization (\ref{q-Racah parametrization}) was made in \cite[(2.11)]{Lemay&Vinet-2018}, for the bivariate case.
\end{remark}

Like their univariate counterparts, the multivariate Bannai--Ito polynomials satisfy a discrete orthogonality relation, which we derive in the next proposition.

\begin{proposition}
	\label{prop BI orthogonality multivariate} 
	The multivariate Bannai--Ito polynomials	satisfy the relation
	\begin{align}
	\label{orthogonality BI multivariate}
	\begin{split}
	& \sum_{\ell_s = 0}^{N}\sum_{\ell_{s-1}= 0}^{\ell_{s}}\dots \sum_{\ell_1 = 0}^{\ell_2}\Omega\left(\boldsymbol{\ell}, \mathbf{n}; \boldsymbol{\alpha}, \beta, N\right)B_{\mathbf{n}}\left(\boldsymbol{\ell}; \boldsymbol{\alpha}, \beta, N\right)B_{\mathbf{m}}\left(\boldsymbol{\ell}; \boldsymbol{\alpha}, \beta, N\right) \\
	= & \ H\left(\mathbf{n}; \boldsymbol{\alpha}, \beta, N\right)\delta_{\mathbf{n},\mathbf{m}}
	\end{split}
	\end{align}
	where the orthogonality grid is given by the simplex
	\[
	\{\boldsymbol{\ell} = (\ell_1,\dots,\ell_s)\in\N^s: 0\leq \ell_1\leq \ell_2\leq\dots\leq \ell_s\leq N\}
	\]
	and we have the expressions
	\begin{align*}
	\Omega\left(\boldsymbol{\ell}, \mathbf{n}; \boldsymbol{\alpha}, \beta, N\right) = \frac{\prod_{i = 1}^s w_{\ell_i-N_{i-1}}^{(i)}\left(\ell_i,\ell_{i+1}, N_{i-1}, \alpha_1,\dots, \alpha_{i+1}, \beta \right)}{\prod_{i = 1}^{s-1}h_{n_i}^{(i)}\left(\ell_{i+1}, N_{i-1},n_i, \alpha_1,\dots,\alpha_{i+1}, \beta\right) }
	\end{align*}
	for the weight function and
	\[
	H\left(\mathbf{n}; \boldsymbol{\alpha}, \beta, N\right) = h_{n_s}^{(s)}\left(N, N_{s-1},n_s, \alpha_1,\dots, \alpha_{s+1},\beta\right),
	\]
	for the normalization coefficient, where we use the convention that \(\ell_{s+1} = N\). Expressions for the functions \(w_{\ell_i-N_{i-1}}^{(i)}\) and \(h_{n_i}^{(i)}\) can be found in Appendix D.
\end{proposition}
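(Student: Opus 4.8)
The plan is to establish the orthogonality relation (\ref{orthogonality BI multivariate}) by iterated summation, peeling off one variable at a time and invoking the univariate relation (\ref{orthogonality BI}) at each step, exactly as in the Tratnik and Gasper--Rahman constructions. The decisive structural observation is the \emph{entanglement pattern} of Definition \ref{multivariate Bannai-Ito def}: in the product (\ref{multivariate Bannai-Ito}) the summation variable \(\ell_i\) (in the role of \(x_i\)) enters only through the argument of the \(i\)-th univariate factor \(B_{n_i}\) and, via the parameters (\ref{multivariate BI parameters}), through \(\rho_2^{(i-1)}, r_1^{(i-1)}, M^{(i-1)}\) of the \((i-1)\)-th factor. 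Correspondingly, in the weight \(\Omega\) the variable \(\ell_i\) occurs only in the numerator factor \(w_{\ell_i-N_{i-1}}^{(i)}\) and, through the truncation \(M^{(i-1)}=\ell_i-N_{i-2}\), in the denominator factor \(h_{n_{i-1}}^{(i-1)}\).

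First I would carry out the innermost sum over \(\ell_1\). As no factor of lower index exists, \(\ell_1\) appears only in \(w^{(1)}_{\ell_1}\) and in the two leading factors \(B_{n_1}\), \(B_{m_1}\), whose parameters (\ref{multivariate BI parameters}) with \(k=1\), \(N_0=0\) are identical for \(\mathbf n\) and \(\mathbf m\). By Remark \ref{Remark orthogonality} the conditions (\ref{positivity condition})--(\ref{truncation conditions}) hold, so (\ref{orthogonality BI}) applies and yields \(h_{n_1}^{(1)}\,\delta_{n_1,m_1}\). This cancels the denominator factor \(h_{n_1}^{(1)}\) of \(\Omega\), and the Kronecker delta forces \(n_1=m_1\), so that the degree-dependent quantities \(N_{k-1}\) in (\ref{multivariate BI parameters}) coincide for \(B_{\mathbf n}\) and \(B_{\mathbf m}\) in every subsequent factor.

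I would then set up the induction on \(i\). Suppose the sums over \(\ell_1,\dots,\ell_{i-1}\) have been performed, producing \(\prod_{j<i}\delta_{n_j,m_j}\) together with the normalization \(h_{n_{i-1}}^{(i-1)}(\ell_i,\dots)\); the latter cancels the denominator factor \(h_{n_{i-1}}^{(i-1)}\) and, crucially, removes the entire residual \(\ell_i\)-dependence inherited from the \((i-1)\)-th factor. The only surviving \(\ell_i\)-dependence is then \(w^{(i)}_{\ell_i-N_{i-1}}\) and the \(i\)-th factors \(B_{n_i}(\ell_i)\), \(B_{m_i}(\ell_i)\), which by the already-established equality of the \(N_{i-1}\) share identical parameters. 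A further application of (\ref{orthogonality BI}) gives \(h_{n_i}^{(i)}\,\delta_{n_i,m_i}\), cancelling the next denominator factor and advancing the induction. At the final stage \(i=s\) the factor \(h_{n_s}^{(s)}\) produced by the \(\ell_s\)-summation is \emph{not} present in the denominator of \(\Omega\) (which runs only up to \(s-1\)), so it survives and equals \(H(\mathbf n;\boldsymbol\alpha,\beta,N)\), completing (\ref{orthogonality BI multivariate}).

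The main obstacle will be verifying that each individual \(\ell_i\)-summation is genuinely an instance of (\ref{orthogonality BI}) with the explicit parameters (\ref{multivariate BI parameters}). Concretely, one must check that, as the grid index \(\kappa=\ell_i-N_{i-1}\) ranges over \(\{0,\dots,M^{(i)}\}\) with \(M^{(i)}=\ell_{i+1}-N_{i-1}\), the argument \(\tfrac{(-1)^{N_{i-1}+\ell_i}}{2}\bigl(\ell_i+\tfrac{\alpha_{[1;i]}}{2}\bigr)-\tfrac14\) of the \(i\)-th factor traces out exactly the Bannai--Ito grid \(x_\kappa=(-1)^\kappa\bigl(\tfrac{\kappa}{2}+\rho_1^{(i)}+\tfrac14\bigr)-\tfrac14\); that \(w^{(i)}_{\ell_i-N_{i-1}}\) coincides with the univariate weight \(w_\kappa\) of (\ref{orthogonality BI}); and that \(w^{(i)}\) vanishes for \(\ell_i<N_{i-1}\), so that the nested simplex range \(0\le\ell_i\le\ell_{i+1}\) may be replaced by the univariate range. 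This bookkeeping is made delicate by the parity-dependent signs \((-1)^{N_{k-1}+x_k}\) and the even/odd decomposition \(n=2n_e+n_p\), which force one to track the two parities separately and to confirm, via Remark \ref{Remark orthogonality}, that the natural-number truncation \(M^{(i)}=\ell_{i+1}-N_{i-1}\) keeps the conditions (\ref{positivity condition})--(\ref{truncation conditions}) in force at every step; the required identities are precisely those recorded in Appendix D. As an independent consistency check, (\ref{orthogonality BI multivariate}) should also arise from the \(q\to1\) limit of the multivariate \((-q)\)-Racah orthogonality (\ref{multivariate q-racah orthogonality}) combined with Lemma \ref{lemma -q-Racah limit}, though monitoring the limits of \(\rho(\mathbf x)\) and \(h_{\mathbf n}\) is more delicate than the direct argument above.
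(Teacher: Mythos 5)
Your proposal is correct and follows essentially the same route as the paper: both arguments exploit the factorization of $B_{\mathbf n}$ and of $\Omega$, the telescoping cancellation of the $h^{(i)}_{n_i}$ factors, the vanishing of $w^{(i)}_{\ell_i-N_{i-1}}$ for $\ell_i<N_{i-1}$, and Remark \ref{Remark orthogonality} to invoke the univariate relation (\ref{orthogonality BI}) at each stage. The only difference is organizational — the paper phrases the telescoping as an induction on $s$ that peels off the outermost variable $\ell_s$ and absorbs the inner sums into the induction hypothesis, whereas you evaluate the nested sums from the innermost variable $\ell_1$ outward — but the computation is identical.
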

\begin{proof}
	We prove this by induction on \(s\). For \(s = 1\) the relation (\ref{orthogonality BI multivariate}) is precisely the univariate orthogonality relation (\ref{orthogonality BI}), which is applicable by Remark \ref{Remark orthogonality} and where the parameters are given by (\ref{multivariate BI parameters}) with \(k = 1\).
	
	Now suppose the claim has been proven for \(s-1\). Let us write
	\begin{align*}
	\boldsymbol{\ell} & = (\ell_1,\dots,\ell_{s}), & \overline{\boldsymbol{\ell}} & =  (\ell_1,\dots,\ell_{s-1}) \\
	\mathbf{n} & = (n_1,\dots,n_s), & \overline{\mathbf{n}} &= (n_1,\dots,n_{s-1}) \\
	\boldsymbol{\alpha} & = (\alpha_1,\dots,\alpha_{s+1}), & \overline{\boldsymbol{\alpha}} &= (\alpha_1,\dots,\alpha_{s}),
	\end{align*}
	Observe that
	\begin{align*}
	& \ B_{\mathbf{n}}\left(\boldsymbol{\ell};\boldsymbol{\alpha},\beta,N\right) \\ = &\  B_{\overline{\mathbf{n}}}\left(\overline{\boldsymbol{\ell}};\overline{\boldsymbol{\alpha}},\beta,\ell_s \right) B_{n_s}\left(\frac{(-1)^{N_{s-1}+\ell_s}}{2}\left(\ell_s+\frac{\alpha_{[1;s]}}{2}\right)-\frac14;\rho_1^{(s)},\rho_2^{(s)},r_1^{(s)},r_2^{(s)},N-N_{s-1} \right),
	\end{align*}
	with \(\rho_i^{(s)}, r_i^{(s)}\) as in (\ref{multivariate BI parameters}). It is immediate that
	\begin{align*}
	\Omega\left(\boldsymbol{\ell}, \mathbf{n}; \boldsymbol{\alpha},\beta,N\right)
	= \Omega(\overline{\boldsymbol{\ell}}, \overline{\mathbf{n}}; \overline{\boldsymbol{\alpha}},\beta,\ell_s)\frac{w_{\ell_s-N_{s-1}}^{(s)}}{h_{n_{s-1}}^{(s-1)}},
	\end{align*}
	where the factor \(\frac{w_{\ell_s-N_{s-1}}^{(s)}}{h_{n_{s-1}}^{(s-1)}}\) is independent of \(\ell_1,\dots,\ell_{s-1}\). Hence we find that the left-hand side of (\ref{orthogonality BI multivariate}) equals
	\begin{align*}
	&\sum_{\ell_s = 0}^{N}\frac{w_{\ell_s-N_{s-1}}^{(s)}}{h_{n_{s-1}}^{(s-1)}}B_{n_s}\left(\frac{(-1)^{N_{s-1}+\ell_s}}{2}\left(\ell_s+\frac{\alpha_{[1;s]}}{2}\right)-\frac14;\rho_1^{(s)},\rho_2^{(s)},r_1^{(s)},r_2^{(s)},N-N_{s-1} \right)\\&\times B_{m_s}\left(\frac{(-1)^{M_{s-1}+\ell_s}}{2}\left(\ell_s+\frac{\alpha_{[1;s]}}{2}\right)-\frac14;\rho_1^{(s)},\rho_2^{(s)},r_1^{(s)},r_2^{(s)},N-M_{s-1} \right)\\
	&\sum_{\ell_{s-1}= 0}^{\ell_{s}}\dots \sum_{\ell_1 = 0}^{\ell_2}\Omega\left(\overline{\boldsymbol{\ell}}, \overline{\mathbf{n}}; \overline{\boldsymbol{\alpha}}, \beta, \ell_s\right) B_{\overline{\mathbf{n}}}\left(\overline{\boldsymbol{\ell}}; \overline{\boldsymbol{\alpha}}, \beta, \ell_s\right) B_{\overline{\mathbf{m}}}\left(\overline{\boldsymbol{\ell}}; \overline{\boldsymbol{\alpha}}, \beta, \ell_s\right),
	\end{align*}
	where we have written \(M_{s-1}\) for \(\sum_{i = 1}^{s-1}m_i\).
	By the induction hypothesis, the sum in the third line yields 
	\[
	\delta_{n_1,m_1}\dots \delta_{n_{s-1},m_{s-1}}H(\overline{\mathbf{n}}; \overline{\boldsymbol{\alpha}},\beta,\ell_s)
	\]
	and by definition, we have \(H(\overline{\mathbf{n}}; \overline{\boldsymbol{\alpha}},\beta,\ell_s) = h_{n_{s-1}}^{(s-1)}(\ell_s,N_{s-2},n_{s-1},\overline{\boldsymbol{\alpha}},\beta)\).
	Hence the left hand side of (\ref{orthogonality BI multivariate}) reduces to
	\begin{align*}
	& \ \delta_{n_1,m_1}\dots \delta_{n_{s-1},m_{s-1}}\sum_{k = 0}^{N-N_{s-1}}w_k^{(s)}B_{n_s}\left(x_k;\rho_1^{(s)},\rho_2^{(s)},r_1^{(s)},r_2^{(s)};N-N_{s-1} \right)\\&\times B_{m_s}\left(x_k;\rho_1^{(s)},\rho_2^{(s)},r_1^{(s)},r_2^{(s)},N-N_{s-1} \right),
	\end{align*}
	with
	\[
	x_k = (-1)^{k}\left(\frac{k}{2}+\rho_1^{(s)}+\frac14\right)-\frac14,
	\]
	and where we have used (\ref{multivariate BI parameters}) and the fact that \(w_{\ell_s-N_{s-1}}^{(s)} = 0\) for \(\ell_s\in \{0,\dots,N_{s-1}-1\}\), as explicitly stated in Appendix D. The assertion now follows immediately from the orthogonality relation (\ref{orthogonality BI}) in the univariate case.
\end{proof}

\subsection{The higher rank $q = 1$ Bannai--Ito algebra and connection coefficients}

The rank \(n-2\) Bannai--Ito algebra \(\mathcal{A}_n\) was introduced in \cite{DeBie&Genest&Vinet-2016} as the abstract associative algebra with generators \(\Gamma_A\), indexed by all possible subsets \(A\subseteq [1;n]\), subject to the relations
\begin{equation}
\label{BI algebra relations}
\{\Gamma_A,\Gamma_B\} = \Gamma_{(A\cup B)\setminus(A\cap B)} + 2\Gamma_{A\cap B}\Gamma_{A\cup B} + 2 \Gamma_{A\setminus(A\cap B)}\Gamma_{B\setminus(A\cap B)},
\end{equation}
for all sets \(A, B \subseteq[1;n]\). It was originally constructed as the symmetry algebra of the so-called Dirac--Dunkl operator
\[
\underline{D} = \sum_{i = 1}^n e_iT_i, \qquad T_i = \partial_{x_i}+(\gamma_i-\tfrac12)\frac{1-r_i}{x_i},
\]
where the \(e_i\) are Clifford elements, satisfying \(\{e_i,e_j\} = -2\delta_{ij}\), the \(\gamma_i>\frac12\) are real parameters and the \(r_i\) are the reflections defined in Section \ref{Paragraph q-Dirac-Dunkl model}. The scalar version of this model, with all \(e_i\) replaced by products of reflections \(\prod_{j = i+1}^nr_j\), coincides with the \(\mathbb{Z}_2^n\) \(q\)-Dirac--Dunkl model introduced in Section \ref{Paragraph q-Dirac-Dunkl model} in the limit \(q\to 1\).

Under the specialization \(q\to 1\), the quantum superalgebra \(\ospq\) reduces to the Lie superalgebra \(\mathfrak{osp}(1\vert 2)\). This is manifest from the alternative presentation for \(\ospq\), with generators \(A_0, A_{\pm}, P\) and relations
\begin{equation}
\label{def - ospq without K}
[A_0,A_{\pm}] = \pm A_{\pm}, \quad \{A_+,A_-\} = [2A_0]_{q^{1/2}},\quad  [P,A_0]= 0, \quad \{P,A_{\pm}\} = 0, \quad P^2 = 1,
\end{equation}
which clearly reduce to the defining relations for \(\mathfrak{osp}(1\vert 2)\) in the limit \(q\to 1\). This presentation relates to (\ref{def - ospq with K}) by taking \(K = q^{A_0/2}\). The coproduct action on \(A_0\) compatible with (\ref{Coproduct}) is hence
\[
\Delta(A_0) = A_0\otimes 1 + 1\otimes A_0.
\]

Comparison of the relations (\ref{BI algebra relations}) with the defining relations (\ref{q-anticommutation Gamma}) of the algebra \(\mathcal{A}_n^q\) explains why the latter is considered a \(q\)-deformation of \(\mathcal{A}_n\). As a consequence, the limit \(q\to 1\) of the extension process (\ref{Gamma_A^q})--(\ref{extension process}) should yield a different construction method for \(\mathcal{A}_n\) by means of embedding in \(\mathfrak{osp}(1\vert 2)^{\otimes n}\). Note that the expression (\ref{def:tau isomorphism}) for the extension morphism \(\tau\) simplifies remarkably under this limit. As a consequence, the requirements (\ref{most general sets 1})--(\ref{most general sets 3}) on the sets \(A\) and \(B\) in Proposition \ref{prop q-anticommutation Gamma} can be omitted in case \(q = 1\).

Taking \(q\to 1\) in (\ref{ospq-action}) we obtain unitary irreducible modules for the Lie superalgebra \(\mathfrak{osp}(1\vert 2)\), which we will denote by \(\widetilde{W}^{(\gamma_i)}\). The solution to the spectral problem proposed in Definition \ref{definition bases final} survives the limit \(q\to 1\) and the expressions for the eigenvalues follow immediately from the fact that
\[
\lim_{q\to 1}\left([\alpha]_q\right) = \alpha,
\]
for any \(\alpha\in\R\). Moreover, it follows from Corollary \ref{cor characterized by equations} that the vectors constructed in Definition \ref{definition bases final} are uniquely determined by the equations (\ref{eigenvalue equation 1}), (\ref{eigenvalue equation 2}) and (\ref{eigenvalue equation K}). Hence we are led to propose the following analog of Definition \ref{definition bases final} for \(q = 1\). For ease of notation, we will consider only the \(q \to 1\) limits of the vectors (\ref{vector s}) and (\ref{vector j}), although one could similarly define \(\vert \tilde{\mathbf{j}}^{(i,r)};N\rangle\) for any \(i,r\).

\begin{definition}
	We denote by \(\vert\tilde{\mathbf{s}};N\rangle = \vert \tilde{s}_{n-1},\tilde{s}_{n-2},\dots,\tilde{s}_{1};N\rangle\), with all \(\tilde{s}_k\in\N\) and \(N\in\N\), the vectors \(\in \widetilde{W}^{(\gamma_1)}\otimes\dots\otimes \widetilde{W}^{(\gamma_n)}\) diagonalizing the abelian subalgebra
	\[
	\langle \Gamma_{[1;2]},\Gamma_{[1;3]},\dots,\Gamma_{[1;n]}\rangle
	\]
	of \(\mathcal{A}_n\) through the eigenvalue equations
	\[
	\Gamma_{[1;k+1]}\vert \tilde{s}_{n-1},\dots,\tilde{s}_{1};N\rangle = (-1)^{\vst{k}}\left(\vst{k}+\gamma_{[1;k+1]}-\frac12\right)\vert \tilde{s}_{n-1},\dots,\tilde{s}_{1};N\rangle
	\]
	for all \(k\in [1;n-1]\), and satisfying
	\begin{equation}
	\label{eigenvalue equation A0 1}
	\left(A_0\right)_{[1;n]}\vert \tilde{s}_{n-1},\dots,\tilde{s}_{1};N\rangle = \left(\vert\tilde{\mathbf{s}}_{n-1}\vert+N+\gamma_{[1;n]}\right)\vert \tilde{s}_{n-1},\dots,\tilde{s}_{1};N\rangle,
	\end{equation}
	where
	\[
	\left(A_0\right)_{[1;n]} = \Delta^{(n)}(A_0) = \sum_{i = 0}^{n-1}1^{\otimes i}\otimes A_0 \otimes 1^{\otimes (n-i-1)},
	\]
	with \(A_0\) as in (\ref{def - ospq without K}). Similarly, the vectors \(\vert\tilde{\mathbf{j}};N\rangle = \vert \tilde{j}_{n-1},\tilde{j}_{n-2},\dots,\tilde{j}_{1};N\rangle\) diagonalize the abelian subalgebra
	\[
	\langle \Gamma_{[2;3]}, \Gamma_{[2;4]},\dots,\Gamma_{[2;n]},\Gamma_{[1;n]}\rangle
	\]
	of \(\mathcal{A}_n\) through the relations
	\[
		\Gamma_{[2;k+1]}\vert \tilde{j}_{n-1},\dots,\tilde{j}_{1};N\rangle =  (-1)^{\vert\tilde{\mathbf{j}}_{k-1}\vert}\left(\vert\tilde{\mathbf{j}}_{k-1}\vert+\gamma_{[2;k+1]}-\frac12\right)\vert \tilde{j}_{n-1},\dots,\tilde{j}_{1};N\rangle,
	\]
	for all \(k\in [2;n-1]\) and
	\[
	\Gamma_{[1;n]}\vert\tilde{j}_{n-1},\dots,\tilde{j}_{1};N\rangle = (-1)^{\vert\tilde{\mathbf{j}}_{n-1}\vert}\left(\vert\tilde{\mathbf{j}}_{n-1}\vert+\gamma_{[1;n]}-\frac12\right)\vert \tilde{j}_{n-1},\dots,\tilde{j}_{1};N\rangle.
	\]
	The quantum number \(N\) is again fixed by the equation
	\begin{equation}
	\label{eigenvalue equation A0 2}
	\left(A_0\right)_{[1;n]}\vert \tilde{j}_{n-1},\dots,\tilde{j}_{1};N\rangle = \left(\vert\tilde{\mathbf{j}}_{n-1}\vert+N+\gamma_{[1;n]}\right)\vert \tilde{j}_{n-1},\dots,\tilde{j}_{1};N\rangle.
	\end{equation}
	As in the \(q\)-deformed case, these vectors are orthogonal and we may again choose the normalization such that
	\[
	\langle\tilde{\mathbf{s}};N\vert\tilde{\mathbf{s}}';N'\rangle = \delta_{\tilde{\mathbf{s}},\tilde{\mathbf{s}}'}\delta_{N,N'}, \qquad \langle\tilde{\mathbf{j}};N\vert\tilde{\mathbf{j}}';N'\rangle = \delta_{\tilde{\mathbf{j}},\tilde{\mathbf{j}}'}\delta_{N,N'}.
	\]
\end{definition}
\begin{remark}
	The equations (\ref{eigenvalue equation A0 1}) and (\ref{eigenvalue equation A0 2}) are the \(q \to 1\) analogs of (\ref{eigenvalue equation K}), as is motivated by the expression 
	\[
	K = q^{A_0/2}.
	\]
\end{remark}

We will now focus on the overlap coefficients \(\langle\tilde{\mathbf{j}};m\vert\tilde{\mathbf{s}};m\rangle\), which by the previously established connections are in fact the Racah coefficients of the algebra \(\mathfrak{osp}(1\vert 2)\). As before, it suffices to treat the case \(m = 0\), as for general \(m\) we have
\[
\langle\tilde{\mathbf{j}};m\vert\tilde{\mathbf{s}};m\rangle = \left(\prod_{\ell = 1}^m\left(\ell+(1-(-1)^{\ell})(\vert\bj_{n-1}\vert+\gamma_{[1;n]}-\tfrac12)\right) \right) \langle\tilde{\mathbf{j}};0\vert\tilde{\mathbf{s}};0\rangle,
\]
as follows from (\ref{effect of m not zero}) in the limit \(q \to 1\). Therefore, we will write \(\vert\tilde{\mathbf{j}}\rangle\) and \(\vert\tilde{\mathbf{s}}\rangle\) for \(\vert\tilde{\mathbf{j}};0\rangle \) and \(\vert\tilde{\mathbf{s}};0\rangle\). Like in the \(q\)-deformed case, the overlap coefficients have a nice expression in terms of orthogonal polynomials.

\begin{theorem}
	\label{th - Overlap q = 1}
	The overlap coefficients \(\langle\tilde{\mathbf{j}}\vert\tilde{\mathbf{s}}\rangle\) are proportional to multivariate Bannai--Ito polynomials
	\begin{align*}
	& \langle \tilde{j}_{n-1},\dots,\tilde{j}_{1}\vert \tilde{s}_{n-1},\dots,\tilde{s}_1\rangle \\ = &\, \delta_{\vst{n-1},\vert\tilde{\mathbf{j}}_{n-1}\vert}C_{\tilde{\mathbf{s}},\tilde{\mathbf{j}}}
	B_{(\tilde{j}_1,\dots,\tilde{j}_{n-2})}\left((\tilde{s}_1,\dots,\vst{n-2}); 2\gamma_{[1;2]}-1, 2\gamma_3,\dots,2\gamma_n, 2\gamma_2-1, \vst{n-1}\right).
	\end{align*}
	The proportionality coefficient is given by
	\[
	C_{\tilde{\mathbf{s}},\tilde{\mathbf{j}}} = \sqrt{\frac{\Omega\left((\tilde{s}_1,\dots,\vst{n-2}),(\tilde{j}_1,\dots,\tilde{j}_{n-2}); 2\gamma_{[1;2]}-1, 2\gamma_3,\dots,2\gamma_n,2\gamma_2-1,\vst{n-1}\right)}{H\left((\tilde{j}_1,\dots,\tilde{j}_{n-2});2\gamma_{[1;2]}-1, 2\gamma_3,\dots,2\gamma_n,2\gamma_2-1,\vst{n-1}\right)}},
	\]
	where the functions \(\Omega\) and \(H\) have been defined in Proposition \ref{prop BI orthogonality multivariate}.
\end{theorem}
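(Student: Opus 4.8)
The plan is to derive the statement as the $q\to1$ limit of the $q$-deformed overlap coefficients already computed in Theorem~\ref{th - Overlap q-case}. Specializing that result to $i=1$ and $r=n-1$ gives, as recorded in~(\ref{overlap with j and s}),
\[
\langle\mathbf{j}\vert\mathbf{s}\rangle = \sqrt{\frac{\rho^{(1,n-1)}(\mathbf{s})}{h_{\mathbf{j}}}}\, R_{(j_1,\dots,j_{n-2})}\left((s_1,s_1+s_2,\dots,\vss{n-2});\mathbf{a},b,\vss{n-1}\left|\right.-q\right),
\]
with the parameters~(\ref{parametrization overlap j and s}). The decisive observation is that this parametrization has precisely the form~(\ref{q-Racah parametrization}) demanded by Lemma~\ref{lemma -q-Racah limit}, with $\alpha_1=2\gamma_{[1;2]}-1$, $\alpha_k=2\gamma_{k+1}$ for $k\in\{2,\dots,n-1\}$ and $\beta=2\gamma_2-1$, and that the positivity hypotheses~(\ref{positivity condition alpha}) translate exactly into the standing assumption $\gamma_i>\frac12$. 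Thus the continuous variables of the target polynomial are the partial sums $(\tilde{s}_1,\dots,\vst{n-2})$ and the truncation slot $x_{s+1}$ becomes $N=\vst{n-1}$, matching Definition~\ref{multivariate Bannai-Ito def}.

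First I would argue that $\langle\tilde{\mathbf{j}}\vert\tilde{\mathbf{s}}\rangle=\lim_{q\to1}\langle\mathbf{j}\vert\mathbf{s}\rangle$. Since $\lim_{q\to1}[\alpha]_q=\alpha$ and the structure constants $\sigma_m^{(\gamma)}$ possess finite nonzero limits for $\gamma>\frac12$, the entire recursive construction of the bases of Definition~\ref{definition bases final} depends continuously on $q$; the limiting vectors are orthonormal and are characterised by the $q\to1$ forms of the eigenvalue equations, so by Corollary~\ref{cor characterized by equations} they coincide with the $q=1$ vectors $\vert\tilde{\mathbf{s}}\rangle$ and $\vert\tilde{\mathbf{j}}\rangle$. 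The Kronecker factor $\delta_{\vst{n-1},\vert\tilde{\mathbf{j}}_{n-1}\vert}$ then survives from the constraint $\vjj{n-1}=\vss{n-1}$ needed for a nonzero overlap, which is the $q=1$ form of the $K_{[1;n]}$-eigenvalue equation now carried by $(A_0)_{[1;n]}$ from~(\ref{def - ospq without K}).

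Next I would pass to the renormalized polynomial by writing $R_{\mathbf{j}}=g_{\mathbf{j}}^{-1}\widehat{R_{\mathbf{j}}}$ as in~(\ref{renormalized multivariate -q-Racah def}) and invoke Lemma~\ref{lemma -q-Racah limit}, whose proof rests on the hypergeometric computation of Lemma~\ref{lemma basic hypergeometric in limit}, to obtain
\[
\lim_{q\to1}\widehat{R_{(j_1,\dots,j_{n-2})}}\left((s_1,\dots,\vss{n-2});\mathbf{a},b,\vss{n-1}\left|\right.-q\right) = k_{\mathbf{j},\mathbf{x},\boldsymbol{\alpha},\beta,N}\,B_{(\tilde{j}_1,\dots,\tilde{j}_{n-2})}\left((\tilde{s}_1,\dots,\vst{n-2});\boldsymbol{\alpha},\beta,\vst{n-1}\right).
\]
Multiplying by the limits of $\sqrt{\rho^{(1,n-1)}(\mathbf{s})/h_{\mathbf{j}}}$ and $g_{\mathbf{j}}^{-1}$ exhibits $\langle\tilde{\mathbf{j}}\vert\tilde{\mathbf{s}}\rangle$ as a multiple of the multivariate Bannai--Ito polynomial with exactly the parameters in the statement; since the eigenvectors are fixed only up to a phase, the proportionality constant $C_{\tilde{\mathbf{s}},\tilde{\mathbf{j}}}$ may be taken real and positive.

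Finally I would identify $C_{\tilde{\mathbf{s}},\tilde{\mathbf{j}}}$ with $\sqrt{\Omega/H}$. The cleanest route is the nested product structure: Lemma~\ref{lemma products of G} writes the full overlap as a product over the intermediate variables of univariate connection coefficients, each of which, in the limit, is a univariate Bannai--Ito polynomial carrying its univariate normalization $\sqrt{w^{(i)}/h^{(i)}_{n_i}}$ from~(\ref{orthogonality BI}); since Proposition~\ref{prop BI orthogonality multivariate} defines the ratio $\Omega/H$ precisely as $\prod_{i}w^{(i)}/\prod_i h^{(i)}_{n_i}$, the product of these univariate constants is exactly $\sqrt{\Omega/H}$. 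As a corroborating check one may expand $\delta_{\tilde{\mathbf{j}},\tilde{\mathbf{j}}'}=\sum_{\tilde{\mathbf{s}}}\langle\tilde{\mathbf{j}}'\vert\tilde{\mathbf{s}}\rangle\langle\tilde{\mathbf{s}}\vert\tilde{\mathbf{j}}\rangle$ and compare with the discrete orthogonality~(\ref{orthogonality BI multivariate}), the positive square root being legitimate by~(\ref{positivity condition alpha}). The main obstacle I anticipate is twofold: rigorously justifying the interchange of the limit with the overlap (confirming that the $q=1$ eigenvectors are genuinely the normalized limits of their $q$-deformed counterparts), and correctly matching the shifted arguments and the per-level truncations against the nested parametrization~(\ref{multivariate BI parameters}) of Definition~\ref{multivariate Bannai-Ito def} — a bookkeeping task complicated by the fact that, unlike the $q$-deformed weight in~(\ref{multivariate q-racah orthogonality}), the Bannai--Ito weight $\Omega$ depends on the degree $\tilde{\mathbf{j}}$, so that the constant cannot be read off from a naive uniqueness-of-measure argument but must be tracked through the product structure.
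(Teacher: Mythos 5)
Your proposal is correct and follows essentially the same route as the paper: the overlap is identified as the $q\to1$ limit of (\ref{overlap with j and s}), the parametrization (\ref{parametrization overlap j and s}) is matched to (\ref{q-Racah parametrization}) with positivity guaranteed by $\gamma_i>\frac12$, and Lemma \ref{lemma -q-Racah limit} yields the proportionality to the multivariate Bannai--Ito polynomial. The only cosmetic difference is in pinning down $C_{\tilde{\mathbf{s}},\tilde{\mathbf{j}}}$: the paper uses the global orthonormality argument together with Proposition \ref{prop BI orthogonality multivariate}, exactly as in the proof of Theorem \ref{th - Overlap q-case}, which is the route you relegate to a ``corroborating check''; your preferred level-by-level product route is equivalent since $\Omega/H$ factors as $\prod_i w^{(i)}/h^{(i)}_{n_i}$, and your caution about the degree-dependence of $\Omega$ is well taken but does not obstruct either argument.
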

\begin{proof}
	By definition, these are the limits \(q\to 1\) of the overlap coefficients (\ref{overlap with j and s}). The choice of parameters (\ref{parametrization overlap j and s}) coincides with (\ref{q-Racah parametrization}), and (\ref{positivity condition alpha}) is satisfied since we require all \(\gamma_i>\frac12\). Hence we find from Lemma \ref{lemma -q-Racah limit}
	\[
	\langle\tilde{\mathbf{j}}\vert\tilde{\mathbf{s}}\rangle \propto B_{(\tilde{j}_1,\dots,\tilde{j}_{n-2})}\left((\tilde{s}_1,\dots,\vst{n-2}); 2\gamma_{[1;2]}-1, 2\gamma_3,\dots,2\gamma_n, 2\gamma_2-1, \vst{n-1}\right).
	\]
	The proportionality coefficient \(C_{\tilde{\mathbf{s}},\tilde{\mathbf{j}}}\) has been derived from the orthonormality of the vectors \(\vert\tilde{\mathbf{j}}\rangle\) in combination with Proposition \ref{prop BI orthogonality multivariate}, following the same lines as the proof of Theorem \ref{th - Overlap q-case}. 
\end{proof}
Note that this agrees with the results obtained in \cite{Genest&Vinet&Zhedanov-2012} for the univariate case, i.e. the case \(n = 3\), modulo a transformation \(\gamma_1\leftrightarrow\gamma_3\) of the representation parameters. 
\begin{remark}
	If we denote by \(\vert\tilde{\mathbf{j}}^{(i,r)};N\rangle\) the vector introduced in Definition \ref{definition bases final} for \(q = 1\), then as before we would find that \(\langle\tilde{\mathbf{j}}^{(i,r)}; m\vert\tilde{\mathbf{s}};m\rangle\)
	is proportional to a multivariate Bannai--Ito polynomial
	\[
	B_{(\tilde{j}^{(i,r)}_i,\dots,\tilde{j}^{(i,r)}_{i+r-2})}\left((\tilde{s}_i,\dots,\vst{i+r-2}-\vst{i-1});\boldsymbol{\alpha},2\gamma_{i+1}-1, \vst{i+r-1}-\vst{i-1} \right),
	\]
	with
	\[
	\boldsymbol{\alpha} = (\alpha_1,\dots,\alpha_{r}), \qquad \alpha_1 = 2\vst{i-1}+2\gamma_{[1;i+1]}-1, \qquad \alpha_k = 2\gamma_{i+k},\ k\in [2;r].
	\]
\end{remark} 

\subsection{Discrete realization}

In this final section we will investigate the bispectrality of the multivariate Bannai--Ito polynomials from Definition \ref{multivariate Bannai-Ito def}. Like before, we will first renormalize our polynomials as follows:
\begin{equation}
\label{renormalized BI polynomials def}
\widehat{B}_{\mathbf{n}}(\mathbf{x};\boldsymbol{\alpha},\beta,N) = k_{\mathbf{n},\mathbf{x},\boldsymbol{\alpha},\beta,N} B_{\mathbf{n}}(\mathbf{x};\boldsymbol{\alpha},\beta,N),
\end{equation}
with \(k_{\mathbf{n},\mathbf{x},\boldsymbol{\alpha},\beta,N}\) as in (\ref{k_n,x}). By Lemma \ref{lemma -q-Racah limit}, these are the renormalized multivariate \((-q)\)-Racah polynomials (\ref{renormalized multivariate -q-Racah def}) with parametrization (\ref{q-Racah parametrization}) in the limit \(q\to 1\). The following is the \(q = 1\) analog of Proposition \ref{prop Iliev x}, i.e.\ the \(q = -1\) analog of \cite[Propositions 4.2 and 4.5]{Iliev-2011}.

\begin{proposition}
	\label{prop Iliev x -1}
	Let \(x_1,\dots,x_{s}\) be integer variables. Let us define the operator
	\begin{align*}
	& \widetilde{\mathcal{L}}_j^{\mathbf{x}}\left(x_1,\dots,x_j;\alpha_1,\dots,\alpha_{j+1},\beta,x_{j+1}\right) \\
	= & \sum_{\nuu\in\{-1,0,1\}^j}\widetilde{C}_{\nuu}(x_1,\dots,x_{j+1})T_{+,x_1}^{\nu_1}\dots T_{+,x_j}^{\nu_j}\\&+\left(\frac{\alpha_{[2;j+1]}+\beta}{2}+(-1)^{x_{j+1}}(\alpha_1-\beta-1)\left(x_{j+1}+\frac{\alpha_{[1;j+1]}}{2}\right)\right),
	\end{align*}
	with the convention that \(x_{s+1} = N\) and where the \(\widetilde{C}_{\nuu}(x_1,\dots,x_{j+1})\) are functions in the variables \(x_1,\dots,x_{j+1}\), explicit expressions of which can be found in Appendix E. Then the operators \[\widetilde{\mathcal{L}}_j^{\mathbf{x}}\left(x_1,\dots,x_j;\alpha_1,\dots,\alpha_{j+1},\beta,x_{j+1}\right), \qquad j\in\{1,\dots,s\},\] are mutually commuting operators, with the renormalized Bannai--Ito polynomials in \(s\) variables as common eigenfunctions:
	\begin{equation}
	\label{difference relation BI}
	\widetilde{\mathcal{L}}_j^{\mathbf{x}}\left(x_1,\dots,x_j;\alpha_1,\dots,\alpha_{j+1},\beta,x_{j+1}\right)\widehat{B}_{\mathbf{n}}(\mathbf{x};\boldsymbol{\alpha},\beta,N) = \widetilde{\mu}_j\widehat{B}_{\mathbf{n}}(\mathbf{x};\boldsymbol{\alpha},\beta,N),
	\end{equation}
	with eigenvalue
	\[
	\widetilde{\mu}_j = \left(\frac{\alpha_{[2;j+1]}+\beta}{2}-(-1)^{N_j}\left(N_j+\frac{\alpha_{[2;j+1]}+\beta}{2}\right) \right).
	\]
\end{proposition}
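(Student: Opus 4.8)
The plan is to obtain this proposition as a rescaled $q\to 1$ limit of Proposition~\ref{prop Iliev x}, exploiting that the renormalized Bannai--Ito polynomials are by construction (Lemma~\ref{lemma -q-Racah limit} together with (\ref{renormalized BI polynomials def})) the limits of the renormalized $(-q)$-Racah polynomials $\widehat{R_{\mathbf{n}}}$ under the parametrization (\ref{q-Racah parametrization}). First I would note that, since the renormalization factor $g_{\mathbf{n}}$ in (\ref{renormalized multivariate -q-Racah def}) depends only on $\mathbf{n}$ and not on $\mathbf{x}$, the $\mathbf{x}$-difference operator $\mathcal{L}_j^{\mathbf{x}}$ of Proposition~\ref{prop Iliev x} diagonalizes $\widehat{R_{\mathbf{n}}}$ with the very same eigenvalue $\mu_j$ as it does $R_{\mathbf{n}}$. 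Substituting (\ref{q-Racah parametrization}) into these operators and dividing by $q-q^{-1}$, I would define $\widetilde{\mathcal{L}}_j^{\mathbf{x}} := \lim_{q\to 1}(q-q^{-1})^{-1}\mathcal{L}_j^{\mathbf{x}}$ and claim it coincides with the operator in the statement.

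The eigenvalue is the easy part: using $A_{j+1}=-q^{\alpha_{[1;j+1]}}$ and $\tfrac{bA_{j+1}}{a_1}=-q^{\alpha_{[2;j+1]}+\beta}$ one rewrites $\mu_j$ and computes $\lim_{q\to1}\mu_j/(q-q^{-1})$ by splitting into the cases $N_j$ even and $N_j$ odd (or a single L'H\^{o}pital step), which yields exactly $\widetilde{\mu}_j = \tfrac{\alpha_{[2;j+1]}+\beta}{2}-(-1)^{N_j}(N_j+\tfrac{\alpha_{[2;j+1]}+\beta}{2})$.

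The main work is the operator limit. Here the coefficient functions $C_{\nuu}$ of $\mathcal{L}_j^{\mathbf{x}}$ specialize under (\ref{q-Racah parametrization}) to products of the same $(1-(-q)^{a})$-type factors analysed in Lemma~\ref{lemma basic hypergeometric in limit}; dividing by $q-q^{-1}$ produces $0/0$ indeterminacies that are resolved by the identical L'H\^{o}pital technique, giving the functions $\widetilde{C}_{\nuu}$ recorded in Appendix~E. The scalar part of $\mathcal{L}_j^{\mathbf{x}}$ carries an apparent singularity $\tfrac{1}{1-q}$, but under (\ref{q-Racah parametrization}) one has $1-\tfrac{qb}{a_1}=1-q^{1+\beta-\alpha_1}=O(1-q)$, so this factor cancels the pole and the rescaled scalar part converges to $\tfrac{\alpha_{[2;j+1]}+\beta}{2}+(-1)^{x_{j+1}}(\alpha_1-\beta-1)(x_{j+1}+\tfrac{\alpha_{[1;j+1]}}{2})$; the convention $x_{s+1}=N$ is carried through unchanged. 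With the coefficients thus identified, I would prove (\ref{difference relation BI}) by taking $q\to1$ in $(q-q^{-1})^{-1}\mathcal{L}_j^{\mathbf{x}}\widehat{R_{\mathbf{n}}}=(q-q^{-1})^{-1}\mu_j\widehat{R_{\mathbf{n}}}$: the left side converges coefficientwise to $\widetilde{\mathcal{L}}_j^{\mathbf{x}}\widehat{B}_{\mathbf{n}}$ (using $\widehat{R_{\mathbf{n}}}\to\widehat{B}_{\mathbf{n}}$ and continuity of the finitely many shifts on the grid), and the right side to $\widetilde{\mu}_j\widehat{B}_{\mathbf{n}}$.

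Finally, commutativity transfers for free: from $[\mathcal{L}_i^{\mathbf{x}},\mathcal{L}_j^{\mathbf{x}}]=0$ valid for all $q\in(0,1)$ one gets $[(q-q^{-1})^{-1}\mathcal{L}_i^{\mathbf{x}},(q-q^{-1})^{-1}\mathcal{L}_j^{\mathbf{x}}]=0$, and letting $q\to1$ yields $[\widetilde{\mathcal{L}}_i^{\mathbf{x}},\widetilde{\mathcal{L}}_j^{\mathbf{x}}]=0$. The hardest step is the bookkeeping in the operator limit of Appendix~E: one must check that every rescaled coefficient, including the mixed-shift terms attached to $T_{+,x_1}^{\nu_1}\dots T_{+,x_j}^{\nu_j}$, has a finite limit and matches the tabulated $\widetilde{C}_{\nuu}$, which is routine but lengthy, exactly as in the passage leading to Lemma~\ref{lemma basic hypergeometric in limit}.
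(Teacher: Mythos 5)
Your proposal is correct and follows essentially the same route as the paper: the paper's proof likewise defines $\widetilde{\mathcal{L}}_j^{\mathbf{x}}$ as $\lim_{q\to 1}(q-q^{-1})^{-1}\mathcal{L}_j^{\mathbf{x}}$ under the parametrization (\ref{q-Racah parametrization}), combines this with $\widehat{B}_{\mathbf{n}} = \lim_{q\to 1}\widehat{R_{\mathbf{n}}}$ and $\widetilde{\mu}_j = \lim_{q\to 1}\mu_j/(q-q^{-1})$, and invokes Proposition \ref{prop Iliev x} and Lemma \ref{lemma -q-Racah limit}. Your additional observations (the $g_{\mathbf{n}}$ factor being $\mathbf{x}$-independent, the cancellation of the $\frac{1}{1-q}$ pole against $1-\frac{qb}{a_1}$, and the transfer of commutativity through the limit) correctly fill in details the paper leaves implicit.
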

\begin{proof}
	This is immediate from Proposition \ref{prop Iliev x}, Lemma \ref{lemma -q-Racah limit} and the fact that with the parametrization (\ref{q-Racah parametrization}) one finds 
	\begin{align*}
	\widetilde{\mathcal{L}}_j^{\mathbf{x}}\left(x_1,\dots,x_j;\alpha_1,\dots,\alpha_{j+1},\beta,x_{j+1}\right) & = \lim_{q\to 1}\left(\frac{\mathcal{L}_j^{\mathbf{x}}(x_1,\dots,x_{j};a_1,\dots,a_{j+1},b,x_{j+1}\vert-q)}{q-q^{-1}} \right),\\ \widehat{B}_{\mathbf{n}}(\mathbf{x};\boldsymbol{\alpha},\beta,N) & = \lim_{q\to 1}\left(\widehat{R_{\mathbf{n}}}(\mathbf{x};\mathbf{a},b,N\vert-q)\right), \\\widetilde{\mu}_j & = \lim_{q\to 1}\left(\frac{\mu_j}{q-q^{-1}}\right).
	\end{align*}
	Note that the restriction that all \(x_i\) be integer is necessary for the limits to converge.
\end{proof}

Observe that the equation (\ref{difference relation BI}) establishes in fact a \(3^j\)-term difference relation for the multivariate Bannai--Ito polynomials.
Similarly we can define a second commutative algebra of shift operators in the discrete variables \(n_i\) that diagonalize the polynomials \(\widehat{B}_{\mathbf{n}}(\mathbf{x};\boldsymbol{\alpha},\beta,N)\), as a \(q = 1\) analog of Proposition \ref{prop Iliev n}. This will lead to a \(3^j\)-term recurrence relation for the \(s\)-variate Bannai--Ito polynomials, for any \(j\in\{1,\dots,s\}\).

\begin{proposition}
	\label{prop Iliev n -1}
	Let \(x_1,\dots,x_{s},n_1,\dots,n_{s}\) be a set of integer variables. Let us define the operator 
	\begin{align*}
	& \widetilde{\mathfrak{L}}_j^{\mathbf{n}}\left(n_1,\dots,n_s;\alpha_1,\dots,\alpha_{s+1},\beta,N\right) \\
	= & \sum_{\nuu\in\{-1,0,1\}^j}\widetilde{D}_{\nuu}(n_1,\dots,n_s)T_{+,n_{s-j+1}}^{\nu_j}T_{+,n_{s-j+2}}^{\nu_{j-1}-\nu_j}\dots T_{+,n_s}^{\nu_1-\nu_2}\\&+\left(\frac{\alpha_{[1;s+1]}+\alpha_{[s-j+2;s+1]}}{2}+N-(-1)^{N+N_{s-j}}\left(N_{s-j}+\frac{\alpha_{[2;s-j+1]}+\beta}{2}\right)(\beta-\alpha_1+1) \right),
	\end{align*}
	where the \(\widetilde{D}_{\nuu}(n_1,\dots,n_s)\) are functions in the variables \(n_1,\dots,n_s\), explicit expressions of which can be found in Appendix E. Then the operators \[\widetilde{\mathfrak{L}}_j^{\mathbf{n}}\left(n_1,\dots,n_s;\alpha_1,\dots,\alpha_{s+1},\beta,N\right), \qquad j\in\{1,\dots,s\},\] are mutually commuting operators, with the renormalized Bannai--Ito polynomials in \(s\) variables as common eigenfunctions:
	\begin{equation}
	\widetilde{\mathfrak{L}}_j^{\mathbf{n}}\left(n_1,\dots,n_s;\alpha_1,\dots,\alpha_{s+1},\beta,N\right)\widehat{B}_{\mathbf{n}}(\mathbf{x};\boldsymbol{\alpha},\beta,N) = \widetilde{\kappa}_j\widehat{B}_{\mathbf{n}}(\mathbf{x};\boldsymbol{\alpha},\beta,N),
	\end{equation}
	with eigenvalue
	\[
	\widetilde{\kappa}_j = \left(\frac{\alpha_{[1;s+1]}+\alpha_{[s-j+2;s+1]}}{2}+N-(-1)^{N+x_{s-j+1}}\left(x_{s-j+1}+\frac{\alpha_{[1;s-j+1]}}{2}\right) \right).
	\]
\end{proposition}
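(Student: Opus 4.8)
The plan is to obtain this proposition, just like its companion Proposition \ref{prop Iliev x -1}, as a controlled limit $q\to 1$ of the $(-q)$-Racah statement in Proposition \ref{prop Iliev n}. Concretely, I would show that
\[
\widetilde{\mathfrak{L}}_j^{\mathbf{n}}\left(n_1,\dots,n_s;\alpha_1,\dots,\alpha_{s+1},\beta,N\right) = \lim_{q\to 1}\left(\frac{\mathfrak{L}_j^{\mathbf{n}}\left(n_1,\dots,n_s;a_1,\dots,a_{s+1},b,N\vert -q\right)}{q-q^{-1}}\right),
\]
with the parametrization (\ref{q-Racah parametrization}), that the common eigenfunctions pass to the renormalized Bannai--Ito polynomials (\ref{renormalized BI polynomials def}) by Lemma \ref{lemma -q-Racah limit}, and that the eigenvalue is $\widetilde{\kappa}_j = \lim_{q\to 1}\left(\kappa_j/(q-q^{-1})\right)$. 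Since the operators $\mathfrak{L}_j^{\mathbf{n}}$ mutually commute for every admissible $q$ by Proposition \ref{prop Iliev n}, their renormalized limits inherit commutativity termwise, so the only genuine work is the explicit evaluation of the three limits above.

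For the eigenvalue I would substitute the parametrization (\ref{q-Racah parametrization}) into the expression for $\kappa_j$ in Proposition \ref{prop Iliev n}. Under (\ref{q-Racah parametrization}) the products (\ref{notation Nk, Ak}) satisfy $A_k = -q^{\alpha_{[1;k]}}$, so that $\kappa_j$ rewrites as a sum of monomials in $q$ and $(-q)$ carrying the exponents $2N$, $N-x_{s-j+1}$ and $N+x_{s-j+1}$. As $q\to 1$ the constant term and the top-order term cancel, producing a $0/0$ indeterminacy against $q-q^{-1}$; applying L'H\^{o}pital's rule exactly as in the proof of Lemma \ref{lemma basic hypergeometric in limit}, the factors $(-q)^{N}$ and $(-q)^{x_{s-j+1}}$ contribute the parity signs $(-1)^{N}$ and $(-1)^{x_{s-j+1}}$ in the limit, and one recovers the stated $\widetilde{\kappa}_j$. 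The same mechanism, applied term by term to each rational coefficient $D_{\nuu}$ of $\mathfrak{L}_j^{\mathbf{n}}$, produces the functions $\widetilde{D}_{\nuu}(n_1,\dots,n_s)$ collected in Appendix E.

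The main obstacle, as in Proposition \ref{prop Iliev x -1}, is that the individual terms of $\mathfrak{L}_j^{\mathbf{n}}$ and of $\kappa_j$ separately diverge or vanish as $q\to 1$; only after dividing by $q-q^{-1}$ and carefully tracking the parities of the exponents $N$, $N_{s-j}$ and $x_{s-j+1}$ do the limits exist and assemble into the finite expressions above. This parity bookkeeping is precisely the content of the L'H\^{o}pital computation in Lemma \ref{lemma basic hypergeometric in limit}, and it is here that the hypothesis that all $n_i$, $x_i$ and $N$ be integers is indispensable, since it guarantees that each factor $(-q)^{\mathrm{integer}}$ converges to a definite sign rather than merely to $\pm 1$ ambiguously. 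Once the three limits are verified, the eigenvalue equation $\widetilde{\mathfrak{L}}_j^{\mathbf{n}}\widehat{B}_{\mathbf{n}} = \widetilde{\kappa}_j\widehat{B}_{\mathbf{n}}$ follows by passing to the limit in the corresponding equation of Proposition \ref{prop Iliev n}. Finally, since $\mathfrak{L}_j^{\mathbf{n}} = \bb(\mathcal{L}_j^{\mathbf{x}})$, I would note that one may alternatively deduce the result by checking that the duality transformation $\bb$ survives the limit and intertwines $\widetilde{\mathfrak{L}}_j^{\mathbf{n}}$ with the operators $\widetilde{\mathcal{L}}_j^{\mathbf{x}}$ of Proposition \ref{prop Iliev x -1}, giving an independent confirmation of the coefficient computation.
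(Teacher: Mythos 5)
Your proposal is correct and follows essentially the same route as the paper: the paper's proof likewise obtains the result as an immediate consequence of Proposition \ref{prop Iliev n} and Lemma \ref{lemma -q-Racah limit} by verifying the three limit identities $\widetilde{\mathfrak{L}}_j^{\mathbf{n}} = \lim_{q\to 1}\bigl(\mathfrak{L}_j^{\mathbf{n}}/(q-q^{-1})\bigr)$, $\widehat{B}_{\mathbf{n}} = \lim_{q\to 1}\widehat{R_{\mathbf{n}}}$ and $\widetilde{\kappa}_j = \lim_{q\to 1}\bigl(\kappa_j/(q-q^{-1})\bigr)$ under the parametrization (\ref{q-Racah parametrization}). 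Your additional remarks on the L'H\^{o}pital parity bookkeeping and the survival of the duality map $\bb$ are consistent elaborations rather than a different method.
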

\begin{proof}
	This is again immediate from Proposition \ref{prop Iliev n}, Lemma \ref{lemma -q-Racah limit} and the fact that with the parametrization (\ref{q-Racah parametrization}) one finds
	\begin{align*}
	\widetilde{\mathfrak{L}}_j^{\mathbf{n}}\left(n_1,\dots,n_s;\alpha_1,\dots,\alpha_{s+1},\beta,N\right) & = \lim_{q\to 1}\left(\frac{\mathfrak{L}_j^{\mathbf{n}}(n_1,\dots,n_s;a_1,\dots,a_{s+1},b,N\vert-q)}{q-q^{-1}} \right), \\ 
	\widehat{B}_{\mathbf{n}}(\mathbf{x};\boldsymbol{\alpha},\beta,N) & = \lim_{q\to 1}\left(\widehat{R_{\mathbf{n}}}(\mathbf{x};\mathbf{a},b,N\vert-q)\right),\\	
	\widetilde{\kappa}_j & = \lim_{q\to 1}\left(\frac{\kappa_j}{q-q^{-1}}\right).
	\end{align*}
\end{proof}

We will conclude with a discrete realization of the rank \(n-2\) Bannai--Ito algebra. Our representation space will be defined as follows.

\begin{definition}
	\label{representation spaces V1 def}
	We will denote by \(\mathcal{V}\) the infinite-dimensional vector space spanned by the renormalized multivariate Bannai--Ito polynomials
	\[
	\widehat{B}_{(\tilde{j}_1,\dots,\tilde{j}_{n-2})}\left((\tilde{s}_1,\tilde{s}_1+\tilde{s}_2,\dots,\vst{n-2}); \boldsymbol{\alpha}, \beta, \vst{n-1}\right), \quad \tilde{\mathbf{j}}\in\N^{n-1},
	\]
	considered as functions of \(\tilde{\mathbf{s}}\in\N^{n-1}\), where \(\tilde{j}_{n-1}\) is fixed by the constraint \(\vert\tilde{\mathbf{j}}_{n-1}\vert = \vst{n-1}\) and with
	\[
	\boldsymbol{\alpha} = (\alpha_1,\dots,\alpha_{n-1}), \quad \alpha_1 = 2\gamma_1+2\gamma_2-1, \quad \alpha_k = 2\gamma_{k+1}, k\in[2;n-1], \quad \beta = 2\gamma_2-1. 
	\]
\end{definition}
By Lemma \ref{lemma -q-Racah limit}, this is the \(q \to 1\) analog of the space \(\mathcal{V}_q\) from Definition \ref{representation spaces Vq def}. Finally, taking limits in Theorem \ref{th discrete realization}, we obtain a realization of the algebra \(\mathcal{A}_n\) on the space \(\mathcal{V}\) by means of difference operators.

\begin{theorem}
	\label{th discrete realization q = 1}
	Let \(\gamma_i>\frac12\), \(i\in[1;n]\), be a set of real parameters. Let us define
	\begin{equation}
	\label{Gamma starts with 1 q = 1}
	\Gamma_{[1;m+1]} = (-1)^{\vst{n-1}}\left(-\widetilde{\mathfrak{L}}_{n-m-1}^{\mathbf{n}}+\left(\vst{n-1}+\gamma_{[1;n]}+\gamma_{[m+2;n]}-\frac12\right)\right)
	\end{equation}
	for \(m\in[0;n-1]\) and
	\begin{equation}
	\label{Gamma starts with i q = 1}
	\Gamma_{[i+1;i+r]} = -\widetilde{\mathcal{L}}_{i,r-1}^{\mathbf{x}}+\left(\gamma_{[i+1;i+r]}-\frac12\right)
	\end{equation}
	for \(i\in[1;n-1],r\in[1;n-i]\), with the abbreviations \(\widetilde{\mathfrak{L}}_{n-m-1}^{\mathbf{n}}\) for
	\[
	\widetilde{\mathfrak{L}}_{n-m-1}^{\mathbf{n}}\left(\tilde{j}_1,\dots,\tilde{j}_{n-2};2\gamma_{[1;2]}-1,2\gamma_3,\dots,2\gamma_n,2\gamma_2-1,\vert\tilde{\mathbf{j}}_{n-1}\vert\right)
	\]
	and \(\widetilde{\mathcal{L}}_{i,r-1}^{\mathbf{x}}\) for
	\[
	\widetilde{\mathcal{L}}_{r-1}^{\mathbf{x}}\left(\tilde{s}_i,\dots,\vst{i+r-2}-\vst{i-1};2\vst{i-1}+2\gamma_{[1;i+1]}-1,2\gamma_{i+2},\dots,2\gamma_{i+r},2\gamma_{i+1}-1,\vst{i+r-1}-\vst{i-1}\right),
	\]
	with the convention that \(\widetilde{\mathcal{L}}_{i,0}^{\mathbf{x}} = \widetilde{\mathfrak{L}}_0^{\mathbf{n}} = 0\) and that
	\[
	\widetilde{\mathfrak{L}}_{n-1}^{\mathbf{n}} = \left(\vss{n-1}+\gamma_1+2\gamma_{[2;n]}-\frac12\right)-(-1)^{\vss{n-1}}\left(\gamma_1-\frac12\right).
	\]
	Then the algebra generated by the operators (\ref{Gamma starts with 1 q = 1}) and (\ref{Gamma starts with i q = 1}) forms a discrete realization of the rank \(n-2\) Bannai--Ito algebra \(\mathcal{A}_n\) on the module \(\mathcal{V}\).
\end{theorem}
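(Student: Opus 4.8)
The plan is to obtain Theorem~\ref{th discrete realization q = 1} as the $q \to 1$ degeneration of Theorem~\ref{th discrete realization}, rather than repeating the lifting argument from scratch. Three facts make this limit transparent. First, by Lemma~\ref{lemma -q-Racah limit}, under the parametrization (\ref{q-Racah parametrization}) the renormalized multivariate $(-q)$-Racah polynomials spanning $\mathcal{V}_q$ converge (up to the explicit factor $k_{\mathbf{n},\mathbf{x},\boldsymbol{\alpha},\beta,N}$) to the renormalized Bannai--Ito polynomials spanning $\mathcal{V}$, so the representation space of Definition~\ref{representation spaces Vq def} degenerates precisely to that of Definition~\ref{representation spaces V1 def}. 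Second, Propositions~\ref{prop Iliev x -1} and~\ref{prop Iliev n -1} record exactly the limits we need, namely $\mathcal{L}_j^{\mathbf{x}}/(q-q^{-1}) \to \widetilde{\mathcal{L}}_j^{\mathbf{x}}$ and $\mathfrak{L}_j^{\mathbf{n}}/(q-q^{-1}) \to \widetilde{\mathfrak{L}}_j^{\mathbf{n}}$, together with the eigenvalue limits $\mu_j/(q-q^{-1}) \to \widetilde{\mu}_j$ and $\kappa_j/(q-q^{-1}) \to \widetilde{\kappa}_j$. Third, the $q$-anticommutator degenerates to the ordinary anticommutator and $(q^{1/2}+q^{-1/2}) \to 2$, so the relations (\ref{q-anticommutation Gamma}) collapse to (\ref{BI algebra relations}).

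The first concrete step is to check that the generators (\ref{Gamma start from 1}) and (\ref{Gamma start from i}) converge coefficient-wise to (\ref{Gamma starts with 1 q = 1}) and (\ref{Gamma starts with i q = 1}). For the first family one uses $(-q)^{-\vss{n-1}}q^{-\gamma_{[1;n]}-\gamma_{[m+2;n]}+\frac12} \to (-1)^{\vss{n-1}}$ and $[\,\cdot\,]_q \to (\cdot)$; combined with Proposition~\ref{prop Iliev n -1} this yields $(-1)^{\vss{n-1}}\bigl(-\widetilde{\mathfrak{L}}_{n-m-1}^{\mathbf{n}}+(\dots)\bigr)$, matching (\ref{Gamma starts with 1 q = 1}) including signs, while the boundary value $\widetilde{\mathfrak{L}}_{n-1}^{\mathbf{n}}$ and the convention $\widetilde{\mathfrak{L}}_0^{\mathbf{n}}=0$ arise from the corresponding $q$-expressions. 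For the second family the prefactor $q^{-\gamma_{[i+1;i+r]}+\frac12} \to 1$, so $\Gamma_{[i+1;i+r]}^q \to -\widetilde{\mathcal{L}}_{i,r-1}^{\mathbf{x}}+(\gamma_{[i+1;i+r]}-\tfrac12)$, which is (\ref{Gamma starts with i q = 1}). Since each limiting operator diagonalizes the spanning set of $\mathcal{V}$ by Propositions~\ref{prop Iliev x -1} and~\ref{prop Iliev n -1}, the operators genuinely act on $\mathcal{V}$.

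It then remains to pass the algebra relations to the limit. For every $q \in (0,1)$ the operators of Theorem~\ref{th discrete realization} satisfy (\ref{q-anticommutation Gamma}) as identities between difference operators on the fixed lattice of $2n-2$ discrete variables. Each identity is a polynomial of degree at most two in the generators, with structure constants $q^{\pm 1/2}$ and $(q^{1/2}+q^{-1/2})$, and the generators are difference operators with finitely many shift terms whose coefficients are explicit functions of $q$. Because each generator already possesses a finite limit by the previous step, finite sums and products converge coefficient-wise, and letting $q \to 1$ in (\ref{q-anticommutation Gamma}) produces exactly (\ref{BI algebra relations}) for every admissible triple of consecutive sets. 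Finally, Corollary~\ref{cor Generating set}, whose proof carries over verbatim to $q=1$ with the extension process simplifying as noted after (\ref{BI algebra relations}), guarantees that these consecutive-set generators generate all of $\mathcal{A}_n$, completing the realization on $\mathcal{V}$.

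The main obstacle is the rigorous justification of this last step: one must verify that coefficient-wise convergence of the generators suffices to transport the quadratic relations (\ref{q-anticommutation Gamma}) to the limit, given that each $\Gamma_A^q$ individually carries a factor $1/(q-q^{-1})$ that is only rendered finite after absorption into $\mathcal{L}$ or $\mathfrak{L}$ via Propositions~\ref{prop Iliev x -1} and~\ref{prop Iliev n -1}. Concretely, one should confirm that no cancellation in (\ref{q-anticommutation Gamma}) forces the relation to depend on subleading terms of the generators in $(q-1)$; since every generator has a genuine finite limit and the relations are polynomial with convergent coefficients, evaluating both sides term by term on the converging polynomial basis and then sending $q \to 1$ closes the argument. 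An alternative that sidesteps these analytic subtleties is to mirror the proof of Theorem~\ref{th discrete realization} directly at $q = 1$: lift the action of each $\Gamma_A$ to the overlap coefficients $\langle\tilde{\mathbf{j}}\vert\tilde{\mathbf{s}}\rangle$ of Theorem~\ref{th - Overlap q = 1} using the eigenvalue equations of the $q=1$ bases and Propositions~\ref{prop Iliev x -1} and~\ref{prop Iliev n -1}, invoke the $q=1$ analog of Proposition~\ref{prop decomposition} to conclude that the lifted operators inherit (\ref{BI algebra relations}), and then conjugate by $k_{\mathbf{n},\mathbf{x},\boldsymbol{\alpha},\beta,N}$ to land on $\mathcal{V}$.
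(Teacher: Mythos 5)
Your proposal is correct and follows exactly the route the paper intends: the paper gives no separate proof of Theorem \ref{th discrete realization q = 1}, stating only that it is obtained by ``taking limits in Theorem \ref{th discrete realization}'', which is precisely the degeneration you carry out using Lemma \ref{lemma -q-Racah limit} for the module, Propositions \ref{prop Iliev x -1} and \ref{prop Iliev n -1} for the operators, and the collapse of the $q$-anticommutator relations (\ref{q-anticommutation Gamma}) to (\ref{BI algebra relations}). Your additional care about passing the quadratic relations through the limit, and the alternative of redoing the lifting argument directly at $q=1$, go beyond what the paper records but are consistent with it.
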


\section{Conclusions and outlook}
In this paper we have obtained the connection coefficients between the eigenbases of several abelian subalgebras of the higher rank \(q\)-Bannai--Ito algebra \(\mathcal{A}_n^q\). We have given an explicit expression of these coefficients in terms of multivariable \((-q)\)-Racah polynomials and shown how these generate a family of infinite-dimensional modules for \(\mathcal{A}_n^q\), thereby providing an algebraic underpinning for \cite{Iliev-2011}. The limit \(q\to 1\) led to similar results for the higher rank \(q = 1\) Bannai--Ito algebra, and suggested a natural extension of the Bannai--Ito polynomials to multiple variables.

In Definition \ref{multivariate Bannai-Ito def} the univariate Bannai--Ito polynomials were chosen to satisfy specific truncation conditions. Other choices of truncation (\ref{truncation 1})--(\ref{truncation 2}), as well as different parametrizations in (\ref{q-Racah parametrization}), should lead to other types of multivariate Bannai--Ito polynomials, such as the polynomials obtained in \cite{Lemay&Vinet-2018} for the bivariate case. It might be of interest to determine the influence of these choices on the orthogonality and bispectrality of the multivariate Bannai--Ito polynomials. Moreover, limiting processes such as established in \cite{Vinet&Zhedanov-2012} should lead to \(q = -1\) analogs of several other multivariate \(q\)-orthogonal polynomials, such as \(q\)-Hahn, \(q\)-Jacobi and many others, thereby providing multivariate extensions for the whole \(q = -1\) Askey scheme. 

In Theorem \ref{th - Overlap q = 1} we have obtained a realization of the higher rank \(q = 1\) Bannai--Ito algebra by operators acting on multivariate polynomials through discrete shifts in the variables. It should be possible to obtain a different realization, with both difference operators and reflections, as a multivariate extension of \cite{Tsujimoto&Vinet&Zhedanov-2012}, thereby complementing the bispectrality of the multivariate Bannai--Ito polynomials. The same principles should work for the \(q\)-deformed case.

Throughout this paper, we have restricted our attention to abelian subalgebras of the type (\ref{subalgebra Y_i,r}). An interesting but undoubtedly very technical problem would be the generalization of Theorem \ref{th - Overlap q-case} to other types of abelian subalgebras. In the terminology of \cite{DeBie&Genest&vandeVijver&Vinet-2016}, this would correspond to asking whether the recoupling graph is connected.

In \cite[Section 4]{Genest&Iliev&Vinet-2018} a method is suggested to construct superintegrable quantum Calogero-Gaudin models with \(\mathfrak{osp}_q(1\vert 2)\)-symmetry from the \(q\)-Bannai--Ito generators. This would establish a \(q\)-deformation of the superintegrable systems with reflections in \cite{DeBie&Genest&Lemay&Vinet-2017}. 

Finally, the results presented here strenghten the suggestion made in \cite{Genest&Vinet&Zhedanov-2016} of a deeper connection between quantum algebras and quantum superalgebras under the transformation \(q\to -q\), based on \cite{Tsujimoto&Vinet&Zhedanov-2011} and \cite{Zachos-1992}. 

We hope to report on many of these issues in the near future.

\section*{Acknowledgements}
This work was supported by the Research Foundation Flanders (FWO) under Grant EOS 30889451. We would like to thank Plamen Iliev for sharing with us his Maple-code that was used for the verification of \cite[Theorem 5.5]{Iliev-2011}. We are also grateful to the anonymous referee for useful comments.

\newpage
\section*{Appendix A: Weight function and normalization coefficient for the multivariate $(-q)$-Racah polynomials in Theorem \ref{th - Overlap q-case}}
The functions \(\rho^{(i,r)}(\mathbf{j}^{(1,1)})\) and \(h_{\mathbf{j}^{(i,r)}}\) in Theorem \ref{th - Overlap q-case} have the expression
\begin{align}
\label{gauge coefficient rho}
\begin{split}
& \rho^{(i,r)}(\mathbf{j}^{(1,1)}) \\
= & \ \frac{(-q, (-q)^{2\vj{1,1}{i-1}}q^{2\gamma_{[1;i+r-1]}};-q)_{\vj{1,1}{i+r-1}-\vj{1,1}{i-1}}}{(q^{2\gamma_{i+r}}, (-q)^{2\vj{1,1}{i-1}-1}q^{2\gamma_{[1;i+r]}};-q)_{\vj{1,1}{i+r-1}-\vj{1,1}{i-1}}}\frac{((-q)^{2\vj{1,1}{i-1}-1}q^{2\gamma_{[1;i+1]}},q^{2\gamma_{i+1}}; -q)_{j_i^{(1,1)}}}{(-q, (-q)^{2\vj{1,1}{i-1}}q^{2\gamma_{[1;i]}};-q)_{j_i^{(1,1)}}}
\\
\times & \prod_{k = 1}^{r-1}\left[\frac{(q^{2\gamma_{i+k+1}};-q)_{j_{i+k}^{(1,1)}}}{(-q;-q)_{j_{i+k}^{(1,1)}}}\frac{((-q)^{2\vj{1,1}{i-1}-1}q^{2\gamma_{[1;i+k+1]}};-q)_{\vj{1,1}{i+k-1}+\vj{1,1}{i+k}-2\vj{1,1}{i-1}}}{((-q)^{2\vj{1,1}{i-1}}q^{2\gamma_{[1;i+k]}};-q)_{\vj{1,1}{i+k-1}+\vj{1,1}{i+k}-2\vj{1,1}{i-1}}}\right] \\
\times & \prod_{k = 1}^{r-1}\left[\frac{1-(-q)^{2\vj{1,1}{i+k-1}-1}q^{2\gamma_{[1;i+k]}}}{1-(-q)^{2\vj{1,1}{i-1}-1}q^{2\gamma_{[1;i+k]}}}\left(q^{2\gamma_{i+k}}\right)^{-\vj{1,1}{i+k-1}+\vj{1,1}{i-1}}\right]
\end{split}
\end{align}
and
\begin{align*}
& h_{\mathbf{j}^{(i,r)}} \\
= & \ \frac{((-q)^{2\vj{1,1}{i-1}}q^{2\gamma_{[1;i+r-1]}}, q^{2\gamma_{[i+1;i+r]}}; -q)_{\vj{1,1}{i+r-1}-\vj{1,1}{i-1}}}{(q^{2\gamma_{i+r}},(-q)^{2\vj{1,1}{i-1}}q^{2\gamma_{[1;i]}}; -q)_{\vj{1,1}{i+r-1}-\vj{1,1}{i-1}}}\left(q^{-2\gamma_{[i+1;i+r-1]}}\right)^{\vj{1,1}{i+r-1}-\vj{1,1}{i-1}} \\
\times & \left((-q)^{\vj{1,1}{i+r-1}-\vj{1,1}{i-1}}q^{2\gamma_{[i+1;i+r]}}, (-q)^{\vj{1,1}{i+r-1}+\vj{1,1}{i-1}-1}q^{2\gamma_{[1;i+r]}}; -q \right)_{\vj{i,r}{i+r-2}-\vj{i,r}{i-1}} \\ \times &\left((-q)^{1-\vj{1,1}{i+r-1}-\vj{1,1}{i-1}}q^{-2\gamma_{[1;i]}}, (-q)^{-\vj{1,1}{i+r-1}+\vj{1,1}{i-1}}; -q \right)_{\vj{i,r}{i+r-2}-\vj{i,r}{i-1}} \\
\times & \prod_{k = 1}^{r-1}\left[(-q, q^{2\gamma_{i+k+1}}; -q)_{j_{i+k-1}^{(i,r)}}\frac{(q^{2\gamma_{[i+1;i+k]}};-q)_{\vj{i,r}{i+k-2}+\vj{i,r}{i+k-1}-2\vj{i,r}{i-1}}}{(-q^{2\gamma_{[i+1;i+k+1]}-1};-q)_{\vj{i,r}{i+k-2}+\vj{i,r}{i+k-1}-2\vj{i,r}{i-1}}}\right.\\&\left.\phantom{\prod_{k = 1}^{j-2}}\times\frac{1+q^{2\gamma_{[i+1;i+k+1]}-1}}{1+q^{2\gamma_{[i+1;i+k+1]}-1-2\vj{1,1}{i-1}+2\vj{i,r}{i+k-1}}}\right].
\end{align*}

\newpage
\section*{Appendix B: Explicit expressions for the funtions $C_{\boldsymbol{\nu}}(\mathbf{x})$ in Proposition \ref{prop Iliev x} and $D_{\boldsymbol{\nu}}(\mathbf{n})$ in Proposition \ref{prop Iliev n}}
The following expressions for the functions \(C_{\nuu}(x_1,\dots,x_{j+1})\) can be obtained from \cite[(4.2)--(4.4)]{Iliev-2011}, upon using the reparametrization (\ref{Reparametrization AW to q-Racah}). For \(\nuu\in \{-1,0,1\}^j\), let
\[
\nuu^{\pm} = (\nu^{\pm}_1,\dots,\nu^{\pm}_{j})\in\{0,1\}^j, \quad \nu_j^+ = \max(\nu_j, 0), \quad \nu_j^- = -\min(\nu_j, 0).
\]
Let also \(\vert\nuu^{\pm}\vert = \sum_{k = 1}^j\nu_k^{\pm}\). Then we have
\[
C_{\nuu}(x_1,\dots,x_{j+1}) = \left(q(q-1)\right)^{j-\vert\nuu^{+}\vert-\vert\nuu^-\vert}\frac{\prod_{\ell = 0}^{j}B_{\ell}^{(\nu_{\ell},\nu_{\ell+1})}(\mathbf{x})}{\prod_{\ell = 1}^jb_{\ell}^{\nu_{\ell}}(\mathbf{x})},
\]
with the convention that \(\nu_0 = \nu_{j+1} = 0\) and where
\begin{align*}
B_0^{(0,0)}(\mathbf{x}) = &\, 1 + b + \frac{1}{q-1}\left(1-\frac{qb}{a_1}\right)\left(a_1(-q)^{x_1}+(-q)^{-x_1}\right), \\
B_0^{(0,1)}(\mathbf{x}) = & \left(1-a_1(-q)^{x_1}\right)\left(1-b(-q)^{x_1+1}\right), \\
B_0^{(0,-1)}(\mathbf{x}) = & \left(1-(-q)^{-x_1}\right)\left(1-\frac{b}{a_1}(-q)^{-x_1+1}\right),
\end{align*}
and where for \(\ell \in \{1,\dots,j\}\) we have
\begin{align*}
B_{\ell}^{(0,0)}(\mathbf{x}) & = \frac{1}{q-1}\frac{(-q)^{-x_{\ell}-x_{\ell+1}}}{A_{\ell}}\left(1+(-q)^{2x_{\ell}}A_{\ell}\right)\left(1+(-q)^{2x_{\ell+1}}A_{\ell+1}\right)+ 1-\frac{a_{\ell+1}}{q}, \\
B_{\ell}^{(0,1)}(\mathbf{x}) & = \left(1-A_{\ell+1}(-q)^{x_{\ell}+x_{\ell+1}}\right)\left(1-a_{\ell+1}(-q)^{x_{\ell+1}-x_{\ell}}\right), \\
B_{\ell}^{(1,0)}(\mathbf{x}) & = \left(1-A_{\ell+1}(-q)^{x_{\ell}+x_{\ell+1}}\right)\left(1-(-q)^{x_{\ell}-x_{\ell+1}}\right), \\
B_{\ell}^{(1,1)}(\mathbf{x}) & = \left(1-A_{\ell+1}(-q)^{x_{\ell}+x_{\ell+1}}\right)\left(1-A_{\ell+1}(-q)^{x_{\ell}+x_{\ell+1}+1}\right), \\
B_{\ell}^{(-1,0)}(\mathbf{x}) & = \left(1-a_{\ell+1}(-q)^{-x_{\ell}+x_{\ell+1}}\right)\left(1-\frac{(-q)^{-x_{\ell}-x_{\ell+1}}}{A_{\ell}}\right), \\
B_{\ell}^{(-1,1)}(\mathbf{x}) & = \left(1-a_{\ell+1}(-q)^{-x_{\ell}+x_{\ell+1}}\right)\left(1-a_{\ell+1}(-q)^{-x_{\ell}+x_{\ell+1}+1}\right), \\
B_{\ell}^{(0,-1)}(\mathbf{x}) & = \left(1-(-q)^{x_{\ell}-x_{\ell+1}}\right)\left(1-\frac{(-q)^{-x_{\ell}-x_{\ell+1}}}{A_{\ell}}\right), \\
B_{\ell}^{(1,-1)}(\mathbf{x}) & = \left(1-(-q)^{x_{\ell}-x_{\ell+1}}\right)\left(1-(-q)^{x_{\ell}-x_{\ell+1}+1}\right), \\
B_{\ell}^{(-1,-1)}(\mathbf{x}) & = \left(1-\frac{(-q)^{-x_{\ell}-x_{\ell+1}}}{A_{\ell}}\right)\left(1-\frac{(-q)^{-x_{\ell}-x_{\ell+1}+1}}{A_{\ell}}\right),
\end{align*}
and
\begin{align*}
b_{\ell}^0(\mathbf{x}) &= \left(1-A_{\ell}(-q)^{2x_{\ell}+1}\right)\left(1-\frac{(-q)^{-2x_{\ell}+1}}{A_{\ell}}\right), \\
b_{\ell}^1(\mathbf{x}) & = \left(1-A_{\ell}(-q)^{2x_{\ell}}\right)\left(1-A_{\ell}(-q)^{2x_{\ell}+1} \right), \\
b_{\ell}^{-1}(\mathbf{x}) & = \left(1-\frac{(-q)^{-2x_{\ell}}}{A_{\ell}}\right)\left(1-\frac{(-q)^{-2x_{\ell}+1}}{A_{\ell}}\right).
\end{align*}

Similarly, the functions \(D_{\nuu}(n_1,\dots,n_s)\) are given by
\[
D_{\nuu}(n_1,\dots,n_s) = \left(q(q-1)\right)^{j-\vert\nuu^{+}\vert-\vert\nuu^-\vert}\frac{\prod_{\ell = 0}^{j}E_{\ell}^{(\nu_{\ell},\nu_{\ell+1})}(\mathbf{n})}{\prod_{\ell = 1}^je_{\ell}^{\nu_{\ell}}(\mathbf{n})},
\]
with the convention that \(\nu_0 = \nu_{j+1} = 0\) and where
\begin{align*}
E_0^{(0,0)}(\mathbf{n}) = &\, 1 + A_{s+1}(-q)^{2N} + \frac{1}{q-1}\left(1-\frac{qb}{a_1}\right)\left(\frac{a_1}{b}(-q)^{N-N_s}+A_{s+1}(-q)^{N+N_s}\right), \\
E_0^{(0,1)}(\mathbf{n}) = & \left(1-A_{s+1}(-q)^{N_s+N}\right)\left(1-\frac{b}{a_1}A_{s+1}(-q)^{N_s+N+1}\right), \\
E_0^{(0,-1)}(\mathbf{n}) = & \left(1-\frac{a_1}{b}(-q)^{N-N_s}\right)\left(1-(-q)^{N-N_s+1}\right),
\end{align*}
and where for \(\ell \in \{1,\dots,j\}\) we have
\begin{align*}
E_{\ell}^{(0,0)}(\mathbf{n}) & = 1-\frac{a_{s-\ell+2}}{q}+\frac{1}{q-1}\left((-q)^{-N_{s-\ell+1}}+\frac{bA_{s-\ell+2}}{a_1}(-q)^{N_{s-\ell+1}} \right)\\&\times\left(\frac{a_1}{bA_{s-\ell+1}}(-q)^{-N_{s-\ell}}+(-q)^{N_{s-\ell}}\right), \\
E_{\ell}^{(0,1)}(\mathbf{n}) & = \left(1-\frac{b}{a_1}A_{s-\ell+2}(-q)^{2N_{s-\ell}+n_{s-\ell+1}}\right)\left(1-(-q)^{-n_{s-\ell+1}}\right), \\
E_{\ell}^{(1,0)}(\mathbf{n}) & = \left(1-\frac{b}{a_1}A_{s-\ell+2}(-q)^{2N_{s-\ell}+n_{s-\ell+1}}\right)\left(1-a_{s-\ell+2}(-q)^{n_{s-\ell+1}}\right), \\
E_{\ell}^{(1,1)}(\mathbf{n}) & =  \left(1-\frac{b}{a_1}A_{s-\ell+2}(-q)^{2N_{s-\ell}+n_{s-\ell+1}}\right) \left(1-\frac{b}{a_1}A_{s-\ell+2}(-q)^{2N_{s-\ell}+n_{s-\ell+1}+1}\right), \\
E_{\ell}^{(-1,0)}(\mathbf{n}) & = \left(1-(-q)^{-n_{s-\ell+1}}\right)\left(1-\frac{a_1(-q)^{-2N_{s-\ell}-n_{s-\ell+1}}}{bA_{s-\ell+1}} \right), \\
E_{\ell}^{(-1,1)}(\mathbf{n}) & = \left(1-(-q)^{-n_{s-\ell+1}}\right)\left(1-(-q)^{-n_{s-\ell+1}+1}\right), \\
E_{\ell}^{(0,-1)}(\mathbf{n}) & = \left(1-a_{s-\ell+2}(-q)^{n_{s-\ell+1}}\right) \left(1-\frac{a_1(-q)^{-2N_{s-\ell}-n_{s-\ell+1}}}{bA_{s-\ell+1}} \right), \\
E_{\ell}^{(1,-1)}(\mathbf{n}) & = \left(1-a_{s-\ell+2}(-q)^{n_{s-\ell+1}}\right)\left(1-a_{s-\ell+2}(-q)^{n_{s-\ell+1}+1}\right), \\
E_{\ell}^{(-1,-1)}(\mathbf{n}) & = \left(1-\frac{a_1(-q)^{-2N_{s-\ell}-n_{s-\ell+1}}}{bA_{s-\ell+1}} \right)\left(1-\frac{a_1(-q)^{-2N_{s-\ell}-n_{s-\ell+1}+1}}{bA_{s-\ell+1}} \right),
\end{align*}
and
\begin{align*}
e_{\ell}^0(\mathbf{n}) &= \left(1-\frac{b}{a_1}A_{s-\ell+2}(-q)^{2N_{s-\ell+1}+1}\right)\left(1-\frac{a_1(-q)^{-2N_{s-\ell+1}+1}}{bA_{s-\ell+2}} \right), \\
e_{\ell}^1(\mathbf{n}) & = \left(1-\frac{b}{a_1}A_{s-\ell+2}(-q)^{2N_{s-\ell+1}}\right)\left(1-\frac{b}{a_1}A_{s-\ell+2}(-q)^{2N_{s-\ell+1}+1}\right), \\
e_{\ell}^{-1}(\mathbf{n}) & = \left(1-\frac{a_1(-q)^{-2N_{s-\ell+1}}}{bA_{s-\ell+2}} \right)\left(1-\frac{a_1(-q)^{-2N_{s-\ell+1}+1}}{bA_{s-\ell+2}} \right).
\end{align*}

\newpage

\section*{Appendix C: Gauge coefficients in the proof of Theorem \ref{th discrete realization}}

Let us denote by \(A_{i,r}\) the set \([1;i-2]\cup[i+r-1;n-2]\). Starting from the definition (\ref{gauge coefficient rho}), a long but straightforward calculation shows
\begin{align*}
& \frac{\rho^{(1,n-1)}(\mathbf{s})}{\rho^{(i,r)}(\mathbf{s})} \\
= &\  \frac{\left(-q,q^{2\gamma_{[1;n-1]}};-q\right)_{\vss{n-1}}\left(q^{2\gamma_{i+r}},(-q)^{2\vss{i-1}-1}q^{2\gamma_{[1;i+r]}};-q\right)_{\vss{i+r-1}-\vss{i-1}}}{\left(-q,(-q)^{2\vss{i-1}}q^{2\gamma_{[1;i+r-1]}};-q\right)_{\vss{i+r-1}-\vss{i-1}}\left(q^{2\gamma_n},-q^{2\gamma_{[1;n]}-1};-q\right)_{\vss{n-1}}}\\\times&\frac{\left(-q^{2\gamma_{[1;2]}-1},q^{2\gamma_2};-q\right)_{s_1}}{(-q,q^{2\gamma_1};-q)_{s_1}}\frac{\left(-q^{2\gamma_{[1;i+1]}-1};-q\right)_{2\vss{i-1}}}{\left(q^{2\gamma_{[1;i]}};-q\right)_{2\vss{i-1}}\left(q^{2\gamma_{[i;i+r-1]}}\right)^{\vss{i-1}}}
\\\times&\prod_{k\in A_{i,r}}\frac{\left(q^{2\gamma_{k+2}};-q\right)_{s_{k+1}}}{(-q;-q)_{s_{k+1}}}\frac{\left(-q^{2\gamma_{[1;k+2]}-1};-q\right)_{\vss{k}+\vss{k+1}}}{\left(q^{2\gamma_{[1;k+1]}};-q\right)_{\vss{k}+\vss{k+1}}}\frac{\left(1+q^{2\gamma_{[1;k+1]}+2\vss{k}-1}\right)}{\left(q^{2\gamma_{k+1}}\right)^{\vss{k}}}\\\times&\frac{\prod_{k = 0}^{r-1}\left(1+q^{2\gamma_{[1;i+k]}+2\vss{i-1}-1}\right)}{\prod_{k = 1}^{n-2}\left(1+q^{2\gamma_{[1;k+1]}-1}\right)}\prod_{k = 1}^{r-1}\frac{\left(-q^{2\gamma_{[1;i+k+1]}-1};-q\right)_{2\vss{i-1}}}{\left(q^{2\gamma_{[1;i+k]}};-q\right)_{2\vss{i-1}}},
\end{align*}
for \(i\in \{2,\dots,n-1\}\), whereas for \(i = 1\) one finds
\begin{align*}
& \frac{\rho^{(1,n-1)}(\mathbf{s})}{\rho^{(1,r)}(\mathbf{s})} \\
= &\ \frac{\left(-q,q^{2\gamma_{[1;n-1]}};-q\right)_{\vss{n-1}}}{\left(-q,q^{2\gamma_{[1;r]}};-q\right)_{\vss{r}}}\frac{\left(q^{2\gamma_{r+1}},-q^{2\gamma_{[1;r+1]}-1};-q\right)_{\vss{r}}}{\left(q^{2\gamma_n},-q^{2\gamma_{[1;n]}-1};-q\right)_{\vss{n-1}}}\prod_{k= r}^{n-2}\left(q^{2\gamma_{k+1}}\right)^{-\vss{k}}
\\\times&\prod_{k= r}^{n-2}\frac{\left(q^{2\gamma_{k+2}};-q\right)_{s_{k+1}}}{(-q;-q)_{s_{k+1}}}\frac{\left(-q^{2\gamma_{[1;k+2]}-1};-q\right)_{\vss{k}+\vss{k+1}}}{\left(q^{2\gamma_{[1;k+1]}};-q\right)_{\vss{k}+\vss{k+1}}}\frac{1+q^{2\gamma_{[1;k+1]}+2\vss{k}-1}}{1+q^{2\gamma_{[1;k+1]}-1}}.
\end{align*} \newpage

\section*{Appendix D: Weight function and normalization coefficient for the multivariate Bannai--Ito polynomials in Proposition \ref{prop BI orthogonality multivariate}}
	The functions \(w_{\ell_i-N_{i-1}}^{(i)}\) in Proposition \ref{prop BI orthogonality multivariate} vanish if \(\ell_i < N_{i-1}\), whereas if \(\ell_i\geq N_{i-1}\), they have the expression 
	\begin{align*}
	& w_{\ell_i-N_{i-1}}^{(i)}\left(\ell_i,\ell_{i+1}, N_{i-1}, \alpha_1,\dots, \alpha_{i+1}, \beta \right) \\
	= & \, (-1)^{\ell_i-N_{i-1}}\frac{\left(\frac{N_{i-1}}{2}+\frac{\alpha_{[1;i+1]}}{4}+\frac12+\frac{(-1)^{\ell_{i+1}+N_{i-1}}}{2}\left(\ell_{i+1}+\frac{\alpha_{[1;i+1]}}{2}\right)\right)_{\left(\ell_i-N_{i-1}\right)_e}}{\left(\ell_i-N_{i-1}\right)_e!\left(\frac{\alpha_1-\beta}{2}\right)_{\left(\ell_i-N_{i-1}\right)_e+\left(\ell_i-N_{i-1}\right)_{p}}} \\
	\times & \frac{\left(\frac{N_{i-1}}{2}+\frac{\alpha_{[1;i+1]}}{4}-\frac{(-1)^{\ell_{i+1}+N_{i-1}}}{2}\left(\ell_{i+1}+\frac{\alpha_{[1;i+1]}}{2}\right)\right)_{\left(\ell_{i}-N_{i-1}\right)_e+\left(\ell_{i}-N_{i-1}\right)_p}}{\left(\frac{N_{i-1}}{2}+\frac{\alpha_{[1;i]}-\alpha_{i+1}}{4}+\frac12+\frac {(-1)^{\ell_{i+1}+N_{i-1}}}{2}\left(\ell_{i+1}+\frac{\alpha_{[1;i+1]}}{2}\right)\right)_{\left(\ell_i-N_{i-1}\right)_e+\left(\ell_i-N_{i-1}\right)_p}} \\
	\times & \frac{\left(N_{i-1}+\frac{\alpha_{[2;i]}+\beta+1}{2} \right)_{\left(\ell_i-N_{i-1}\right)_e+\left(\ell_i-N_{i-1}\right)_{p}}\left(N_{i-1}+\frac{\alpha_{[1;i]}+1}{2}\right)_{\left(\ell_i-N_{i-1}\right)_e}}{\left(\frac{N_{i-1}}{2}+\frac{\alpha_{[1;i]}-\alpha_{i+1}}{4}+1+\frac{(-1)^{\ell_{i+1}+N_{i-1}}}{2}\left(\ell_{i+1}+\frac{\alpha_{[1;i+1]}}{2}\right)\right)_{\left(\ell_i-N_{i-1}\right)_e}}.
	\end{align*}
	The functions \(h_{n_i}^{(i)}\) are given by
	\begin{align*}
	& h_{n_i}^{(i)}\left(\ell_{i+1}, N_{i-1},n_i, \alpha_1,\dots,\alpha_{i+1}, \beta\right) \\
	= &\, \frac{\left(\ell_{i+1}-N_{i-1}\right)_e!\left(n_i\right)_e!\left(N_{i-1}+\frac{\alpha_{[1;i]}+1}{2}\right)_{\left(\ell_{i+1}-N_{i-1}\right)_e+\left(\ell_{i+1}-N_{i-1}\right)_p}}{\left(\left(\ell_{i+1}-N_{i-1}\right)_e-\left(n_i\right)_e-(n_i)_p(1-\left(\ell_{i+1}-N_{i-1}\right)_p)\right)!} \\
	\times & \frac{\left(N_{i-1}+\left(n_i\right)_e+\frac{\alpha_{[2;i+1]}+\beta+1}{2}\right)_{\left(\ell_{i+1}-N_{i-1}\right)_e+\left(\ell_{i+1}-N_{i-1}\right)_p-\left(n_i\right)_e}}{\left(\left(n_i\right)_e+\left(n_i\right)_p+\frac{\alpha_{i+1}}{2}\right)_{\left(\ell_{i+1}-N_{i-1}\right)_e+\left(\ell_{i+1}-N_{i-1}\right)_p-\left(n_i\right)_e-\left(n_i\right)_p}} \\
	\times &\, \frac{\left(N_{i-1}+\frac{\alpha_{[2;i]}+\beta+1}{2}\right)_{\left(n_i\right)_e+\left(n_i\right)_p}}{\left(\left(N_{i-1}+\left(n_i\right)_e+\frac{\alpha_{[2;i+1]}+\beta+1}{2}\right)_{\left(n_i\right)_e+\left(n_i\right)_p}\right)^2} \\
	\times &\, \frac{\left(\frac{1}{2}\left(N_{i-1}+\ell_{i+1}+\alpha_{[2;i+1]}+\beta+1+(\ell_{i+1}-N_{i-1})_p(\alpha_1-\beta-1)\right) \right)_{\left(n_i\right)_e+(n_i)_p}}{\left(\frac{\alpha_1-\beta}{2}\right)_{\left(\ell_{i+1}-N_{i-1}\right)_e-(n_i)_e+\left(\ell_{i+1}-N_{i-1}\right)_p(1-\left(n_i\right)_p)}}\\
	\times & \left(\frac{1}{2}\left(N_{i-1}+\ell_{i+1}+\alpha_{[1;i+1]}+1-(\ell_{i+1}-N_{i-1})_p(\alpha_1-\beta-1)\right) \right)_{\left(n_i\right)_e}.
	\end{align*}
	\newpage
	
\section*{Appendix E: Explicit expressions for the functions $\widetilde{C}_{\nuu}(\mathbf{x})$ in Proposition \ref{prop Iliev x -1} and $\widetilde{D}_{\nuu}(\mathbf{n})$ in Proposition \ref{prop Iliev n -1}}

For \(\nuu\in\{-1,0,1\}^j\) we have
\[
\widetilde{C}_{\nuu}(x_1,\dots,x_{j+1}) = \frac{\prod_{\ell = 0}^{j}\widetilde{B}_{\ell}^{(\nu_{\ell},\nu_{\ell+1})}(\mathbf{x})}{\prod_{\ell = 1}^j\tilde{b}_{\ell}^{\nu_{\ell}}(\mathbf{x})},
\]
with the convention that \(\nu_0 = \nu_{j+1} = 0\) and where
\begin{align*}
\widetilde{B}_0^{(0,0)}(\mathbf{x}) = &\, \frac12\left(-\beta+(-1)^{x_1}(2x_1+\alpha_1)(\beta-\alpha_1+1)\right), \\
\widetilde{B}_0^{(0,1)}(\mathbf{x}) = &\, -\frac12\left((2x_1+\alpha_1+\beta+1)+(-1)^{x_1}(\beta-\alpha_1+1) \right), \\
\widetilde{B}_0^{(0,-1)}(\mathbf{x}) = &\, \frac12\left((2x_1+\alpha_1-\beta-1)+(-1)^{x_1}(\beta-\alpha_1+1)\right),
\end{align*}
and where for \(\ell \in \{1,\dots,j\}\) we have
\begin{align*}
\widetilde{B}_{\ell}^{(0,0)}(\mathbf{x}) & = \, (1-\alpha_{\ell+1})-(-1)^{x_{\ell}+x_{\ell+1}}(2x_{\ell}+\alpha_{[1;\ell]})(2x_{\ell+1}+\alpha_{[1;\ell+1]}), \\
\widetilde{B}_{\ell}^{(0,1)}(\mathbf{x}) & = \, -(2x_{\ell+1}+\alpha_{[1;\ell+1]}+\alpha_{\ell+1})+(-1)^{x_{\ell}+x_{\ell+1}}(2x_{\ell}+\alpha_{[1;\ell]}), \\
\widetilde{B}_{\ell}^{(1,0)}(\mathbf{x}) & = \, -(2x_{\ell}+\alpha_{[1;\ell+1]})+(-1)^{x_{\ell}+x_{\ell+1}}(2x_{\ell+1}+\alpha_{[1;\ell+1]}), \\
\widetilde{B}_{\ell}^{(1,1)}(\mathbf{x}) & = \, -(2x_{\ell}+2x_{\ell+1}+2\alpha_{[1;\ell+1]}+1)+(-1)^{x_{\ell}+x_{\ell+1}+1}, \\
\widetilde{B}_{\ell}^{(-1,0)}(\mathbf{x}) & = \, (2x_{\ell}+\alpha_{[1;\ell]}-\alpha_{\ell+1})-(-1)^{x_{\ell}+x_{\ell+1}}(2x_{\ell+1}+\alpha_{[1;\ell+1]}), \\
\widetilde{B}_{\ell}^{(-1,1)}(\mathbf{x}) & = \, (2x_{\ell}-2x_{\ell+1}-2\alpha_{\ell+1}-1)+(-1)^{x_{\ell}+x_{\ell+1}}, \\
\widetilde{B}_{\ell}^{(0,-1)}(\mathbf{x}) & = \, (2x_{\ell+1}+\alpha_{[1;\ell]})-(-1)^{x_{\ell}+x_{\ell+1}}(2x_{\ell}+\alpha_{[1;\ell]}), \\
\widetilde{B}_{\ell}^{(1,-1)}(\mathbf{x}) & = \, -(2x_{\ell}-2x_{\ell+1}+1)+(-1)^{x_{\ell}+x_{\ell+1}}, \\
\widetilde{B}_{\ell}^{(-1,-1)}(\mathbf{x}) & = \, (2x_{\ell}+2x_{\ell+1}+2\alpha_{[1;\ell]}-1)-(-1)^{x_{\ell}+x_{\ell+1}},
\end{align*}
and
\begin{align*}
\tilde{b}_{\ell}^0(\mathbf{x}) &= -(2x_{\ell}+\alpha_{[1;\ell]}+1)(2x_{\ell}+\alpha_{[1;\ell]}-1), \\
\tilde{b}_{\ell}^1(\mathbf{x}) & = -2(2x_{\ell}+\alpha_{[1;\ell]}+1), \\
\tilde{b}_{\ell}^{-1}(\mathbf{x}) & = 2(2x_{\ell}+\alpha_{[1;\ell]}-1).
\end{align*}

Similarly, the functions \(\widetilde{D}_{\nuu}(n_1,\dots,n_s)\), with \(\nuu\in\{-1,0,1\}^j\), are given by
\[
\widetilde{D}_{\nuu}(n_1,\dots,n_s) = \frac{\prod_{\ell = 0}^{j}\widetilde{E}_{\ell}^{(\nu_{\ell},\nu_{\ell+1})}(\mathbf{n})}{\prod_{\ell = 1}^j\tilde{e}_{\ell}^{\nu_{\ell}}(\mathbf{n})}
\]
with the convention that \(\nu_0 = \nu_{j+1} = 0\) and where
\begin{align*}
\widetilde{E}_0^{(0,0)}(\mathbf{n}) = &\, -\frac12\left((2N+\alpha_{[1;s+1]})+(-1)^{N+N_s}(2N_s+\alpha_{[2;s+1]}+\beta)(\alpha_1-\beta-1)\right), \\
\widetilde{E}_0^{(0,1)}(\mathbf{n}) = &\, -\frac12\left((2N+2N_s+2\alpha_{[2;s+1]}+\alpha_1+\beta+1)+(-1)^{N-N_s}(\beta-\alpha_1+1) \right), \\
\widetilde{E}_0^{(0,-1)}(\mathbf{n}) = &\, -\frac12\left((2N-2N_s+\alpha_1-\beta+1)-(-1)^{N-N_s}(\beta-\alpha_1+1)\right),
\end{align*}
and where for \(\ell \in \{1,\dots,j\}\) we have
\begin{align*}
\widetilde{E}_{\ell}^{(0,0)}(\mathbf{n}) & = \, (1-\alpha_{s-\ell+2})-(-1)^{n_{s-\ell+1}}(2N_{s-\ell+1}+\alpha_{[2;s-\ell+2]}+\beta)(2N_{s-\ell}+\alpha_{[2;s-\ell+1]}+\beta), \\
\widetilde{E}_{\ell}^{(0,1)}(\mathbf{n}) & = \, -(2N_{s-\ell}+\alpha_{[2;s-\ell+2]}+\beta)+(-1)^{n_{s-\ell+1}}(2N_{s-\ell+1}+\alpha_{[2;s-\ell+2]}+\beta), \\
\widetilde{E}_{\ell}^{(1,0)}(\mathbf{n}) & = \, -(2N_{s-\ell+1}+\alpha_{[2;s-\ell+2]}+\alpha_{s-\ell+2}+\beta)+(-1)^{n_{s-\ell+1}}(2N_{s-\ell}+\alpha_{[2;s-\ell+1]}+\beta), \\
\widetilde{E}_{\ell}^{(1,1)}(\mathbf{n}) & = \, -(4N_{s-\ell}+2n_{s-\ell+1}+2\alpha_{[2;s-\ell+2]}+2\beta+1)-(-1)^{n_{s-\ell+1}}, \\
\widetilde{E}_{\ell}^{(-1,0)}(\mathbf{n}) & = \, (2N_{s-\ell+1}+\alpha_{[2;s-\ell+1]}+\beta)-(-1)^{n_{s-\ell+1}}(2N_{s-\ell}+\alpha_{[2;s-\ell+1]}+\beta), \\
\widetilde{E}_{\ell}^{(-1,1)}(\mathbf{n}) & = \, 2n_{s-\ell+1}-1+(-1)^{n_{s-\ell+1}}, \\
\widetilde{E}_{\ell}^{(0,-1)}(\mathbf{n}) & = \, (2N_{s-\ell}+\alpha_{[2;s-\ell+1]}-\alpha_{s-\ell+2}+\beta)-(-1)^{n_{s-\ell+1}}(2N_{s-\ell+1}+\alpha_{[2;s-\ell+2]}+\beta), \\
\widetilde{E}_{\ell}^{(1,-1)}(\mathbf{n}) & = \, -(2n_{s-\ell+1}+2\alpha_{s-\ell+2}+1)+(-1)^{n_{s-\ell+1}}, \\
\widetilde{E}_{\ell}^{(-1,-1)}(\mathbf{n}) & = \, (4N_{s-\ell}+2n_{s-\ell+1}+2\alpha_{[2;s-\ell+1]}+2\beta-1)-(-1)^{n_{s-\ell+1}},
\end{align*}
and
\begin{align*}
\tilde{e}_{\ell}^0(\mathbf{n}) &= -(2N_{s-\ell+1}+\alpha_{[2;s-\ell+2]}+\beta+1)(2N_{s-\ell+1}+\alpha_{[2;s-\ell+2]}+\beta-1), \\
\tilde{e}_{\ell}^1(\mathbf{n}) & = -2(2N_{s-\ell+1}+\alpha_{[2;s-\ell+2]}+\beta+1), \\
\tilde{e}_{\ell}^{-1}(\mathbf{n}) & = 2(2N_{s-\ell+1}+\alpha_{[2;s-\ell+2]}+\beta-1).
\end{align*}

\newpage

\end{document}